\documentclass[10pt]{amsart}
\usepackage{amsmath}
\usepackage{amscd}
\usepackage{amssymb}
\usepackage{amsthm}
\usepackage{enumerate}
\usepackage[normalem]{ulem}
\usepackage{mathtools}
\usepackage[usenames,dvipsnames]{xcolor}
\usepackage{stmaryrd}
\usepackage[margin=1.2in]{geometry}
\usepackage[T2A]{fontenc}
\usepackage[utf8]{inputenc}
\usepackage{mathrsfs}
\newtheorem{thm}{Theorem}[section]
\newtheorem{lem}[thm]{Lemma}
\newtheorem{prop}[thm]{Proposition}
\newtheorem{cor}[thm]{Corollary}

\theoremstyle{definition}
\newtheorem{defn}[thm]{Definition}

\newtheorem{cons}[thm]{Construction}

\numberwithin{equation}{section}

\newcommand{\bC}{{\mathbb C}}

\newcommand{\bE}{{\mathbb E}}
\newcommand{\bF}{{\mathbb F}}

\newcommand{\bM}{{\mathbb M}}
\newcommand{\bN}{{\mathbb N}}

\newcommand{\bR}{{\mathbb R}}

\newcommand{\bZ}{{\mathbb Z}}

\newcommand{\cA}{{\mathcal A}}
\newcommand{\cB}{{\mathcal B}}

\newcommand{\cF}{{\mathcal F}}
\newcommand{\cG}{{\mathcal G}}
\newcommand{\cH}{{\mathcal H}}

\newcommand{\cK}{{\mathcal K}}

\newcommand{\cM}{{\mathcal M}}

\newcommand{\cT}{{\mathcal T}}
\newcommand{\cU}{{\mathcal U}}
\newcommand{\cV}{{\mathcal V}}
\newcommand{\cW}{{\mathcal W}}

\renewcommand{\Re}{\operatorname{Re}}

\DeclareMathOperator{\actson}{\curvearrowright}

\DeclareMathOperator{\id}{id}

\DeclareMathOperator{\tr}{tr}

\DeclareMathOperator{\vol}{vol}

\DeclareMathOperator{\Int}{int}

\DeclareMathOperator{\ev}{ev}
\DeclareMathOperator{\Ball}{Ball}

\DeclareMathOperator{\diam}{diam}

\DeclareMathOperator{\Ext}{Ext}
\DeclareMathOperator{\Prob}{Prob}
\DeclareMathOperator{\co}{co}
\DeclareMathOperator{\spec}{spec}
\DeclareMathOperator{\Lip}{Lip}
\DeclareMathOperator{\Fix}{Fix}
\DeclareMathOperator{\Cent}{Cent}
\DeclareMathOperator{\md}{md}
\DeclareMathOperator{\Proj}{Proj}
\DeclareMathOperator{\Gr}{Gr}

\DeclarePairedDelimiter{\ip}{\langle}{\rangle}

\begin{document}

\title{Property (T) and strong 1-boundedness for von Neumann algebras}

\author{Ben Hayes}
\address{\parbox{\linewidth}{Department of Mathematics, University of Virginia, \\
141 Cabell Drive, Kerchof Hall,
P.O. Box 400137
Charlottesville, VA 22904}}
\email{brh5c@virginia.edu}
\urladdr{https://sites.google.com/site/benhayeshomepage/home}

\thanks{The author acknowledges support from NSF CAREER award DMS-2144739.
}
\title{On the generator problem for $W^{*}$-bundles}
\date{\today}

\begin{abstract}
We give an explicit example of a $W^{*}$-bundle $M$ over a compact, metrizable space $K$, which has each fiber $M_{p}$ a $\textrm{II}_{1}$-factor with separable predual, and which satisfies the following negation of the generator problem: given any finite family $a_{1},\cdots,a_{n}\in M$ of continuous sections, there is a $p\in K$ (depending upon that family) so that $a_{1,p},\cdots,a_{n,p}$ do not generate $M_{p}$ as a von Neumann algebra. More generally, if $N$ is a sub-bundle of $M$ with the property that each fiber is hyperfinite, or has a Cartan, or is generated by two commuting diffuse subalgebras, or has diffuse central sequence algebra, or is generated by a single sequential commutation orbit, then given any finite family $a_{1},\cdots,a_{n}\in M$ of continuous sections, there is a $p\in K$ (depending upon that family) so that $a_{1,p},\cdots,a_{n,p}$ together with $N_{p}$ do not generate $M_{p}$.
We discuss implications for the generator problem for von Neumann algebras: e.g. there is no ``continuous" way to take countably many generators for a tracial von Neumann algebra with separable predual and produce a single generator, at least if such a procedure works for all von Neumann algebras simultaneously. 
\end{abstract}
\maketitle 

\section{Introduction}

One of the most well known open problems in von Neumann algebras asks if every von Neumann algebra with separable predual is singly generated. This was answered affirmatively for type $I$ algebras in \cite{PearcySG}, for an algebra generated by countably many singly generated algebras in \cite{SSGen}, properly infinite algebras in \cite{WogenGen}, and non-prime algebras in \cite{BehnckeSG}. In \cite{WilligGen}, it was  noted that the generator problem reduces to the case of $\textrm{II}_{1}$-factors, and since the hyperfinite $\textrm{II}_{1}$-factor is singly generated, this means the problem is about \emph{nonamenable} $\textrm{II}_{1}$-factors (see \cite{CardinalInvariantsSherman} for a further historical discussion of the above developments). Since these initial papers on the subject, many other examples of $\textrm{II}_{1}$-factors with separable predual have been shown to be  singly generated, such as those which have a Cartan \cite{SorinCarSG}, those that are not full \cite{GePopaSG}, $L(SL_{n}(\bZ))$ for $n\geq 3$ \cite{GeShenGen}, those that are thin \cite{GePopaSG}, and those that are generated by a single sequential commutation orbit \cite{InternalSeqComm} in the sense of \cite{SeqCommutation} (this last class includes algebras with a Cartan, those that are not full, those that are not prime, as well as $L(SL_{n}(\bZ))$ for $n\geq 3$ and so provides a strict generalization of many prior results). Work of Shen \cite{ShenSingleGen} also provides a numerical invariant $\cG(M)$ which (roughly) counts the minimal number of generators weighted by what projections they lie under, additional work on this invariant was done in \cite{DSSW}.


It was recently announced \cite{elliott2026simpleseparablecalgebrasingly} that there is a simple, unital, separable $C^{*}$-algebra which is not singly generated. Previous results on $C^{*}$-algebra which are not singly generated all relied on some reduction to the abelian case. In the abelian case, counterexamples are abundant since for a compact Hausdorff space $K$ we have that $C(K)$ is $n$-generated if and only if $K$ embeds in $\bC^{n}$, and covering dimension theory allows one to rule this out in many cases (e.g. if $K$ is a compact manifold or CW complex of dimension larger than $2n$). The results of \cite{elliott2026simpleseparablecalgebrasingly} are a breakthrough in the field, but since the $C^{*}$-algebras given there are nuclear, it is unclear how to adapt the methods to give a von Neumann algebra which is not singly generated: every nuclear $C^{*}$-algebra will have all of its GNS completions hyperfinite von Neumann algebras with separable predual, and we know that all such algebras are singly generated. 

While there is a large literature proving that various classes of von Neumann algebras are singly generated, the general belief is that there should be a separable von Neumann algebra which is not singly generated. Many people believe that techniques from Voiculescu's free probability theory (e.g. his microstates free entropy dimension defined in \cite{Voiculescu1996}) should prove that the free group factors $L(\bF_{r})$ are not singly generated once $r$ is sufficiently large (see e.g. the papers \cite{SorinSSG,DimaNonMS}). However, the problem still remains open.

As the generator problem remains currently out of reach, it is natural to consider modified version of it for algebras that are ``in-between" $C^{*}$-algebras and von Neumann algebras. The class of $W^{*}$-bundles was introduced by Ozawa in \cite{OzawaBundle} as an operator algebraic means of describing a ``continuously varying" family of von Neumann algebras over a compact Hausdorff space. A $W^{*}$-bundle is a triple $(M,\bE,K)$ where $K$ is a compact Hausdorff space with $C(K)$ embedded in the  center of $M$, and $\bE\colon M\to C(K)$ is a faithful and tracial conditional expectation, together with extra hypotheses that imply that the images $M_{p}$ of $M$ under the GNS representations with respect to $\tau_{p}=\bE(x)(p)$ are all tracial von Neumann algebras (see Definition \ref{defn:W*bundle} for a precise definition). This allows us heuristically to view the algebras $M_{p}$ as von Neumann algebras which ``vary continuously in $p$". 
This concept was later generalized to the notion of a tracially complete $C^{*}$-algebra in \cite{TraciallyComplete}. The initial motivation for these notions was for classification of nuclear $C^{*}$-algebras, but it has become clear that they are important objects of study for their own sake (see e.g. \cite{ EvingtonTCC, Evington_thesis, EPBundles, mommaerts2026nonlocallytrivialmathrmwbundlefixed}). Specifically for us, tracially complete $C^{*}$-algebras can be viewed as objects that are a ``bridge" between $C^{*}$-algebras and tracial von Neumann algebras, and it is sensible to ask questions about them such as if they are hyperfinite, if they are non-full, etc (see \cite{TraciallyComplete, UnifGamma, UnifGamm3, UnifGamm2}). 
As a modification of the generator problem, it thus becomes natural to ask what the generator problem for $W^{*}$-bundles and tracially complete $C^{*}$-algebras would be. A naive formulation would be to find a factorial, tracially complete $C^{*}$-algebra which is not singly generated as a tracially complete $C^{*}$-algebra. However, every $C(K)$ can be viewed as tracially complete $C^{*}$-algebra (and its structure as a tracially complete $C^{*}$-algebra is the same as its $C^{*}$-algebra structure), and as previously mentioned examples of these that are not $n$-generated are abundant. In our opinion, a better analogue would be a separable $W^{*}$-bundle which is not ``singly generated over $C(K)$'' (to rule out abelian examples, since every $C(K)$ is singly generated over itself), and which has the property that all of its fibers are nonamenable $\textrm{II}_{1}$-factors (so that it better connects to the generator problem for von Neumann algebras, which reduces to the case of nonamenable $\textrm{II}_{1}$-factors). Our main theorem provides the first such example.

\begin{thm}\label{thm: kind of main thm}
There is a $W^{*}$-bundle $(M,\bE,K)$ over a compact metrizable space, which has each fiber a $\textrm{II}_{1}$-factor with separable predual, and which satisfies the following  negation of the generator problem: given any finite family $a_{1},\cdots,a_{n}\in M$ of continuous sections, there is a $p\in K$ (depending upon the family) so that $a_{1,p},\cdots,a_{n,p}$ do not generate $M_{p}$ as a von Neumann algebra.    
\end{thm}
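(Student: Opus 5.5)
Since $n$ is arbitrary while the fibers are fixed, I would build $M$ so that its fibers contain, for every $n$, some $\textrm{II}_{1}$-factor that provably needs more than $n$ generators, or else fibers that are ``generic'' enough that no fixed finite family of continuous sections can generate all of them. The concrete plan uses a Baire category argument on the base space together with a compactness or finiteness obstruction coming from free entropy dimension. The natural candidate is $K=$ a compact metrizable space parametrizing $\textrm{II}_{1}$-factors: for instance $K=\{1,2,\dots\}\cup\{\infty\}$, the one-point compactification of $\bN$. The fiber over $m$ would be an interpolated free group factor $L(\bF_{r_m})$ (or a matrix amplification), arranged with $r_m\to\infty$, and the fiber over $\infty$ would be a suitable limit, e.g.\ an ultraproduct-like or inductive limit fiber.

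First, construct the bundle. I would take $M$ to be the $W^{*}$-bundle of sequences $(x_m)_m\in\prod_m M_m$ that converge, in the uniform $2$-norm sense appropriate to $W^{*}$-bundles, to an element of a fixed limit algebra $M_\infty$ (for instance, $M_m\subseteq M_\infty$ compatibly with traces, and we require $\|x_m-x_\infty\|_2\to0$). Then I would check the axioms of Definition \ref{defn:W*bundle}: $C(K)$ is central, $\bE(x)(p)=\tau_p(x_p)$ is continuous, and the unit ball is complete in the uniform $2$-norm. I would also check that the fibers are the $M_m$, which are $\textrm{II}_{1}$-factors with separable predual, and $M_\infty$.

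The key step is the obstruction. Given continuous sections $a_1,\dots,a_n$, the tuple $(a_{1,m},\dots,a_{n,m})$ lies in $M_m$ and converges in $2$-norm to $(a_{1,\infty},\dots,a_{n,\infty})$. If every $a_{\cdot,m}$ generated $M_m$, I would want a contradiction from some invariant that is bounded in terms of $n$ but grows with $m$. The issue is that a fixed-size invariant such as free entropy dimension $\delta_0$ is not known to be bounded by $n$ for general generators; indeed, that is exactly the open problem. So instead of free groups I would choose fibers where a rigidity invariant is available. Here I would use property (T) style rigidity with strong $1$-boundedness, following the author's title. Choose $M_m=L(\Gamma_m)$ where the $\Gamma_m$ are property (T) groups whose (T)-constants degenerate, or better, use $2$-norm continuity: a finite generating tuple near a fixed tuple in $2$-norm forces, via a uniform Kazhdan pair, an approximate containment that eventually contradicts $M_m$ being ``large'' relative to $M_\infty$. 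Concretely, I aim to show that if $a_{\cdot,m}$ generate $M_m$ and converge to $a_{\cdot,\infty}$, then the algebras $W^{*}(a_{\cdot,m})$ are uniformly approximately contained in the algebra generated by finitely many fixed elements. I would then contradict this via an invariant like the $1$-bounded entropy $h$, using the fact that $h$ of a property (T) algebra is controlled by any $2$-norm-close generating tuple.

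The main obstacle is exactly this semicontinuity step: passing from $2$-norm convergence of generators to a contradiction with the fiberwise generation. I would first try to arrange for each $n$ an infinite subfamily of fibers that are not generated by $n$ elements that are all $\varepsilon$-close to a bounded-dimensional tuple. The tool would be lower semicontinuity of microstate-type entropy under $2$-norm perturbations for property (T) generators. Failing that, I would fall back on a diagonal construction: enumerate a countable $2$-norm dense set of $n$-tuples of sections, and for each one build a fiber it fails to generate, gluing the fibers continuously.
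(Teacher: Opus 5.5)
There is a genuine gap, and it is structural rather than technical: over a base like $K=\bN\cup\{\infty\}$, continuity can never produce the obstruction. Every $m\in\bN$ is isolated, so the only constraint is at $\infty$. Suppose $M_\infty$ is generated by $n$ elements and every $M_m$ is generated by $k$ contractions $c^{(m)}_1,\dots,c^{(m)}_k$, with $k$ independent of $m$. Take sections $s_1,\dots,s_n\in M$ whose values at $\infty$ generate $M_\infty$. Add the sections $t_j$ with $t_{j,m}=\varepsilon_m c^{(m)}_j$ and $t_{j,\infty}=0$, where $\varepsilon_m\downarrow 0$. These lie in $M$, since they are norm bounded and $\|t_{j,m}\|_2\le\varepsilon_m\to 0$. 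Then $s_1,\dots,s_n,t_1,\dots,t_k$ fiberwise generate $M$.

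So for your bundle the theorem holds exactly when the fibers are \emph{not} uniformly finitely generated. That would already give a $\textrm{II}_1$-factor that is not singly generated. Your ``key step'' is therefore the open generator problem itself (as you half-acknowledge for $L(\bF_{r_m})$), and no semicontinuity argument along $m\to\infty$ can replace it.

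The property (T) fallback points the wrong way. Uniform Kazhdan pairs and strong $1$-boundedness give an \emph{upper} bound $h<\infty$. In the paper this is precisely the hypothesis under which a sub-bundle $N$ \emph{cannot} help generate; it never gives a lower bound on the number of generators. Indeed, many property (T) factors, e.g.\ $L(SL_n(\bZ))$ for $n\ge 3$, are singly generated.

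The diagonal fallback also fails. Non-generation is not stable under small $\|\cdot\|_2$-perturbations, so defeating a countable dense set of tuples says nothing about the remaining tuples. It is also circular, since the sections depend on the bundle you are building.

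The missing idea is that the obstruction must be dimension-theoretic and must live on a huge base space, in analogy with $C(K)$ failing to be $n$-generated when $\dim K>2n$. The paper's construction runs as follows.
\begin{enumerate}
\item Take $Q$ with an embedding into a matrix ultraproduct with trivial relative commutant, witnessed by microstates $X^{(k)}$.
\item Form $\cA=B*_{u}C([-1,1])^{*_u\bN}$, and let $K$ consist of all traces coming from homomorphisms $\cA\to\prod_{k\to\omega}\bM_{n_k}(\bC)$ sending $x_j\mapsto (X^{(k)}_j)_{k\to\omega}$.
\item Trivial relative commutant makes every such trace extremal, so every fiber is a factor containing $Q$ irreducibly.
\item A diagonal argument makes $K$ compact. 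Ozawa's theorem, applied to the Bauer face $\overline{\co}(K)$, then yields a factorial $W^*$-bundle.
\end{enumerate}

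Finite generation is then obstructed by a new invariant $\delta_{MD}$. It is a mean-dimension analogue of free entropy dimension that counts $\varepsilon$-injective continuous maps into low-dimensional spaces instead of covering numbers. Its key properties are:
\begin{enumerate}
\item it is an invariant of the generated tracially complete $C^*$-algebra;
\item it is at most $2n$ whenever $n$ elements generate;
\item it equals $+\infty$ for this bundle. The microstate spaces contain $\{X^{(k)}\}\times\Ball(\bM_{n_k}(\bC)_{s.a.})^{\bN}$, and Szarek's volume-ratio theorem gives $\dim_\varepsilon(\Ball(\bM_n(\bC)_{s.a.})^r,\|\cdot\|_2)\ge(1-o(1))rn^2$.
\end{enumerate}

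The paper also shows that proving $\delta_{MD}>1$ for a single tracial von Neumann algebra would itself disprove the generator problem. So the uniformity over a large, dimensionally rich family of traces is essential, and it is exactly what a base like $\bN\cup\{\infty\}$ cannot supply.
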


In fact, we prove something more general. Let $(M,\bE,K)$ be a $W^{*}$-bundle and $N\leq M$ a sub-bundle. We say that a family $(a_{i})_{i\in I}$ in $M$ \textbf{fiberwise generates $M$ over $N$} if for every $p\in K$ we have that $W^{*}(\{a_{i,p}\}_{i\in I}\cup N_{p})=M_{p}$. 
One motivation for this consideration is the following: the statement that every $\textrm{II}_{1}$-factor is singly generated is equivalent to the statement that there is a $k\in \bN$ so that every $\textrm{II}_{1}$-factor is can be generated by at most $k$ elements (e.g. by \cite{PearcySG}, see also \cite[Theorem 6.3]{CardinalInvariantsSherman}). Thus, for connections to the generator problem for von Neumann algebras, it is sensible to take a class of von Neumann algebras which are known to be singly generated (e.g. hyperfinite algebras), and ask if there is a $W^{*}$-bundle $(M,\bE,K)$ which cannot be fiberwise finitely generated over any sub-bundle whose fibers belong to that class. E.g. if one could show that every separable $\textrm{II}_{1}$-factor was, say, singly generated over a hyperfinite subfactor (or singly generated over any of the class of algebras listed above which are known to be singly generated), then it would follow that every separable $\textrm{II}_{1}$-factor is singly generated. We show that we can produce a $W^{*}$-bundle which is not fiberwise finitely generated over any sub-bundle whose fibers fall into a large class of von Neumann algebras which are known to be singly generated.

\begin{thm}\label{thm: main thm intro}
Let $Q$ be a $\textrm{II}_{1}$-factor which has an embedding into an ultraproduct of matrices with trivial relative commutant (e.g. $Q=L(\bF_{r})$ for $r>1$, or $Q=L(G)$ where $G$ is an i.c.c., residually finite, Property (T) group). Then there is a compact, metrizable space $K$, and a $W^{*}$-bundle $M$over $K$ so that:
\begin{enumerate}[(i)]
    \item for each $p\in K$, we have that $M_{p}$ is a   $\textrm{II}_{1}$-factor with separable predual \footnote{$M$ is even separable in the uniform $2$-norm, which is a priori stronger than having fibers with separable predual}. Additionally, $M$ contains the trivial bundle over $K$ with fiber $Q$ as a sub-bundle, and $Q'\cap M_{p}=\bC$ for every $p\in K$. In particular, each $M_{p}$ is nonamenable. \label{item: intro rel commm}
    \item \label{item: intro classes} Suppose $N$ is any sub-bundle of $M$ and that for every $p\in K$, $N_{p}$ satisfies one of the following properties:
    \begin{itemize}
    \item $N_{p}$ is hyperfinite, 
        \item $N_{p}$ has a Cartan subalgebra,
        \item $N_{p}$ has diffuse central sequence algebra,
        \item  $N_{p}$ is generated by two commuting diffuse subalgebras,
        \item $N_{p}$ can be generated by a single sequential commutation orbit in the sense of \cite{SeqCommutation}.
    \end{itemize}
    Then $M$ cannot be fiberwise generated over $N$ by a finite family of continuous sections: i.e. given $n\in \bN$ and $a=(a_{1},\cdots,a_{n})\in M^{n}$ there is a $p\in K$ (depending upon $a$ and $N$) so that $M_{p}\ne W^{*}(\{a_{i,p}\}_{i=1}^{n}\cup N_{p})$. 
    \item \label{item: unfiromly S1B INTRO} More generally, suppose that $N$ is any sub-bundle of $M$ and that $\sup_{p\in K}h(N_{p},\tau_{p})<+\infty$ (with $h$ being the $1$-bounded entropy), then $M$ cannot be fiberwise generated by a finite family of continuous sections over $N$. 
\end{enumerate}
\end{thm}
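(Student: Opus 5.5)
The plan is to build $M$ over the one-point compactification $K=\bN\cup\{\infty\}$ from a sequence of $\textrm{II}_{1}$-factors $M_{k}\supseteq Q$, each with $Q'\cap M_{k}=\bC$. The construction should make the $1$-bounded entropy of the fibers \emph{relative to $Q$} blow up at a definite scale, while each individual fiber stays manageable. Part (\ref{item: unfiromly S1B INTRO}) is the heart of the matter; part (\ref{item: intro classes}) then follows because every class listed there consists of strongly $1$-bounded algebras with $h(N_{p})\leq 0$, so $\sup_{p}h(N_{p},\tau_{p})\leq 0<+\infty$.

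\textbf{Construction.} Fix an embedding $Q\hookrightarrow \prod_{k\to\omega}M_{d_{k}}(\bC)$ with trivial relative commutant. By a Jung-type argument, this gives microstates for the generators of $Q$ that are unique up to approximate unitary conjugacy. I would take $M_{k}$ to be a $\textrm{II}_{1}$-factor generated by $Q$ together with $k$ further "free" pieces, arranged so that $Q'\cap M_{k}=\bC$; for instance, an amalgamated or free-product-type construction over $Q$. The limit fiber $M_{\infty}$ is obtained as a tracial limit. The bundle $M$ is the set of sequences $(x_{k})_{k}$ that converge in the appropriate $2$-norm sense, realized inside $\prod_{k}M_{k}\oplus M_{\infty}$ with $\bE(x)(k)=\tau_{k}(x_{k})$. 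Checking the axioms of Definition \ref{defn:W*bundle} and uniform $2$-norm separability should be routine. The trivial bundle with fiber $Q$ sits inside $M$ as constant sections. Relative commutant triviality of $Q$ in each $M_{k}$ gives factoriality and nonamenability, which proves (\ref{item: intro rel commm}).

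\textbf{Compactness step.} Suppose $a=(a_{1},\dots,a_{n})$ fiberwise generates $M$ over $N$. Fix finitely many generators $x$ of the $Q$-part, viewed as continuous sections. For each $p$ and each $\varepsilon>0$, there is a noncommutative $*$-polynomial $P$ with $\|x_{p}-P(a_{p},y_{p})\|_{2}<\varepsilon$ for some $y_{p}$ in the unit ball of $N_{p}$. Continuity of sections and compactness of $K$ let us choose finitely many polynomials that work on open sets covering $K$. This yields a \emph{uniform} degree and coefficient bound $D(\varepsilon)$.

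\textbf{Entropy step (main obstacle).} Using the uniform bound $D(\varepsilon)$, every microstate for $x$ in $M_{p}$ is within $O(\varepsilon)$ of the image, under a fixed Lipschitz map, of a microstate for $(a,N_{p})$. Hence the covering numbers of the microstate space of $Q$ "in the presence of $M_{p}$" are bounded by those of $N_{p}$ times an $n$-variable factor depending only on $D(\varepsilon)$. The uniqueness of microstates for $Q$ converts this into a bound, uniform in $p$ and depending only on $\sup_{p}h(N_{p})$, $n$ and $\varepsilon$, on the $\varepsilon$-scale $1$-bounded entropy $h_{\varepsilon}(M_{p}:M_{p})$.

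The construction must be tuned so that $h_{\varepsilon_{0}}(M_{k}:M_{k})\to\infty$ at some fixed scale $\varepsilon_{0}$. The $k$ free pieces contribute unboundedly many independent directions, each of definite size, which are "anchored" by $Q$; this anchoring is why the trivial relative commutant is needed. That yields the contradiction.

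The delicate point is making the upper bound genuinely uniform across fibers. The $N_{p}$ vary with $p$, so one needs a scale-by-scale version of the estimate $h(W^{*}(N,a))\leq h(N)+(\text{bounded})$. I would try first to prove this for fixed-scale quantities using Jung's covering lemma together with the uniform polynomial bound. The simultaneous lower bound $h_{\varepsilon_{0}}(M_{k})\to\infty$ needs to be verified directly from the free-product structure.
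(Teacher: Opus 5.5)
Your choice of base space already defeats the strategy, independently of the entropy estimates. Over $K=\bN\cup\{\infty\}$ every point $k\in\bN$ is isolated, so $1_{\{k\}}\in C(K)\subseteq Z(M)$, and fiberwise generators can be glued. Suppose $b_{1},\dots,b_{m}$ generate $M_{\infty}$; lift them to arbitrary sections $a_{i}$ (in your nested model, $a_{i,k}=E_{M_{k}}(b_{i})$). Suppose contractions $g_{1,k},\dots,g_{m,k}$ generate $M_{k}$; lift them to contractions $\tilde g_{j,k}\in M$ and put $c_{j}=\sum_{k}\lambda_{k}1_{\{k\}}\tilde g_{j,k}$ with $0\neq\lambda_{k}\to 0$. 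The partial sums are uniformly bounded and $\|\cdot\|_{\bE}$-Cauchy, so $c_{j}\in M$, with $c_{j,k}=\lambda_{k}g_{j,k}$ and $c_{j,\infty}=0$. Then $a_{1},\dots,a_{m},c_{1},\dots,c_{m}$ fiberwise generate $M$. So the theorem for your bundle (already with $N=C(K)$) would imply that some fiber is not singly generated --- for $Q=L(\bF_{2})$ with free pieces, some $L(\bF_{r})$ with $3\le r\le\infty$. That is, it would solve the generator problem.

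The paper's bundle is built precisely to avoid this. Its $K$ is the large compact set of all traces on $B*_{u}C([-1,1])^{*_{u}\bN}$ coming from homomorphisms into $\prod_{k\to\omega}\bM_{n_{k}}(\bC)$ that extend \emph{one fixed} microstate sequence $X^{(k)}$ for $Q$. The trivial relative commutant is used only to make all these traces extremal (so Choquet theory and Ozawa's theorem give a factorial $W^{*}$-bundle) and to get $Q'\cap M_{p}=\bC$. It gives no uniqueness of microstates: by Jung, microstates of a non-amenable $Q$ are never unique up to approximate unitary conjugacy. What the construction buys is that the bundle's microstate spaces contain $\{X^{(k)}\}\times\Ball(\bM_{n_{k}}(\bC)_{s.a.})^{\bN}$ for all large $k$.

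The entropy step has a second, independent gap, and it is not about uniformity in $p$. Approximating a finite set $F$ of the fixed generators by polynomials in $(a,N)$ gives a bound of the form $\sup_{p}h(N_{p})+2n\log(CL/\varepsilon)$, where $L$ is the Lipschitz constant of the approximating polynomials. Compactness of $K$ makes $L$ independent of $p$, but not of $F$ or $\varepsilon$, whereas any fixed-scale lower bound needs $|F|\to\infty$. So covering-number quantities cannot be played against each other here. Indeed, your claimed estimate applied to the one-point bundle with fiber $L(\bF_{\infty})$ would already show that $L(\bF_{\infty})$ is not finitely generated, hence $L(\bF_{\infty})\not\cong L(\bF_{2})$. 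This is exactly the non-invariance problem of free entropy dimension.

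The missing idea is to replace $\log K_{\varepsilon}$ by the topological width $\dim_{\varepsilon}$, giving the mean-dimension invariant $\delta_{MD}$. Precomposing an $\varepsilon'$-injective map with a polynomial map changes only the scale, never the dimension count. This yields generator-independence, the bound $\delta_{MD}(M|N)\le 2n$ for $n$ extra generators, the chain inequality $\delta_{MD}(M)\le\delta_{MD}(M|N)+\delta_{MD}(N)$, and $\delta_{MD}(N)\le 1$ when $\sup_{p}h(N_{p})<\infty$. The lower bound $\delta_{MD}(M;X)=+\infty$ then comes from the large microstate spaces above combined with Szarek's volume-ratio theorem, which gives $\sup_{\varepsilon>0}\liminf_{n}\frac{1}{n^{2}}\dim_{\varepsilon}(\Ball(\bM_{n}(\bC)_{s.a.})^{r},\|\cdot\|_{2})\ge r$.
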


The fact that we may take $Q=L(\bF_{r})$ follows form the existence  on quantum expanders due to Hastings \cite{Hastings} (see also \cite{PisierQuantumExpander} for a separate proof as well as  \cite{CollinsBordenave} for  a more general statement), combined with Voiculescu's asymptotic freeness theorem \cite[Theorem 3.8]{VoicAsyFree}. When $Q=L(G)$ with $G$ i.c.c., residually finite, Property (T) the existence of such an embedding uses that $\delta_{e}$ is approximated by characters of finite-dimensional representations, and extremality of $\delta_{e}$ in the character space forces that such finite-dimensional representations have irreducibility subrepresentations whose characters converge to $\delta_{e}$. See Theorem \ref{thm: Property T trivial rel comm} for more details.

The $M$ constructed in Theorem \ref{thm: main thm intro} is done by completing a $C^{*}$-algebra $A$ with respect to a family of traces, and $A$ has some sub-algebra which is (roughly) a universal $C^{*}$-algebra of countably many self-ajdoint contractions. However, the methods are flexible enough that we can replace self-adjoint contractions with unitaries or even projections. As such, we obtain the following refinement of our results.
\begin{thm}\label{thm: main theorem intro but with projections}
Let $Q$ be a $\textrm{II}_{1}$-factor which has an embedding into an ultraproduct of matrices with trivial relative commutant. Then there is a compact, metrizable space $K$ and a $W^{*}$-bundle $M$ over $K$ so that $(M,\bE,K)$ so that $M$ is separable in the uniform $2$-norm,  satisfies conclusions (\ref{item: intro rel commm})-(\ref{item: unfiromly S1B INTRO}) of Theorem \ref{thm: main thm intro}, and so that $M$ is generated (as a tracially complete $C^{*}$-algebra) by a countable family of projections.
\end{thm}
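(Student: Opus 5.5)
The plan is to rerun the construction behind Theorem \ref{thm: main thm intro}, changing only the universal algebra of generators. There, $M$ is the tracial completion of a $C^{*}$-algebra $A$ with respect to a closed family of traces $K\subseteq T(A)$. Here I take
\[
A = Q_{0} \ast \Big(\mathop{\ast}_{j\in\bN} C^{*}(p_{j})\Big),
\]
where each $C^{*}(p_{j})$ is the universal $C^{*}$-algebra of a projection, i.e. $\bC^{2}$ in the unital free product, and $Q_{0}$ is a separable, weak$^{*}$-dense, unital $C^{*}$-subalgebra of $Q$. Let $K$ be the closure of the set of traces obtained as follows: fix matrix models $\pi_{k}\colon Q_{0}\to M_{d_{k}}(\bC)$ realizing the embedding $Q\hookrightarrow\prod_{\omega}M_{d_{k}}$ with trivial relative commutant, and send the $p_{j}$ to arbitrary projections in $M_{d_{k}}(\bC)$; then add all limit points. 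Then $K$ is compact and metrizable, since $A$ is separable. By construction $M$ is generated as a tracially complete $C^{*}$-algebra by the countable family of projections $\{p_{j}\}$ together with $Q_{0}$. To get \emph{only} projections, I replace $Q_{0}$ by a countable family of projections generating $Q$; this is possible because a $\textrm{II}_{1}$-factor is generated by its projections, and a countable weak$^{*}$-dense set of them suffices. The matrix models of those projections can be taken to be projections as well, by functional calculus after a small perturbation.

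Next I check that every step of the proof of Theorem \ref{thm: main thm intro} only used two things: that the generators are free variables ranging over a set of matrices (there, self-adjoint contractions), and that every tuple of such matrices is allowed. Both remain true for projections. For (\ref{item: intro rel commm}), each fiber $M_{p}$ with $p$ a limit of the matrix traces is an ultraproduct-type object containing $Q$. The relative commutant $Q'\cap M_{p}=\bC$ follows exactly as before from the trivial relative commutant of the embedding, since that argument only concerns $Q$ and not the type of the free variables. Separability in the uniform $2$-norm is immediate from separability of $A$.

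For (\ref{item: unfiromly S1B INTRO}), and hence (\ref{item: intro classes}), I mimic the earlier argument. Given $a_{1},\dots,a_{n}\in M$ and $N$ with $\sup_{p}h(N_{p})<\infty$, approximate each $a_{i}$ in uniform $2$-norm by noncommutative polynomials in finitely many generators $p_{1},\dots,p_{m}$ and $Q_{0}$. Then choose a trace in $K$ in which the remaining projections $p_{m+1},p_{m+2},\dots$ are placed in general, free-like position, for instance Haar-random projections of trace $1/2$ in the matrix models, which are asymptotically free by Voiculescu. In that fiber, $W^{*}(a_{p},N_{p})$ has $1$-bounded entropy bounded by a constant depending on $\sup h(N_{p})$ and $n$, while $M_{p}$ contains free projections together with $Q$, so $h(M_{p})=\infty$. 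This contradicts generation, because $h$ is monotone and finite generation over a bounded-entropy algebra keeps $h$ finite only under the structural lemmas used earlier.

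I expect the main obstacle to be this last step: ensuring that the earlier entropy computation for free families of self-adjoint contractions transfers to free projections. Concretely, I need that $W^{*}(Q,p_{m+1},\dots)$ in the chosen fiber has infinite $1$-bounded entropy, or more precisely is not generated by the prescribed data, in a way that is uniform enough to survive the approximation of $a_{i}$ by polynomials. I would first try to reduce to the self-adjoint case. Free projections of trace $1/2$ generate a copy of $L(\bZ_{2}\ast\bZ_{2})\ast\cdots$, and enough of them yield diffuse free self-adjoint elements such as $p_{2j}-p_{2j+1}$ combinations, or generate $L(\bF_{\infty})$-type algebras. So the earlier argument can be applied verbatim to these derived elements.
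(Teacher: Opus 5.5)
\textbf{Verdict: your construction is essentially the paper's, but the argument for item (\ref{item: unfiromly S1B INTRO}) has a genuine gap.}

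\emph{The construction, and one point to make explicit.} Your construction matches the paper's in outline: generators of $Q$ chosen to be projections, the algebra $B*_{u}C^{*}((\bZ/2\bZ)^{*\infty})$, and traces coming from ultraproduct homomorphisms that extend fixed models $X^{(k)}$ of $Q$. The hypothesis only gives trivial relative commutant along \emph{one} ultrafilter. You need every limit trace to be extremal, so that $\overline{\co}(K)$ is a face and a Bauer simplex. For that you must first pass to a subsequence $n_{k}$ along which the relative commutant is trivial for every ultrafilter, as in Proposition \ref{prop: asymptotic rephrase}. Otherwise you could place the free projections in an asymptotic relative commutant and obtain non-factorial limit traces.

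\emph{The gap in (\ref{item: unfiromly S1B INTRO}).} You invoke the principle that $W^{*}(\{a_{i,p}\}_{i=1}^{n}\cup N_{p})$ has $1$-bounded entropy bounded in terms of $n$ and $\sup_{p}h(N_{p},\tau_{p})$. This is false. Already with $N=\bC$, $L(\bF_{2})$ is generated by two self-adjoints and $h(L(\bF_{2}))=+\infty$, so $h(M_{p})=+\infty$ is no obstruction to finite generation.

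There is a second problem. Approximating the $a_{i}$ in $\|\cdot\|_{\bE}$ by polynomials in $p_{1},\dots,p_{m}$ and $Q_{0}$ says nothing about the exact algebra $W^{*}(a_{1,p},\dots,a_{n,p})$ in the fiber you select. The $a_{i,p}$ still depend on $p_{m+1},p_{m+2},\dots$, and generation is not stable under small $2$-norm perturbations.

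More fundamentally, an argument that only uses that $a_{p}$ is an $n$-tuple in one fixed fiber would show that this fiber is not finitely generated. That fiber is, e.g., $L(\bF_{\infty})$ when $Q=L(\bF_{2})$ and the projections are free of trace $1/2$, and proving it is not finitely generated is far beyond current methods. Compare Theorem \ref{thm: nontrivial computations for tracial vNas are hard}: even the paper's invariant cannot be shown to exceed $1$ on a single tracial von Neumann algebra without disproving the generator problem. The bad fiber has to depend on the family, and the obstruction has to see all traces at once.

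\emph{The missing idea.} You need an invariant defined uniformly over $X$. The paper uses $\delta_{MD}$, a mean-dimension analogue of free entropy dimension for tracially complete $C^{*}$-algebras, with four properties:
\begin{enumerate}[(a)]
\item $\delta_{MD}(M|N;X)\leq 2n$ if $n$ sections generate $M$ over $N$;
\item the chain inequality $\delta_{MD}(M;X)\leq\delta_{MD}(M|N;X)+\delta_{MD}(N;X)$;
\item $\delta_{MD}(N;X)\leq 1$ when $\sup_{p}h(N_{p},\tau_{p})<+\infty$;
\item $\delta_{MD}(M;X)=+\infty$.
\end{enumerate}

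The projections enter in (d). By universality of $\cK$, for every neighborhood $\mathcal{O}$ of the laws and all large $k$, the microstates space $\Gamma^{(n_{k})}(\mathcal{O})$ contains $\{X^{(k)}\}\times\prod_{i}\Gr(\lfloor n_{k}/2\rfloor,n_{k})$. Moreover, for small $\varepsilon$, $\dim_{\varepsilon}$ of $\prod_{j=1}^{r}\Gr(\lfloor n/2\rfloor,n)$ in $\|\cdot\|_{2}$ is at least roughly $\frac{r}{2}n^{2}$ (Corollary \ref{cor: projection lower bound}). That bound comes from Szarek's volume-ratio theorem applied to quotients, continuous sections of quotient maps, and the locally bi-Lipschitz parametrization $Y\mapsto e^{iY}Pe^{-iY}$.

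Your proposed reduction to free self-adjoints built from the projections does not replace this step, because it again only concerns the structure of a single fiber.
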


As an immediate corollary, we deduce the following (which appears to be new). For a group $G$, we use $C^{*}(G)$ for its full $C^{*}$-algebra. 

\begin{cor}
We have that $C^{*}((\bZ/2\bZ)^{*\infty})$ is not singly generated as a $C^{*}$-algebra. 
\end{cor}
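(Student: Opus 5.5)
Assume toward a contradiction that $C^{*}((\bZ/2\bZ)^{*\infty})$ is singly generated. Write $A=C^{*}((\bZ/2\bZ)^{*\infty})$. It is the universal unital $C^{*}$-algebra generated by countably many self-adjoint unitaries $u_{1},u_{2},\dots$, or equivalently by countably many projections $q_{j}=(1-u_{j})/2$. Fix a single generator $x\in A$. Then $x$, together with $1$, generates a dense $*$-subalgebra, since $C^{*}(x)$ means the unital $C^{*}$-algebra generated.

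Take the $W^{*}$-bundle $(M,\bE,K)$ from Theorem \ref{thm: main theorem intro but with projections}, built from a fixed $Q$ such as $L(\bF_{2})$. That theorem says $M$ is generated as a tracially complete $C^{*}$-algebra by a countable family of projections $(e_{j})_{j\in\bN}$. By universality there is a unital $*$-homomorphism $\pi\colon A\to M$ with $\pi(q_{j})=e_{j}$. If there are finitely many $e_{j}$, send the remaining $q_{j}$ to $0$; this is harmless because we only need the images to contain all the $e_{j}$.

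Set $a=\pi(x)\in M$. The key step is to show that the single section $a$ fiberwise generates $M$, that is, $W^{*}(a_{p})=M_{p}$ for every $p\in K$. Since $x$ generates $A$, each $q_{j}$ is a norm limit of noncommutative $*$-polynomials in $x$. Applying $\pi$ and then the fiber map $M\to M_{p}$, both contractive $*$-homomorphisms, shows that each $e_{j,p}$ is a norm limit of $*$-polynomials in $a_{p}$, so $e_{j,p}\in C^{*}(a_{p})\subseteq W^{*}(a_{p})$.

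It remains to check that the $e_{j,p}$ generate $M_{p}$ as a von Neumann algebra. Let $B$ be the norm closed unital $*$-subalgebra of $M$ generated by the $e_{j}$. Generation as a tracially complete $C^{*}$-algebra means that $M$ is the closure of $B$ in the uniform $2$-norm on bounded sets. The uniform $2$-norm dominates each fiber $2$-norm $\|\cdot\|_{2,\tau_{p}}$. Hence every element of the unit ball of $M_{p}$ coming from $M$ is a $\|\cdot\|_{2,\tau_{p}}$-limit of bounded elements of the image of $B$. By the Kaplansky density theorem and the fact that the image of $M$ in $M_{p}$ is strong-operator dense by the definition of the fiber (Definition \ref{defn:W*bundle}), the image of $B$ is strongly dense in $M_{p}$. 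So $W^{*}(\{e_{j,p}\}_{j})=M_{p}$, and therefore $W^{*}(a_{p})=M_{p}$ for every $p$.

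This contradicts Theorem \ref{thm: main theorem intro but with projections}. Indeed, conclusion (ii) of Theorem \ref{thm: main thm intro}, applied with $n=1$ and $N=C(K)$, produces a $p$ with $W^{*}(a_{p})\neq M_{p}$. Here $C(K)$ is the trivial bundle, and its fibers $\bC$ are hyperfinite, so $C(K)$ is an admissible sub-bundle.

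The only delicate point is the density step: passing from generation in the tracially complete (uniform $2$-norm) sense to fiberwise von Neumann generation. This should follow directly from the definitions, because the uniform $2$-norm dominates each fiber $2$-norm. Everything else is formal universality of the full group $C^{*}$-algebra.
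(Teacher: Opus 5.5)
Your proposal is correct and is essentially the argument the paper intends when it calls this an immediate corollary of Theorem \ref{thm: main theorem intro but with projections}. The universal property maps $C^{*}((\bZ/2\bZ)^{*\infty})$ onto a $\|\cdot\|_{2;X}$-dense subalgebra of the projection-generated bundle, so a single generator would give a single continuous section fiberwise generating $M$, contradicting conclusion (ii) with $N=C(K)$. Your density step is just the easy direction of the paper's lemma equating fiberwise generation with generation in $\|\cdot\|_{2;X}$.
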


We discuss the relation to the generator problem for von Neumann algebras, and the general sketch of the proof in the rest of this introduction.

\subsection*{Relation to the generator problem for von Neumann algebras}

Given a $\textrm{II}_{1}$-factor $Q$ with separable predual, we may always find countably many generators $a=(a_{j})_{j=1}^{\infty}$ for $Q$. Attempting to solve the generator problem for von Neumann algebra, essentially means that we have to produce an algorithm which inputs such a sequence $a$ and repackages it into a single generator for $Q$. Theorem \ref{thm: kind of main thm} implies that if such an algorithm exists, then it cannot vary continuous in the trace of $Q$ (or more properly speaking in the law of $a$). See Theorems \ref{thm: cont func of law} for a precise formulation.

As mentioned above, the generator problem for von Neumann algebras is equivalent to asking if there is a uniform bound on the minimal number of generators of any von Neumann algebra with separable predual. 
It thus makes sense to consider packaging hyperfinite subalgebras $N$ of $M$ (or subalgebras with a Cartan, non-prime subalgebras, etc.) as part of the ``data" for an algorithm alluded to above that would solve the generator problem. Theorem \ref{thm: main thm intro} implies that there is a large class $\mathcal{C}$ of singly generated algebras so that even if we package in  a subalgebra $N$ of $M$ with $N\in \mathcal{C}$ into such an algorithm, then it still cannot vary continuously into the law of $a$. See Theorem \ref{thm: no continuous algorithm} for a precise version of this.  Moreover, the fact that each fiber is a factor forces it to be the case that $M$ does not have abelian quotients (nor does any sub-bundle obtained by restricting to closed subsets with nonempty interior), and thus rules out that the lack of finite generation is occurring purely for abelian reasons.

We also think it is of significance that there is a trivial sub-bundle which is the trivial bundle over $K$ with fibers over a $\textrm{II}_{1}$-factor $Q$. The proofs that various classes of algebras are singly generated von Neumann algebras typically use partial isometries, projections, etc. It is unclear if a $W^{*}$-bundle always has nowhere vanishing continuous sections which are projection-valued, take values in non-unitary partial isometries, etc. However, since in our results we are assuming that we have a trivial sub-bundle over a $\textrm{II}_{1}$-factor $Q$, we have ample access to sections which are projection-valued, partial isometry valued etc. coming from norm-bounded, SOT-continuous functions $K\to Q$ (and we are even, by Theorem \ref{thm: main theorem intro but with projections}, allowed to assume that $M$ is generated as a tracially complete $C^{*}$-algebra by its projections).

Finally, let us note that by \cite{EPBundles} we may regard a $W^{*}$-bundle as the continuous sections of a topological bundle over $K$ with fibers tracial von Neumann algebras. Recall that if $X$ is a Polish space, then $f\colon K\to X$ is \emph{universally measurable} if for every open $U\subseteq X$ we have that $f^{-1}(U)$ is in the $\mu$-completion of the Borel sets for every Borel probability measures $\mu$ on $K$. This is more general than being a Borel function (see \cite[Theorems 14.2 and 21.10]{KechrisClassic}).
Using the topological structure given in \cite{EPBundles} one can also define Borel sections, universally measurable sections etc (see Section \ref{Sec:universmally measurable bundle} for more details). By standard descriptive set theory methods (i.e the Jankov-von Neumann theorem), if $M_{p}$ is singly generated for every $p\in K$, then we can find a universally measurable section $(a_{p})_{p\in K}$ so that $W^{*}(a_{p})=M_{p}$ for every $p\in K$. Thus if we could upgrade ``continuous" in Theorem \ref{thm: main thm intro} to ``universally measurable", then this would amount to a disproof of the generator problem. 


For the above reasons, we think our results give substantial evidence to the belief that there is a von Neumann algebra with separable predual which is not singly generated.
\subsection*{Discussion of methods involved}
Our methods rely on a mixture of operator algebraic machinery on $W^{*}$-bundles, classical results in Banach space geometry, results in strong convergence in random matrices, Voiculescu's free entropy dimension theory, and adapting ideas from the entropy theory in dynamical settings (specifically the entropy theory of actions of sofic groups on compact metric space or probability spaces). 

Each of these areas is of significant interest in its own right. The theory of strong convergence (initiated in \cite{HaagThorbNormBound}) in random matrices has seen tremendous advances lately (e.g. \cite{PTkilled,CollinsBordenave,  bordenave2023norm, Chen_2026, chen2024newapproachstrongconvergence, gao2026newsource, loudermagee2025limitgroups,   MdLSstrongasymptoticfreenesshaar, MageeThomas2023, Parraud2024strong}), see \cite{ magee2025strong, vHICM, vHCDM} for surveys in this direction. It also has strong connections to other areas of mathematics beyond operator algebras, such as geometry \cite{MageeHide, magee2025strongconvergenceuniformlyrandom, MageeThomas2023, song2025randomharmonicmapsspheres} and random graph theory \cite{CollinsBordenave, bordenave2023norm}. Voiculescu's free entropy dimension theory is the source of the first proofs for many important structural results on free group factors, such as absence of Cartan subalgebras \cite{Voiculescu1996}, and primeness \cite{GePrime}. In many cases free entropy dimension proves structural results on free group factors which are currently inaccessible by other means, e.g. thinness of free group factors \cite{GeNonPrimeSG}, the absence of finite multiplicity MASAs in free group factors \cite{DykemaFreeEntropy}, Property C' for free group factors \cite{Hayes2018}, or the fact that free group factors cannot be generated by two amenable subalgebras with diffuse intersection \cite{Jung2007}. This last property was generalized to
the Peterson-Thom property for free group factors (\cite{HayesPT} along with \cite{PTkilled, bordenave2023norm, chen2024newapproachstrongconvergence, magee2025strong, Parraud2024strong}). As a consequence of the Peterson-Thom property one may deduce solidity properties for free group factors \cite{hayes2024generalsolidityphenomenaanticoarse} beyond the celebrated solidity and strong solidity theorems due to Ozawa \cite{OzawaSolidActa}, and Ozawa-Popa \cite{OzPopaCartan}.
Entropy for dynamical systems is also a well-studied subject as well, and the case of sofic groups involved the resolution of long-standing problems as well as developments of new exciting phenomena not present for actions of amenable groups (see e.g. the surveys \cite{LewisICM, BowenExamples}).

We sketch the idea behind the proof of Theorem \ref{thm: main thm intro} when $Q=L(\bF_{2})$.
Ultimately, our aim is to produce an invariant $\delta_{MD}(M;X)$ and a relative version $\delta_{MD}(M|N;X)$ for tracially complete $C^{*}$-algebras $(M;X)$ satisfying that: 
\begin{itemize}
    \item $\delta_{MD}(M|N;X)$ is bounded by the minimal number of self-adjoints in $M$ which together with $N$ generate $M$,
    \item $\delta_{MD}(M|N;X)$ satisfies the chain inequality for conditioning: $\delta_{MD}(M)\leq \delta_{MD}(M|N)+\delta_{MD}(M)$,
    \item $\delta_{MD}(M)\leq 1$ if $\sup_{\tau\in X}h(\overline{\pi_{\tau}(M)}^{SOT},\tau)<+\infty$, with $h$ being the $1$-bounded entropy of \cite{Jung2007, Hayes2018} (equivalently if $h(M;X)<+\infty$ in the sense of \cite{macmahon20261boundedentropycalgebras}). In particular, we have that $\delta_{MD}(M)\leq 1$ if for each $\tau$ we have that $\overline{\pi_{\tau}(M)}^{SOT}$ is hyperfinite, or has a Cartan, or is generated by two commuting diffuse subalgebras, or has diffuse central sequence algebra, or is generated by a single sequential commutation orbit. 
\end{itemize}
We then produce a factorial $W^{*}$-bundle $M$ over a metrizable space $K$ which has a fixed sub-bundle which is a trivial bundle over $L(\bF_{2})$, with each fiber $M_{p}$ a non-amenable $\textrm{II}_{1}$-factor, and which satisfies $\delta_{MD}(M)=+\infty$. 

The idea behind the definition of $\delta_{MD}$ is that one can view free entropy theory as analogous to the theory of dynamical entropy for actions of sofic groups on compact spaces or probability spaces. In both settings, one has a version of ``finitary approximates" for some generating object and use various means to compute its ``size" (e.g. literal cardinality, or volume, or packing number), and then takes appropriate limits with the end goal of producing an invariant which is independent of the choice of generator. For a tracial von Neumann algebra $(M,\tau)$, the $1$-bounded entropy is analogous to the entropy of probability measure-preserving actions of sofic groups. For a $C^{*}$-algebra, the $1$-bounded entropy (due to MacMahon in \cite{macmahon20261boundedentropycalgebras}) of its universal tracial completion is analogous to the topological entropy of an action of a sofic group by homeomorphisms\footnote{A frequently proposed analogue of topological entropy is to use microstates for the operator norm instead of for the universal tracial completion. While certainly a very important notion, this does not match up with the classical setting. E.g. producing norm microstates for a $C^{*}$-algebra is, in practice, several orders of magnitude harder than producing weak$^{*}$-microstates for a tracial von Neumann algebra. Whereas in the setting of dynamical systems, producing topological microstates is strictly easier than producing ones that are weak$^{*}$-approximates for a given invariant measure. Said differently: for topological dynamical systems $G\actson K$, topological entropy measures some kind of size of equivariant embeddings of $C(K)$ into a natural Loeb measure space (which is an ultraproduct of probability spaces) associated with your sofic group, and does not measure the size of equivariant embeddings into some norm ultraproduct of finite-dimensional $C^{*}$-algebras. This is similar to considering $*$-homomorphisms from a $C^{*}$-algebra $A$ into a \emph{tracial} ultraproduct of matrices, instead of a \emph{norm} ultraproduct of matrices. The former is captured by the entropy of the universal tracial completion of $A$, whereas the latter captures a different notion without a precise analogue in dynamical systems.}, and
Voiculescu's free entropy dimension is analogous to metric mean dimension of topological dynamical systems (compare e.g the formulations \cite[Section 4]{LindWeiss},\cite[Section 4]{Li}  and \cite[Definition 2.1]{JungLemma}). 
Metric mean dimension is not an invariant for topological dynamical systems, and analogously there are difficulties showing that free entropy dimension is an invariant of the generating von Neumann algebra. There is a natural modification of metric mean dimension, known as mean dimension defined in \cite{Li, LindWeiss}, which \emph{is} an invariant of topological dynamical systems. Modifying the formula for mean dimension in the context of free entropy theory produces a quantity we can prove is invariant (see Theorem \ref{thm: invariance of MD}), and this is the desired $\delta_{MD}$ described above. The relative version $\delta_{MD}(M|N)$ follows in a similar manner, adapting known formulations on relative mean dimension given in \cite{li2025soficconditionalmeandimension, liang2021conditionalmeandimension,  TsukaHur}. We remark that one of the major initial applications of mean dimension in \cite{LindWeiss} was to exhibit a topological action  $\bZ\actson X$ which did not embed into the full shift over $[0,1]$, i.e. so that $C(X)$ is not generated by the translates of a single continuous function. It is thus natural to adapt mean dimension as a means to obstruct single/finite generation for operator algebras (and this relates nicely to established work relating minimal number of generators to free entropy dimension, see \cite{DSSW}). 

Computations for $\delta_{MD}$ are difficult in general. Indeed, as we will show in Theorem \ref{thm: nontrivial computations for tracial vNas are hard}, if there is a single tracial von Neumann algebra $(M,\tau)$ for which $\delta_{MD}(M,\tau)>1$, then it follows that there is a $\textrm{II}_{1}$-factor which is not singly generated. However, for tracially complete $C^{*}$-algebras, the situation is much nicer. 
Though it is not trivial, one can use classical results of Szarek \cite{SzarekAlmostEuc, STALmostEuc} in geometric Banach space theory to give many examples of tracially complete $C^{*}$-algebras $(M;X)$ with $\delta_{MD}(M;X)=+\infty$. 
The easiest examples, however, will be uniform tracial completions of certain universal $C^{*}$-algebras such as $C^{*}(\bF_{\infty})$ or the infinite full free product of $C([-1,1])$ etc. Unfortunately, these will not give rise to $W^{*}$-bundles with nonamenable factor fibers.

The trick is ultimately to start with a sequence of microstates $(U^{(k)}_{1},U^{(k)}_{2})\in \cU(\bM_{k}(\bC))^{2}$ for $L(\bF_{2})$ which have the property that \emph{ for every free ultrafilter $\omega$}, the resulting embedding of $L(\bF_{2})$ into the tracial ultraproduct of matrices with respect to $\omega$ has trivial relative commutant, e.g. using quantum expanders \cite{bordenave2023norm, Hastings,PisierQuantumExpander}.

 We then take some appropriately large $C^{*}$-algebra $A$ containing $C^{*}_{\lambda}(\bF_{2})$, such as the full free product of $C^{*}_{\lambda}(\bF_{2})$ with $C^{*}(\bF_{\infty})$ and consider the compact (non-convex) set of traces $K$ we get from homomorphisms on $A$ into tracial ultraproduct of matrices which when restricted to $C^{*}_{\lambda}(\bF_{2})$ produce the given embedding of $C^{*}_{\lambda}(\bF_{2})$ with trivial relative commutant. The trivial relative commutant condition guarantees all such traces are extremal, and a diagonal argument guarantees that $K$ is compact. Choquet theory implies that if $X$ is the closed convex hull of $K$, then $X$ is a face in $\cT(A)$ and is a Bauer simplex with extreme points $K$. It then follows by \cite[Theorem 3]{OzawaBundle} (see also \cite[Theorem 3.37]{TraciallyComplete}) that the tracial completion\footnote{strictly speaking $X$ may a priori fail to be a faithful set of traces. This presents no issue and one simply passes to the separation by $X$ and then the tracial completion.} $(M;X)$ of $(A,K)$ is a factorial $W^{*}$-bundle over $K$. This bundle will have the property that $\delta_{MD}(M;X)=+\infty$, and we naturally have a trivial sub-bundle of $M$ which is the trivial bundle over $K$ with fibers $L(\bF_{2})$. The construction of $(A,K)$ allows us to say that $(M;X)$ has many microstates and we are able to use this (together with the aforementioned geometric Banach space theory techniques) to prove that $\delta_{MD}(M;X)=+\infty$. Together with the chain inequality for conditional $\delta_{MD}$, we show that $M$ cannot be fiberwise finitely generated (more generally that it cannot be fiberwise finitely generated over a sub-bundle satisfying the hypotheses of Theorem \ref{thm: main thm intro} (\ref{item: intro classes}), (\ref{item: unfiromly S1B INTRO})).

\subsection*{Acknowledgments} I thank David Jekel and Srivatsav Kunnawalkam Elayavalli for insightful discussions at the early stages of this project. 
Part of this project was carried out at the 2025 ``C$^{*}$-Algebras" conference at Oberwolfach, where discussions with 
Jamie Gabe, Matt Kennedy, Jenny Pi, Chris Schafhauser, and Sven Raum were helpful. Conversations with Tim Austin at the ``Spectral gaps 2025" conference in Portoro\v{z} were insightful.
I also worked on this project at YMC*A 2026, where comments by Stuart White and Kiefer Mommaerts were useful. 
I thank David Sherman for clarifying the history of the generator problem to me, as well as for interesting discussions on this work, and I thank David Jekel and Stuart White for valuable comments on an earlier draft of this paper.

\subsection*{AI disclosure statement.} No LLM's were used at any stage during this project.

\tableofcontents

\section{Preliminaries}

\subsection{Notation}
We use $\bC^{*}\ip{(T_{i})_{i\in I}}$ for all noncommutative $*$-polynomials in abstract variables $(T_{i})_{i\in I}$ indexed  by $I$ (i.e. the free unital $*$-algebra on the set $I$). 
Given a unital $*$-algebra $A$ and $a\in A^{I}$, there is a unique unital $*$-homomorphism $\ev_{a}\colon \bC^{*}\ip{(T_{i})_{i\in I}}\to A$ so that $\ev_{a}(T_{i})=a_{i}$ for all $i\in I$. We set $P(a):=\ev_{a}(P)$ for $P\in \bC^{*}\ip{(T_{i})_{i\in I}}$. 
If $J$ is a set, and $P=(P_{j})_{j\in J}\in \bC^{*}\ip{(T_{i})_{i\in I}}^{J}$, we set $P(a)=(P_{j}(a))_{j\in j}$. If $J_{0}\subseteq J$, we use $P|_{J_{0}}\in \bC^{*}\ip{(T_{i})_{i\in I}}^{J_{0}}$ for $(P_{j})_{j\in J_{0}}$. Given $k\in \bN$, we use $[k]=\{1,\cdots,k\}$. 

 Recall that a \emph{Polish space} is a separable topological space whose topology is generated by a complete metric (but we do not regard the metric as part of the data). If $X$ is compact Hausdorff space, we use $\Prob(X)$ for the space of Radon probability measures on $X$. If $X$ is Polish, we use $\Prob(X)$ for the space of Borel probability measures on $X$. For a finite-dimensional Hilbert space $\cH$, we use $\tr$ for the \emph{normalized} trace on $B(\cH)$, i.e.
\[tr(T)=\frac{1}{\dim(\cH)}\sum_{j}\ip{Te_{j},e_{j}},\]
with $(e_{j})_{j}$ an orthonormal basis of $\cH$. 

For a $C^{*}$-algebra $A$, we use $A_{s.a.}$ for its self-adjoint elements and if $A$ is unital, we use $\cU(A)$ for its group of unitaries. We also use $\Proj(A)$ for the projections in $A$.
For a group $G$, we let $C^{*}(G)$ be its full $C^{*}$-algebra.
For unital $C^{*}$-algebras $A,B$ we use $A*_{u}B$ for their \emph{universal free product}, i.e. $A*_{u}B$ contains unital embeddings of $A,B$ and has the universal property that any two unital $*$-homomorphisms on $A,B$ extend uniquely to $A*_{u}B$. More generally, if $I$ is a set and $(A_{i})_{i\in I}$ are unital $C^{*}$-algebras, we use $*_{i\in I,u}A_{i}$ for their universal free product. If each $A_{i}$ is a fixed $C^{*}$-algebra $A$, we will typically use $A^{*_{u}I}$ instead. The reader may be surprised to learn that we will not use reduced free products much in this paper.

\subsection{Choquet theory}

Given a convex subset $C$ of a real vector space, we let $\Ext(C)$ denote its extreme points. If $F\subseteq C$ is convex, we say that $F$ is a \emph{face} of $C$, if whenever $p,q\in C$ and $t\in (0,1)$ with $(1-t)p+tq\in F$, then necessarily $p,q\in F$.

If $C$ is a compact convex subset of a locally convex space $V$, and $\mu\in \Prob(C)$, then by \cite[Theorem 3.27]{GrandpaRudin} there is a unique $x\in C$ so that 
\[\phi(x)=\int \phi(p)\,d\mu(p) \text{ for all $\phi\in V^{*}$}.\]
We use $\int p\,d\mu(p)$ for this unique point $x\in C$. If $C$ is metrizable, we call $C$ a \emph{Choquet simplex} if for every $x\in C$, there is a unique $\mu\in \Prob(\Ext(C))$ so that $x=\int p\,d\mu(p)$. By \cite[Theorem 3.1.18]{Sakai} the tracial state space of any unital $C^{*}$-algebra is a Choquet simplex. A Choquet simplex $C$ is called a \emph{Bauer simplex} if $\Ext(C)$ is compact (by \cite[comments before Proposition V.7.9]{Conway}, even finite-dimensional convex sets may fail to have a closed set of extreme points).

\subsection{Tracially complete $C^{*}$-algebras}

By a \emph{tracial von Neumann algebra} we mean a pair $(M,\tau)$ where $M$ is a von Neumann algebra and $\tau$ is a faithful, normal, tracial state on $M$.

\begin{defn}[Definition 3.4 of\cite{TraciallyComplete}]
A \emph{tracially complete $C^{*}$-algebra} is a pair $(M,X)$ where 
\begin{itemize}
    \item $X\subseteq \cT(M)$ is weak$^{*}$-compact and convex, and
    \item $\Ball(M)$ is complete under $\|a\|_{2;X}=\sup_{\tau\in X}\tau(a^{*}a)^{1/2}$. 
\end{itemize}
    
\end{defn}

The idea here is that a tracially complete $C^{*}$-algebra should be viewed as an object ``in between" a tracial von Neumann algebra and a $C^{*}$-algebra. Any pair $(A,X)$ of a unital $C^{*}$-algebra $A$ together with a weak$^{*}$-compact convex set of traces $X$ on $A$ generates a tracially complete $C^{*}$-algebra via a natural completion functor (see \cite{OzawaBundle}, \cite[Section 3.3]{TraciallyComplete}) 
by setting
\[\overline{A}^{X}=\frac{\{(a_{n})_{n=1}^{\infty}\in \ell^{\infty}(\bN,A):(a_{n})_{n=1}^{\infty} \text{ is $\|\cdot\|_{2,X}$-Cauchy}\}}{\{(a_{n})_{n=1}^{\infty}\in \ell^{\infty}(\bN,A):\|a_{n}\|_{2,X}\to 0\}},\]
which has a natural diagonal embedding of $A$. Further each $\tau\in X$ is a trace on $\overline{A}^{X}$ since $\tau(a_{n})$ is a Cauchy sequence if $(a_{n})_{n=1}^{\infty}$ is $\|\cdot\|_{2,X}$-Cauchy. We continue to use $X$ for this set of traces on $\overline{A}^{X}$. The pair $(\overline{A}^{X},X)$ is then a tracially complete $C^{*}$-algebra \cite[Proposition 3.23]{TraciallyComplete}.

Various important properties such as Property Gamma and hyperfiniteness have natural analogues for tracially complete $C^{*}$-algebras \cite{TraciallyComplete}, which amount to demanding that these properties hold ``uniformly" over all traces. For $\tau\in X$, we let $M_{\tau}=\overline{\pi_{\tau}(M)}^{SOT}$. Given a set $I$ and $a\in M^{I}$, we let $C^{*}_{X}(a)=\overline{\{P(a):P\in \bC^{*}\ip{(T_{i})_{i\in I}}\}}^{\|\cdot\|_{2;X}}.$

A tracially complete $C^{*}$-algebra is called \emph{factorial}
 if $X$ is a face in $\cT(M)$. By \cite[Proposition 3.14]{TraciallyComplete} this is equivalent to demanding that $M_{\tau}$ be a factor for every $\tau\in \Ext(X)$. 

\begin{defn}[Section 5 of \cite{OzawaBundle}] \label{defn:W*bundle}
A \emph{$W^{*}$-bundle} is a triple $(M,\bE,K)$ where:
\begin{itemize}
    \item $K$ is a compact Hausdorff space,
    \item $M$ is a unital $C^{*}$-algebra containing a unital copy of $C(K)$ with $C(K)\subseteq Z(M)$
    \item $\bE\colon M\to C(K)$ is a faithful conditional expectation,
    \item $\bE$ is tracial: for all $x,y\in M$ we have $\bE(xy)=\bE(yx)$. 
    \item $\Ball(M)$ is complete under the norm $\|x\|_{\bE}=\|\bE(x^{*}x)\|^{1/2}.$
\end{itemize}
\end{defn}

Note that each $p\in K$ gives rise to a trace $\tau_{p}$ on $M$ by $\tau_{p}(x)=\bE(x)(p)$. We let $\pi_{p}$ be GNS representation arising from $\tau_{p}$ and we set $M_{p}=\pi_{p}(M)$. By \cite[Theorem 11]{OzawaBundle}, we know that $M_{p}$ is already a von Neumann algebra. We also let $x_{p}=\pi_{p}(x)$. Thus we may think of $M$ as a continuous field $(M_{p})_{p\in K}$ of tracial von Neumann algebras over $K$, and $M$ as the continuous sections of this field. For a precise version of this, see \cite{EPBundles}.
Note that if $X=\overline{\co}(\{\tau_{p}:p\in K\})$, then 
\[\|a\|_{\bE}=\sup_{\tau\in X}\tau(a^{*}a)^{1/2}, \text{ for all $a\in M$.}\]
Hence, we may view every $W^{*}$-bundle as a tracially complete $C^{*}$-algebra. In fact, the definition of a tracially complete $C^{*}$-algebra was naturally motivated by Ozawa's definition of a $W^{*}$-bundle. 
It follows by \cite[Proposition 3.6]{TraciallyComplete} that a $W^{*}$-bundle $(M,\bE,K)$, when viewed as a tracially complete $C^{*}$-algebra, is factorial if and only if $M_{p}$ is a factor for each $p\in K$. If $I$ is set and $a=(a_{i})_{i\in I}\in M^{I}$, we define $a_{p}\in M_{p}^{I}$ for $p\in K$ by $a_{p}=(a_{i,p})_{i\in I}$, where $a_{i,p}:=\pi_{p}(a_{i})$.
A particular case of a bundle is the following: if $K$ is a compact Hausdorff space and $(Q,\tau)$ is a tracial von Neumann algebra, let $C_{\sigma}(K,Q)$ be all operator norm bounded and SOT-continuous functions $f\colon K\to Q$. 
Then $\|f\|=\sup_{x\in K}\|f(x)\|$ is a norm on $C_{\sigma}(K,Q)$ which makes it a $C^{*}$-algebra, and one can check that the conditional expectation $\bE\colon C_{\sigma}(K,Q)\to C(K)$ given by $\bE(f)(x)=\tau(f(x))$ turns $C_{\sigma}(K,Q)$ into a $W^{*}$-bundle. We call this the \emph{trivial bundle over $Q$ with fiber $K$.}

While in general it is difficult to axiomatize \emph{which} tracially complete $C^{*}$-algebras are $W^{*}$-bundles, it is easier in the case of factorial tracially complete $C^{*}$-algebras. Indeed, by \cite[Theorem 3]{OzawaBundle} (see also \cite[Theorem 3.37]{TraciallyComplete}) and \cite[Proposition 3.23 (ii) and (iv)]{TraciallyComplete} if $A$ is  unital $C^{*}$-algebra and $K\subseteq\cT(A)$ is a face in $\cT(A)$, and a Bauer simplex (i.e. $K$ is a Choquet simplex and $\Ext(K)$ is compact), then the completion of $(A,K)$ is a $W^{*}$-bundle over $K$. Since $\cT(A)$ is always a Choquet simplex when $A$ is a unital $C^{*}$-algebra, the following folklore proposition gives us a good means to producing $W^{*}$-bundles. 

\begin{prop}\label{prop: creating faces}
Let $V$ be a locally convex space and $C\subseteq V$ a metrizable Choquet simplex. If $K\subseteq \Ext(C)$ is compact, and $X=\overline{\co}(K)$, then $X$ is a face in $C$,  $\Ext(X)=K$, and $X=\left\{\int p\,d\mu(p):\mu \in \Prob(K)\right\}$. 
\end{prop}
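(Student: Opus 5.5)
First I would show that $X=\{\int p\,d\mu(p):\mu\in\Prob(K)\}$. Call the right-hand side $Y$. The barycenter map $\beta\colon\Prob(K)\to C$, $\mu\mapsto\int p\,d\mu(p)$, is affine. It is also weak$^{*}$-to-weak continuous, since for $\phi\in V^{*}$ the function $\mu\mapsto\phi(\beta(\mu))=\int\phi\,d\mu$ is continuous (because $\phi|_{K}\in C(K)$). As $\Prob(K)$ is compact and convex, $Y=\beta(\Prob(K))$ is compact and convex, and it contains $K$ via Dirac masses, so $X\subseteq Y$. Conversely, every $\mu\in\Prob(K)$ is a weak$^{*}$-limit of finitely supported probability measures on $K$. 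Their barycenters lie in $\co(K)$, so by continuity of $\beta$ we get $\beta(\mu)\in\overline{\co}(K)=X$. Hence $X=Y$.

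Next I would identify the extreme points. By Milman's converse to Krein--Milman, $\Ext(X)\subseteq\overline{K}=K$. Conversely, each $p\in K$ is extreme in $C$ and lies in $X\subseteq C$, so it is extreme in $X$. Thus $\Ext(X)=K$.

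The main step is showing $X$ is a face, and this is where the simplex hypothesis enters. Suppose $x=(1-t)y+tz\in X$ with $y,z\in C$ and $t\in(0,1)$. Since $C$ is a metrizable Choquet simplex, there are unique representing measures $\mu_{y},\mu_{z}\in\Prob(\Ext(C))$ for $y,z$ (here $\Ext(C)$ is a $G_{\delta}$, so these are Borel measures on a Polish space). Then $(1-t)\mu_{y}+t\mu_{z}$ is a probability measure on $\Ext(C)$ representing $x$. On the other hand, $x=\int p\,d\mu(p)$ for some $\mu\in\Prob(K)$, and $\mu$, viewed as a measure on $\Ext(C)$ supported on $K$, also represents $x$. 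Uniqueness of the representing measure gives $(1-t)\mu_{y}+t\mu_{z}=\mu$. Since $K$ is compact, hence Borel, evaluating at $\Ext(C)\setminus K$ gives $\mu_{y}(\Ext(C)\setminus K)=\mu_{z}(\Ext(C)\setminus K)=0$. So $\mu_{y}$ and $\mu_{z}$ restrict to elements of $\Prob(K)$, and therefore $y,z\in Y=X$.

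I expect the only delicate points to be measure-theoretic bookkeeping. The notion of barycenter on $\Prob(K)$ must agree with the one used in the definition of Choquet simplex. This holds because both are characterized by testing against $V^{*}$, so pushing a measure on $K$ forward to $\Ext(C)$ does not change its barycenter. One must also check that $(1-t)\mu_{y}+t\mu_{z}$ represents $x$, which follows by linearity of $\phi\mapsto\int\phi\,d\mu$.
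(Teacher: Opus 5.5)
Your proof is correct and follows the paper's argument: uniqueness of the representing measure on $\Ext(C)$ forces $\mu=(1-t)\mu_{y}+t\mu_{z}$, so $\mu_{y}$ and $\mu_{z}$ are concentrated on $K$, and Milman's converse handles the extreme points. The differences are minor. You prove the barycentric description of $X$ directly, whereas the paper cites Rudin's Theorem 3.28. You also spell out the inclusion $K\subseteq\Ext(X)$, which the paper leaves implicit.
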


\begin{proof}
Suppose $x_{1},x_{2}\in C$ and $t\in (0,1)$ satisfy that $x:=(1-t)x_{1}+tx_{2}\in C$. By \cite[Theorem 3.28]{GrandpaRudin} and compactness of $K$, we may write $x=\int p\,d\mu(x)$ for some $\mu\in \Prob(K)$. Since $C$ is metrizable, by the Choquet–Bishop de Leeuw theorem \cite[Theorem I.4.8 and Corollary I.4.9]{ChoquetTheory} we may write $x_{i}=\int p\,d\mu_{i}(x)$ with $\mu_{i}\in \Prob(\Ext(C))$. Since $C$ is Choquet, the representing measure for $x$ is unique. Since $K\subseteq \Ext(C)$, this forces $\mu=(1-t)\mu_{1}+t\mu_{2}$. Since $0<t<1$ and $\mu(K)=1$ we must have that $\mu_{i}(K)=1$ for $i=1,2$. This implies that $x_{i}\in \overline{\co}(K)=X$ for $i=1,2$. Thus $X$ is a face in $C$. Finally, since $K$ is compact, Milman's partial converse \cite[Theorem V.7.8]{Conway} tells us that $\Ext(X)=K$. 

\end{proof}

\subsection{Tracial von Neumann algebras and microstates}\label{sec: all praise the OG}

Given a unital $C^{*}$-algebra $A$ and a state $\varphi$ on $A$, and a normal element $a\in A$, we let $\mu_{a,\varphi}$ be the measure on $\spec(a)$ given by
\[\varphi(f(a))=\int f\,d\mu_{a,\varphi}, \text{ for all $f\in C(\spec(a))$.}\]
If $\varphi$ is understood from context, we often use $\mu_{a}$ instead of $\mu_{a,\varphi}$.



Suppose $(M,X)$ is a tracially complete $C^{*}$-algebra. If $I$ is a set and $F\subseteq I$ is finite, we use $\|\cdot\|_{2,F:X}$ for the seminorm on $M^{I}$ given by 
\[\|a\|_{2,F;X}=\left(\sum_{i\in I}\|a_{i}\|_{2;X}^{2}\right)^{1/2}.\]
If $I$ is finite itself, we will typically use $\|a\|_{2;X}$ in place of $\|a\|_{2,F;X}$ for $a\in M^{F}$. Finally, if $X=\{\tau\}$, we will typically use $\|\cdot\|_{2,F}$ instead of $\|\cdot\|_{2,F;\{\tau\}}$ (if $\tau$ is clear from context), and will also use 
\[\|a\|_{2}=\left(\sum_{i\in I}\|a_{i}\|_{2}^{2}\right)^{1/2},\]
if $I$ is finite. All of this applies for the case that $M=\bM_{n}(\bC)$ in which case $\|\cdot\|_{2}$ on $\bM_{n}(\bC)$ is assumed to be taken with respect to the \emph{normalized} trace. 
Namely, for $F\subseteq I$ finite, $p\in [1,+\infty]$, we define $\|\cdot\|_{p,F}$ on $\bM_{k}(\bC)^{I}$ by
\[\|A\|_{p,F}^{p}=\sum_{j\in F}\tr((A_{j}^{*}A_{j})^{p/2})^{1/p}, \text{ if $p\in [1,\infty)$},\]
\[\|A\|_{\infty,F}=\max_{j\in F}\|A_{j}\|,\]
and if $I$ itself is finite, we typically use $\|A\|_{p},\|A\|_{\infty}$ in place of $\|A\|_{F,p},\|A\|_{\infty,p}$.

Given $R\colon I\to [0,+\infty)$, we let $\Sigma_{R}$ consist of all linear functionals $\ell\colon \bC^{*}\ip{(T_{i})_{i\in I}}\to \bC$ so that:
\begin{itemize}
    \item $\ell(PQ)=\ell(QP)$ for all $P,Q\in \bC^{*}\ip{(T_{i})_{i\in I}}$,
    \item $\ell(P^{*}P)\geq 0$ for all $P\in \bC^{*}\ip{(T_{i})_{i\in I}}$,
    \item $\ell((T_{i}^{*}T_{i})^{k})\leq R_{i}^{2k}$ for all $i\in I,k\in \bN$. 
\end{itemize}
If $R$ is a constant function (say constantly equal to $C$), we typically use $\Sigma_{C,I}$ instead of $\Sigma_{R}$. Note that if $(M,\tau)$ is a tracial von Neumann algebra and $a\in \prod_{i\in I}R_{i}\Ball(M)$, then the functional $\ell_{a}\colon \bC^{*}\ip{(T_{i})_{i\in I}}\to\bC$ given by $\ell_{a}(P)=\tau(P(a))$ is in $\Sigma_{R}$. 

For $\ell\in\Sigma_{R}$ define an inner product $\ip{\cdot,\cdot}_{\ell}$ on $\bC^{*}\ip{(T_{i})_{i\in I}}$ by $\ip{P,Q}_{\ell}=\ell(P^{*}Q)$. We let $L^{2}(\ell)$ be the separation/completion of $\bC^{*}\ip{(T_{i})_{i\in I}}$ under this inner product. For $P\in \bC^{*}\ip{(T_{i})_{i\in I}}$, we let $[P]$ be its image in $L^{2}(\ell)$ under separation/completion. By the same argument as in \cite[Proposition 4.2]{BNLaws}, for each $\ell\in\Sigma_{R}$ there is a unique $*$-homomorphism $\pi_{\ell}\colon \bC^{*}\ip{(T_{i})_{i\in I}}\to B(L^{2}(\ell))$ with $\pi_{\ell}(P)[Q]=[PQ]$ for all $P,Q\in \bC^{*}\ip{(T_{i})_{i\in i}}$. Further we have that $\|\pi_{\ell}(T_{i})\|\leq R_{i}$ for all $i\in I$. We set $W^{*}(\ell)=\overline{\pi_{\ell}(\bC^{*}\ip{(T_{i})_{i\in I}})}^{SOT}$, and $\tau_{\ell}(x)=\ip{x[1],[1]}$ for $x\in W^{*}(\ell)$. Then $(W^{*}(\ell),\tau_{\ell})$ is a tracial von Neumann algebra, and setting $\vartheta_{\ell}=(\pi_{\ell}(T_{i}))_{i\in I}$ we have that $\ell_{\vartheta_{\ell}}=\ell$.

We endow the space of linear functionals (and hence $\Sigma_{R})$ on $\bC^{*}\ip{(T_{i})_{i\in I}}$ with the weak$^{*}$-topology given by the family of seminorms $\|L\|_{P}=|L(P)|$ for $P\in \bC^{*}\ip{(T_{i})_{i\in i}}$.
We may define a norm $\|P\|_{R}$ on $\bC^{*}\ip{(T_{i})_{i\in I}}$ by 
\[\|P\|_{R}=\sup_{a\in \prod_{i\in I}R_{i}\Ball(A)}\|P(a)\|,\]
where the supremum is over all unital $C^{*}$-algebras $A$ and all tuples $a\in \prod_{i}R_{i}\Ball(A)$. Letting $C^{*}(R)$ be the completion of $\bC^{*}\ip{(T_{i})_{i\in i}}$ under this norm, the above description shows that we have a homeomorphism $\cT(C^{*}(R))\cong \Sigma_{R}$, given by $\tau\mapsto \tau|_{\bC^{*}\ip{(T_{i})_{i\in I}}}$. In particular, $\Sigma_{R}$ is weak$^{*}$-compact. Again if $I$ is finite and $R$ is constantly equal to, say $C$, we typically use $\|\cdot\|_{C,I}$ and $C^{*}(C,I)$ instead of $\|\cdot\|_{R}$. If $I=[r]$, we tend to use $\|\cdot\|_{C,r},C^{*}(C,r)$ instead.
The case where we most commonly use such a function $R\in [0,\infty)^{I}$ is the following: let $A$ be a $C^{*}$-algebra and $a\in A^{I}$. We call $R\in [0,+\infty)^{I}$ a \emph{cutoff function for $a$} if $\|a_{i}\|\leq R_{i}$ for all $i\in I$. If $a$ is clear from context, we often call $R$ a \emph{cutoff function.}

Given $R\in [0,+\infty)^{I}$, and $\mathcal{O}\subseteq \Sigma_{R}$, and $k\in \bN$, we set
\[\Gamma_{R}^{(k)}(\mathcal{O})=\left\{A\in \prod_{i\in I}R_{i}\Ball(\bM_{k}(\bC)):\ell_{A}\in \mathcal{O}\right\}.\]
Here $\ell_{A}$ is the law with respect to the unique tracial state $\tr$ on $\bM_{k}(\bC)$. Typically, $\mathcal{O}$ will be weak$^{*}$-open, in which case $\Gamma^{(k)}_{R}(\mathcal{O})$ is open. If $R$ is a constant function, say constantly equal to $C$, we will typically use $\Gamma_{C}^{(k)}(\mathcal{O})$ instead of $\Gamma^{(k)}_{R}(\mathcal{O})$. We are often interested in compact convex sets $K\subseteq \Sigma_{R}$. In this case, we refer to the collection $(\Gamma^{(k)}_{R}(\mathcal{O}))_{k,\mathcal{O}}$ ranging over $k\in \bN$, and neighborhoods $\mathcal{O}$ of $K$ as \emph{Voiculescu's microstates spaces for $K$.}

It will be convenient to work with another norm on $\bC^{*}\ip{(T_{i})_{i\in I}}$ other than $\|\cdot\|_{2},\|\cdot\|_{R}$. It follows as in \cite[Lemma 3.1]{HJKECoho} that there is a function
\[L\colon [0,\infty)^{I}\times \bC^{*}\ip{(T_{i})_{i\in I}}\to [0,+\infty)\]
so that for all $P\in \bC^{*}\ip{(T_{i})_{i\in I}}$, every tracial von Neumann algebra $(M,\tau)$ and every $a,b\in \prod_{i}R_{i}\Ball(M)$, we have 
\[\|P(a)-P(b)\|_{2}\leq L(R,P)\|a-b\|_{2,F},\]
where $F$ is  the smallest finite subset of $I$ so that $P\in \bC^{*}\ip{(T_{i})_{i\in F}}$.
For a fixed $R\in [0,+\infty)^{I}$, we let $\|P\|_{R,\Lip}$ be minimal $L(R,P)$ for which the above inequality holds for  every tracial von Neumann algebra $(M,\tau)$ and every $a,b\in \prod_{i}R_{i}\Ball(M)$. More generally, if $P\in \bC^{*}\ip{(T_{i})_{i\in I}}^{G}$ where $G$ is a finite set, we let $\|P\|_{R,\Lip}$ be the minimal $L$ so that for all tracial von Neumann algebras $(M,\tau)$ and all $a,b\in \prod_{i\in I}R_{i}\Ball(M)$ we have 
\[\|P(a)-P(b)\|_{2}\leq \|a-b\|_{2,F}\]
where $F$ is the minimal finite subset of $I$ so that $P_{j}\in \bC^{*}\ip{(T_{i})_{i\in F}}$ for all $j\in G$.

Given a sequence $(M_{n},\tau_{n})$ of tracial von Neumann algebras, and a free ultrafilter $\omega\in\beta\bN\setminus\bN$, we let $\prod_{n\to\omega}(M_{n},\tau_{n})$ denote their tracial ultraproduct. If $\tau_{n}$ is understood from context, we will often instead write $\prod_{n\to\omega}M_{n}$. Given $(x_{n})_{n=1}^{\infty}\in \prod_{n\to\omega}M_{n}$ with $\sup_{n}\|x_{n}\|<+\infty,$ we let $(x_{n})_{n\to\omega}$ be its image in $\prod_{n\to\omega}M_{n}$. We use $\tau_{\omega}$ for the trace on $\prod_{n\to\omega}M_{n}$ defined by $\tau_{\omega}((x_{n})_{n\to\omega})=\lim_{n\to\omega}\tau_{n}(x_{n})$. If each $M_{n}$ is $\bM_{k_{n}}(\bC)$ for some sequence $(k_{n})_{n=1}^{\infty}$ in $\bN$ (so that necessarily $\tau_{n}=\tr$), we typically use $\tr_{\omega}$ instead of $\tau_{\omega}$. 

\section{Definition of the invariant $\delta_{MD}$: relative and nonrelative}

In this section, we define a modification (denoted $\delta_{MD}(a))$ of Voiculescu's microstates free entropy dimension for tuples $a$ in a tracial von Neumann algebra. This modification will have the property that:
\begin{itemize}
    \item it is defined for tuples in a tracially complete $C^{*}$-algebra (instead of just for tracial von Neumann algebras),
    \item it is an invariant for the tracially complete $C^{*}$-algebra generated by the tuple. Namely, if $(M,X)$ is any tracially complete $C^{*}$-algebra, if $I,J$ are sets, and $a\in M^{I},b\in M^{J}$ are two tuples with $C^{*}_{X}(a)=C^{*}_{X}(b)$, then $\delta_{MD}(a)=\delta_{MD}(b)$.
\end{itemize}
As discussed in the introduction, the core idea is to modify the definition of mean dimension to the context of Voiculescu's microstates spaces instead of topological microstates in the context of, say, actions by homeomorphisms of sofic groups. 
We proceed to give the definition. 
\begin{defn}
Let $X$ be a Hausdorff topological space and $\rho$ a continuous pseudometric on $X$. Given $\varepsilon>0$, and $\Xi$  a set we say that $f\colon X\to \Xi$ is \emph{$\varepsilon$-injective} with respect to $\rho$ if $\rho(x,y)<\varepsilon$ whenever $f(x)=f(y)$. 
We let $\dim_{\varepsilon}(X,\rho)$ be the minimal $d$ so that there is a $d$-dimensional compact Hausdorff space $\Delta$ (in the sense of covering dimension) and a continuous $f\colon X\to \Delta$ which is $\varepsilon$-injective (with respect to $\rho$).   
\end{defn}

Typically for use $X\subseteq \bM_{n}(\bC)^{I}$ for some set $I$, and $\rho(a,b)=\|a-b\|_{2,F}$ for some $F\subseteq I$. We will refer to an $\varepsilon$-injective function with respect to $\rho$ as an $(\varepsilon,F)$-injective function in this setting.

The reader may have similar notions in other contexts, e.g. the usage of mean dimension and sofic mean dimension in \cite[Section 4]{LindWeiss}, \cite[Section 4]{Li}. In these references, this notion is often defined more combinatorially, and we are following the convention given in \cite[Section 1.5]{Gro}. Though not needed for our work, we state for the convenience of the reader several equivalent formulations of $\dim_{\varepsilon}$ here. See e.g. \cite[Section 1.1]{UWEquiv} for the proof. 

\begin{prop}\label{prop: dim TFAE}
Let  $(X,d)$ be a compact metric space. Then the following are equivalent:
\begin{enumerate}[(i)]
    \item $\dim_{\varepsilon}(X,d)\leq d$,
    \item there is a finite open cover $\{U_{i}\}_{i=1}^{r}$ of $X$ of multiplicity at most $d+1$ (i.e with $\sum_{i=1}^{r}1_{U_{i}}\leq d+1$) and with $\diam(U_{i},\rho)<\varepsilon$ for all $i\in [r]$, 
    \item there is a simplicial complex $Z$ of dimension at most $d$ and an $\varepsilon$-injective, continuous map $f\colon X\to Z$,
    \item there is a metrizable topological space $Z$ of covering dimension at most $d$, and an $\varepsilon$-injective, continuous map $f\colon X\to Z$,
    \item there is a Hausdorff topological space of covering dimension at most $d$ and an $\varepsilon$-injective, continuous map $f\colon X\to Z$. 
\end{enumerate}
\end{prop}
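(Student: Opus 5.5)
The plan is to prove the cycle (ii)$\Rightarrow$(iii)$\Rightarrow$(iv)$\Rightarrow$(v)$\Rightarrow$(i)$\Rightarrow$(ii). Throughout, write $\rho$ for the metric, so that $d$ denotes only the dimension bound. The implications (iii)$\Rightarrow$(iv)$\Rightarrow$(v) are formal. A finite simplicial complex of dimension at most $d$ is a metrizable space of covering dimension at most $d$, and metrizable spaces are Hausdorff. For (v)$\Rightarrow$(i), given an $\varepsilon$-injective continuous $f\colon X\to Z$ with $Z$ Hausdorff of covering dimension at most $d$, replace $Z$ by $f(X)$. This set is compact Hausdorff, being a continuous image of a compact space. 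Since $f(X)$ is compact in a Hausdorff space it is closed, and for normal $Z$ covering dimension is monotone on closed subspaces; in general one instead uses the compact-space form of the definition in the paper. Either way $f(X)$ is a compact Hausdorff space of dimension at most $d$, so $\dim_{\varepsilon}(X,\rho)\le d$.

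For (ii)$\Rightarrow$(iii), I would use the nerve construction. Take a partition of unity $\{\phi_i\}_{i=1}^r$ subordinate to $\{U_i\}$, which exists since $X$ is compact metric, for instance $\phi_i=\rho(\cdot,X\setminus U_i)/\sum_j\rho(\cdot,X\setminus U_j)$. Define $f(x)=(\phi_i(x))_i\in\bR^r$. Its image lies in the geometric realization of the nerve $Z$, whose simplices are the sets $S\subseteq[r]$ with $\bigcap_{i\in S}U_i\neq\emptyset$. Multiplicity at most $d+1$ means every simplex has at most $d+1$ vertices, so $\dim Z\le d$. For $\varepsilon$-injectivity, suppose $f(x)=f(y)$. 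Pick $i$ with $\phi_i(x)>0$; then $\phi_i(y)>0$ too, so $x,y\in U_i$ and $\rho(x,y)\le\diam(U_i)<\varepsilon$.

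The main step is (i)$\Rightarrow$(ii). Let $f\colon X\to\Delta$ be continuous and $\varepsilon$-injective, with $\Delta$ compact Hausdorff of dimension $d$. First I need a uniform version of injectivity: there is an open cover $\cV$ of $f(X)$ such that each $f^{-1}(V)$, $V\in\cV$, has $\rho$-diameter less than $\varepsilon$. To get it, fix $z\in f(X)$. The fiber $f^{-1}(z)$ is compact with diameter less than $\varepsilon$, so it has diameter at most some $\varepsilon'<\varepsilon$. Let $W$ be the open $\eta$-neighbourhood of $f^{-1}(z)$, with $\eta$ chosen so that $\varepsilon'+2\eta<\varepsilon$; then $\diam W<\varepsilon$. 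The set $f(X\setminus W)$ is compact and misses $z$. Hence $V_z=\Delta\setminus f(X\setminus W)$ is an open neighbourhood of $z$ with $f^{-1}(V_z)\subseteq W$.

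Next, $f(X)$ is a closed subset of $\Delta$, hence compact of dimension at most $d$. Extract a finite subcover of $\{V_z\cap f(X)\}$ and refine it to a finite open cover of $f(X)$ of multiplicity at most $d+1$. Pulling back by $f$ gives a finite open cover of $X$ with multiplicity at most $d+1$ and members of diameter less than $\varepsilon$, which is (ii).

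The obstacle I anticipate is exactly this compactness and closed-subspace bookkeeping, together with the strict inequality, which relies on compactness of the fibers. The statement as written reuses $d$ both for the metric and for the dimension, so I would make the renaming $\rho$ for the metric explicit at the start. Since this is standard, one could instead simply cite \cite[Section 1.1]{UWEquiv}, as the paper does.
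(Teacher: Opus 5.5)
Your proof is correct. The paper does not prove this proposition itself: it is stated only for the reader's intuition, and the proof is deferred to \cite[Section 1.1]{UWEquiv}. What you give is a self-contained version of the standard argument. For (ii)$\Rightarrow$(iii) you use the nerve of a finite cover together with the distance-function partition of unity. For (i)$\Rightarrow$(ii) you use the compactness argument that produces open sets $V_z\ni z$ with $\diam f^{-1}(V_z)<\varepsilon$. That second step is where compactness of the fibres and the strict inequality genuinely enter, and you handle it correctly. Compared with the citation, your route makes explicit that nothing beyond compactness of $X$ and Hausdorffness of the target is used.

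A few bookkeeping points:
\begin{itemize}
\item In (iii)$\Rightarrow$(iv) you tacitly take $Z$ finite. If (iii) allows arbitrary simplicial complexes, whose weak topology need not be metrizable, first replace $Z$ by a finite subcomplex containing the compact set $f(X)$.
\item In (v)$\Rightarrow$(i) the hedge about normality is unnecessary. With the \v{C}ech--Lebesgue definition, $\dim A\le \dim Z$ for every closed $A\subseteq Z$ in any space. Extend a finite open cover $\{U_i\}$ of $A$ to open sets $\widetilde U_i\subseteq Z$, refine $\{\widetilde U_i\}\cup\{Z\setminus A\}$ to order at most $d$, and restrict to $A$.
\item Alternatively, your (i)$\Rightarrow$(ii) argument runs verbatim inside any Hausdorff $Z$: apply the dimension bound to the cover $\{V_{z_1},\dots,V_{z_m},Z\setminus f(X)\}$. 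This gives (v)$\Rightarrow$(ii) directly. You can then close the cycle with (ii)$\Rightarrow$(iii)$\Rightarrow$(i), since a finite complex is compact, and avoid subspace monotonicity altogether.
\item If some $U_i=X$ in (ii), the formula for $\phi_i$ needs a convention. In that case, however, $\diam X<\varepsilon$ and a constant map suffices.
\end{itemize}
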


 We will typically be interested in the quantity $\dim_{\varepsilon}(\Gamma^{(k)}_{R}(\mathcal{O}),\|\cdot\|_{2,F})$ where $\mathcal{O}$ is a neighborhood of some compact convex set of laws, and $R$ is a cutoff function on some arbitrary index set $I$, and $F$ is a finite subset of $I$. The reader might notice that:
\begin{itemize}
    \item $\Gamma^{(k)}_{R}(\mathcal{O})$ is not a compact space unless $\mathcal{O}$ is a closed neighborhood,
    \item $\|\cdot\|_{2,F}$ is not a metric unless $I$ is finite and $F=I$,
    \item $\Gamma^{(k)}_{R}(\mathcal{O})$ is not a metrizable space unless $I$ is a countable set (which we will not assume in general).
\end{itemize}
This first item is not a major issue: when we define $\delta_{MD}$ we will take an infimum over neighborhoods $\mathcal{O}$ of a compact set of laws. Every such neighborhood will contain a compact neighborhood. By monotonicity of the quantities involved, we could simply replace the infimum with a compact neighborhood if we wished to use one of the other items in Proposition \ref{prop: dim TFAE}. instead of $\dim_{\varepsilon}$. The other items are harder to fix, especially the last (if $I$ is countable, we could use a suitable metric defined by summing $2^{-n}\min(1,\|\cdot\|_{2,F_{n}})$ across an increasing sequence of finite subsets with union $I$, and the reader may check that this does not change the ultimate quantity $\delta_{MD}$ by the same argument as in, e.g \cite[Lemma 4.4]{Li}).

As mentioned above, we are largely stating Proposition \ref{prop: dim TFAE} for the convenience and intuition of the reader. It will be used once to get a lower bound on $\dim_{\varepsilon}$ for a compact subset of a finite-dimensional Hilbert space (where none of the above four bullet points present an issue). For concreteness, we will stick to $\dim_{\varepsilon}$ as formulated above and not use the other items in Proposition \ref{prop: dim TFAE}. Ultimately, it should be the case that replacing $\dim_{\varepsilon}$ with other quantities implicitly defined by the other items in Proposition \ref{prop: dim TFAE} will not change the invariant $\delta_{MD}$ we define (e.g. by copying the details provided in \cite[Section 1.1]{UWEquiv}), but we will not pursue this here. 

For a tracially complete $C^{*}$-algebra $(M;X)$, a set $I$ and $a\in M^{I}$, we let $L_{a;X}=\{\tau\circ \ev_{a}:\tau\in X\}$. Note that if $R\in [0,=\infty)^{I}$ is a cutoff function, then $L_{a;X}$ is a compact, convex subset of $\Sigma_{R}$. 

\begin{defn}
Let $(M;X)$ be a tracially complete $C^{*}$-algebra, $I,J$  sets, $R\in [0,\infty)^{I\sqcup J}$ and $a\in \prod_{i\in I}R_{i}\Ball(M)$, $b\in \prod_{j\in J}R_{j}\Ball(M)$. For a weak$^{*}$-neighborhood $\mathcal{O}$ of $L_{a,b;X}$, a finite $F\subseteq I$ and an $\varepsilon>0$, we iteratively define
\[\md_{\varepsilon,F,R}(\mathcal{O})=\limsup_{k\to\infty}\frac{1}{k^{2}}\dim_{\varepsilon}(\Gamma^{(k)}_{R}(\mathcal{O}),\|\cdot\|_{2,F}),\]
\[\md_{\varepsilon,F,R}(a,b)=\inf_{\mathcal{O}}\md_{\varepsilon,F,R}(\mathcal{O}),\]
\[\delta_{MD,R}(a:b)=\sup_{\varepsilon,F}\md_{\varepsilon,F,R}(a,b),\]
where the infimum is over all weak$^{*}$-neighborhoods of $L_{a.b;X}$ and the supremum is over all $\varepsilon>0$ and $F\subseteq I$ finite. 
\end{defn}
We will show later that $\delta_{MD}$ does not depend upon the choice of $R$ (in fact, we will show something much more general), so we typically drop $R$ and write e.g. $\md_{\varepsilon,F}(\mathcal{O}),\md_{\varepsilon,F}(a,b)$, $\delta_{MD}(a:b)$ if it is clear from context. 
Note that $\|\cdot\|_{2,F}$ is a seminorm on $\bM_{k}(\bC)^{I}$. It is tempting to replace $\Gamma^{(k)}_{R}(\mathcal{O})$ with its projection $\pi_{F}(\Gamma^{(k)}_{R}(\mathcal{O}))$ onto $\bM_{k}(\bC)^{F}$ where $\|\cdot\|_{2,F}$ is now an honest norm and take  $\dim_{\varepsilon}$ of the image. This is ultimately a different quantity than $\dim_{\varepsilon}(\Gamma^{(k)}_{R}(\mathcal{O}),\|\cdot\|_{2,F})$ since we demand that $\varepsilon$-injective maps with respect to $\|\cdot\|_{2,F}$ on $\Gamma^{(k)}_{R}(\mathcal{O})$ are continuous with respect to the induced topology coming from $\bM_{k}(\bC)^{I}$, and there may not be a continuous section $\pi_{F}(\Gamma^{(k)}_{R}(\mathcal{O}))\to \Gamma^{(k)}_{R}(\mathcal{O})$. As a result, if we were to replace $\dim_{\varepsilon}(\Gamma^{(k)}_{R}(\mathcal{O}),\|\cdot\|_{2,F})$ with $\dim_{\varepsilon}(\pi_{F}(\Gamma^{(k)}_{R}(\mathcal{O})),\|\cdot\|_{2})$ we would a priori lose desirable monotonicity properties that we can deduce once we prove that $\delta_{MD}(N:M)$ is independent of choice of generators. E.g. with this alternate definition of $\delta_{MD}$ it becomes unclear if the inequality $\delta_{MD}(N:M)\leq \delta_{MD}(M)$ holds.
There is also a relative version of mean dimension in the context of factor maps. We will need a similar relative version.

\begin{defn}
Let $(M,X)$ be a tracially complete $C^{*}$-algebra, $I_{i},i=1,2,3$ be sets and let $a\in M^{I_{1}},b\in M^{I_{2}},c\in M^{I_{3}}$ be given. For finite sets $F\subseteq I_{1}\sqcup I_{2}$, $G\subseteq I_{2}$, $\varepsilon,\theta>0$, $n\in \bN$ and $\Omega\subseteq \bM_{n}(\bC)^{I_{1}\sqcup I_{2}}$, we say that a function $f\colon \Omega\to \Delta$ is \textbf{$(\varepsilon,F|\theta,G)$-injective} if whenever $x,y\in \Omega$  satisfy that $f(x)=f(y)$ \emph{and} $\|x-y\|_{2,G}<\theta$, then we have $\|x-y\|_{2,F}<\varepsilon$. We let $\dim_{\varepsilon,F|\theta,G}(\Omega)$ be the smallest $d$ so that there is a $d$-dimensional simplicial complex and a $(\varepsilon,F|\theta,G)$-injective function $f\colon \Omega\to \Delta$. For a weak$^{*}$-neighborhood $\mathcal{O}$ of $L_{a,b,c:X}$, $\varepsilon,\theta>0$ and finite $F\subseteq I_{1},G\subseteq I_{2}$ we iteratively define

We define
\[\md_{\varepsilon,F|\theta,G}(\mathcal{O})=\limsup_{k\to\infty}\frac{1}{k^{2}}\dim_{\varepsilon,F|\theta,G}(\Gamma^{(k)}_{R}(\mathcal{O})),\]
\[\md_{\varepsilon,F|\theta,G}(a|b:c)=\inf_{\mathcal{O}}\md_{\varepsilon,F|\theta,G}(\mathcal{O}),\]
\[\md_{\varepsilon,F}(a|b:c)=\inf_{\theta,G}\md_{\varepsilon,F|\theta,G}(a|b:c),\]
\[\delta_{MD}(a|b:c)=\sup_{\varepsilon,F}\md(a|b:c),\]
where the first infimum is overall weak$^{*}$-neighborhoods $\mathcal{O}$ of $L_{a,b;X}$, the second infimum is over $\theta>0$ and finite $G\subseteq I_{2}$, and the supremum is over $\varepsilon>0$ and all finite $F\subseteq I_{1}$.

\end{defn}

The following inequality justifies calling the above a ``relative mean dimension".

\begin{prop}\label{prop: relative inequlaity}
Let $(M,X)$ be a tracially complete $C^{*}$-algebra, $I_{i},i=1,2,3$ be sets, and let $a\in M^{I_{1}},b\in M^{I_{2}},c\in M^{I_{3}}$ be given. Then
\[\delta_{MD}(a,b:c)\leq \delta_{MD}(b:a,c)+\delta_{MD}(a|b:c)\]
\end{prop}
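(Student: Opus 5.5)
The plan is to prove the inequality at the level of the finite-dimensional approximations by taking products of almost-injective maps. This is the same mechanism as in the classical chain inequality for (conditional) mean dimension. We may assume both terms on the right-hand side are finite, and we fix a cutoff function $R$ on $I_{1}\sqcup I_{2}\sqcup I_{3}$. All microstate spaces below are $\Gamma^{(k)}_{R}(\mathcal{O})\subseteq \bM_{k}(\bC)^{I_{1}\sqcup I_{2}\sqcup I_{3}}$ with $\mathcal{O}$ a neighborhood of $L_{a,b,c;X}$. So both $\delta_{MD}(b:a,c)$ and $\delta_{MD}(a|b:c)$ are computed from the \emph{same} spaces and differ only in the (pseudo)metric data.

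First fix the data on the left-hand side: $\varepsilon>0$ and a finite $F\subseteq I_{1}\sqcup I_{2}$, written $F=F_{1}\sqcup F_{2}$ with $F_{i}\subseteq I_{i}$.

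Step 1 handles the relative term. Choose $\theta>0$ and a finite $G\subseteq I_{2}$ with $\md_{\varepsilon,F_{1}|\theta,G}(a|b:c)\leq \delta_{MD}(a|b:c)+\eta$. Then choose a neighborhood $\mathcal{O}_{1}$ with $\md_{\varepsilon,F_{1}|\theta,G}(\mathcal{O}_{1})\leq \delta_{MD}(a|b:c)+2\eta$.

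Step 2 handles the other term. Set $\varepsilon'=\min(\varepsilon,\theta)$ and $F_{2}'=F_{2}\cup G$. Choose $\mathcal{O}_{2}$ with $\md_{\varepsilon',F_{2}'}(\mathcal{O}_{2})\leq \delta_{MD}(b:a,c)+\eta$. Put $\mathcal{O}=\mathcal{O}_{1}\cap\mathcal{O}_{2}$.

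Step 3 is the key observation, for each $k$. Let $f_{1}\colon \Gamma^{(k)}_{R}(\mathcal{O})\to\Delta_{1}$ be a restriction of an $(\varepsilon,F_{1}|\theta,G)$-injective map on $\Gamma^{(k)}_{R}(\mathcal{O}_{1})$. Let $f_{2}\colon \Gamma^{(k)}_{R}(\mathcal{O})\to\Delta_{2}$ be a restriction of an $(\varepsilon',F_{2}')$-injective map on $\Gamma^{(k)}_{R}(\mathcal{O}_{2})$. Restriction preserves continuity and both injectivity notions, so the dimensions do not increase. Define $f=(f_{1},f_{2})\colon \Gamma^{(k)}_{R}(\mathcal{O})\to \Delta_{1}\times\Delta_{2}$, and suppose $f(x)=f(y)$.
\begin{itemize}
\item From $f_{2}(x)=f_{2}(y)$ we get $\|x-y\|_{2,F_{2}'}<\varepsilon'$. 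Since $G\subseteq F_{2}'$, this gives $\|x-y\|_{2,G}<\theta$, and since $F_{2}\subseteq F_{2}'$, it gives $\|x-y\|_{2,F_{2}}<\varepsilon$.
\item We now know both $f_{1}(x)=f_{1}(y)$ and $\|x-y\|_{2,G}<\theta$. The relative injectivity of $f_{1}$ then gives $\|x-y\|_{2,F_{1}}<\varepsilon$.
\end{itemize}
Hence $\|x-y\|_{2,F}<\sqrt{2}\,\varepsilon$, so $f$ is $(\sqrt{2}\varepsilon,F)$-injective.

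Step 4 bounds the dimension and passes to limits. Since $\dim(\Delta_{1}\times\Delta_{2})\leq\dim\Delta_{1}+\dim\Delta_{2}$ for compact Hausdorff (indeed compact metrizable, or simplicial) spaces, we obtain
\[\dim_{\sqrt{2}\varepsilon}(\Gamma^{(k)}_{R}(\mathcal{O}),\|\cdot\|_{2,F})\leq \dim_{\varepsilon,F_{1}|\theta,G}(\Gamma^{(k)}_{R}(\mathcal{O}_{1}))+\dim_{\varepsilon'}(\Gamma^{(k)}_{R}(\mathcal{O}_{2}),\|\cdot\|_{2,F_{2}'}).\]
Divide by $k^{2}$ and use subadditivity of $\limsup$. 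This yields $\md_{\sqrt{2}\varepsilon,F}(a,b:c)\leq \delta_{MD}(a|b:c)+\delta_{MD}(b:a,c)+3\eta$. Letting $\eta\to0$ and taking the supremum over $\varepsilon$ and $F$ gives the claim; replacing $\varepsilon$ by $\sqrt{2}\varepsilon$ is harmless under the supremum.

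The main point requiring care is the product dimension estimate. It is cleanest if the targets are taken to be simplicial complexes, as in the relative definition, whose product is again triangulable of dimension $d_{1}+d_{2}$. Otherwise one invokes the product theorem for covering dimension of compact Hausdorff spaces, replacing $\Delta_{i}$ by the compact images $\overline{f_{i}(\Gamma)}$ if needed. Also needed is a consistency check: the non-relative dimension in the definition of $\delta_{MD}$ must accept targets of this type, which follows from the flexibility recorded in Proposition \ref{prop: dim TFAE}, or directly since simplicial complexes are compact Hausdorff of the right dimension.

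The rest is bookkeeping of the quantifier order: $\theta$ and $G$ are chosen \emph{before} the $b$-precision. This is exactly why the term $\delta_{MD}(b:a,c)$, a supremum over all precisions and finite sets, absorbs the extra requirement $\|x-y\|_{2,G}<\theta$.
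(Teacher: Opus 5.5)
Your proposal is correct and follows essentially the paper's argument. On a common microstate space you take the product of a relatively $(\varepsilon,F|\theta,G)$-injective map with an injective map for the $b$-coordinates, then pass to limits with $\theta,G$ chosen before the $b$-precision. The one difference is that you split $F=F_{1}\sqcup F_{2}$ and control $F_{2}\cup G$ through the $b$-term at precision $\min(\varepsilon,\theta)$, which costs a harmless factor of $\sqrt{2}$; the paper instead applies relative injectivity to all of $F\subseteq I_{1}\sqcup I_{2}$ and uses the $b$-term only at scale $(\theta,G)$, so your version also works under the definition's stated convention that $F\subseteq I_{1}$ in the relative quantity.
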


\begin{proof}
Fix a cutoff function $R\in [0,\infty)^{I}$ where $I:=\bigsqcup_{i=1}^{3}I_{i}$. Let $\varepsilon>0$ and a finite $F\subseteq I_{1}\sqcup I_{2}$ be given. 
Fix a weak$^{*}$-neighborhood $\mathcal{O}$ of $L_{a,b,c;X}$. Suppose a finite $G\subseteq I_{2}$ and a $\theta>0$ are given. Let $g\colon \Gamma^{(n)}_{R}(\mathcal{O})\to \Delta_{1}$ be a $(\theta,G)$-injective function, and let $f\colon \Gamma^{(n)}_{R}(\mathcal{O})\to \Delta_{2}$ be a $(\varepsilon,F|\theta,G)$-injective function. Then $f\times g$ is an $(\varepsilon,F)$-injective function. This shows that 
\[\md_{\varepsilon,F}(a,b:c)\leq \md_{\varepsilon,F}(\mathcal{O})\leq \md_{\varepsilon,F|\theta,G}(\mathcal{O})+\md_{\theta,G}(\mathcal{O}).\]
Taking the infimum over $\mathcal{O}$ we see that
\[\md_{\varepsilon,F}(a,b:c)\leq \md_{\varepsilon,F|\theta,G}(a|b:c)+\md_{\theta,G}(b:a,c)\leq \md_{\varepsilon,F|\theta,G}(a|b:c)+\delta_{MD}(b:a,c).\]
Taking the infimum over $G,\theta$, then the supremum over $F,\varepsilon$ completes the proof. 

\end{proof}

\subsection{Proof of invariance}

In this section, we show that if $(M,X)$ is a tracially complete $C^{*}$-algebra, then $\delta_{MD}(a:b|c)=\delta_{MD}(a':b'|c')$ if $a,b,c,a',b',c'$ are tuples in $M$ with 
\[C^{*}_{X}(b)=C^{*}_{X}(b'),\]
\[C^{*}_{X}(a,b)=C^{*}_{X}(a',b'),\]
\[C^{*}_{X}(a,b,c)=C^{*}_{X}(a',b',c').\]
In fact, we prove something slightly more general.
As in \cite{Hayes2018, macmahon20261boundedentropycalgebras}, an important step in our proof is the following proposition (see \cite[Corollary 5.1]{macmahon20261boundedentropycalgebras} for the proof), which ultimately relies on \cite[Proposition 3.28]{TraciallyComplete}. 

\begin{prop}\label{prop: better KDT}
Let $(M,X)$ be a tracially complete $C^{*}$-algebra, $I$ a set and $a\in M^{I}$. Let $R\in [0,\infty)^{I}$ be a cutoff function for $a$. For any $b\in C^{*}_{X}(a)$, and any $\varepsilon>0$, there is a $P\in \bC^{*}\ip{(T_{i})_{i\in I}}$ so that $\|P\|_{R}\leq \|b\|$ and $\|P(a)-b\|_{2;X}<\varepsilon$. 
\end{prop}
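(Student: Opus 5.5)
The approach is to reduce to a Kaplansky density statement for a single tracial-type completion, and then upgrade it to a uniform one over $X$ by convexity. First I would approximate $b$ in $\|\cdot\|_{2;X}$ by a polynomial $Q(a)$; this is possible because $C^{*}_{X}(a)$ is by definition the $\|\cdot\|_{2;X}$-closure of $\{P(a)\}$. The polynomial $Q$, however, carries no norm control. To enforce the bound $\|P\|_{R}\leq \|b\|$, I would pass to the universal algebra $C^{*}(R)$. The evaluation $\ev_{a}$ extends to a unital $*$-homomorphism $\pi\colon C^{*}(R)\to M$, since $a\in\prod_i R_i\Ball(M)$. Each $\tau\in X$ pulls back to a trace on $C^{*}(R)$, so $Y:=\{\tau\circ\pi:\tau\in X\}$ is a compact convex subset of $\cT(C^{*}(R))\cong\Sigma_R$; in fact it is exactly $L_{a;X}$.

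The next step is to identify $C^{*}_{X}(a)$ with the tracial completion $\overline{C^{*}(R)}^{Y}$ (after separating by $Y$). The map $P\mapsto P(a)$ is isometric for $\|\cdot\|_{2;Y}$ versus $\|\cdot\|_{2;X}$, and its image is dense in $C^*_X(a)$. Because $(M,X)$ is tracially complete, the unit ball is complete, so the map extends to an isomorphism between the respective completions. I would need to check that this isomorphism is isometric on operator norms, or at least that it identifies the operator-norm unit ball of $\overline{C^{*}(R)}^{Y}$ with $\Ball(M)\cap C^{*}_{X}(a)$.

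The key input is then the tracially complete Kaplansky density theorem, \cite[Proposition 3.28]{TraciallyComplete}. It says that for a unital $C^{*}$-algebra $A$ with compact convex trace set $Y$, the unit ball of $A$ is $\|\cdot\|_{2;Y}$-dense in the unit ball of $\overline{A}^{Y}$. Applying it to $b/\|b\|$ (the case $b=0$ being trivial), I obtain $x\in C^{*}(R)$ with $\|x\|\leq\|b\|$ and $\|\pi(x)-b\|_{2;X}<\varepsilon/2$. Finally I use that the polynomials are norm-dense in $C^{*}(R)$ to pick $P$ with $\|P-x\|_R$ small, so the $2$-norm error stays below $\varepsilon$. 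Rescaling by a factor slightly less than $1$ then keeps $\|P\|_R\leq\|b\|$, at the cost of an additional error of size about $\delta\|b\|$, which can be absorbed.

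The step I expect to be the main obstacle is the identification in the second paragraph, namely that the norm unit ball of the completion matches $\Ball(M)\cap C^{*}_{X}(a)$. The tracially complete norm on $M$ need not coincide with the limit-of-sequences norm on $\overline{C^{*}(R)}^{Y}$ a priori. I would try to resolve this using the fact that for tracially complete algebras the operator norm is determined by the $2$-norms via $\|y\|=\sup\{\|yz\|_{2;X}:z\in\Ball(M)\}$-type formulas, which is essentially the content of the cited \cite[Corollary 5.1]{macmahon20261boundedentropycalgebras}.
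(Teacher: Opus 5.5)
Your proof has a genuine gap: the identification in your second paragraph, and the problem is not the norm comparison you flag. Completeness of the balls of $M$ does let you extend $P\mapsto P(a)$ to a $*$-homomorphism $\Phi\colon\overline{C^{*}(R)}^{Y}\to C^{*}_{X}(a)$. Since $\|\Phi(\xi)\|_{2;X}=\|\xi\|_{2;Y}$, $\Phi$ is injective, and hence automatically isometric for the operator norms. What does not follow is surjectivity. An element of $\overline{C^{*}(R)}^{Y}$ is by definition the class of a \emph{norm-bounded} $\|\cdot\|_{2;Y}$-Cauchy sequence. So $b\in\Phi(\overline{C^{*}(R)}^{Y})$ says exactly that $b$ is a $\|\cdot\|_{2;X}$-limit of a norm-bounded sequence $P_{n}(a)$. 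By contrast, $b\in C^{*}_{X}(a)$ only provides approximants $P_{n}(a)$ with no norm control. Dense image plus completeness of $\Ball(M)$ cannot bridge this: completeness produces limits of bounded Cauchy sequences, not bounded approximants. Your identification is therefore equivalent to the proposition itself, and the argument is circular. For the same reason, Kaplansky density in the abstract completion $\overline{C^{*}(R)}^{Y}$ is essentially built into its definition and never reaches $b$. The remedy you suggest would not help either. A norm formula bears on isometry, not surjectivity, and the formula as written computes $\|y\|_{2;X}$ rather than $\|y\|$, because $\|yz\|_{2,\tau}\leq\|y\|_{2,\tau}\|z\|$.

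What is needed is Kaplansky density \emph{inside} $M$: the unit ball of the $\|\cdot\|_{2;X}$-dense $C^{*}$-subalgebra $\pi(C^{*}(R))$ must be $\|\cdot\|_{2;X}$-dense in $\Ball(M)\cap C^{*}_{X}(a)$, with norm-controlled lifts to $C^{*}(R)$. The paper does not reprove this. It cites \cite[Corollary 5.1]{macmahon20261boundedentropycalgebras}, which rests on the tracially complete Kaplansky theorem \cite[Proposition 3.28]{TraciallyComplete}.

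You can also do it by hand, starting with $b=b^{*}$ and $\|b\|\leq 1$.
Choose self-adjoint polynomials $Q_{n}$ with $\|Q_{n}(a)-b\|_{2;X}\to 0$; symmetrize, using $\|c^{*}\|_{2,\tau}=\|c\|_{2,\tau}$.
Let $f(t)=\max(-1,\min(t,1))$ and $z_{n}=f(Q_{n}(T))\in C^{*}(R)$. Then $\|z_{n}\|\leq 1$ and $\pi(z_{n})=f(Q_{n}(a))$.
A $1$-Lipschitz $f$ satisfies $\|f(x)-f(y)\|_{2,\tau}\leq\|x-y\|_{2,\tau}$ for self-adjoint $x,y$ in a tracial von Neumann algebra. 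To see this, integrate $|f(s)-f(t)|^{2}\leq|s-t|^{2}$ against the joint spectral measure at $\hat{1}$ of left multiplication by $x$ and right multiplication by $y$ on $L^{2}$.
Working in each $M_{\tau}$ then gives $\|\pi(z_{n})-b\|_{2;X}=\|f(Q_{n}(a))-f(b)\|_{2;X}\leq\|Q_{n}(a)-b\|_{2;X}$, uniformly in $\tau\in X$.

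General $b$ follows from the $2\times 2$ trick with $\begin{pmatrix}0&b\\ b^{*}&0\end{pmatrix}\in\bM_{2}(M)$, taking the off-diagonal corner of the corresponding element of $\bM_{2}(C^{*}(R))$. From $z_{n}$, your last step (norm-approximating by a polynomial in $C^{*}(R)$ and rescaling) finishes the proof, and the detour through $\overline{C^{*}(R)}^{Y}$ can be dropped.
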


As in \cite{macmahon20261boundedentropycalgebras}, we use the following lemma (note: in \cite{macmahon20261boundedentropycalgebras} this lemma is stated with $R$ being uniformly bounded, but the proof given in \cite{macmahon20261boundedentropycalgebras} works for general cutoff functions, mutatis mutandis). For a set $X$ equipped with a pseudometric $\rho$, and $A\subseteq X$, we let $N_{\delta}(A,\rho)=\{x\in X:\rho(a,x)<\delta \text{ for some $x\in X$}\}$.

\begin{lem}\label{lem: key lem for invariance}
Let $(M,X)$ be a tracially complete $C^{*}$-algebra, and let $I$ be a set. Fix $x\in M^{I}$, and a cutoff function $R\in [0,\infty)^{I}$. 
\begin{enumerate}[(i)]
\item Let $Q\in \bC^{*}\ip{T_{i}:i\in I}^{I'}$ for some set $I'$ and let $R'\in [0,+\infty)^{I'}$ satisfy that $\|Q_{i'}\|_{R,\infty}\leq R_{i}'$ for all $i'\in I'$.Given a weak$^{*}$-neighborhood $\mathcal{O}$ of $L_{Q(x);X}$, there is a weak$^{*}$-neighborhood $\mathcal{V}$ of $L_{x;X}$ so that $Q(\Gamma^{(n)}_{R}(\mathcal{V}))\subseteq \Gamma^{(n)}_{R'}(\mathcal{O})$ for all $n\in \bN$. \label{item: pushforward neighborhods}
\item For any weak$^{*}$-neighborhood $\mathcal{O}$ of $L_{x;X}$, there is a $\kappa>0$, a finite $F\subseteq I$, and a weak$^{*}$-neighborhood $\mathcal{V}$ of $L_{x;X}$ so that $N_{\kappa}(\Gamma^{(n)}(\mathcal{V}),\|\cdot\|_{2,F}))\cap \left(\prod_{i\in I}R_{i}\Ball(\bM_{n}(\bC))\right)\subseteq \Gamma^{(n)}_{R}(\mathcal{O})$ for all $n\in \bN$. \label{item: small perturbation neighborhood}
\item For any weak$^{*}$-neighborhood $\mathcal{O}$ of $L_{x;X}$, there is an $\eta>0$ and a finite $F\subseteq I$ with the following property. Whenever $b\in \prod_{i\in I}R_{i}\Ball(M)$ satisfies $\|x-b\|_{2,F;X}<\eta$, there is a weak$^{*}$-neighborhood $\mathcal{V}$ of $L_{b;X}$ with $\Gamma^{(n)}_{R}(\mathcal{V})\subseteq \Gamma^{(n)}_{R}(\mathcal{O})$.  \label{item: small perturbation neighborhood 2}
\end{enumerate}
\end{lem}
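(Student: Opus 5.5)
The proof treats the three items separately. Each reduces to the fact that weak$^{*}$-neighborhoods of a compact set of laws can be taken to be finite intersections of basic sets. Item (\ref{item: pushforward neighborhods}) is a pure pullback statement.

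\emph{Item (i).} By compactness of $L_{Q(x);X}$, I will shrink $\mathcal{O}$ to a basic neighborhood of the form
\[
\{\ell\in\Sigma_{R'}:\ \inf_{\tau\in X}\max_{1\le j\le m}|\ell(P_{j})-\tau(P_{j}(Q(x)))|<\delta\}
\]
for finitely many $P_{j}\in \bC^{*}\ip{(T_{i'})_{i'\in I'}}$. Such sets form a neighborhood base: they are open, and a standard compactness argument shows every open set containing $L_{Q(x);X}$ contains one. I then define $\mathcal{V}$ by the same condition with the composed polynomials $P_{j}\circ Q\in \bC^{*}\ip{(T_{i})_{i\in I}}$.

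Now take $A\in\Gamma^{(n)}_{R}(\mathcal{V})$. Since $\ell_{Q(A)}(P_{j})=\ell_{A}(P_{j}\circ Q)$, the law of $Q(A)$ lies in $\mathcal{O}$. The norm bound $\|Q_{i'}(A)\|\le\|Q_{i'}\|_{R}\le R'_{i'}$ follows from the definition of $\|\cdot\|_{R}$, since $A\in\prod_{i}R_{i}\Ball(\bM_{n}(\bC))$. Hence $Q(A)\in\Gamma^{(n)}_{R'}(\mathcal{O})$.

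\emph{Item (ii).} Again take $\mathcal{O}$ basic, given by polynomials $P_{1},\dots,P_{m}$ and tolerance $\delta$, and let $F$ be the finite set of variables they involve. Let $\mathcal{V}$ be the same set with tolerance $\delta/2$. For $A$ in $\Gamma^{(n)}(\mathcal{V})$ and $B\in\prod_{i}R_{i}\Ball(\bM_{n}(\bC))$ with $\|A-B\|_{2,F}<\kappa$, the Lipschitz bound gives
\[
|\tr(P_{j}(A))-\tr(P_{j}(B))|\le\|P_{j}(A)-P_{j}(B)\|_{2}\le\|P_{j}\|_{R,\Lip}\,\kappa.
\]
It is this estimate that requires both tuples to lie in the $R$-ball. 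Choosing $\kappa<\delta/(2\max_{j}\|P_{j}\|_{R,\Lip})$ puts $B$ in $\Gamma^{(n)}_{R}(\mathcal{O})$.

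\emph{Item (iii).} Take $\mathcal{O}$ basic with polynomials $P_{j}$, tolerance $\delta$, and variable set $F$. If $\|x-b\|_{2,F;X}<\eta$, the same Lipschitz estimate applied inside each fiber $\pi_{\tau}(M)''$ gives $|\tau(P_{j}(x))-\tau(P_{j}(b))|\le \|P_{j}\|_{R,\Lip}\,\eta$ for every $\tau\in X$. I choose $\eta$ so that this is below $\delta/2$, and let $\mathcal{V}$ be the basic neighborhood of $L_{b;X}$ with the same polynomials and tolerance $\delta/2$. If $\ell\in\mathcal{V}$, there is a $\tau\in X$ with $|\ell(P_{j})-\tau(P_{j}(b))|<\delta/2$ for all $j$. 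Since the trace $\tau$ is the same on both sides, the triangle inequality gives $|\ell(P_{j})-\tau(P_{j}(x))|<\delta$, so $\ell\in\mathcal{O}$ and $\Gamma^{(n)}_{R}(\mathcal{V})\subseteq\Gamma^{(n)}_{R}(\mathcal{O})$.

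The main obstacle is the basic-neighborhood reduction for an infinite index set $I$ and a compact convex $L_{x;X}$, together with the fact that the cutoff $R$ is not uniformly bounded. For the reduction, I would argue that the weak$^{*}$ topology on $\Sigma_{R}$ is generated by finitely many polynomial coordinates at a time. Compactness of $L_{x;X}$ then lets me cover it by finitely many basic boxes inside $\mathcal{O}$, and I replace these by a single uniform-tolerance tube around $L_{x;X}$ in the finitely many coordinates involved; this tube is contained in the union of the boxes by a Lebesgue-number argument. The unbounded cutoff causes no trouble, because each polynomial involves only finitely many variables, so only finitely many $R_{i}$ enter any Lipschitz constant.
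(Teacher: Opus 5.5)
Your proof is correct and complete. The paper gives no argument of its own here: it cites MacMahon's proof of this lemma, which is stated there for a uniformly bounded cutoff, and asserts that it extends to general $R$ mutatis mutandis.

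Your closing observation is exactly the content of that ``mutatis mutandis'': each polynomial involves only finitely many variables, so every norm bound $\|P\|_{R}$ and every Lipschitz constant $\|P\|_{R,\Lip}$ involves only finitely many $R_{i}$ and is independent of $n$. This is what makes $\kappa$, $\eta$ and $F$ uniform in $n$, and in (iii) independent of $b$.

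The reduction to tubes $\{\ell:\inf_{\tau\in X}\max_{j}|\ell(P_{j})-\tau(P_{j}(x))|<\delta\}$ is sound. Such a tube is the preimage under $\ell\mapsto(\ell(P_{j}))_{j}$ of an open $\delta$-neighborhood of the compact set of moment vectors $\{(\tau(P_{j}(x)))_{j}:\tau\in X\}\subseteq\bC^{m}$. Your Lebesgue-number argument then places it inside any given neighborhood.

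Item (iii) hinges on the tube being indexed by the same $\tau\in X$ for $x$ and for $b$. You use this correctly, and in fact obtain $\mathcal{V}\subseteq\mathcal{O}$ outright.

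One minor economy: in item (i) the tube reduction is unnecessary. The map $\ell\mapsto\ell\circ\ev_{Q}$ is a weak$^{*}$-continuous map $\Sigma_{R}\to\Sigma_{R'}$; the moment bounds follow from $\|Q_{i'}\|_{R}\leq R'_{i'}$. It sends $L_{x;X}$ onto $L_{Q(x);X}$, so you may take $\mathcal{V}$ to be the preimage of the interior of $\mathcal{O}$.
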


We now proceed to prove that the $\delta_{MD}(a|b:c)$ is an invariant of the algebras generated by $a,$,$(a,b)$, and $(a,b,c)$. In fact, we prove a more general statement which will give us good monotonicity properties for $\delta_{MD}$, analogous to those for $1$-bounded entropy (see Corollary \ref{cor:basic properties}).

\begin{thm}\label{thm: invariance of MD}
Let $(M,X)$ be a tracially complete $C^{*}$-algebra, and $(I_{i})_{i=1}^{3},(I_{i}')_{i=1}^{3}$ be sets. Suppose that $a\in M^{I_{1}},b\in M^{I_{2}},c\in M^{I_{3}}$, and $a'\in M^{I_{1}'},b'\in M^{I_{2}'},c'\in M^{I_{3}'}$ are given, and that 
\[C^{*}_{X}(b)\supseteq C^{*}_{X}(b'),\]
\[C^{*}_{X}(a,b)\subseteq C^{*}_{X}(a',b'),\]
\[C^{*}_{X}(a,b,c)\supseteq C^{*}_{X}(a',b',c').\]
Then
\[\delta_{MD}(a|b:c)\leq\delta_{MD}(a'|b':c').\]
\end{thm}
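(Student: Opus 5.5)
The plan is to compare microstate spaces through polynomial maps, pulling $(\varepsilon',F'|\theta',G')$-injective functions for $(a',b',c')$ back to $(\varepsilon,F|\theta,G)$-injective functions for $(a,b,c)$. The containments are used as follows: $C^{*}_{X}(a',b',c')\subseteq C^{*}_{X}(a,b,c)$ gives the transfer map, $C^{*}_{X}(a,b)\subseteq C^{*}_{X}(a',b')$ controls the $F$-distance, and $C^{*}_{X}(b')\subseteq C^{*}_{X}(b)$ controls the conditioning. Fix cutoff functions $R$ for $(a,b,c)$ and $R'$ for $(a',b',c')$, and fix $\varepsilon>0$ and a finite $F\subseteq I_{1}$. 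It suffices to find $\varepsilon'>0$ and a finite $F'\subseteq I_{1}'$ with the following property: for every $\theta'>0$ and finite $G'\subseteq I_{2}'$ there are $\theta>0$ and a finite $G\subseteq I_{2}$ with $\md_{\varepsilon,F|\theta,G}(a|b:c)\leq \md_{\varepsilon',F'|\theta',G'}(a'|b':c')$. Taking the infimum over $(\theta',G')$ and then the supremum over $(\varepsilon,F)$ then gives the claim.

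\textbf{Choices of polynomials.} By Proposition \ref{prop: better KDT}, choose norm-controlled polynomials $P=(P_{i})_{i\in F}$ in the variables of $(a',b')$ with $\|P(a',b')-a_{F}\|_{2;X}<\varepsilon/8$. Let $F'$ be the finite set of $I_{1}'$-variables on which $P$ depends, enlarged by the $I'_2$-variables it involves. Set $\varepsilon'=\varepsilon/(4\max(1,\|P\|_{R',\Lip}))$. Strictly, the $b'$-variables of $P$ should be absorbed into the conditioning; simplest is to note that the definition allows $F\subseteq I_{1}\sqcup I_{2}$ in the injectivity notion, so $F'$ may include them.

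Next choose polynomials $S=(S_{j})_{j\in I_{1}'\sqcup I_{2}'\sqcup I_{3}'}$ in the variables of $(a,b,c)$ with $\|S_{j}\|_{R}\leq R'_{j}$, approximating $(a',b',c')$ in $\|\cdot\|_{2;X}$ on a prescribed finite set of coordinates and to prescribed accuracy. Take $S_{j}$ to be any norm-bounded approximant for the remaining coordinates; these matter only through the neighborhood. Given $(\theta',G')$, use $C^{*}_{X}(b')\subseteq C^{*}_{X}(b)$ to choose polynomials $Q$ in the $b$-variables with $\|Q(b)-b'_{G'}\|_{2;X}<\theta'/8$. Let $G$ be the variables $Q$ uses, and set $\theta=\theta'/(4\max(1,\|Q\|_{R,\Lip}))$. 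Arrange $S_{G'}=Q$, which is allowed since $Q$ is itself such an approximant.

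\textbf{Transfer of microstates.} Given a neighborhood $\mathcal{O}'$ of $L_{a',b',c';X}$, I would combine Lemma \ref{lem: key lem for invariance} (\ref{item: small perturbation neighborhood 2}), applied with $b=S(a,b,c)$, which is close to $(a',b',c')$ on the relevant finite set, with Lemma \ref{lem: key lem for invariance} (\ref{item: pushforward neighborhods}). This produces a neighborhood $\mathcal{V}$ of $L_{a,b,c;X}$ with $S(\Gamma^{(n)}_{R}(\mathcal{V}))\subseteq \Gamma^{(n)}_{R'}(\mathcal{O}')$ for all $n$. Shrinking $\mathcal{V}$ further, which only concerns finitely many trace polynomial inequalities, I would also ensure that $\|P(S(x))-x_{F}\|_{2}<\varepsilon/4$ for every $x\in\Gamma^{(n)}_{R}(\mathcal{V})$. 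This holds because $\tr(|P(S(x))-x_{F}|^{2})$ is a polynomial in the law and is $<(\varepsilon/8)^{2}$-close at the limit laws.

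Now let $f'\colon \Gamma^{(n)}_{R'}(\mathcal{O}')\to\Delta$ be $(\varepsilon',F'|\theta',G')$-injective, and set $f=f'\circ S$; this is continuous since $S$ is polynomial. Suppose $f(x)=f(y)$ and $\|x-y\|_{2,G}<\theta$. Then $\|S_{G'}(x)-S_{G'}(y)\|_{2}=\|Q(x)-Q(y)\|_{2}<\theta'$ by the Lipschitz bound, so $\|S(x)-S(y)\|_{2,F'}<\varepsilon'$. Hence $\|x-y\|_{2,F}<\varepsilon/4+\|P\|_{R',\Lip}\varepsilon'+\varepsilon/4<\varepsilon$. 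This gives $\dim_{\varepsilon,F|\theta,G}(\Gamma^{(n)}_{R}(\mathcal{V}))\leq \dim_{\varepsilon',F'|\theta',G'}(\Gamma^{(n)}_{R'}(\mathcal{O}'))$; taking $\limsup_{n}$ and infima over neighborhoods finishes the argument.

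\textbf{Expected obstacle.} The main difficulty is the bookkeeping of quantifier order. $P$, and hence $(\varepsilon',F')$, must be fixed before $(\theta',G')$, while $S$ and $\mathcal{V}$ depend on $\mathcal{O}'$, $Q$ and $P$ simultaneously. I also need to check that the $I_{2}'$-coordinates appearing in $P$ are handled correctly in the relative injectivity, either by placing them in $F'$ or by including them in $G'$ after enlarging it. There is a further subtlety: Lemma \ref{lem: key lem for invariance} (\ref{item: small perturbation neighborhood 2}) is stated for a fixed target tuple. Since $S$ is chosen only after $\mathcal{O}'$, I would invoke it with the $\eta,F$ supplied for $\mathcal{O}'$ and then pick $S$ accurate within $\eta$ on that finite set.
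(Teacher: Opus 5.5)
You have a genuine gap at the point where you ``arrange $S_{G'}=Q$''. In your ordering, $Q$ (and with it $G$ and $\theta$) is fixed as soon as $(\theta',G')$ is given, and its accuracy is only $\|Q(b)-b'_{G'}\|_{2;X}<\theta'/8$. Only afterwards does $\mathcal{O}'$ arrive. Lemma \ref{lem: key lem for invariance} (\ref{item: small perturbation neighborhood 2}) applied to $\mathcal{O}'$ then gives you an $\eta>0$ and a finite $\widetilde{F}\subseteq I'$ that you do not control. In general $\widetilde{F}\cap G'\neq\emptyset$ and $\eta\ll\theta'/8$, e.g.\ if $\mathcal{O}'$ pins down $\ell(T_{g}^{*}T_{g})$ for some $g\in G'$ to a tolerance much finer than $\theta'$. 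In that case $S(a,b,c)$ is not $\eta$-close to $(a',b',c')$ on $\widetilde{F}$, so the lemma cannot be invoked. This is more than a missing hypothesis. The law of $Q(b)$ can really differ from that of $b'_{G'}$ at the precision $\mathcal{O}'$ demands, and then $S(\Gamma^{(n)}_{R}(\mathcal{V}))\subseteq\Gamma^{(n)}_{R'}(\mathcal{O}')$ can fail however small $\mathcal{V}$ is. So the two requirements ``$S_{G'}$ equals the pre-chosen $Q$'' and ``$S$ is $\eta$-accurate on $\widetilde{F}$'' are incompatible. As written, your argument does not establish your reduction with $(\theta,G)$ independent of $\mathcal{O}'$.

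There are two ways to repair this.

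\emph{The paper's route.} Choose $Q$ only after $\mathcal{O}'$, within $\eta$ of $(a',b',c')$ on $\widetilde{F}\supseteq F'\cup G'$ and with $Q|_{I_{2}'}$ in the $b$-variables, and read off $G,\theta$ from this $Q$. Then $(\theta,G)$ depend on $\mathcal{O}'$. This is harmless, because $\md_{\varepsilon,F}(a|b:c)$ is a joint infimum over $(\theta,G)$ and over neighborhoods. So for each $\mathcal{O}'$ you obtain $\md_{\varepsilon,F}(a|b:c)\leq\md_{\varepsilon',F'|\theta',G'}(\mathcal{O}')$ directly, in place of your stronger intermediate statement.

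\emph{Keeping your reduction as stated.} Keep your $\mathcal{O}'$-independent $Q$, but use it only for the Lipschitz estimate.
\begin{enumerate}
\item Take $S_{G'}$ to be an $\mathcal{O}'$-dependent approximant of $b'_{G'}$, accurate to within $\min(\eta,\theta'/8)$.
\item Shrink $\mathcal{V}$ so that $\|S_{G'}(x)-Q(x)\|_{2}<\theta'/4$ on $\Gamma^{(n)}_{R}(\mathcal{V})$. This is an open condition on laws, and it holds on $L_{a,b,c;X}$ because $\|S_{G'}(a,b,c)-Q(b)\|_{2,G';X}<\theta'/4$.
\item Then $\|x-y\|_{2,G}<\theta$ gives $\|S_{G'}(x)-S_{G'}(y)\|_{2}<\theta'/4+\|Q\|_{R,\Lip}\theta+\theta'/4<\theta'$, and the rest of your argument goes through unchanged.
\end{enumerate}
This bridging step is the device the paper uses on the $F'$-side, via $Q_{0}$ and $\mathcal{U}_{2}$. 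Your handling of the $F$-estimate, shrinking $\mathcal{V}$ after $S$ is chosen, is correct and makes that part unnecessary. In your quantifier order, however, the bridging is needed on the $G'$-side. Your treatment of the $I_{2}'$-variables in $F'$ matches the paper, which also takes $F'\subseteq I_{1}'\sqcup I_{2}'$.
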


\begin{proof}
To ease notation somewhat, set $I=\sqcup_{i=1}^{3}I_{i}$ and $I'=\sqcup_{i=1}^{3}I_{i}'$.
 Fix cutoff functions $R\in [0,\infty)^{I}$ and $R'\in [0,\infty)^{I'}$. Let $\varepsilon>0$  and a finite $F\subseteq I_{1}\sqcup I_{2}$ be given. 
 Since $C^{*}_{X}(a,b)\subseteq C^{*}_{X}(a',b')$, we may apply Proposition \ref{prop: better KDT} to choose a $P\in \bC^{*}\ip{T_{i}':i\in I_{1}'\sqcup I_{2}'}^{I_{1}\sqcup I_{2}}$ so that $\|P(a',b')-(a,b)\|_{2,F;X}<\varepsilon$ and that $\|P_{i}\|_{R'}\leq R_{i}$ for all $i\in I_{1}\sqcup I_{2}$. Let $F'\subseteq I_{1}'\sqcup I_{2}'$ be a finite set so that $P_{i}\in \bC^{*}\ip{T_{i}':i\in F'}$ for all $i\in F$.  Set $L'=\|P|_{F}\|_{R',\Lip}$.  Set $\varepsilon'=\frac{\varepsilon}{L'+1}$. Since $C^{*}_{X}(a',b',c')\subseteq C^{*}_{X}(a,b,c)$, we may apply Proposition \ref{prop: better KDT} to find a $Q_{0}\in \bC^{*}\ip{T_{i'}:i'\in I'}^{I}$ so that 
 \[\|Q_{0}(a,b,c)-(a',b',c')\|_{2,F'}<\varepsilon'.\]
Note that
 \[\|P(Q_{0}|_{I_{1}'\sqcup I_{2}'}(a,b,c))-(a,b)\|_{2,F;X}<2\varepsilon.\]
 Thus 
 \[\cU_{1}=\left\{\ell\in \Sigma_{R}:\sum_{i\in F}\|P_{i}(Q_{0}|_{I_{1}'\sqcup I_{2}'}(T'))-T_{i}\|_{L^{2}(\ell)}^{2}<4\varepsilon^{2}\right\}\]
 is a weak$^{*}$-neighborhood of $L_{a,b,c;X}$. 

  Fix a weak$^{*}$-open neighborhood $\mathcal{O}$ of $L_{a',b',c';X}$, a finite $G'\subseteq I_{2}'$, and a $\theta'>0$. By Lemma \ref{lem: key lem for invariance} (\ref{item: small perturbation neighborhood}) we may find an $\eta>0$ and a finite $\widetilde{F}\subseteq I'$ so that if $y\in \prod_{i'\in I'}R_{i'}'\Ball(M)$ satisfies $\|(a',b'c')-y\|_{2,\widetilde{F};X}<\eta$, then there is a weak$^{*}$-neighborhood $\mathcal{V}$ of $L_{y;X}$ with $\Gamma^{(n)}_{R'}(\mathcal{V})\subseteq \Gamma^{(n)}_{R'}(\mathcal{O})$.
Without loss of generality we may assume that $\eta<\varepsilon'$ and $\widetilde{F}\supseteq F'\cup G'$.  Since $C^{*}_{X}(a',b',c')\subseteq C^{*}_{X}(a,b,c)$. Apply Proposition \ref{prop: better KDT} to find a $Q\in \bC^{*}\ip{T_{i}:i\in I}^{I'}$ so that $\|Q(a,b,c)-(a',b',c')\|_{2,\widetilde{F}}<\eta$ and $\|Q_{i}\|_{R,\infty}\leq R_{i}'$ for all $i\in I'$.
Since $C^{*}_{X}(b)\supseteq C^{*}_{X}(b')$ we may, and will, assume that $Q|_{I_{2}'}\in \bC^{*}\ip{T_{i}:i\in I_{2}}^{I_{2}'}$. Choose a finite $G\subseteq I_{2}$ so that $Q_{i'}\in \bC^{*}\ip{T_{i}:i\in G}$ for all $i'\in G'$. Let $L=\|Q|_{G'}\|_{R,\Lip}$. Set  $\theta=\frac{\theta'}{L+1}$. Since 
\[\|Q|_{I_{1}'\sqcup I_{2}'}(a,b,c)-Q_{0}|_{I_{1}'\sqcup I_{2}'}(a,b,c)\|_{2,F';X}<2\varepsilon',\]
we have that 
\[\cU_{2}=\left\{\ell\in \Sigma_{R}:\sum_{i'\in F'}\|Q_{i'}(T')-Q_{0,i'}(T)\|_{L^{2}(\ell)}^{2}<4\varepsilon'^{2}\right\}\]
is a weak$^{*}$-neighborhood of $L_{a,b,c;X}$. 

By Lemma \ref{lem: key lem for invariance} (\ref{item: pushforward neighborhods}) and  (\ref{item: small perturbation neighborhood 2}), we may choose a neighborhood $\mathcal{U}_{3}$ of $L_{a,b,c;X}$ so that $Q(\Gamma^{(n)}(\mathcal{U}_{3}))\subseteq \Gamma^{(n)}(\mathcal{O})$ for all $n$. Let $\mathcal{U}=\mathcal{U}_{1}\cap \mathcal{U}_{2}\cap \mathcal{U}_{3}$. 
Suppose that $f\colon \Gamma^{(n)}(\mathcal{O})\to \Delta$ is $(\varepsilon',F'|\theta',G')$-injective. 
Suppose that $x,y\in \Gamma^{(n)}(\mathcal{U})$ and $f(Q(x))=f(Q(y))$ and that $\|x-y\|_{2,G}<\theta$. Then $\|Q(x)-Q(y)\|_{2,G'}\leq L\|x-y\|_{2,G}<\theta'$. Hence by choice of $f$ we have that $\|Q(x)-Q(y)\|_{2,F'}<\varepsilon'$. Our choice of $\mathcal{U}_{1},\mathcal{U}_{2}$ thus force:
\begin{align*}
\|x-y\|_{2,F}&\leq \varepsilon+\|P(Q_{0}|_{I_{1}'\sqcup I_{2'}}(x))-P(Q_{0}|_{I_{1}'\sqcup I_{2}'}(y))\|_{2,F}\\
&\leq 4\varepsilon+L\left[\|Q_{0}|_{I_{1}'\sqcup I_{2}'}(x)-Q|_{I_{1}'\sqcup I_{2}'}(x)\|_{2,F'}+\|Q_{0}|_{I_{1}'\sqcup I_{2}'}(y)-Q|_{I_{1}'\sqcup I_{2}'}(y)\|_{2,F'} \right]\\
&+\|P(Q|_{I_{1}'\sqcup I_{2}'}(x))-P(Q|_{I_{1}'\sqcup I_{2}'}(y))\|_{2,F}\\
&\leq 4\varepsilon+5L\varepsilon'<9\varepsilon.    
\end{align*}
Thus $f\circ Q$ is $(9\varepsilon,F|G,\theta)$-injective. So we have shown that 
\[\md_{9\varepsilon,F|\theta,G}(\mathcal{U}:c)\leq \md_{\varepsilon',F'|\theta',G'}(\mathcal{O}:c').\]
A fortiori,
\[\md_{9\varepsilon,F}(a|b:c)\leq \md_{\varepsilon',F'|\theta',G'}(\mathcal{O}:c').\]
Observe that $\varepsilon',F'$ depend upon $\varepsilon,F$ but do not depend upon $\mathcal{O},\theta',G'$. Moreover, $\theta',G'$ also do not depend upon $\mathcal{O}$. Thus we may fix $\varepsilon',F',\varepsilon,F,\theta',G'$ and take the infimum over $\mathcal{O}$ to obtain
\[\md_{9\varepsilon,F}(a|b:c)\leq \md_{\varepsilon',F'|\theta',G'}(a'|b':c').\]
Since $G',\theta'$ do not depend upon $\varepsilon,F,\varepsilon',F'$ and $\varepsilon,F$ do not depend upon $\varepsilon',F',\theta',G'$ we can take the infimum over $G',\theta'$ to see  that
\[\md_{9\varepsilon,F}(a|b:c)\leq \md_{\varepsilon',F'}(a'|b':c').\]
A fortiori,
\[\md_{9\varepsilon,F}(a|b:c)\leq \delta_{MD}(a'|b':c').\]
Taking the supremum over $\varepsilon,F$ shows that 
\[\delta_{MD}(a|b:c)\leq \delta_{MD}(a'|b':c').\]

\end{proof}

That $\delta_{MD}(a|b:c)$ is an invariant of the generated algebras follows immediately. 

\begin{cor}
Let $(M,X)$ be a tracially complete $C^{*}$-algebra, and $(I_{i})_{i=1}^{3},(I_{i}')_{i=1}^{3}$ be sets. Suppose that $a\in M^{I_{1}},b\in M^{I_{2}},c\in M^{I_{3}}$, $a'\in M^{I_{1}'},b'\in M^{I_{2}'},c\in M^{I_{3}'}$ are given and that $C^{*}_{X}(b)=C^{*}_{X}(b')$, $C^{*}_{X}(a,b)=C^{*}_{X}(a',b')$, $C^{*}_{X}(a,b,c)=C^{*}_{X}(a',b',c')$. Then
\[\delta_{MD}(a|b:c)=\delta_{MD}(a'|b':c').\]
\end{cor}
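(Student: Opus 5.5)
This corollary follows by applying Theorem \ref{thm: invariance of MD} twice, once in each direction. The plan is to check that the three inclusion hypotheses of that theorem hold in both orderings of the tuples.

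\emph{First direction.} Take the triple $(a,b,c)$ as the unprimed data and $(a',b',c')$ as the primed data in Theorem \ref{thm: invariance of MD}. The three required hypotheses are
\[C^{*}_{X}(b)\supseteq C^{*}_{X}(b'),\qquad C^{*}_{X}(a,b)\subseteq C^{*}_{X}(a',b'),\qquad C^{*}_{X}(a,b,c)\supseteq C^{*}_{X}(a',b',c').\]
Each of these holds because the corresponding algebras are assumed equal. The theorem then gives
\[\delta_{MD}(a|b:c)\leq \delta_{MD}(a'|b':c').\]

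\emph{Second direction.} Interchange the roles, so that $(a',b',c')$ is the unprimed data and $(a,b,c)$ is the primed data. The three hypotheses now read
\[C^{*}_{X}(b')\supseteq C^{*}_{X}(b),\qquad C^{*}_{X}(a',b')\subseteq C^{*}_{X}(a,b),\qquad C^{*}_{X}(a',b',c')\supseteq C^{*}_{X}(a,b,c).\]
Again these are immediate from the assumed equalities, and the theorem yields the reverse inequality
\[\delta_{MD}(a'|b':c')\leq \delta_{MD}(a|b:c).\]
Combining the two inequalities gives equality.

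The theorem is stated for arbitrary index sets $I_i$ and $I_i'$, so swapping the two triples is legitimate. It also does not depend on which cutoff functions are used, since its proof fixes arbitrary cutoffs $R$ and $R'$. Consequently there is no substantive obstacle here: the only care needed is to confirm that the mixed directions of inclusion in Theorem \ref{thm: invariance of MD} are satisfied in both orderings, and the equality hypotheses guarantee this.
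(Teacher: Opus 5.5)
Your proposal is correct and matches the paper's approach: the paper states the corollary "follows immediately" from Theorem \ref{thm: invariance of MD}, which is exactly your two applications with the roles of the primed and unprimed triples swapped. Since the assumed equalities supply all three inclusions in both orderings, the two resulting inequalities combine to give equality.
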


Because of the above Theorem, we may unambiguously define $\delta_{MD}(Q|N:M)$ whenever $(M,X)$ is a tracially complete $C^{*}$-algebra and $Q\leq N\leq M$ as follows. Choose tuples $a,b,c$ in $M$ with $C^{*}_{X}(b)=N,C^{*}_{X}(a,b)=Q,C^{*}_{X}(a,b,c)=M$. We then set
\[\delta_{MD}(Q|N:M;X)=\delta_{MD}(a|b:c),\]
and $\delta_{MD}(N:Q;X)=\delta_{MD}(b:a)$.
This invariant enjoys the following monotonicity properties. 
\begin{cor}\label{cor:basic properties}
Let $(M;X)$ be a tracially complete $C^{*}$-algebra.
\begin{enumerate}[(i)]
    \item If $N_{1}\leq N_{2}\leq Q_{2}\leq Q_{1}\leq M_{1}\leq M_{2}$ are tracially complete sub-$C^{*}$-algebras of $M$, then
\[\delta_{MD}(Q_{2}|N_{2}:M_{2};X)\leq \delta_{MD}(Q_{1}|N_{1}:M_{1};X).\]
\item If $N\leq Q\leq M$, then
\[\delta_{MD}(Q:M;X)\leq \delta_{MD}(Q|N:M;X)+\delta_{MD}(N:M;X).\]
\end{enumerate}
\end{cor}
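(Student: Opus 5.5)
Both parts follow by choosing generating tuples adapted to the nested algebras and then invoking Theorem \ref{thm: invariance of MD} for (i) and Proposition \ref{prop: relative inequlaity} for (ii).

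For (i), we first need to fix the notation. By the convention above, $\delta_{MD}(Q|N:M;X)=\delta_{MD}(a|b:c)$ whenever $C^{*}_{X}(b)=N$, $C^{*}_{X}(a,b)=Q$ and $C^{*}_{X}(a,b,c)=M$. Choose tuples $a_{1},b_{1},c_{1}$ realizing $(Q_{1},N_{1},M_{1})$ and $a_{2},b_{2},c_{2}$ realizing $(Q_{2},N_{2},M_{2})$. Any tracially complete subalgebra is generated by some tuple, for instance all of its elements, which is allowed since the index sets are arbitrary. We then check the three hypotheses of Theorem \ref{thm: invariance of MD} with $(a,b,c)=(a_{2},b_{2},c_{2})$ and $(a',b',c')=(a_{1},b_{1},c_{1})$:
\[C^{*}_{X}(b_{2})=N_{2}\supseteq N_{1}=C^{*}_{X}(b_{1}),\]
\[C^{*}_{X}(a_{2},b_{2})=Q_{2}\subseteq Q_{1}=C^{*}_{X}(a_{1},b_{1}),\]
\[C^{*}_{X}(a_{2},b_{2},c_{2})=M_{2}\supseteq M_{1}=C^{*}_{X}(a_{1},b_{1},c_{1}).\]
The theorem then gives $\delta_{MD}(a_{2}|b_{2}:c_{2})\leq\delta_{MD}(a_{1}|b_{1}:c_{1})$, which is exactly (i).

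For (ii), pick a tuple $b$ with $C^{*}_{X}(b)=N$, a tuple $a$ with $C^{*}_{X}(a,b)=Q$ (for instance $a$ generating $Q$ itself), and a tuple $c$ with $C^{*}_{X}(a,b,c)=M$. Proposition \ref{prop: relative inequlaity} gives
\[\delta_{MD}(a,b:c)\leq\delta_{MD}(b:a,c)+\delta_{MD}(a|b:c).\]
The left side is $\delta_{MD}(Q:M;X)$ and the last term is $\delta_{MD}(Q|N:M;X)$ by definition.

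It remains to identify $\delta_{MD}(b:a,c)$ with $\delta_{MD}(N:M;X)$, and this is the main point to check. The non-relative quantity $\delta_{MD}(b:a,c)$ should be the special case of the relative one with an empty conditioning tuple. If the conditioning tuple is empty (or consists only of the constant $1$), the $(\theta,G)$ condition is vacuous, so $(\varepsilon,F|\theta,G)$-injectivity reduces to $(\varepsilon,F)$-injectivity. Hence $\delta_{MD}(b|\emptyset:a,c)=\delta_{MD}(b:a,c)$. Theorem \ref{thm: invariance of MD}, applied with first tuple $b$, empty conditioning tuple $\emptyset$ and remaining tuple $(a,c)$ on both sides, then shows this depends only on $C^{*}_{X}(b)=N$ and $C^{*}_{X}(b,a,c)=M$. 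So $\delta_{MD}(b:a,c)=\delta_{MD}(N:M;X)$, and substituting gives (ii).
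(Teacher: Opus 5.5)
Your proposal is correct and matches the paper's proof: (i) is Theorem \ref{thm: invariance of MD} applied to generating tuples for $(Q_{2},N_{2},M_{2})$ and $(Q_{1},N_{1},M_{1})$, and (ii) is Proposition \ref{prop: relative inequlaity} with each term read as an invariant of the generated algebras. Your identification of the non-relative $\delta_{MD}(b:a,c)$ with the relative quantity for an empty conditioning tuple (so $C^{*}_{X}(\emptyset)=\bC 1$) spells out why the non-relative version is also independent of generators, which the paper only asserts.
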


\begin{proof}
Now that we know $\delta_{MD}$ is independent of the choice of generators, the first item follows from Theorem \ref{thm: invariance of MD} and the second from Proposition \ref{prop: relative inequlaity}.
\end{proof}

Finally, we note the following limit property for $\delta_{MD}$ analogous to \cite[Lemma A.10]{Hayes2018}, which in many cases allows us to reduce to the case that the algebras in question are separable with respect to $\|\cdot\|_{2,X}$. 

\begin{prop}\label{prop: limiting formula for MD}
 Let $(M,X)$ be a tracially complete $C^{*}$-algebra, and $N\leq M$. Let $N_{\alpha}$ be an increasing net of tracially complete sub-$C^{*}$-algebras of $N$ with $\overline{\bigcup_{\alpha}N_{\alpha}}^{\|\cdot\|_{2,X}}=N$. Then:
 \[\delta_{MD}(N:M;X)=\sup_{\alpha}\delta_{MD}(N_{\alpha}:M;X).\]
\end{prop}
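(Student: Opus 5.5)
One inequality is immediate. By Corollary \ref{cor:basic properties} (i) (taking $N_{1}=N_{2}=\bC$ and $M_{1}=M_{2}=M$), we have $\delta_{MD}(N_{\alpha}:M;X)\leq \delta_{MD}(N:M;X)$ for every $\alpha$. This gives $\sup_{\alpha}\delta_{MD}(N_{\alpha}:M;X)\leq\delta_{MD}(N:M;X)$. The work is in the reverse inequality.

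For the reverse inequality, I will set up generating tuples compatible with the net. Choose tuples $b^{(\alpha)}$ with $C^{*}_{X}(b^{(\alpha)})=N_{\alpha}$ (e.g. $b^{(\alpha)}$ enumerating $\Ball(N_{\alpha})$). Let $b$ be the concatenation of all $b^{(\alpha)}$, indexed by $J=\bigsqcup_{\alpha}J_{\alpha}$, and let $c$ generate $M$. Then $C^{*}_{X}(b)\supseteq\bigcup_{\alpha}N_{\alpha}$, and it is $\|\cdot\|_{2,X}$-closed on bounded sets, so $C^{*}_{X}(b)=N$. Hence $\delta_{MD}(N:M;X)=\delta_{MD}(b:c)$, computed with a cutoff function $R$ that restricts to cutoffs for each $b^{(\alpha)}$. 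Fix $\varepsilon>0$ and a finite $F\subseteq J$. Since $F$ is finite and the net is increasing, there is $\alpha_{0}$ with $F\subseteq\bigsqcup_{\beta\leq\alpha_{0}}J_{\beta}$, i.e. every $b_{j}$, $j\in F$, lies in $N_{\alpha_{0}}$. It then suffices to show
\[\md_{\varepsilon,F}(b,c)\leq \md_{\varepsilon',F'}(b^{(\alpha_{0})},c')\]
for a suitable finite $F'$ and $\varepsilon'$ comparable to $\varepsilon$, where $c'$ is all the remaining coordinates (the other $b^{(\beta)}$ together with $c$). Note that $(b^{(\alpha_0)},c')$ is just a reindexing of $(b,c)$.

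The key step is therefore the following. For the tuple $(b,c)$ and a finite $F$ of coordinates lying in $N_{\alpha_{0}}$, I want to compare the $(\varepsilon,F)$-dimension with respect to the full tuple against that of the tuple where the distinguished generators of $N_{\alpha_{0}}$ are $b^{(\alpha_{0})}$. I will argue as in the proof of Theorem \ref{thm: invariance of MD}. Using Proposition \ref{prop: better KDT}, approximate each $b_{j}$ ($j\in F$) by a polynomial $P_{j}$ in the finitely many coordinates $F'$ of $b^{(\alpha_{0})}$, with norm control and Lipschitz constant $L$. Then shrink the neighborhood so that microstates satisfy $\|P(x|_{F'})-x|_{F}\|_{2}<\varepsilon$. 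With this neighborhood, any $(\varepsilon/(L+1),F')$-injective map is $(3\varepsilon,F)$-injective. Since the microstate spaces of $(b,c)$ and $(b^{(\alpha_{0})},c')$ coincide up to reindexing of coordinates, this gives $\md_{3\varepsilon,F}(b:c)\leq\delta_{MD}(b^{(\alpha_{0})}:c')=\delta_{MD}(N_{\alpha_{0}}:M;X)$. Taking the supremum over $\varepsilon,F$ yields $\delta_{MD}(N:M;X)\leq\sup_{\alpha}\delta_{MD}(N_{\alpha}:M;X)$.

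I expect the main obstacle to be bookkeeping rather than new ideas. One point needing care is that $\delta_{MD}(b:c)$ only measures injectivity on $I$-coordinates in $b$, while the neighborhoods involve all of $b,c$. Because of this, the reduction must keep the same ambient microstate space, so that no invariance theorem is needed beyond the direct polynomial approximation above. A second point is checking that $C^{*}_{X}(b)=N$, which uses density of $\bigcup_{\alpha}N_{\alpha}$ together with the completeness of the unit ball in $\|\cdot\|_{2,X}$ (via Proposition \ref{prop: better KDT} to keep norm control).
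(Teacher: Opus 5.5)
Your argument is correct and follows the paper's route: the easy inequality from Corollary \ref{cor:basic properties}, and the reverse by concatenating generating tuples of the $N_{\alpha}$, placing a finite $F$ inside some $N_{\alpha_{0}}$, and using that $\delta_{MD}$ does not depend on the choice of generators. The polynomial approximation you add is valid but unnecessary: $(b^{(\beta)})_{\beta\leq\alpha_{0}}$ already generates $N_{\alpha_{0}}$ and contains every coordinate in $F$, so after reindexing (with $c'$ now the remaining coordinates) $\md_{\varepsilon,F}(b,c)$ is literally one of the terms in the supremum defining $\delta_{MD}((b^{(\beta)})_{\beta\leq\alpha_{0}}:c')=\delta_{MD}(N_{\alpha_{0}}:M;X)$.
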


\begin{proof}
The fact that 
 \[\delta_{MD}(N:M;X)\geq \sup_{\alpha}\delta_{MD}(N_{\alpha}:M;X)\]
follows from Corollary \ref{cor:basic properties}.
For the reverse inequality, we simply take generating tuples for each $N_{\alpha}$, combined them to get a generating tuple of $N$, and use that $\delta_{MD}(N:M;X)$ does not depend upon the choice of generating tuple for $N$.

\end{proof}

\section{Sample computations of $\delta_{MD}$ and permanence properties}

\subsection{Technical preliminary results}

In general, proving lower bounds on $\dim_{\varepsilon}$ is extremely difficult. The following is the main case where we have a good lower bound.

\begin{prop}[Proposition 2.1 of \cite{Tsuka} and Proposition \ref{prop: dim TFAE}]\label{prop:Banach space computation}
Let $(V,\|\cdot\|)$ be a finite dimensional real Banach space. If $\varepsilon<1$, then $\dim_{\varepsilon}(\Ball(V),\|\cdot\|)=\dim(V)$.
\end{prop}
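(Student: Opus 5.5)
The plan is to prove the two inequalities separately. The upper bound $\dim_{\varepsilon}(\Ball(V),\|\cdot\|)\leq \dim(V)$ is immediate: the identity map $\Ball(V)\to\Ball(V)$ is continuous and injective, hence $\varepsilon$-injective for every $\varepsilon>0$. Its target is a compact subset of $\bR^{\dim V}$ (all norms on $V$ are equivalent), so its covering dimension is $\dim(V)$.

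For the lower bound, I would pass to formulation (ii) of Proposition \ref{prop: dim TFAE}. Then it suffices to show that every finite open cover $\{U_{i}\}$ of $B:=\Ball(V)$ with $\diam(U_{i},\|\cdot\|)<\varepsilon<1$ has multiplicity at least $n+1$, where $n=\dim V$. This is a Lebesgue covering type argument, and the standard route is via a partition of unity and Brouwer's fixed point theorem. Let $\{\phi_{i}\}$ be a partition of unity subordinate to $\{U_{i}\}$, and let $N$ be the nerve of the cover. We get a continuous map $\Phi\colon B\to |N|$, $\Phi(x)=\sum_i\phi_i(x)e_i$. If the multiplicity were at most $n$, then $\dim |N|\leq n-1$.

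Next, pick a point $p_{i}\in U_{i}$ for each $i$ and define the affine map $\Psi\colon|N|\to V$ by $e_i\mapsto p_i$. Set $g=\Psi\circ\Phi$. For $x\in B$, every $i$ with $\phi_i(x)\neq 0$ satisfies $x\in U_i$, so $\|p_i-x\|<\varepsilon$. By convexity of the norm this gives $\|g(x)-x\|<\varepsilon<1$ for all $x\in B$. The image $g(B)$ lies in a finite union of affine simplices of dimension at most $n-1$, so it has empty interior in $V$. In particular some point $q$ with $\|q\|$ small is missed; indeed, a set of dimension at most $n-1$ cannot contain any open ball.

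The main step, and the one I expect to be the obstacle, is to contradict this using the fact that $g$ moves points by less than $1$. I would use the standard degree/Brouwer argument. Consider $h(x)=x-g(x)+q$ on $B$; we want a fixed point, i.e.\ $g(x)=q$. Since $\|x-g(x)\|<1$ uniformly on the compact set $B$, we have $\|x-g(x)\|\leq 1-\delta$ for some $\delta>0$. Choosing $\|q\|<\delta$ gives $h(B)\subseteq B$. Brouwer's theorem applies because $B$ is a compact convex body homeomorphic to the Euclidean ball, so $h$ has a fixed point. That fixed point satisfies $g(x)=q$, contradicting the choice of $q\notin g(B)$. Hence the multiplicity is at least $n+1$ and $\dim_\varepsilon(B)\geq n$, which gives $\dim_{\varepsilon}(\Ball(V),\|\cdot\|)=\dim(V)$. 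Alternatively, one can simply cite \cite[Proposition 2.1]{Tsuka} for this covering statement together with Proposition \ref{prop: dim TFAE} to convert between the covering and map formulations.
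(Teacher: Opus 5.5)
Your proof is correct. The paper gives no argument of its own for this statement. It cites Tsukamoto's Proposition 2.1, which is phrased for $\varepsilon$-injective maps into finite simplicial complexes, and uses Proposition \ref{prop: dim TFAE} only to translate between that formulation and the paper's definition of $\dim_{\varepsilon}$, which allows arbitrary compact Hausdorff targets.

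What you wrote is the standard argument behind the cited result, routed through the cover formulation (ii) rather than through maps into polyhedra:
\begin{enumerate}
\item Build a map $g$ from $\Ball(V)$ back into $V$ whose image lies in finitely many affine pieces of dimension at most $n-1$, and which moves every point by strictly less than the radius.
\item Use Brouwer to show that $g$ must hit a full neighborhood of the origin, which contradicts the low-dimensionality of its image.
\end{enumerate}
The gain is self-containment. For the lower bound you only need the easy implication (i)$\Rightarrow$(ii) of Proposition \ref{prop: dim TFAE}: pull back a low-order open cover of the compact image of an $\varepsilon$-injective map. The upper bound is just the identity map.

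Two steps can be simplified.
\begin{enumerate}
\item The nerve is unnecessary. Each $g(x)=\sum_i\phi_i(x)p_i$ already lies in the convex hull of at most $n$ of the points $p_i$, so $g(\Ball(V))$ is contained in finitely many affine subspaces of dimension at most $n-1$.
\item The compactness step producing $\delta$ can be skipped. Because the cover is finite, you have the uniform bound $\|g(x)-x\|\leq \max_i \diam(U_i)<\varepsilon<1$ directly.
\end{enumerate}
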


In our setting, we will have to lower bound $\dim_{\varepsilon}((\Ball(\bM_{n}(\bC)_{s.a.}))^{r}),\|\cdot\|_{2})$. Thus we need to take the unit ball of a Banach space in some norm, and measure $\dim_{\varepsilon}$ of this Ball with respect to a \emph{different} norm (in particular, in our setting the two norms will be not uniformly equivalent). The following classical result of Szarek \cite{SzarekAlmostEuc, STALmostEuc} provides a useful criterion in terms of volumes to say that on ``large" subspaces two norms on a finite-dimensional space are comparable. For a normed vector space $(V,\|\cdot\|)$, we use $\mathcal{O}(V,\|\cdot\|)$ for the group of linear maps $V\to V$ which preserve $\|\cdot\|$.  We use the formulation given in \cite{Pis}.

\begin{thm}[Theorem 6.1 of \cite{Pis}]\label{lemm: isomorphism on large subspace}
Let $\|\cdot\|$ be a norm on a finite-dimensional vector space $V$ and $\||\cdot\||$ a Hilbert space norm on a finite dimensional vector space $V$. Let $B$ be the  ball with respect to $\|\cdot\|$, and let $D$ be the ball with respect to $|\|\cdot\||$. Suppose that $ \|\cdot\|\leq |\|\cdot\||$, and let 
\[A=\left(\frac{\vol(D)}{\vol(B)}\right)^{1/\dim(V)}.\]
Then for all $1\leq k\leq \dim(V)-1$ there is a subspace $W\subseteq V$ with $\dim(V)=k$ and so that 
\begin{equation}\label{eqn: isomorphism on large subspace}
|\|x\||\leq \left(4\pi A\right)^{\frac{n}{n-k}}\|x\|\mbox{ for all $x\in W$.}
\end{equation}
Moreover, if $m_{k}$ is the unique $\mathcal{O}(V,\||\cdot\||)$-invariant probability measure on the space $G_{V,k}$ of $k$-dimensional subspaces of $V$, then
\[m_{k}\left(\{W:W \textnormal{ satisfies (\ref{eqn: isomorphism on large subspace})}\}\right)\geq 1-2^{-\dim(V)}.\]
\end{thm}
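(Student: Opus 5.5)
Since the statement is quoted from \cite{Pis}, we could simply cite it. If we want to reprove it, the plan is Szarek's volume-ratio argument: average the gauge of $B$ over Euclidean spheres, control random subspaces by Markov's inequality, and upgrade an integral bound to a pointwise one with a cap estimate.

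First, a normalization. We identify $(V,|\|\cdot\||)$ with $\ell_{2}^{n}$, $n=\dim V$. Since $\|\cdot\|\leq|\|\cdot\||$, we have $D\subseteq B$, so the ratio $\vol(D)/\vol(B)$ as written is at most $1$. The quantity the argument actually controls is the volume ratio $\bigl(\vol(B)/\vol(D)\bigr)^{1/n}$, and we read $A$ as this quantity. Let $\sigma$ be the normalized rotation-invariant measure on the Euclidean sphere $S$. Integrating in polar coordinates gives the identity
\[\int_{S}\|\theta\|^{-n}\,d\sigma(\theta)=\frac{\vol(B)}{\vol(D)}=A^{n}.\]

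Second, the random-subspace step. Sample $W$ from $m_{k}$ and let $\sigma_{W}$ be the normalized measure on $S\cap W$. By uniqueness of rotation-invariant measures,
\[\int_{G_{V,k}}\int_{S\cap W}f\,d\sigma_{W}\,dm_{k}(W)=\int_{S}f\,d\sigma\]
for every positive measurable $f$. Take $f=\|\cdot\|^{-n}$. Markov's inequality then gives a set of $W$ of $m_{k}$-measure at least $1-2^{-n}$ on which
\[\int_{S\cap W}\|\theta\|^{-n}\,d\sigma_{W}\leq 2^{n}A^{n}.\]

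Third, and this is the main obstacle, we convert this integral bound on $W$ into a pointwise lower bound for $\|\cdot\|$ on $S\cap W$. Let $t=\min_{S\cap W}\|\theta\|$, attained at $\theta_{0}$. Because $\|\cdot\|\leq|\|\cdot\||$, the norm is $1$-Lipschitz for the Euclidean metric. Hence on the cap $C=\{\theta\in S\cap W:|\|\theta-\theta_{0}\||\leq t\}$ we have $\|\theta\|\leq 2t$. We then need a lower bound $\sigma_{W}(C)\geq (t/\pi)^{k}$, say, valid in dimension $k$ for $t\leq 1$; we would get it by comparing geodesic and chordal distances and bounding the cap volume below by a Euclidean ball in $k-1$ dimensions. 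The delicate point is keeping the constants clean enough to land on exactly $4\pi$. Combining,
\[(2t)^{-n}(t/\pi)^{k}\leq (2A)^{n},\qquad\text{so}\qquad t^{-(n-k)}\leq 4^{n}\pi^{k}A^{n}\leq (4\pi A)^{n}.\]
Therefore $t\geq(4\pi A)^{-n/(n-k)}$. Since $|\|x\||=1$ on $S\cap W$, homogeneity yields $|\|x\||\leq(4\pi A)^{n/(n-k)}\|x\|$ for all $x\in W$, for all $W$ in the good set. This gives both (\ref{eqn: isomorphism on large subspace}) and the measure estimate.
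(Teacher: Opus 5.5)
Your proposal is correct and follows essentially the paper's route. The paper gives no proof of its own and just cites \cite[Theorem 6.1]{Pis}, whose proof is exactly this Szarek volume-ratio argument: the polar-coordinate identity, averaging over the Grassmannian with Markov's inequality, and a cap estimate at the minimizer of $\|\cdot\|$ on $S\cap W$. You were also right to read $A$ as $\bigl(\vol(B)/\vol(D)\bigr)^{1/\dim V}$. As printed, the ratio is inverted: it is at most $1$ because $D\subseteq B$, and the conclusion then fails once $4\pi A<1$. The corrected orientation is the one the paper actually uses when it applies the result in the proof of Theorem~\ref{thm:computation of dim for op norm vs 2-norm}.
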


Note that in our setting, $\Ball(\bM_{n}(\bC),\|\cdot\|_{\infty})\subseteq \Ball(\bM_{n}(\bC),\|\cdot\|_{2})$, which is precisely the opposite inclusion in Szarek's theorem. We thus work with a dual statement, where we find quotients by ``small" subspaces on which the two norms are comparable. 
If $V$ is a real locally convex space, and $C\subseteq V$ we use $C^{o}$ for the polar of $C$ given by
\[\{\phi\in V^{*}:\phi(v)\leq 1\textnormal{ for all $v\in C$}\}.\]
Given a Banach space $(V,\|\cdot\|)$ and a close linear subspace $W\leq V$, we will abuse notation and still use $\|\cdot\|$ for the quotient norm on $V/W$. From Theorem \ref{lemm: isomorphism on large subspace}, we have the following. 

\begin{lem}\label{lemm:isomorphism on large quotient}
Let $\|\cdot\|$ be a norm on a finite-dimensional vector space $V$ and $\||\cdot\||$ a Hilbert space norm on a finite dimensional vector space $V$. Let $B$ be the  ball with respect to $\|\cdot\|$, and let $D$ be the ball with respect to $\||\cdot\||$. Suppose that $ |\|\cdot\||\leq \|\cdot\|$, and let 
\[A=\left(\frac{\vol(D^{o})}{\vol(B^{o})}\right)^{1/\dim(V)},\]
Then for all $1\leq k\leq \dim(V)-1$ there is a subspace $W\subseteq V$ with $\dim(V)=k$ and so that 
\begin{equation}\label{eqn: isomorphism on large subspace 2}
|\|x\||\leq \left(4\pi A\right)^{\frac{n}{k}}\|x\|\mbox{ for all $x\in V/W$.}
\end{equation}
Moreover, if $m_{k}$ is the unique $\mathcal{O}(V,\||\cdot\||)$-invariant probability measure on the space $G_{V,k}$ of $k$-dimensional subspaces of $V$, then
\[m_{k}\left(\{W:W \textnormal{ satisfies (\ref{eqn: isomorphism on large subspace 2})}\}\right)\geq 1-2^{-\dim(V)}.\]
\end{lem}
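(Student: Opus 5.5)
The plan is to deduce the lemma from Theorem \ref{lemm: isomorphism on large subspace} by duality: apply Szarek's theorem to the dual space with the dual norms, then pass back through annihilators. Throughout, $n=\dim(V)$.

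First, use the Hilbert structure of $\||\cdot\||$ to identify $V^{*}$ with $V$ via the inner product. Under this identification:
\begin{itemize}
\item the dual norm $\||\cdot\||_{*}$ is again a Hilbert norm, with unit ball $D^{o}$;
\item the dual norm $\|\cdot\|_{*}$ has unit ball $B^{o}$;
\item since $\||\cdot\||\leq \|\cdot\|$ gives $B\subseteq D$, hence $D^{o}\subseteq B^{o}$, we get $\|\cdot\|_{*}\leq \||\cdot\||_{*}$.
\end{itemize}
These are exactly the hypotheses of Theorem \ref{lemm: isomorphism on large subspace} on $V^{*}$, with constant $A=(\vol(D^{o})/\vol(B^{o}))^{1/n}$. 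Apply it with $k'=n-k$ (which lies in $[1,n-1]$). This gives an $(n-k)$-dimensional subspace $W'\subseteq V^{*}$ with
\[\||\phi\||_{*}\leq (4\pi A)^{\frac{n}{n-k'}}\|\phi\|_{*}=(4\pi A)^{n/k}\|\phi\|_{*}\quad\text{for all }\phi\in W'.\]
Moreover, this holds for an $m_{n-k}$-proportion at least $1-2^{-n}$ of such subspaces.

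Next, set $W=(W')^{\perp}=\{v\in V:\phi(v)=0 \text{ for all } \phi\in W'\}$, which has dimension $k$. By Hahn--Banach, the dual of the quotient $(V/W,\|\cdot\|)$ is $W'$ with the restricted norm $\|\cdot\|_{*}$, and similarly for $\||\cdot\||$. So for $x\in V/W$,
\[\|x\|=\sup\{\phi(x):\phi\in W',\ \|\phi\|_{*}\leq 1\},\qquad \||x\||=\sup\{\phi(x):\phi\in W',\ \||\phi\||_{*}\leq 1\}.\]
Write $C=(4\pi A)^{n/k}$. On $W'$ we have both $\|\cdot\|_{*}\leq\||\cdot\||_{*}$ and $\||\cdot\||_{*}\leq C\|\cdot\|_{*}$. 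Comparing the two suprema then gives that the two quotient norms on $V/W$ are equivalent up to the factor $C$. The one direction not already implied by $\||\cdot\||\leq\|\cdot\|$ passing to quotients is $\|x\|\leq C\||x\||$, and that is the substantive content being extracted. I would double-check at this point which direction of (\ref{eqn: isomorphism on large subspace 2}) is intended. The direction literally written follows trivially from the hypothesis, and the useful one is the reverse, so I would record both.

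For the measure statement, I would check that $W'\mapsto (W')^{\perp}$ is a homeomorphism $G_{V^{*},n-k}\to G_{V,k}$. Under the Hilbert identification $V^{*}\cong V$, it is just the orthogonal complement map, which intertwines the actions of $\mathcal{O}(V,\||\cdot\||)$. By uniqueness of the invariant probability measure it pushes $m_{n-k}$ forward to $m_{k}$, and so the proportion $1-2^{-n}$ transfers.

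I expect the only real obstacle to be bookkeeping. This means tracking the polars and volumes under the identification $V^{*}\cong V$, so that the constant in Theorem \ref{lemm: isomorphism on large subspace} comes out as $A$ and the exponent as $n/k$, and checking the isometric identification $(V/W)^{*}\cong W'$ for both norms.
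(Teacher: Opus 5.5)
Your proposal is correct and follows the paper's argument: the paper proves this lemma in one line as the dual of Theorem~\ref{lemm: isomorphism on large subspace}, and you have written out that dualization (apply Szarek on $V^{*}$ with $k'=n-k$, set $W=(W')_{\perp}$ using $(V/W)^{*}\cong W'$ isometrically for both norms, and transport the invariant measure via $E\mapsto E^{\perp}$). You are also right about the direction: the dualization yields $\|x\|\leq (4\pi A)^{n/k}\||x\||$ on $V/W$ (with $\dim(W)=k$), which is the inequality actually used in the proof of Theorem~\ref{thm:computation of dim for op norm vs 2-norm}; there $A$ is the ratio $(\vol(B^{o})/\vol(D^{o}))^{1/\dim(V)}\geq 1$, the reciprocal of what is printed in the statement.
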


\begin{proof}
This is precisely the dual of Lemma \ref{lemm: isomorphism on large subspace}.
\end{proof}

In our specific case, we our concerned with comparing finite-dimensional noncommutative $L^{p}$-norms. For an integer $n\in \bN$, and $p\in [1,\infty]$, we let $S^{p}(n,\tr)$ be the space of $n\times n$ matrices equipped with the norm $\|A\|_{p}=\tr(|A|^{p})^{1/p}$. When $p=\infty$, we will use $M_{n}(\bC)$ instead of $S^{\infty}(n,\tr)$.
For $r\in \bN$, we will give $S^{p}(n,\tr)^{\oplus r}$ the $\ell^{p}$-direct sum norm of the noncommutative $L^{p}$-norm with respect to $\tr$.
For a linear subspace $W\subseteq S^{p}(n,\tr)^{\oplus r}$ we use $\|\cdot\|_{S^{p}(n,\tr)^{\oplus r}/W}$ for the quotient norm induced on $S^{p}(n,\tr)^{\oplus r}/W$ from the norm on $S^{p}(n,\tr)^{\oplus r}$.

Our ultimate goal will be to use Lemma \ref{lemm:isomorphism on large quotient} to show that $\dim_{\varepsilon}((\Ball(\bM_{n}(\bC)_{s.a.})^{r},\|\cdot\|_{2})$ is $(1-o(1))n^{2}$ as $n\to\infty$ and then $\varepsilon\to 0$.  Since we will use the norms being comparable on a quotient by a small-dimensional subspace, we will need to produce certain continuous sections of quotient maps. The following two lemmas will allow us to choose such continuous sections.

\begin{lem}\label{lemm:convex sets have small boundary}
Let $V$ be a finite-dimensional real vector space, and let $C\subseteq V$ be a compact convex set with $\dim(C)=\dim(V)$, and suppose that $0\in \Int(C)$. Then, for every linear subspace $W\leq V$ we have that $(\partial C)\cap W$ has measure zero with respect to any Lebesgue measure on $W$.

\end{lem}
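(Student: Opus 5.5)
Since $0\in\Int(C)$, the set $C\cap W$ is a compact convex subset of $W$ with $0$ in its relative interior. The plan is to reduce to the statement that the boundary of a full-dimensional compact convex set in a finite-dimensional space is Lebesgue null, and then prove that by a radial scaling argument.

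The first step is the reduction. I will show $(\partial C)\cap W\subseteq \partial_W(C\cap W)$, where $\partial_W$ is the boundary relative to $W$. If $x\in W$ lies in the relative interior of $C\cap W$, then $x$ is a convex combination with positive weight on some point of $C\cap W$ beyond $x$. More concretely, for $x\in C$ and $t\in[0,1)$, the point $tx$ lies in $\Int(C)$: the set $tx+(1-t)\Int(C)$ is an open subset of $C$ containing $tx$, since $0\in\Int(C)$. It is cleaner to avoid the inclusion and use the gauge directly. Let $\rho(v)=\inf\{s>0: v\in sC\}$ be the Minkowski functional of $C$. Because $C$ is compact, convex, and has $0$ in its interior, $\rho$ is a continuous, positively homogeneous, finite function on $V$. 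Moreover $C=\{\rho\le 1\}$, $\Int(C)\supseteq\{\rho<1\}$ by the scaling remark, and $\{\rho>1\}$ is open and disjoint from $C$. Hence $\partial C\subseteq\{\rho=1\}$, and therefore $(\partial C)\cap W\subseteq\{v\in W:\rho(v)=1\}$.

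The second step handles the level set inside $W$. Let $S=\{v\in W:\rho(v)=1\}$. By homogeneity the sets $tS$ for $t>0$ are pairwise disjoint, and all of them for $t\in[1/2,1]$ are contained in the bounded set $C\cap W$. Fix a Lebesgue measure $\lambda$ on $W$ and set $m=\dim W$. Then $\lambda(tS)=t^{m}\lambda(S)$. If $\lambda(S)>0$, the countably many disjoint sets $t_jS$ with distinct $t_j\in[1/2,1]$ would give $\lambda(C\cap W)\ge\sum_j t_j^m\lambda(S)=\infty$, contradicting compactness. So $\lambda(S)=0$, and hence $\lambda((\partial C)\cap W)=0$. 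The statement does not depend on which Lebesgue measure is chosen, since any two are scalar multiples of each other.

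The main point needing care is the continuity and finiteness of $\rho$ and the inclusion $\partial C\subseteq\{\rho=1\}$. Continuity follows from the bounds $c|v|\le\rho(v)\le c'|v|$ with respect to a fixed norm, given by the inner and outer balls around $0$, together with subadditivity. The degenerate case $W=0$ is trivial. Measurability is fine because $S$ is closed.
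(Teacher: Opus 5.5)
Your proof is correct and is essentially the paper's radial-scaling argument. Both rest on two facts: pulling a boundary point of $C$ toward $0$ by a factor $t<1$ lands in $\Int(C)$ (your inclusion $\partial C\subseteq\{\rho=1\}$), and Lebesgue measure on $W$ is $t^{\dim W}$-homogeneous. The only difference is the final step: you use the pairwise disjoint dilates $tS$, while the paper bounds the measure of a thin shell and lets $t\to 1$.

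Your gauge formulation is in fact slightly cleaner. The paper's displayed inclusion $\partial C\cap W\subseteq\bigl[(\tfrac{1}{t}C)\setminus C\bigr]\cap W$ cannot hold as written, since $\partial C\subseteq C$ because $C$ is closed. It should read, e.g., $\tfrac{1}{t}(\partial C\cap W)\subseteq\bigl[(\tfrac{1}{t}C)\setminus C\bigr]\cap W$ for $t\in(0,1)$, which yields the same conclusion.
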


\begin{proof}
Fix a Hilbert space norm  on $V$.
Choose $r>0$ so that $r\Ball(V)\subseteq C$. Given $p\in \partial C$, we know that $\{tp+(1-t)y:t\in [0,1),y\in r\Ball(V)\}$ is inside $C$. We leave it as an exercise to verify that $\{tp+(1-t)y:t\in [0,1),y\in r\Ball(v)\}$ is open, and so $\{tp+(1-t)y:t\in [0,1),y\in r\Ball(v)\}\subseteq \Int(C)$. In particular, $tp\in \Int(C)$ for all $t\in [0,1)$. Since $p$ was arbitrary, we see that $t\partial C\subseteq  C$ for all $t\in [0,1)$, i.e. $\partial C\subseteq \frac{1}{t}C$ for all $t\in [0,1)$. Thus 
\[\partial C\cap W\subseteq \left[\left(\frac{1}{t}C\right)\setminus C\right]\cap W=\left[\frac{1}{t} C\cap W\right]\setminus \left[C\cap W\right], \textnormal{ for all $t\in [0,1)$.}\]
Fix some choice $m_{W}$ of Lebesgue measure on $W$. Then,
\[m_{W}\left(\partial C\cap W\right)\leq t^{-\dim(W)}m_{W}( C\cap W)-m_{W}(C\cap W),\textnormal{ for all $t\in [0,1)$.}\]
The proof is completed by letting $t\to 1$. 
\end{proof}

\begin{lem}\label{lem: continuous sections on convex sets}
Let $V,W$ be finite-dimensional real vector space, and let $C\subseteq V$ be a compact convex set with $\dim(C)=\dim(V)$. Suppose that $T\colon V\to W$ is a surjective, linear map. Then there is a continuous section $\phi\colon T(\Int(C))\to \Int(C)$ of $T\big|_{\Int(C)}$.
\end{lem}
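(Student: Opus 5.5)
We have a surjective linear map $T\colon V\to W$ and a compact convex body $C$. We need a continuous map $\phi\colon T(\Int(C))\to \Int(C)$ with $T\circ\phi=\id$. The plan is to pick the section by a canonical averaging: over each point $w$, the fiber $T^{-1}(w)\cap\Int(C)$ is a nonempty, relatively open, bounded convex subset of the affine space $T^{-1}(w)$, and we take its barycenter.

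\textbf{Setup.} Fix a Lebesgue measure $m$ on $\ker T$. For $w\in T(\Int(C))$, choose any $v_{w}$ with $T(v_{w})=w$ and set
\[E_{w}=\{u\in \ker T: v_{w}+u\in \Int(C)\}.\]
This is a nonempty open bounded convex subset of $\ker T$, so $0<m(E_{w})<\infty$. Define
\[\phi(w)=v_{w}+\frac{1}{m(E_{w})}\int_{E_{w}}u\,dm(u).\]
This does not depend on the choice of $v_{w}$, since shifting $v_{w}$ by an element of $\ker T$ translates $E_{w}$, and barycenters are translation covariant.

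\textbf{Step 1: $\phi$ is a section into $\Int(C)$.} Clearly $T\phi(w)=w$. The barycenter of an open convex set lies in that set, so $\phi(w)\in v_{w}+E_{w}\subseteq \Int(C)$.

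\textbf{Step 2: choosing $v_{w}$ continuously.} Fix a linear right inverse $S\colon W\to V$ of $T$, which exists since $T$ is surjective, and take $v_{w}=S(w)$. Then $E_{w}=(\Int(C)-S(w))\cap\ker T$. Continuity of $\phi$ reduces to continuity of
\[w\mapsto m(E_{w})\quad\text{and}\quad w\mapsto \int_{E_{w}}u\,dm(u).\]

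\textbf{Step 3: continuity, the main step.} Let $w_{n}\to w$ in $T(\Int(C))$. The sets $E_{w_{n}}$ all lie in a fixed compact set, because $C$ is compact and $S$ is continuous. I will show that $1_{E_{w_{n}}}\to 1_{E_{w}}$ pointwise off the set
\[N=\{u\in \ker T: S(w)+u\in\partial C\}.\]
Pointwise convergence at $u\notin N$:
\begin{itemize}
\item If $S(w)+u\in\Int(C)$, then by openness $S(w_{n})+u\in \Int(C)$ for large $n$.
\item If $S(w)+u\notin C$, then since $C$ is closed, $S(w_{n})+u\notin C$ for large $n$.
\end{itemize}
The exceptional set is null. $N$ is a translate of $\bigl(\partial C\cap(S(w)+\ker T)\bigr)$, a section of $\partial C$ by an affine subspace. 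Lemma \ref{lemm:convex sets have small boundary} is stated for linear subspaces $W$ and requires $0\in\Int(C)$. To apply it, pick a point $x_{0}\in \Int(C)\cap (S(w)+\ker T)$, which exists since $w\in T(\Int(C))$. Translate so that $x_{0}=0$. Then $S(w)+\ker T$ becomes the linear subspace $\ker T$, and the lemma gives $m(N)=0$.

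\textbf{Conclusion.} Dominated convergence, with dominating function $1$ and $|u|$ on the fixed compact set, now gives convergence of both integrals. Since $m(E_{w})>0$, the quotient is continuous, so $\phi$ is continuous.

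The only delicate point is this null-boundary argument in Step 3, which is exactly why Lemma \ref{lemm:convex sets have small boundary} was proved just before this statement. Everything else is routine.
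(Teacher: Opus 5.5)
Your proposal is correct and is essentially the paper's proof: the section is the barycenter of the fiber $(\Int(C)-x)\cap\ker(T)$ shifted by a lift $x$. Continuity comes from dominated convergence, with Lemma \ref{lemm:convex sets have small boundary} showing that the indicator functions converge off a null set. The differences are minor improvements rather than a different route. You choose lifts via a linear right inverse $S$, where the paper uses that $T$ is a quotient map to get lifts $x_{n}\to x$. You also explicitly check that the barycenter lies in $\Int(C)$ and translate the affine slice to a linear one before invoking the lemma; the paper leaves both points implicit.
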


\begin{proof}

Since $\dim(C)=\dim(W)$, we know that $C$ is the closure of its interior, and in particular $\Int(C)$ is not empty.
Note that 
$(\Int(C)-x)\cap \ker(T)$
is a nonempty open set in $\ker(T)$ for all $x\in \Int(C)$. In particular, $(\Int(C)-x)\cap \ker(T)$ has positive measure with respect to any volume on $\ker(T)$.
Fix a choice $m_{\ker(T)}$ of Lebesgue measure on $\ker(T)$. 
Define $\phi\colon T(\Int(C))\to \Int(C)$ by
\[\phi(v)=\frac{1}{m_{\ker(T)}((\Int(C)-x)\cap \ker(T))}\int_{(\Int(C)-x)\cap\ker(T)}x+w\,dw.\]
where $x\in \Int(C)$ satisfies $T(x)=v$. This does not depend upon the choice of $x$, since $m_{\ker(T)}$ is invariant under translation by elements of $\ker(T)$.
Since $T$ is linear and $T(x+w)=v$ for all $w\in \ker(T)$, we see that 
\[T(\phi(v))=\frac{1}{m_{\ker(T)}((\Int(C)-x)\cap \ker(T))}\int_{(\Int(C)-x)\cap \ker(T)}v\,dw=v.\]

Let $v_{n}$ be a sequence in $T(\Int(C))$ with $v_{n}\to v\in T(\Int(C))$. Choose $x\in \Int(C)$ with $T(x)=v$. Since $T$ is a quotient map in the topological sense, we may choose $x_{n}\in V$ with $T(x_{n})=v_{n}$ and $x_{n}\to x$. For all large $n$ we have that $x_{n}\in \Int(C)$. Note that
\[\vol((\Int(C)-x_{n})\cap \ker(T))=\int_{\ker(T)}1_{\Int(C)}(x_{n}+w)\,dw.\]
We have that $1_{\Int(C)}(x_{n}+w)\to 1_{\Int(C)}(x+w)$ provided $w\notin \partial [(\Int(C)-x)]\cap \ker(T)$. 
Since $(\Int(C)-x)\cap \ker(T)$ is a convex set of dimension equal to dimension of the $\ker(T)$ with $0$ in its interior, we know by Lemma \ref{lemm:convex sets have small boundary} that $\partial(\Int(C)-x)\cap \ker(T)$ has measure zero. 
Thus $1_{\Int(C)}(x_{n}+w)\to 1_{\Int(C)}(x+w)$ almost everywhere and the dominated convergence theorem implies that \[\vol((\Int(C)-x_{n})\cap \ker(T))\to \vol((\Int(C)-x)\cap \ker(T)).\]
Similarly,
\[\int_{\ker(T)}1_{\Int(C)}(x_{n}+w)(x_{n}+w)\,dw\to \int_{\ker(T)}1_{\Int(C)}(x+w)(x+w)\,dw.\]
Thus $\phi(v_{n})\to \phi(v)$. This shows that $\phi$ is continuous.

\end{proof}

Finally, we can put all the above results together to produce a lower bound on $\dim_{\varepsilon}((\Ball(\bM_{n}(\bC)_{s.a.})^{r},\|\cdot\|_{2})$.

\begin{thm}\label{thm:computation of dim for op norm vs 2-norm}
Fix $r\in \bN$. Then
\[\sup_{\varepsilon>0}\liminf_{n\to\infty}\frac{1}{n^{2}}\dim_{\varepsilon}(\Ball((\bM_{n}(\bC)_{s.a.})^{r}),\|\cdot\|_{2})\]

\end{thm}

\begin{proof}
Let
\[A=\sup_{n}\left(\frac{\vol(\Ball(S^{1}(n,\tr)_{s.a.}^{\oplus r}))}{\vol(\Ball(S^{2}(n,\tr)_{s.a.}^{\oplus r}))}\right)^{1/rn^{2}}.\]
By \cite[Corollaire 8]{SPVolumes} we know that $A<\infty$.
Fix $\kappa>0$. 
Let $\varepsilon\in (0,1)$ be sufficiently small in a manner depending upon $\kappa$ which we will determine later. 
If we give $M_{n}(\bC)_{s.a.}^{\oplus r}$ the inner product coming from the norm on $S^{2}(n,\tr)_{s.a.}^{\oplus r}$, then this allows us to identify $M_{n}(\bC)_{s.a.}^{\oplus r}$  with its dual. 
Under this identification, the polar of $\Ball(M_{n}(\bC)_{s.a.})^{r}$ is $\Ball(S^{1}(n,\tr)_{s.a.}^{\oplus r})$.
So by Lemma \ref{lemm:isomorphism on large quotient}, for all sufficiently large $n$ we may choose a subspace $W\subseteq S^{1}(n,\tr)_{s.a.}^{\oplus r}$ with $\dim(W)=\lceil{\kappa n^{2}r\rceil}$ and such that 
\[\|\cdot\|_{M_{n}(\bC)^{\oplus r}/W}\leq \left(4\pi A\right)^{\frac{2}{\kappa}}\|\cdot\|_{S^{2}(n,\tr)^{\oplus r}/W}.\]
Let $\pi\colon M_{n}(\bC)_{s.a.}\to M_{n}(\bC)_{s.a.}/W$ be the quotient map.
Since we give $M_{n}(\bC)_{s.a.}^{\oplus r}/W$ the quotient norm, we know that $\pi$ maps the open unit ball of $M_{n}(\bC)_{s.a.}^{\oplus r}$ onto the open unit ball of $M_{n}(\bC)_{s.a.}^{\oplus r}/W$. By compactness, we in fact have that $\pi(\Ball(M_{n}(\bC)_{s.a.})^{r})=\Ball(M_{n}(\bC)_{s.a.}^{\oplus r}/W)$. By Lemma \ref{lem: continuous sections on convex sets}, we may find a continuous section $\phi\colon \Int(\Ball(M_{n}(\bC)_{s.a.}^{\oplus r}/W))\to \Int(\Ball(M_{n}(\bC)_{s.a.})^{r})$ of $\pi\big|_{\Int(\Ball(M_{n}(\bC)_{s.a.})^{r})}$.

Suppose that $Y$ is a $d$-dimensional compact Hausdorff space and $f\colon M_{n}(\bC)_{s.a.}^{\oplus r}\to Y$ is $\varepsilon$-injective with respect to $\|\cdot\|_{2}$. Let $g=f\circ \phi$. If $g(x)=g(y)$, then we have that $\|\phi(x)-\phi(y)\|_{2}<\varepsilon.$ Since $\pi$ is contractive with respect to the quotient norm, we have that 
\[\|x-y\|_{S^{2}(n,\tr)^{\oplus r}/W}=\|\pi(\phi(x)-\phi(y))\|_{S^{2}(n,\tr)^{\oplus r}/W}<\varepsilon.\]
By our choice of $W$,
\[\|x-y\|_{M_{n}(\bC)_{s.a.}^{\oplus r}/W}\leq \left(4\pi A\right)^{\frac{2}{\kappa}}\|x-y\|_{S^{2}(n,\tr)^{\oplus r}/W}<\left(4\pi A\right)^{\frac{2}{\kappa}}\varepsilon.\]
If $\varepsilon<(4\pi A)^{-\frac{2}{\kappa}}$, then $\Int(\Ball(M_{n}(\bC)_{s.a.}^{\oplus r}/W))$  contains a closed ball of radius $\rho\in \left(\left(4\pi A\right)^{\frac{2}{\kappa}}\varepsilon,1\right)$ with respect to $\|\cdot\|_{M_{n}(\bC)_{s.a.}^{\oplus r}/W}$ and by Proposition \ref{prop:Banach space computation} this forces $\dim(Y)\geq n^{2}r-\dim(W)\geq n^{2}r(1-\kappa)$. Thus 
\[\liminf_{n\to\infty}\frac{1}{n^{2}}\dim_{\varepsilon}((\Ball(\bM_{n}(\bC)_{s.a.})^{r}),\|\cdot\|_{2})\geq (1-\kappa), \textnormal{ if $\varepsilon<(4\pi A)^{-\frac{2}{\kappa}}$.}\]
This implies that 
\[\sup_{\varepsilon>0}\liminf_{n\to\infty}\frac{1}{n^{2}}\dim_{\varepsilon}((\Ball(\bM_{n}(\bC)_{s.a.})^{r}),\|\cdot\|_{2})\geq r(1-\kappa).\]
Since $\kappa>0$ was arbitrary, the proof is completed by letting $\kappa\to 0$.

\end{proof}

In many cases, it is more useful to use unitaries instead of self-adjoints. The following lemma allows us to show that the two settings are comparable.

\begin{lem}\label{lem: coercive unitary}
Suppose $A,B\in \bM_{n}(\bC)_{s.a.}$ and $\|A\|,\|B\|\leq 1/4$. Then $\frac{1}{2}\|A-B\|_{2}\leq \|e^{iA}-e^{iB}\|_{2}\leq \|A-B\|_{2}$. 

\end{lem}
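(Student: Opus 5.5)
The upper bound is immediate from the fact that $x\mapsto e^{ix}$ is $1$-Lipschitz in the appropriate operator sense. Write
\[e^{iA}-e^{iB}=\int_{0}^{1}\frac{d}{dt}\bigl(e^{itA}e^{i(1-t)B}\bigr)\,dt=\int_{0}^{1}e^{itA}\,i(A-B)\,e^{i(1-t)B}\,dt.\]
The integrand is $A-B$ multiplied on both sides by unitaries. Since $\|\cdot\|_{2}$ is unitarily invariant, the triangle inequality for the Bochner integral gives $\|e^{iA}-e^{iB}\|_{2}\leq \|A-B\|_{2}$. This half does not use the hypothesis $\|A\|,\|B\|\leq 1/4$.

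For the lower bound, I plan to invert the exponential using a power-series bound. Write $e^{iA}=1+iA+R(A)$ with $R(A)=\sum_{m\geq 2}\frac{(iA)^{m}}{m!}$, so that
\[e^{iA}-e^{iB}=i(A-B)+\bigl(R(A)-R(B)\bigr).\]
The key estimate is a noncommutative telescoping bound: for $\|A\|,\|B\|\leq s$,
\[\|A^{m}-B^{m}\|_{2}=\Bigl\|\sum_{j=0}^{m-1}A^{j}(A-B)B^{m-1-j}\Bigr\|_{2}\leq m s^{m-1}\|A-B\|_{2}.\]
This uses $\|XYZ\|_{2}\leq \|X\|\,\|Y\|_{2}\,\|Z\|$. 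Summing over $m\geq 2$ gives
\[\|R(A)-R(B)\|_{2}\leq \sum_{m\geq 2}\frac{s^{m-1}}{(m-1)!}\|A-B\|_{2}=(e^{s}-1)\|A-B\|_{2}.\]
With $s=1/4$ we have $e^{1/4}-1\approx 0.284<1/2$. Hence
\[\|e^{iA}-e^{iB}\|_{2}\geq \|A-B\|_{2}-(e^{1/4}-1)\|A-B\|_{2}\geq \tfrac12\|A-B\|_{2}.\]

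The only step needing care is the telescoping estimate, where noncommutativity of $A$ and $B$ matters; the mixed-norm Hölder inequality handles it. Everything else is a direct computation, and the choice of constant $1/4$ is exactly what makes $e^{s}-1<1/2$.
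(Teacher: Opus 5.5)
Your argument is correct, and both halves differ from the paper's.

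The paper works along the straight-line path $\gamma(t)=(1-t)A+tB$. It writes $e^{iB}-e^{iA}=\int_{0}^{1}T_{\gamma(t)}(B-A)\,dt$, where $T_{C}(Y)=i\int_{0}^{1}e^{isC}Ye^{i(1-s)C}\,ds$ is the derivative of $C\mapsto e^{iC}$ (Duhamel's formula, which the paper cites from perturbation theory). The upper bound is then $\|T_{C}\|_{2\to 2}\leq 1$. The lower bound is the uniform estimate $\|T_{C}-i\,\mathrm{id}\|_{2\to 2}\leq 2\sup_{|x|\leq 1/4}|e^{ix}-1|\leq \frac{1}{2}$ for $\|C\|\leq 1/4$. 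In other words, it is a ``derivative uniformly close to $i\,\mathrm{id}$ on a convex set'' argument.

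Your product path $e^{itA}e^{i(1-t)B}$ gets the upper bound from the product rule and unitary invariance alone, with no need for the derivative formula. Your telescoping series bound is a direct Lipschitz estimate on $X\mapsto e^{iX}-1-iX$. It is fully elementary, gives the better constant $2-e^{1/4}\approx 0.72$, and does not even use self-adjointness. A minor quibble: $1/4$ is not ``exactly'' what is needed, since any $s<\log(3/2)$ gives $e^{s}-1<\frac{1}{2}$.

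What the paper's formalism buys is reuse. The operator $T_{C}$ and the path-derivative estimate are recycled directly in the next lemma on $e^{iY}Pe^{-iY}$, where a power-series treatment would be messier.
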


\begin{proof}
Given $C\in \bM_{n}(\bC)_{s.a.}$, we define $T_{C}\colon \bM_{n}(\bC)_{s.a.}\to \bM_{n}(\bC)$ by 
\[T_{C}(B)=i\int_{0}^{1}e^{it C}Be^{i(1-t)C}\,dt.\]
Define $\gamma\colon [0,1]\to \bM_{n}(\bC)_{s.a}$ by $\gamma(t)=(1-t)A+tB$. Since $T_{C}$ is the differential of $C\mapsto e^{iC}$ at $C$ (see \cite[Chapter, Section 14 on perturbation theory]{Bellman}), we have $e^{iB}-e^{iA}=\int_{0}^{1}T_{\gamma(t)}(B-A)\,dt$. So:
\[\|e^{iB}-e^{iA}\|_{2}\leq \int_{0}^{1}\|T_{\gamma(t)}(B-A)\|_{2}\,dt\leq \|B-A\|_{2},\]
the last estimate following as $T_{C}$ is $\|\cdot\|_{2}$-$\|\cdot\|_{2}$ for contractive for all $C\in \bM_{n}(\bC)_{s.a.}$. 
So:
\[\|e^{iB}-e^{iA}-i(B-A)\|_{2}\leq \int_{0}^{1}\|(T_{\gamma(t)}-\id)(B-A)\|_{2}\,dt.\]
Note that:
\[\|T_{\gamma(t)}-\id\|_{B(S^{2}(N,\tr))}\leq 2\sup_{X\in \bM_{n}(\bC)_{s.a},\|X\|\leq 1/4}\|e^{iX}-1\|\leq 2\sup_{x\in [-1/4,1/4]}|e^{ix}-1|\leq \frac{1}{2}.\]
Thus:
\[\|e^{iA}-e^{iB}\|_{2}\geq \frac{1}{2}\|A-B\|_{2}.\]

\end{proof}

From Lemma \ref{lem: coercive unitary} and Theorem \ref{thm:computation of dim for op norm vs 2-norm} we deduce the following. 

\begin{cor}\label{cor: epsilon dim of unitaries}
Fix an integer $r\in \bN$. Then
\[\sup_{\varepsilon>0}\liminf_{n\to\infty}\frac{1}{n^{2}}\dim_{\varepsilon}(\cU(\bM_{n}(\bC))^{r},\|\cdot\|_{2})=r.\]

\end{cor}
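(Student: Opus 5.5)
The plan is to prove two inequalities: an upper bound of $r$ from the real dimension of the space, and a lower bound by transferring Theorem \ref{thm:computation of dim for op norm vs 2-norm} through the exponential map of Lemma \ref{lem: coercive unitary}.

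For the upper bound, note that $\cU(\bM_{n}(\bC))^{r}$ is a compact manifold of real dimension $rn^{2}$. The identity map into itself is $\varepsilon$-injective for every $\varepsilon>0$. Hence $\dim_{\varepsilon}(\cU(\bM_{n}(\bC))^{r},\|\cdot\|_{2})\leq rn^{2}$, and after dividing by $n^{2}$ and taking $\liminf$ and $\sup$, the quantity is at most $r$.

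For the lower bound, fix $\kappa>0$. By Theorem \ref{thm:computation of dim for op norm vs 2-norm}, together with the scaling $A\mapsto A/4$, choose $\varepsilon_{0}>0$ such that
\[\dim_{\varepsilon_{0}}\bigl(((1/4)\Ball(\bM_{n}(\bC)_{s.a.}))^{r},\|\cdot\|_{2}\bigr)\geq (1-\kappa)rn^{2}\]
for all large $n$. The scaling is harmless: dilating by $1/4$ turns $\varepsilon$-injectivity into $\varepsilon/4$-injectivity, so we simply replace $\varepsilon_{0}$ by $\varepsilon_{0}/4$.

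Define $E\colon ((1/4)\Ball(\bM_{n}(\bC)_{s.a.}))^{r}\to\cU(\bM_{n}(\bC))^{r}$ by $E(A)=(e^{iA_{j}})_{j}$. This map is continuous. By Lemma \ref{lem: coercive unitary} applied coordinatewise and summed in squares, it satisfies $\|E(A)-E(B)\|_{2}\geq\frac12\|A-B\|_{2}$.

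Now suppose $f\colon\cU(\bM_{n}(\bC))^{r}\to\Delta$ is continuous and $\varepsilon_{0}/2$-injective. If $f(E(A))=f(E(B))$, then $\|E(A)-E(B)\|_{2}<\varepsilon_{0}/2$, and so $\|A-B\|_{2}<\varepsilon_{0}$. Thus $f\circ E$ is continuous and $\varepsilon_{0}$-injective, which forces $\dim\Delta\geq(1-\kappa)rn^{2}$. Therefore
\[\liminf_{n\to\infty} n^{-2}\dim_{\varepsilon_{0}/2}(\cU(\bM_{n}(\bC))^{r},\|\cdot\|_{2})\geq (1-\kappa)r.\]
Letting $\kappa\to0$ gives the lower bound $r$.

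The only delicate point is that Theorem \ref{thm:computation of dim for op norm vs 2-norm} is stated without its right-hand side. I am reading it, as its proof shows, as asserting the value $r$, that is, the bound $\geq r(1-\kappa)$ for every $\kappa>0$. Everything else is routine composition of maps.
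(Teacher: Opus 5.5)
Your proposal is correct and takes the route the paper indicates. The lower bound comes from Theorem \ref{thm:computation of dim for op norm vs 2-norm}, rescaled to operator-norm radius $1/4$, transferred through the coordinatewise exponential map and the coercivity estimate of Lemma \ref{lem: coercive unitary}; the upper bound $r$ comes from $\cU(\bM_{n}(\bC))^{r}$ being a compact manifold of real dimension $rn^{2}$. Your reading of the truncated theorem statement as asserting the value $r$ (equivalently, the bound $r(1-\kappa)$ for every $\kappa>0$) matches what its proof establishes.
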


It an also be helpful to use projections instead of arbitrray self-adjoints. As with the case of unitaries, this will follow from once we prove the following Lemma.

\begin{lem}\label{lem: case of projections}
Let $P\in \bM_{n}(\bC)$ with $\tr(P)=k/n$, for some $k\in \bN$. Suppose that $X,Y\in \bM_{n}(\bC)_{s.a.}$ and that $(1-P)X(1-P)=PXP=0=PYP=(1-P)Y(1-P)$ have $\|X\|,\|Y\|\leq 1/8$ then
\[\frac{1}{2}\|Y-X\|_{2}\leq \|e^{iY}Pe^{-iY}-e^{iX}Pe^{-iX}\|_{2}\leq 2\|Y-X\|_{2}.\]
\end{lem}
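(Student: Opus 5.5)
The plan is to view the map $F(Z)=e^{iZ}Pe^{-iZ}$ as a perturbation of its linearization $H\mapsto i[H,P]$ at $Z=0$. On the subspace of $P$-off-diagonal self-adjoint matrices this linearization is an isometry for $\|\cdot\|_{2}$. Both inequalities then follow by integrating along the segment $\gamma(t)=(1-t)X+tY$, in the spirit of Lemma \ref{lem: coercive unitary}. The hypothesis $\tr(P)=k/n$ plays no real role beyond $P$ being a projection.

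\emph{Upper bound.} I would write
\[e^{iY}Pe^{-iY}-e^{iX}Pe^{-iX}=(e^{iY}-e^{iX})Pe^{-iY}+e^{iX}P(e^{-iY}-e^{-iX}).\]
Since $P$ and the unitaries are contractions, each term has $\|\cdot\|_{2}$ at most $\|e^{\pm iY}-e^{\pm iX}\|_{2}$. Because $\|X\|,\|Y\|\leq 1/8\leq 1/4$, Lemma \ref{lem: coercive unitary} bounds each of these by $\|Y-X\|_{2}$ (apply it to $\pm X,\pm Y$). This gives the factor $2$.

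\emph{Lower bound, linear algebra.} Set $H=Y-X$. Then $H$ is self-adjoint and off-diagonal: $PHP=0=(1-P)H(1-P)$. This gives $HP=(1-P)HP$ and $PH=PH(1-P)$, so $[H,P]=(1-P)HP-PH(1-P)$. These two pieces are orthogonal in $L^{2}(\tr)$ and together reconstitute $H$. Hence
\[\|[H,P]\|_{2}^{2}=\|(1-P)HP\|_{2}^{2}+\|PH(1-P)\|_{2}^{2}=\|H\|_{2}^{2}.\]

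\emph{Lower bound, differential estimate.} Using $T_{C}$ from the proof of Lemma \ref{lem: coercive unitary}, the derivative of $F$ at $Z$ is
\[DF_{Z}(H)=T_{Z}(H)Pe^{-iZ}+e^{iZ}P\,T_{Z}(H)^{*},\]
and $F(Y)-F(X)=\int_{0}^{1}DF_{\gamma(t)}(H)\,dt$. The segment satisfies $\|\gamma(t)\|\leq 1/8$ by convexity. For $\|Z\|\leq 1/8$, using $|e^{ix}-1|\leq|x|$, we get $\|e^{isZ}-1\|\leq s/8$, and therefore
\[\|T_{Z}(H)-iH\|_{2}\leq\int_{0}^{1}\big(\tfrac{t}{8}+\tfrac{1-t}{8}\big)\|H\|_{2}\,dt=\tfrac18\|H\|_{2}.\]
Combining this with $\|Pe^{-iZ}-P\|\leq 1/8$ gives
\[\|T_{Z}(H)Pe^{-iZ}-iHP\|_{2}\leq\tfrac14\|H\|_{2},\]
and symmetrically $\|e^{iZ}PT_{Z}(H)^{*}-(-iPH)\|_{2}\leq \tfrac14\|H\|_{2}$. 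So $\|DF_{Z}(H)-i[H,P]\|_{2}\leq\tfrac12\|H\|_{2}$. Integrating over $t$,
\[\|F(Y)-F(X)\|_{2}\geq \|[H,P]\|_{2}-\tfrac12\|H\|_{2}=\tfrac12\|H\|_{2}.\]

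\emph{Main obstacle.} The delicate point is getting exactly the constant $\tfrac12$. A cruder estimate of the form $\|T_{Z}-i\,\id\|\leq 2\sup\|e^{iX}-1\|$, as in Lemma \ref{lem: coercive unitary}, loses too much and only yields $\tfrac14$. The bound therefore has to split $e^{itZ}He^{i(1-t)Z}-H$ into its two one-sided pieces, so that the errors add up to $\tfrac18$ rather than $\tfrac14$. I would also verify carefully that the formula for $DF_{Z}$ is correct, namely that the derivative of $e^{-iZ}$ in direction $H$ is $T_{Z}(H)^{*}$.
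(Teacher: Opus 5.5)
Your argument is correct and is essentially the paper's: integrate the derivative of $t\mapsto e^{i\gamma(t)}Pe^{-i\gamma(t)}$ along the segment $\gamma(t)=(1-t)X+tY$ and compare it with the linearization, using the off-diagonal structure of $Y-X$. Your upper bound, via the telescoping identity and Lemma \ref{lem: coercive unitary}, is only a cosmetic variant of the paper's direct bound $\|\sigma'(t)\|_{2}\leq 2\|Y-X\|_{2}$. Your write-up is in fact more careful than the paper's in two places. First, you compare with the true linearization $i[Y-X,P]$; the paper writes $i(Y-X)$, which has the same $\|\cdot\|_{2}$-norm by exactly your orthogonality computation but is not the derivative at $0$. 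Second, your one-sided estimate $\|T_{Z}(H)-iH\|_{2}\leq\tfrac{1}{8}\|H\|_{2}$ is what actually delivers the constant $\tfrac{1}{2}$.
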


\begin{proof}
Let $T_{C}$ be given as in the proof of Lemma \ref{lem: coercive unitary}
Define $\gamma(t)=(1-t)X+tY$, and $\sigma(t)=e^{i\gamma(t)}Pe^{-i\gamma(t)}$. Then:
\[\sigma'(t)=T_{\gamma(t)}(Y-X))Pe^{-i\gamma(t)}-e^{i\gamma(t)}PT_{\gamma(t)}(Y-X).\]
So $\|\sigma'(t)\|_{2}\leq 2\|Y-X\|_{2}$, by the same estimates as in the proof of Lemma \ref{lem: coercive unitary}.

Since $X,Y$ are self-adjoint and $(1-P)X(1-P)=PXP=0=PYP=(1-P)Y(1-P)$, we have that $Y-X=P(Y-X)+(Y-X)P$. 
Hence 
\begin{align*}
 \|\sigma'(t)-i(Y-X)\|_{2}&\leq \left(2\|(T_{\gamma(t)}-id)\|_{B(S^{2}(N,\tr)}+2\|e^{i\gamma(t)}-1\|\right)\|Y-X\|_{2},\\
 &\leq \frac{1}{2}\|Y-X\|_{2},
\end{align*}
where in the last step we use that $\|\gamma(t)\|\leq 1/8$ and the same estimates as in Lemma \ref{lem: coercive unitary}.
Thus
\[\|e^{iY}Pe^{-iY}-e^{iX}Pe^{-iX}-i(Y-X)\|_{2}\leq \int_{0}^{1}\|\sigma'(t)\|_{2}\,dt\leq \frac{1}{2}\|Y-X\|_{2}.\]

\end{proof}

Again this allows us to deduce appropriate estimates for $\dim_{\varepsilon}$ for projections.
For integers $1\leq k\leq n$, we let 
\[\Gr(k,n)=\left\{P\in \Proj(\bM_{n}(\bC)):\tr(P)=\frac{k}{n}\right\}\]
\begin{cor}\label{cor: projection lower bound}
Fix $r\in \bN$, and for $n\in\bN$ and $j=1,\cdots,r$ let $k_{j,n}\in \bN$ be given. Suppose that 
\[\lim_{n\to\infty}\frac{k_{j,n}}{n}=\lambda_{j}\]
exists for all $j=1,\cdots,r$.
Then
\[\sup_{\varepsilon>0}\liminf_{n\to\infty}\frac{1}{n^{2}}\dim_{\varepsilon}\left(\prod_{j=1}^{r}\Gr(k_{j,n},n),\|\cdot\|_{2}\right)= 2\sum_{j=1}^{r}\lambda_{j}(1-\lambda_{j}).\]

\end{cor}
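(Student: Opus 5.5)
Two inequalities are needed: a lower bound for the $\liminf$ and an upper bound for the $\limsup$ (the latter with a dimension count). Throughout I use that $\Gr(k,n)$ is a smooth compact manifold of real dimension $2k(n-k)$, so $\prod_j\Gr(k_{j,n},n)$ has dimension $2\sum_j k_{j,n}(n-k_{j,n})$. Dividing by $n^2$, this tends to $2\sum_j\lambda_j(1-\lambda_j)$.

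\emph{Upper bound.} The identity map into the compact manifold $\prod_j\Gr(k_{j,n},n)$ is continuous and $\varepsilon$-injective for every $\varepsilon$. Hence
\[\dim_{\varepsilon}\Big(\prod_j\Gr(k_{j,n},n)\Big)\leq 2\sum_j k_{j,n}(n-k_{j,n}),\]
and dividing by $n^2$ gives the upper bound for every $\varepsilon$.

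\emph{Lower bound.} For a fixed $P_j\in\Gr(k_{j,n},n)$ (say the coordinate projection), let $V_j$ be the real vector space of self-adjoint $X$ that are off-diagonal with respect to $P_j$, i.e. $P_jXP_j=0=(1-P_j)X(1-P_j)$. Then $X=P_jX(1-P_j)+(1-P_j)XP_j$, and $X$ is determined by the arbitrary complex block $P_jX(1-P_j)$. So $\dim_{\bR}V_j=2k_{j,n}(n-k_{j,n})$.

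Define
\[\Phi\colon\prod_j\big(\tfrac18\Ball(V_j,\|\cdot\|_\infty)\big)\to\prod_j\Gr(k_{j,n},n),\qquad \Phi(X)=(e^{iX_j}P_je^{-iX_j})_j .\]
This map is continuous, and by Lemma \ref{lem: case of projections} it satisfies $\|\Phi(X)-\Phi(Y)\|_2\geq\frac12\|X-Y\|_2$. Consequently, if $f$ on the product of Grassmannians is $\varepsilon$-injective, then $f\circ\Phi$ is $2\varepsilon$-injective on the product of operator-norm balls with respect to $\|\cdot\|_2$. Rescaling by $8$, it suffices to show that
\[\sup_\varepsilon\liminf_n\frac1{n^2}\dim_{\varepsilon}\Big(\prod_j\Ball(V_j,\|\cdot\|_\infty),\|\cdot\|_2\Big)\geq 2\sum_j\lambda_j(1-\lambda_j).\]

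\emph{Main obstacle: adapting Theorem \ref{thm:computation of dim for op norm vs 2-norm} to $V=\bigoplus_jV_j$.} The proof of that theorem used only two inputs: the polar/quotient Lemma \ref{lemm:isomorphism on large quotient}, and a uniform bound on the volume ratio $A$ between the polar of the operator-norm ball and the $2$-norm ball, with exponent $1/\dim V$. Identify $V_j$ with the complex matrices $\bM_{k\times(n-k)}(\bC)$ via $X\mapsto P_jX(1-P_j)$, writing $k=k_{j,n}$. Under this identification the operator norm of $X$ equals the operator norm of the block, and $\|X\|_2^2=\frac{2}{n}\operatorname{Tr}(bb^*)$, where $b$ is the block. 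The dual of the operator-norm ball is then a (rescaled) trace-class ball of rectangular matrices.

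The volume-ratio bound for rectangular Schatten classes follows from the same estimates as \cite{SPVolumes} for rectangular matrices. Concretely, $\vol(S_1)^{1/N}\asymp$ that of the $S_2$ ball up to an absolute constant when $N=2k(n-k)$. In the degenerate case $\lambda_j\in\{0,1\}$, the $j$-th term contributes $0$, so that factor can simply be dropped. If the rectangular volume estimate is troublesome, I would instead embed $\bM_{k\times k}$ blocks, covering the rectangle by roughly $(n-k)/k$ square blocks, and apply the square case. One could also derive the ratio crudely: $\vol(S_1)^{1/N}\gtrsim N^{-1/2}\cdot(\text{norm comparison } \sqrt{\text{rank}})$. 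This only needs $A$ bounded independently of $n$, which holds since $\|\cdot\|_1\leq\|\cdot\|_2$ and the standard Gaussian/Urysohn estimate $\mathbb{E}\|G\|_\infty\lesssim\sqrt{n}$ gives the reverse in volume-average form.

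With $A$ bounded, the rest of the proof of Theorem \ref{thm:computation of dim for op norm vs 2-norm} goes through verbatim. For any $\kappa>0$ and sufficiently small $\varepsilon$, we obtain $\dim_\varepsilon\geq(1-\kappa)\dim V$. Dividing by $n^2$ and letting $\kappa\to0$ yields the lower bound, and together with the upper bound this gives equality.
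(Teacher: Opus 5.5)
Your proposal is correct and follows the same route the paper sketches. The upper bound comes from the fact that $\prod_{j}\Gr(k_{j,n},n)$ is a compact manifold of dimension $2\sum_{j}k_{j,n}(n-k_{j,n})$. The lower bound comes from precomposing with the coercive chart $X\mapsto e^{iX}Pe^{-iX}$ of Lemma \ref{lem: case of projections} and rerunning the Szarek volume-ratio argument of Theorem \ref{thm:computation of dim for op norm vs 2-norm} on the off-diagonal subspace, which the paper leaves implicit and you make explicit.

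One correction: in the normalized $\|\cdot\|_{2}$ scaling, the volume ratio for the off-diagonal block is of order $\sqrt{n/\min(k_{j,n},n-k_{j,n})}$, which is exactly what your Urysohn/Gaussian estimate yields. So it is bounded by a constant depending on $\lambda_{j}$, not an absolute constant, and it genuinely blows up when $\lambda_{j}\in\{0,1\}$. That is why discarding those factors, as you do, is necessary rather than merely convenient.
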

Note that Lemma \ref{lem: case of projections} gives us the lower bound, where the upper bound follows from the fact that $\Gr(k,n)$ has dimension $2k(n-k)$. Moreover, the conclusion of Corollary \ref{cor: projection lower bound} holds with $\liminf_{n\to\infty}$ replace with $\limsup_{n\to\infty}$. 

\subsection{Cases where $\delta_{MD}\leq 1$}

In this section, we give many examples of tracially complete $C^{*}$-algebras $(M,X)$ with $\delta_{MD}(M;X)\leq 1$. The main point is to prove more that if $a\in M$ has $C^{*}_{X}(a)=M$, then $\delta_{MD}(M;X)$ is at most the microstates free entropy dimension of $a$ with respect to $X$. 
We recall the definition of  microstates free entropy dimension with respect to a family of traces here. 

For a space $X$ equipped with a pseudometric $\rho$, we say that $A\subseteq X$ is $\varepsilon$-dense if for all $x\in X$, there is an $a\in A$ with $\rho(a,x)<\varepsilon$. We let $K_{\varepsilon}(A,\rho)$ be the minimum cardinality of an $\varepsilon$-dense subset of $X$. 
For an index set $I$, a precompact $\Omega\subseteq \bM_{n}(\bC)^{I}$, an $\varepsilon>0$, and a finite $F\subseteq I$, we use $K_{\varepsilon,F}(\Omega)$ for $K_{\varepsilon}(\Omega,\|\cdot\|_{2,F})$. 
We say that $\Omega_{0}\subseteq\Omega$ is $(\varepsilon,F)$-orbit dense if for all $X\in \Omega$, there is a $Y\in \Omega_{0}$ and a $U\in \cU(\bM_{n}(\bC))$ so that $\|X-UYU^{*}\|_{2,F}<\varepsilon$. 
We let $K_{\varepsilon,F}^{orb}(\Omega)$ be the minimal cardinality of an $(\varepsilon,F)$-orbit dense subset of $\Omega_{0}$.

\begin{defn}
Let $(M,X)$ be a tracially complete $C^{*}$-algebra. Let $I,J$ be index sets and $a\in M^{I},b\in M^{J}$. Let $R\in [0,\infty)^{I\sqcup J}$ be a cutoff function for $a,b$. For a weak$^{*}$-neighborhood $\mathcal{O}$ of $L_{a,b:X}$, an $\varepsilon>0$, and a $k\in \bN$,  we iteratively define:
\[\delta_{\varepsilon,F,R}(\mathcal{O})=\limsup_{k\to\infty}\frac{\log K_{\varepsilon,F}(\Gamma^{(k)}_{R}(\mathcal{O}))}{k^{2}|\log(\varepsilon)|},\]
\[\delta_{\varepsilon,F,R}(a,b;X)=\inf_{\mathcal{O}\supseteq L_{a,b;X}}\delta_{\varepsilon,F,R}(\mathcal{O}),\]
\[\delta_{0}(a|_{F}:b;X)=\limsup_{\varepsilon\to 0}\delta_{\varepsilon,F,R}(\mathcal{O}),\]
\[\delta_{0}(a:b;X)=\sup_{F}\delta_{0}(a|_{F}:b;X),\]
where the infimum is over all weak$^{*}$-neighborhoods $\mathcal{O}$ of $L_{a,b;X}$, and the supremum is over all finite subsets $F$ of $I$. 
Standard methods show that this quantity does not depend upon the choice of cutoff function $R,$ and this justifies dropping the $R$ in the last two equations. We call $\delta_{0}(a:b;X)$ the \textbf{microstates free entropy dimension of $a$ in the presence of $b$ with respect to $X$.} If $X$ is clear from context, we usually just call this the \emph{microstates free entropy dimension of $a$ in the presence of $b$.}
\end{defn}

In the case $X=\{\tau\}$ for some $\tau\in \cT(M)$, so that $(M,\tau)$ is a tracial von Neumann algebra, \cite[Corollary 2.4]{JungLemma} guarantees this agrees with Voiculescu's free entropy dimension defined in \cite[Section 6]{VoiculescuFreeEntropy2}, \cite[Section6]{Voiculescu1996}.

We also recall the $1$-bounded entropy of $h(a:b;X)$ defined in \cite{macmahon20261boundedentropycalgebras}. For $\varepsilon>0$,$F\subseteq I$ finite and a weak$^{*}$-neighborhood $\mathcal{O}$ of $L_{a,b;X}$ we iteratively define
\[h_{\varepsilon,F}(\mathcal{O};X)=\limsup_{k\to\infty}\frac{1}{k^{2}}\log K_{\varepsilon}^{orb}(\Gamma^{(k)}_{R}(\mathcal{O})),\]
\[h_{\varepsilon,F}(a,b;X)=\inf_{\mathcal{O}}h_{\varepsilon,F}(\mathcal{O};X),\]
\[h(a:b;X)=\sup_{\varepsilon>0,F\subseteq I}h_{\varepsilon,F}(a,b;X),\]
where the infimum is over all weak$^{*}$-neighborhoods $\mathcal{O}$ of $L_{a,b;X}$, and the supremum is over all $\varepsilon>0$ and all finite $F\subseteq I$. By \cite{macmahon20261boundedentropycalgebras}, this is an actually an invariant of the generated tracially complete $C^{*}$-algebra, and so we can set 
\[h(C^{*}_{X}(a):C^{*}_{X}(b);X)=h(a:b;X).\]

For $k\in \bN$, we use $u_{k}$ for the uniform measure on $[k]$. The following proposition will quickly allow us to show the microstates free entropy dimension of $a$ in the presence of $b$ is an upper bound for $\delta_{MD}(C^{*}_{X}(a):C^{*}_{X}(b)).$

\begin{prop}\label{prop: width and packing dim comparison}
Let $\cH_{n}$ be a sequence of finite-dimensional Hilbert spaces with $\dim(\cH_{n})$ increasing and going to infinity, and let $C\in [0,+\infty)$ be fixed. Suppose that $\Gamma_{n}\subseteq C\Ball(\cH_{n})$ are given. Then, for any $t>0$ we have:
\[\limsup_{n\to\infty}\frac{\dim_{t}(\Gamma_{n},\|\cdot\|_{\cH_{n}})}{\dim(\cH_{n})}\leq \liminf_{\varepsilon\to 0}\limsup_{n\to\infty}\frac{\log K_{\varepsilon}(\Gamma_{n},\|\cdot\|_{\cH_{n}})}{\dim(\cH_{n})|\log(\varepsilon)|}.\]

\end{prop}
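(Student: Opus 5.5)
We compare the $t$-width dimension with covering numbers by building an explicit $t$-injective map from a fine net into a nerve complex, and then bound the dimension of that nerve by a counting (volume) argument. Fix $t>0$, $\varepsilon\in(0,t/4)$, and write $D_{n}=\dim(\cH_{n})$ and $N_{n}=K_{\varepsilon}(\Gamma_{n},\|\cdot\|_{\cH_{n}})$.

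Choose an $\varepsilon$-dense set $\{x_{1},\dots,x_{N_{n}}\}\subseteq\Gamma_{n}$. The balls $B(x_{j},2\varepsilon)$ cover $\Gamma_{n}$ and each has diameter $<t$. Using a partition of unity subordinate to this cover, the standard nerve map sends $\Gamma_{n}$ continuously into the nerve of the cover. Two points with the same image lie in a common ball, so they are at distance $<4\varepsilon<t$, and the map is $t$-injective. Hence $\dim_{t}(\Gamma_{n})$ is at most the dimension of the nerve, i.e. (maximal multiplicity of the cover) $-1$. This multiplicity is useless on its own, since it is controlled by $N_{n}$ rather than $D_{n}$. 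We therefore want a cover whose multiplicity is bounded in terms of $\log N_{n}/|\log\varepsilon|$, and the plan is to route the map through a low-dimensional linear image.

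The key step is a Johnson--Lindenstrauss / random projection argument. Take a random orthogonal projection $\pi$ of $\cH_{n}$ onto a subspace $E$ of dimension $m\approx(1+\eta)\log N_{n}/|\log\varepsilon|$, rescaled by $\sqrt{D_{n}/m}$. The goal is that, with positive probability, the rescaled $\pi$ does not shrink any difference $x_{i}-x_{j}$ with $\|x_{i}-x_{j}\|\geq t/2$ below $c\,t\,\varepsilon^{\text{small}}$. The probability that a fixed unit vector has projection of size at most $\delta\sqrt{m/D_{n}}$ is roughly $(C\delta)^{m}$. With $\delta=\varepsilon^{1/(1+\eta)}$ this is $\approx N_{n}^{-1}$-ish per pair, so we need a union bound over $N_{n}^{2}$ pairs; adjusting the constant in $m$ (a factor 2 lost, fixed by taking $\delta=\varepsilon^{\alpha}$ with $\alpha$ close to $1$, so that $\log(1/\delta)\cdot m\approx 2\log N_{n}$) makes the union bound work.

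Then define $f=\pi$ composed with the nerve map. More precisely, use $\pi:\Gamma_{n}\to E$, which is continuous with image in a space of dimension $m$. If $\pi(x)=\pi(y)$, pick net points $x_{i},x_{j}$ within $\varepsilon$ of $x$ and $y$; then $\|\pi(x_{i})-\pi(x_{j})\|\lesssim\varepsilon\sqrt{D_{n}/m}$ (the rescaled operator norm). Comparing this with the lower bound $\delta t\sqrt{m/D_{n}}$ requires $\varepsilon\ll\delta$ after rescaling, which holds since $\delta=\varepsilon^{\alpha}$ with $\alpha<1$ and $\varepsilon\to0$ while $D_{n}/m$ stays bounded. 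This last point uses $\log N_{n}\lesssim D_{n}\log(C/\varepsilon)$, so $m\sim D_{n}$ up to constants, which is fine. Hence $\|x_{i}-x_{j}\|<t/2$ and $\|x-y\|<t$. This gives $\dim_{t}(\Gamma_{n})\leq m$, and dividing by $D_{n}$, taking $\limsup_{n}$, then $\liminf_{\varepsilon\to0}$ and $\eta\to0$, yields the claim.

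The main obstacle is making the small-ball estimate for random projections sharp enough to match the factor exactly. The union bound over pairs costs a factor $2$ in $\log N_{n}$, and this must be absorbed by letting the distortion $\delta$ be $\varepsilon^{\alpha}$ with $\alpha$ just below $1$ while still beating the $\varepsilon$ net scale. The first thing to try is to take $\delta=\varepsilon^{1/2}$-type scales, check the exponents carefully against the requirement $\varepsilon\sqrt{D/m}\ll\delta$, and if the factor $2$ persists, replace the union over pairs by a net-based chaining argument.
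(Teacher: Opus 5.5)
Your key step does not close, and the problem is not just the union-bound bookkeeping. The arithmetic first. With $m=(1+\eta)\log N_n/|\log\varepsilon|$ and per-pair failure probability $(C\delta)^m$, a union bound over $N_n^2$ pairs needs $m\log(1/(C\delta))>2\log N_n$, i.e. $\delta\lesssim\varepsilon^{2/(1+\eta)}$. That means $\alpha\approx 2$, not $\alpha$ close to $1$. But the comparison with the net needs $\delta\gtrsim\varepsilon\sqrt{D_n/m}/t\gg\varepsilon$. The two requirements are compatible only if you roughly double $m$. You must also force $m\geq\eta D_n$: the bound $\log N_n\lesssim D_n\log(C/\varepsilon)$ bounds $m$ from above, not below, so $D_n/m$ need not stay bounded. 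As written, then, your argument proves the inequality with the right-hand side multiplied by $2$, not the proposition. That weaker bound would still make $\delta_{MD}$ finite for the sub-bundles in the main theorems. It does not give the constant $1$ needed in Theorem \ref{thm: nontrivial computations for tracial vNas are hard}.

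Chaining cannot remove the factor, because it is intrinsic to linear maps: the kernel of $\pi$ must avoid the normalized difference set of $\Gamma_n$, which can have twice the dimension of $\Gamma_n$. Concretely, let $\cH_n=\bR^{2k}$ with $\|v\|^2=\frac{1}{2k}\sum_i v_i^2$, and let $\Gamma_n$ be the torus of points $(\cos\theta_1,\sin\theta_1,\dots,\cos\theta_k,\sin\theta_k)$. Then $K_\varepsilon(\Gamma_n)\leq (C/\varepsilon)^k$, so the right-hand side is at most $1/2$.

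Now let $\pi\colon\bR^{2k}\to\bR^m$ be linear with kernel $W$ of dimension $d\geq 2k-m$. An extreme point $v$ of $\{w\in W:\max_i|w_i|\leq 1\}$ has at least $d$ coordinates equal to $\pm1$; otherwise you could move along a nonzero vector of $W$ vanishing on those coordinates. Each pair $(v_{2i-1},v_{2i})$ has length at most $\sqrt{2}<2$, so it is a difference of two unit vectors. Hence $v=x-y$ with $x,y\in\Gamma_n$, $\pi(x)=\pi(y)$, and $\|x-y\|^2\geq d/2k\geq 1-m/2k$. So every $t$-injective linear map needs $m\geq(1-t^2)\dim\cH_n$, which exceeds $\frac{1}{2}\dim\cH_n$ once $t<1/\sqrt{2}$.

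The missing ingredient is a genuinely nonlinear map. The paper identifies $\cH_n$ with $\ell^2(k_n,u_{k_n})$ and splits each coordinate as $h_1(x)+h_2(x)$, where $h_1(x)=f(x)x$ is a smooth truncation to $[-2/\eta,2/\eta]$.
\begin{itemize}
\item The Lipschitz map $\Phi_{1,n}$ sends $\Gamma_n$ into a cube with controlled covering numbers. There \cite[Lemma 2.18]{Hayes4}, the analogue of Lindenstrauss--Weiss's ``mean dimension $\leq$ metric mean dimension'', gives the sharp bound on $\dim_t$ in $\ell^\infty$, hence in $\ell^2$.
\item $\Phi_{2,n}$ takes values in the union of coordinate subspaces with at most $\eta^2 k_n$ nonzero coordinates, which has covering dimension at most $\eta^2 k_n$.
\end{itemize}
The product of the two maps is $t$-injective, and one then lets $\eta\to 0$.
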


\begin{proof}
Set $k_{n}=\dim(\cH_{n})$. We may, and will, take $\cH_{n}=\ell^{2}(k_{n},u_{k_{n}})$, and $C=1$. Fix $\eta>0$, and let $f\in C^{\infty}(\bR)$ be nonnegative function satisfying $1_{[-\frac{1}{\eta},\frac{1}{\eta}]}\leq f\leq 1_{[-\frac{2}{\eta},\frac{2}{\eta}]}$. Define $h_{i}\in C^{\infty}(\bR),i=1,2$ by $h_{1}(x)=f(x)x$, and $h_{2}(x)=(1-f(x))x$. Define continuous maps $\Phi_{i,n}\colon \Ball(\ell^{2}(k_{n},u_{k_{n}}))\to \bR^{k_{n}}$ by $\Phi_{i,n}(a)(j)=h_{i}(a(j))$. Set $\Xi_{n}=\Phi_{1,n}(\Gamma_{n})$. Letting $L=\|h_{1}'\|_{\infty}$, we see that $\Phi_{1,n}$ is an $L$-Lipschitz map if the domain and codomain are given the $\|\cdot\|_{\ell^{2}(u_{k_{n}})}$ norm, and that $\Xi_{n}\subseteq [-\frac{2}{\eta},\frac{2}{\eta}]^{k_{n}}$. Since $\Phi_{1,n}$ is $L$-Lipschitz, it follows that $K_{\varepsilon}(\Xi_{n},\|\cdot\|_{\ell^{2}(u_{k_{n}})})\leq K_{\varepsilon/L}(\Gamma_{n},\|\cdot\|_{\ell^{2}(u_{k_{n}})})$. Thus 
\[\liminf_{\kappa\to 0}\limsup_{n\to\infty}\frac{1}{k_{n}}K_{\kappa}(\Xi_{n},\|\cdot\|_{\ell^{2}(u_{k_{n}})})\leq \liminf_{\varepsilon\to 0}\limsup_{n\to\infty}\frac{1}{k_{n}}K_{\varepsilon}(\Gamma_{n},\|\cdot\|_{\ell^{2}(u_{k_{n}})}).\]
We may thus apply \cite[Lemma 2.18]{Hayes4} with $X=[-\frac{2}{\eta},\frac{2}{\eta}]$ to see that 
\[\limsup_{n\to\infty}\frac{1}{k_{n}}\dim_{t}(\Xi_{n},\|\cdot\|_{\ell^{2}(u_{k_{n})}})\leq \limsup_{n\to\infty}\frac{1}{k_{n}}\dim_{t}(\Xi_{n},\|\cdot\|_{\ell^{\infty}(k_{n})}) \leq \liminf_{\varepsilon\to 0}\limsup_{n\to\infty}\frac{1}{k_{n}}K_{\varepsilon}(\Gamma_{n},\|\cdot\|_{\ell^{2}(u_{k_{n}})})\]
Note that if $f\colon \Xi_{n}\to \Delta$ is $t$-injective, then so is the map $(f\circ \Phi_{1,n})\times \Phi_{2,n}\colon \Gamma_{n}\to \Delta\times \Phi_{2,n}(\Ball(\ell^{2}(k_{n},u_{k_{n}})))$. 

Moreover 
\[\Phi_{2,n}(\Ball(\ell^{2}(k_{n},u_{k_{n}}))\subseteq \bigcup_{A\subseteq [k_{n}]:|A|\leq \eta^{2}k_{n}}\{a\in \Ball(\ell^{2}(k_{n},u_{k_{n}})):a=1_{A}a\},\]
and by \cite[Page 30 and Theorem V.8]{DimTheory} the right-hand side has covering dimension at most $\eta^{2}k_{n}^{2}$. Altogether, we have shown that 
\[\limsup_{n\to\infty}\frac{1}{k_{n}}\dim_{t}(\Gamma_{n},\|\cdot\|_{\ell^{2}(u_{k_{n}})}\leq \eta^{2}+\liminf_{\varepsilon\to 0}\limsup_{n\to\infty}\frac{1}{k_{n}}K_{\varepsilon}(\Gamma_{n},\|\cdot\|_{\ell^{2}(u_{k_{n}})}).\]
The proof is now completed by sending $\eta\to 0$.

\end{proof}

From Proposition \ref{prop: width and packing dim comparison}, we immediately deduce the following.

\begin{cor}\label{cor: delta MD and MFED}
Let $(M,X)$ be a tracially complete $C^{*}$-algebra, and let $a\in M^{I},b\in M^{J}$ be given for some index sets $I,J$. Then:
\[\delta_{MD}(C^{*}_{X}(a):C^{*}_{X}(b))\leq \delta_{0}(a:b;X).\]
\end{cor}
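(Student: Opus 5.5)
The plan is to compare the two quantities level by level, at fixed $\varepsilon$, $F$ and neighborhood $\mathcal{O}$, and then pass to the limits. Fix a cutoff function $R$ for $(a,b)$, a finite $F\subseteq I$, and $t>0$. By Theorem \ref{thm: invariance of MD} we have $\delta_{MD}(C^{*}_{X}(a):C^{*}_{X}(b))=\delta_{MD}(a:b)$. So it suffices to show
\[\md_{t,F}(a,b)\leq \delta_{0}(a|_{F}:b;X).\]
Taking the supremum over $t$ and $F$ then gives the claim.

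For a weak$^{*}$-neighborhood $\mathcal{O}$ of $L_{a,b;X}$, the space $\Gamma^{(k)}_{R}(\mathcal{O})$ lives in $\bM_{k}(\bC)^{I\sqcup J}$. I would first reduce to the projection $\pi_{F}$ onto $\bM_{k}(\bC)^{F}$, regarded as the real Hilbert space $\cH_{k}$ of dimension $2|F|k^{2}$ (or $|F|k^{2}$ in the self-adjoint case) with norm $\|\cdot\|_{2,F}$.

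The key observation is that the continuity issue raised in the remarks after the definition of $\md$ works in our favor here. If $g\colon \pi_{F}(\Gamma^{(k)}_{R}(\mathcal{O}))\to\Delta$ is continuous and $t$-injective for $\|\cdot\|_{2}$, then $g\circ\pi_{F}$ is continuous on $\Gamma^{(k)}_{R}(\mathcal{O})$ and $(t,F)$-injective. Hence
\[\dim_{t}(\Gamma^{(k)}_{R}(\mathcal{O}),\|\cdot\|_{2,F})\leq \dim_{t}(\pi_{F}(\Gamma^{(k)}_{R}(\mathcal{O})),\|\cdot\|_{\cH_{k}}).\]
Similarly, $K_{\varepsilon,F}(\Gamma^{(k)}_{R}(\mathcal{O}))$ equals the covering number of $\pi_{F}(\Gamma^{(k)}_{R}(\mathcal{O}))$ in $\cH_{k}$. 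There may be a factor-$2$ discrepancy, depending on whether centers are required to lie in the set, but this is harmless after taking logs and $\varepsilon\to0$.

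Next, apply Proposition \ref{prop: width and packing dim comparison} with $\Gamma_{k}=\pi_{F}(\Gamma^{(k)}_{R}(\mathcal{O}))\subseteq C\Ball(\cH_{k})$, where $C=(\sum_{i\in F}R_{i}^{2})^{1/2}$. Normalizing by $k^{2}$ instead of $\dim\cH_{k}$ only introduces the constant $2|F|$. This would give a bound in units of $\dim$ per $|F|k^{2}$, which does not match the scaling. Rather than treat $\cH_{k}$ as one block, I would therefore use the proof of Proposition \ref{prop: width and packing dim comparison} itself: apply \cite[Lemma 2.18]{Hayes4} in the coordinates where $k^{2}$ is the natural normalization, i.e.\ with $\dim(\cH_{k})$ replaced by $k^{2}$ throughout. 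The bound $\eta^{2}k_{n}$ on the covering dimension of the tail set scales the same way. The output is
\[\md_{t,F}(\mathcal{O})\leq \liminf_{\varepsilon\to0}\limsup_{k}\frac{\log K_{\varepsilon,F}(\Gamma^{(k)}_{R}(\mathcal{O}))}{k^{2}|\log\varepsilon|}.\]
Equivalently, one can just note that the proposition is homogeneous under rescaling the normalization.

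The main obstacle is then interchanging the infimum over $\mathcal{O}$ with $\liminf_{\varepsilon\to0}$. The left side does not depend on $\varepsilon$, so we have $\md_{t,F}(a,b)\leq\md_{t,F}(\mathcal{O})$ for each $\mathcal{O}$. We need the inequality with $\inf_{\mathcal{O}}$ taken inside the $\varepsilon$-limit on the right, and $\liminf_{\varepsilon}\inf_{\mathcal{O}}$ is in general smaller than $\inf_{\mathcal{O}}\liminf_{\varepsilon}$.

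I would handle this by showing the stronger, single-$\varepsilon$ statement
\[\md_{t,F}(\mathcal{O})\leq \limsup_{k}\frac{\log K_{\varepsilon,F}(\Gamma^{(k)}_{R}(\mathcal{O}))}{k^{2}|\log\varepsilon|}+o_{\varepsilon\to0}(1),\]
with the error uniform in $\mathcal{O}$. To get this I would inspect \cite[Lemma 2.18]{Hayes4}: the error depends only on $t$, $\eta$, $\varepsilon$ and the ambient box $[-2/\eta,2/\eta]$, not on the sets $\Gamma_{k}$. Taking $\inf_{\mathcal{O}}$ at fixed $\varepsilon$ then gives
\[\md_{t,F}(a,b)\leq \delta_{\varepsilon,F}(a,b;X)+o(1).\]
Taking $\limsup_{\varepsilon\to0}$ yields $\md_{t,F}(a,b)\leq\delta_{0}(a|_{F}:b;X)$, which is the claimed bound after the supremum over $t$ and $F$.
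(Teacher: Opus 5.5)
Your reduction is the paper's. The paper deduces the corollary in one line from Proposition \ref{prop: width and packing dim comparison}. Your passage to $\pi_F(\Gamma^{(k)}_R(\mathcal{O}))\subseteq \bM_k(\bC)^F$ is what makes that application legitimate: you compose $t$-injective maps with $\pi_F$ and note that the covering numbers agree up to a factor $2$.

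The scaling detour is unnecessary. Both sides of the Proposition are divided by the same $\dim_{\bR}\bM_k(\bC)^F=2|F|k^2$, so multiplying through by $2|F|$ gives the $k^2$-normalized inequality without re-entering the proof.

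You are right that applying the Proposition one neighborhood at a time only gives $\md_{t,F}(a,b)\leq \inf_{\mathcal{O}}\liminf_{\varepsilon\to 0}\delta_{\varepsilon,F}(\mathcal{O})$. By contrast, $\delta_0(a|_F:b;X)$ has the infimum over $\mathcal{O}$ inside the $\varepsilon$-limit, and the two are not comparable in general. The paper's ``immediately'' passes over this.

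Your fix is the soft spot. The assertion that the error in \cite[Lemma 2.18]{Hayes4} depends only on $t,\eta,\varepsilon$ and the box is a claim about a proof you have not checked. It is plausible, but you do not need it. Because the microstate spaces shrink as $\mathcal{O}$ shrinks, a diagonal argument lets you use the Proposition exactly as stated.

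Suppose $D=\delta_0(a|_F:b;X)<\infty$ and $\md_{t,F}(a,b)>D+3\kappa$.
\begin{enumerate}[(1)]
\item Pick $\varepsilon_i\downarrow 0$ and neighborhoods $\mathcal{O}_i$ of $L_{a,b;X}$ with $\delta_{\varepsilon_i,F}(\mathcal{O}_i)<D+\kappa$. This is possible by the definition of the $\limsup$.
\item Set $\mathcal{O}_j'=\mathcal{O}_1\cap\cdots\cap\mathcal{O}_j$. Since $\md_{t,F}(\mathcal{O}_j')>D+3\kappa$, you can choose $k_1<k_2<\cdots$ with $\dim_t(\Gamma^{(k_j)}_R(\mathcal{O}_j'),\|\cdot\|_{2,F})>(D+3\kappa)k_j^2$.
\item Apply the Proposition to the single sequence $\Gamma_j=\pi_F(\Gamma^{(k_j)}_R(\mathcal{O}_j'))$. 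In the $k^2$-normalization, its left side is at least $D+3\kappa$.
\item For fixed $i$ and all $j\geq i$ you have $\Gamma_j\subseteq \pi_F(\Gamma^{(k_j)}_R(\mathcal{O}_i))$. Hence
\[\limsup_{j\to\infty}\frac{\log K_{2\varepsilon_i}(\Gamma_j,\|\cdot\|_{2})}{k_j^{2}|\log(2\varepsilon_i)|}\leq \frac{|\log\varepsilon_i|}{|\log(2\varepsilon_i)|}\,\delta_{\varepsilon_i,F}(\mathcal{O}_i).\]
The factor $2$ only covers the convention that centers lie in the set.
\item The right side of the Proposition is a $\liminf$ over $\varepsilon\to 0$, so by (4) it is at most $D+\kappa$.
\end{enumerate}
This contradiction gives $\md_{t,F}(a,b)\leq \delta_0(a|_F:b;X)$ with no uniformity claim. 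The rest of your argument then goes through as written.
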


Analogous to Jung's theorem that strong $1$-boundedness implies microstates free entropy dimension at most $1$ with respect to every set of generators, we have the following, which is the main way we will produce examples of tracially complete $C^{*}$-algebras $(M,X)$ which have $\delta_{MD}(M;X)\leq 1$.

\begin{prop}\label{prop: delta MD and MFED}
Let $(M,X)$ be a tracially complete $C^{*}$-algebra. Let $I,J$ be index sets and $a\in M^{I},b\in M^{J}$. Suppose that 
\[\sup_{\tau\in X}h(W^{*}(\ell_{a,\tau}):W^{*}(\ell_{b,\tau});\tau)<+\infty.\]
Then $\delta_{0}(a:b;X)\leq 1$.
\end{prop}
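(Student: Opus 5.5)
We will adapt Jung's argument that finite $1$-bounded entropy forces free entropy dimension at most $1$ to the uniform-over-$X$ setting. Fix a cutoff function $R$, a finite $F\subseteq I$, and set $C:=\sup_{\tau\in X}h(W^{*}(\ell_{a,\tau}):W^{*}(\ell_{b,\tau});\tau)<\infty$. The goal is
\[\limsup_{\varepsilon\to0}\inf_{\mathcal{O}}\limsup_{k}\frac{\log K_{\varepsilon,F}(\Gamma^{(k)}_{R}(\mathcal{O}))}{k^{2}|\log\varepsilon|}\leq 1.\]
Covering $\Gamma^{(k)}_{R}(\mathcal{O})$ splits into two parts: covering orbit representatives, and covering each unitary orbit. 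A unitary orbit $\{UYU^{*}\}$ is the image of $\cU(\bM_{k}(\bC))$ under a map that is $2\max_i R_i$-Lipschitz in $\|\cdot\|_{2}$. Since $\cU(\bM_k(\bC))$ has $\varepsilon$-covering number at most $(C'/\varepsilon)^{k^{2}}$, each orbit costs $k^{2}(|\log\varepsilon|+O(1))$ in $\log$. Hence
\[\log K_{2\varepsilon,F}(\Gamma^{(k)}_{R}(\mathcal{O}))\leq \log K^{orb}_{\varepsilon,F}(\Gamma^{(k)}_{R}(\mathcal{O}))+k^{2}(|\log\varepsilon|+O(1)).\]
Dividing by $k^{2}|\log\varepsilon|$ shows it suffices to find, for each small $\varepsilon$, a neighborhood $\mathcal{O}$ of $L_{a,b;X}$ with $\limsup_{k}k^{-2}\log K^{orb}_{\varepsilon,F}(\Gamma^{(k)}_{R}(\mathcal{O}))\leq C+1$. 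The key point is that this bound is uniform in $\varepsilon$, so after dividing by $|\log\varepsilon|$ it vanishes as $\varepsilon\to0$.

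\textbf{Step 1: local bounds.} For each $\tau\in X$, the hypothesis gives $h_{\varepsilon,F}(\ell_{a,b,\tau})\leq C$, since $h$ for the single trace dominates each $h_{\varepsilon,F}$. So there is a weak$^{*}$-neighborhood $\mathcal{O}_{\tau}$ of the single law $\ell_{a,b,\tau}$ with
\[\limsup_{k}k^{-2}\log K^{orb}_{\varepsilon,F}(\Gamma^{(k)}_{R}(\mathcal{O}_{\tau}))<C+1.\]
One point needs care here: the single-trace $h$ is an invariant of $W^{*}(\ell_{a,\tau})$ in the presence of $W^{*}(\ell_{b,\tau})$, computed with the generators $(a,b)$, and we should invoke the standard invariance of $h$ for the tracial von Neumann algebra to use these generators.

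\textbf{Step 2: compactness.} The set $L_{a,b;X}$ is weak$^{*}$-compact, so finitely many $\mathcal{O}_{\tau_{1}},\dots,\mathcal{O}_{\tau_{m}}$ cover it. Put $\mathcal{O}=\bigcup_{j}\mathcal{O}_{\tau_{j}}$. Then $\Gamma^{(k)}_{R}(\mathcal{O})=\bigcup_{j}\Gamma^{(k)}_{R}(\mathcal{O}_{\tau_{j}})$, and orbit covering numbers are subadditive, so
\[K^{orb}\leq\sum_{j}K^{orb}_{j}\leq m\max_{j}K^{orb}_{j}.\]
Because $m$ is fixed as $k\to\infty$, the factor $m$ disappears in $k^{-2}\log$. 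This gives the uniform bound $C+1$, and assembling with the orbit-covering estimate yields $\delta_{0}(a|_{F}:b;X)\leq1$ for every $F$, hence $\delta_{0}(a:b;X)\leq1$.

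\textbf{Main obstacle.} The delicate step is making Step 1 legitimate. The neighborhoods $\mathcal{O}_{\tau}$ must be neighborhoods in $\Sigma_{R}$ on the full index set $I\sqcup J$ with the same cutoff $R$, and the orbit covering must be with respect to $\|\cdot\|_{2,F}$ with $F$ fixed in advance. Pointwise finiteness of $h$ must therefore be turned into a bound at the fixed scale $\varepsilon$ and fixed $F$; this follows from monotonicity of $h_{\varepsilon,F}$ in $\varepsilon$ and $F$, together with independence of $h$ from the choice of cutoff. I would also double-check that the additive $O(1)$ in the unitary covering bound is uniform in $k$, using Szarek's estimate for covering $\cU(\bM_{k}(\bC))$ in normalized $\|\cdot\|_{2}$.
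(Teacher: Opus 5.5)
Your proof is correct. It follows the paper for the covering estimate but handles the passage from single traces to the uniform quantity differently.

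\textbf{Shared covering step.} Take an $(\varepsilon,F)$-orbit dense set and conjugate it by a Szarek net of $\cU(\bM_{k}(\bC))$ of size $(C'/\varepsilon)^{k^{2}}$. This gives an ordinary $\|\cdot\|_{2,F}$-net, so $\delta_{\varepsilon,F}$ is at most $1+h_{\varepsilon,F}/|\log\varepsilon|$ up to $o(1)$. This is exactly what the paper does.

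\textbf{Where you differ.} The issue is getting from the pointwise hypothesis (a bound on the single-trace $h$ for each $\tau\in X$) to a bound on $h_{\varepsilon,F}(a,b;X)$. The paper cites MacMahon's variational principle \cite[Theorem 6.1]{macmahon20261boundedentropycalgebras} to get $h(C^{*}_{X}(a):C^{*}_{X}(b);X)<+\infty$. You instead prove the needed inequality $h_{\varepsilon,F}(a,b;X)\leq \sup_{\tau\in X}h(a:b;\tau)$ directly. You cover the compact set $L_{a,b;X}$ by finitely many single-law neighborhoods and use subadditivity of orbit covering numbers; the fixed number $m$ of pieces disappears under $k^{-2}\log$.

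\textbf{What each buys.} Your route is self-contained and shows that the direction of the variational principle used here is just compactness. It does rely on the invariance and cutoff-independence of the single-trace $h$, so that the hypothesis can be read in the generators $(a,b)$ with cutoff $R$; you correctly flag this. The paper's route is shorter because it takes the passage to the uniform setting as a black box.

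\textbf{Two small corrections.}
\begin{itemize}
\item The Lipschitz constant of $U\mapsto (UY_{i}U^{*})_{i\in F}$ in $\|\cdot\|_{2,F}$ is $2(\sum_{i\in F}R_{i}^{2})^{1/2}$, not $2\max_{i}R_{i}$. A maximum over all of $I$ need not even exist. Since $F$ is fixed, this only changes the $O(1)$ term.
\item Step 1 does not need monotonicity of $h_{\varepsilon,F}$ in $\varepsilon$ and $F$. Since $h$ is the supremum over all $(\varepsilon,F)$, the bound $h\leq C$ already controls $\inf_{\mathcal{O}}\limsup_{k}$ at each fixed $(\varepsilon,F)$.
\end{itemize}
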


\begin{proof}
By the variational principle, \cite[Theorem 6.1]{macmahon20261boundedentropycalgebras} we have that 
\[h(C^{*}_{X}(a):C^{*}_{X}(b);X)<+\infty.\]
Fix a cutoff function $R\in [0,\infty)^{I\sqcup J}.$   
 Let $\varepsilon>0$ and a finite $F\subseteq I$ be given. Fix a weak$^{*}$-neighborhood $\mathcal{O}$  of $L_{a,b;X}$. Let $R_{F}=\max_{i\in F}R_{i}$. Let $\Xi\subseteq \Gamma^{(k)}_{R}(\mathcal{O})$ be $(\varepsilon,F)$-orbit dense. By \cite[Theorem 7]{Szarek}, we may find $\Omega\subseteq \cU(\bM_{k}(\bC))$ which is $\frac{\varepsilon}{R_{F}}$-dense in $\|\cdot\|_{\infty}$ and has $|\Omega|\leq \left(\frac{CR_{F}}{\varepsilon}\right)^{k^{2}}$ for some universal constant $C$. Then $\Gamma^{(k)}_{R}(\mathcal{O})$ is contained in the $3\varepsilon$-neighborhood of $\bigcup_{U\in \Omega}U\Xi U^{*}$ with respect to $\|\cdot\|_{2,F}$.
Thus 
\[\delta_{6\varepsilon,F}(\mathcal{O})\leq \frac{\log(CR_{F}/\varepsilon)}{|\log(\varepsilon)|}+\frac{h_{\varepsilon,F}(\mathcal{O};X)}{|\log(\varepsilon)|}.\]
Taking the infimum over $\mathcal{O}$ we find that
\[\delta_{6\varepsilon,F}(a,b;X)\leq \frac{\log(CR_{F}/\varepsilon)}{|\log(\varepsilon)|}+\frac{h_{\varepsilon,F}(a,b;X)}{|\log(\varepsilon)|}.\]
Taking the limit suprema as $\varepsilon\to 0$ and then the supremum over $F$ completes the proof.

\end{proof}

Proposition \ref{prop: delta MD and MFED} and Corollary \ref{cor: delta MD and MFED} allow us to give many examples of tracially complete $C^{*}$-algebras which $\delta_{MD}\leq 1$.
For terminology, given a tracial von Neumann algebra $(M,\tau)$ an $x\in M^{r}$ we say that $x$ is a \emph{Kazhdan tuple with constant $C>0$} if for any $M$-$M$ bimodule $\cH$, and any $\xi\in \cH$ we have 
\[\|\xi-P_{\Cent_{M}(\cH)}(\xi)\|\leq C\left(\sum_{j=1}^{r}\|[\xi,x_{j}]\|^{2}\right)^{1/2},\]
where $\Cent_{M}(\cH)=\{\xi\in \cH:x\xi=\xi x \text{ for all $x\in M$}\}$. We call $(x,C)$ a \emph{Kazhdan pair} for $M$.

If $G$ is a group and $S$ is a finite subset for $G$, we say that $S$ is a Kazhdan tuple with constant $C>0$ if for every unitary representation $\pi\colon G\to \cU(\cH)$, and for every $\xi\in \cH$ we have that 
\[\|\xi-P_{\Fix(\pi)}(\xi)\|\leq C\left(\sum_{s\in S}\|\xi-\pi(s)\xi\|^{2}\right)^{1/2},\]
where $\Fix(\pi)=\bigcap_{g\in G}\ker(\pi(g)-1).$ We call $(S,C)$ a \emph{Kazhdan pair} for $G$. We say $G$ has \emph{Property (T)} if it has a Kazhdan pair. 

\begin{cor}\label{cor: basic examples of vanishing}

\begin{enumerate}[(i)]
\item \label{item: 1 bdd entropy 0} If $h(M_{\tau},\tau)\leq 0$ for every $\tau\in X$, then $\delta_{MD}(M;X)\leq 1$. In particular, if for every $\tau\in X$ we have that $M_{\tau}$ satisfies one of the following conditions:
\begin{itemize}
    \item If $M_{\tau}$ is hyperfinite,
    \item If $M_{\tau}$ has a diffuse, regular, hyperfinite subalgebra,
    \item $M_{\tau}$ is generated by two commuting diffuse subalgebras,
    \item $M_{\tau}$ has diffuse central sequence algebra,
    \item $M_{\tau}$ has a single sequential commutation orbit in the sense of  \cite{SeqCommutation},
\end{itemize}
then $\delta_{MD}(M;X)\leq 1$. 
\item \label{item: uniformly strongly 1-bounded} If, more generally, $\sup_{\tau\in X}h(M_{\tau},\tau)<+\infty$, then $\delta_{MD}(M;X)\leq 1$.  
\item \label{item: M property (T)} 
If $(M,X)$ satisfies the following uniform Property (T) assumption: there is an $r\in \bN$ and a $C>0$ so that for every $\tau\in X$, there is an $x\in \Ball(M)^{r}$ which is a Kazhdan tuple with constant $C>0$,  then $(M;X)$ satisfies the hypotheses of (\ref{item: uniformly strongly 1-bounded}), and thus $\delta_{MD}(M;X)\leq 1$. 

In particular, this holds if $(M,X)$ is the tracial completion $(C^{*}(G),K)$ where $G$ is any Property (T) group and $K$ is a compact, convex subset of $\cT(C^{*}(G))$.

\end{enumerate}

\end{cor}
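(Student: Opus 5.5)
The plan is to reduce all three items to the chain
\[\delta_{MD}(M;X)\leq \delta_{0}(a;X)\leq 1,\]
where $a$ is a generating tuple for $M$. The first inequality is Corollary \ref{cor: delta MD and MFED} applied with $b$ empty, and the second is Proposition \ref{prop: delta MD and MFED}.

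Item (\ref{item: 1 bdd entropy 0}) is a special case of (\ref{item: uniformly strongly 1-bounded}), since $h\leq 0$ uniformly gives a finite supremum. So I would prove (\ref{item: uniformly strongly 1-bounded}) first. Choose any tuple $a\in M^{I}$ with $C^{*}_{X}(a)=M$; one can take all of $M$ as the index set, since $I$ is allowed to be uncountable. For each $\tau\in X$ we have $W^{*}(\ell_{a,\tau})\cong M_{\tau}$. The hypothesis $\sup_{\tau}h(M_{\tau},\tau)<\infty$ is therefore exactly the hypothesis of Proposition \ref{prop: delta MD and MFED} with $b$ empty, so $\delta_{0}(a;X)\leq 1$. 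Corollary \ref{cor: delta MD and MFED} then gives $\delta_{MD}(M;X)\leq 1$.

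For the listed classes in (\ref{item: 1 bdd entropy 0}), I would cite the known vanishing results for the $1$-bounded entropy:
\begin{itemize}
\item hyperfinite algebras and algebras with a diffuse regular hyperfinite subalgebra have $h\leq 0$ \cite{Jung2007, Hayes2018};
\item algebras generated by two commuting diffuse subalgebras have $h\leq 0$, using the subadditivity and join properties of $h$;
\item algebras with diffuse central sequence algebra have $h\leq 0$ \cite{Hayes2018};
\item algebras generated by a single sequential commutation orbit have $h\leq 0$ \cite{SeqCommutation}.
\end{itemize}
Since $h(N)\leq 0$ whenever $h(N)<\infty$ holds in the sense used here (as $h\in\{-\infty\}\cup[0,\infty]$), these all feed into (\ref{item: uniformly strongly 1-bounded}).

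For (\ref{item: M property (T)}), the task is to show $\sup_{\tau}h(M_{\tau},\tau)<\infty$ from a uniform Kazhdan pair $(x,C)$ with $x\in\Ball(M)^{r}$. This is the main step. I would follow Jung–Shlyakhtenko / Hayes' argument that a Kazhdan tuple forces $1$-bounded entropy to be bounded by a constant depending only on $r,C$.

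First, for matrix microstates $A$ of $x$ in $\bM_{k}(\bC)$, consider the $L^{2}$ bimodule over $\bM_{k}$. Any two microstates $A,B$ close in law can be compared through the bimodule $L^{2}(\bM_{k})$, with the left action given by $A$ and the right action by $B$. The Kazhdan inequality, transferred approximately to matrices via an ultraproduct argument, shows that $B$ is close to $UAU^{*}$ for some unitary $U$ obtained by polar decomposition of an almost-central vector. Hence the orbit covering number of the microstate space for $x$ is at most $\exp(k^{2}c(r,C,\varepsilon))$ with $c$ independent of $\tau$.

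Next, since $W^{*}(x)=M_{\tau}$ (or at least $x$ generates a subalgebra over which $M_{\tau}$ has finite entropy), this gives $h(M_{\tau},\tau)\leq c(r,C)$ uniformly in $\tau$. I expect the uniformity in $\tau$, and making the transference from Kazhdan constants in $M_{\tau}$ to matrix microstates quantitative, to be the main obstacle. I would handle it by a contradiction/ultraproduct argument that uses only $r$ and $C$.

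Finally, for $C^{*}(G)$ with $(S,C)$ a Kazhdan pair of $G$, every $M_{\tau}$ is generated by $\{u_{s}\}_{s\in S}$ up to a quotient. The group Kazhdan inequality for the unitary representation $g\mapsto u_{g}\,\cdot\,u_{g}^{*}$ on an $M_{\tau}$-bimodule shows that $(u_{s})_{s\in S}$ is a Kazhdan tuple with constant $C$ for every $\tau$. So the previous paragraph applies.
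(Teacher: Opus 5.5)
Your treatment of (i), (ii) and of the $C^{*}(G)$ example matches the paper. Item (ii) is Proposition \ref{prop: delta MD and MFED} followed by Corollary \ref{cor: delta MD and MFED} with $b$ empty. Item (i) is the special case supplied by the cited vanishing results. For $C^{*}(G)$ one checks that $(\pi_{\tau}(s))_{s\in S}$ is a Kazhdan tuple with the group's constant. Your remark that ``$h(N)\leq 0$ whenever $h(N)<\infty$'' is backwards; you only need that $h\leq 0$ implies $h<\infty$.

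The gap is in (iii). The paper does not reprove the uniform bound. It invokes the proof of \cite[Theorem 1.1]{HJKEPropT}, which gives $h(M_{\tau},\tau)\leq f(C,r)$ with $f$ depending only on $(C,r)$. Your sketch of that bound fails at its key step, for two reasons.

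\emph{Closeness in law is not enough.} Nothing makes $\hat{1}$ almost central for the bimodule with left action through $A$ and right action through $B$. Moreover, if microstates close in law were always approximately unitarily conjugate, Jung's tubularity theorem would force $M_{\tau}$ to be hyperfinite. The comparison has to be local, i.e.\ $\|A-B\|_{2}<\delta$.

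\emph{Even locally, the comparison gains nothing.} Kazhdan only gives a central vector $\eta$ with $\|\hat{1}-\eta\|_{2}\leq C\delta$. Its polar part is a partial isometry $v$ intertwining the two embeddings, with $\tr(1-v^{*}v)\leq C^{2}\delta^{2}$. Completing $v$ to a unitary $u$ yields only $\|u^{*}Au-B\|_{2}\leq 2C\sqrt{r}\,\delta$. Since necessarily $2C\sqrt{r}\geq 1$ (for $M_{\tau}\neq\bC$ and $x\in\Ball(M)^{r}$), this is no better than taking $u=1$.

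So you never land within an arbitrary $\varepsilon$ of an orbit. You are left with a count $\exp(k^{2}c(r,C,\varepsilon))$ that genuinely depends on $\varepsilon$, as you wrote, and this does not give $h=\sup_{\varepsilon}h_{\varepsilon}<\infty$. What is missing is control, uniform in $\varepsilon$, of the defect corners $1-vv^{*}$ and $1-v^{*}v$, on which the two embeddings need not be conjugate. That is the actual content of the cited bound.

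The real obstacle is therefore uniformity in $\varepsilon$, not in $\tau$: once $f$ depends only on $(C,r)$, uniformity over $X$ is automatic. Either cite \cite{HJKEPropT} as the paper does, or supply that argument.
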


\begin{proof}

(\ref{item: uniformly strongly 1-bounded}): This follows from  Proposition \ref{prop: delta MD and MFED} and Corollary and \ref{cor: delta MD and MFED}.

The fact that each class listed in (\ref{item: 1 bdd entropy 0}) has $h(M_{\tau},\tau)\leq 0$ follows from the results in \cite{Jung2007, Hayes2018} (see e.g. \cite[Section 1.2]{FreePinsker} for a detailed discussion of the first four items, and see \cite[Corollary 4.8]{Hayes2018} or \cite[Fact 2.9]{SeqCommutation} for the fifth).

(\ref{item: M property (T)}): The proof of \cite[Theorem 1.1]{HJKEPropT}, shows that there is a function $f\colon (0,\infty)\times \bN\to [0,+\infty)$ if $(M,\tau)$ is a tracial von Neumann algebra generated by a Kazhdan tuple $x\in \Ball(M)^{r}$, with constant $C>0$, then 
\[h(M,\tau)\leq f(C,r).\]
Thus our hypothesis on $(M,X)$ imply that the hypotheses of (\ref{item: uniformly strongly 1-bounded}) hold. Hence, $\delta_{MD}(M;X)\leq 1$. 

To see that these hypothesis hold in the case that $(M;X)$ is the tracial completion of $(C^{*}(G),K)$ with $G$ property (T) and $K\subseteq \cT(C^{*}(G))$ compact and convex, fix a finite generating set $S$ for $G$. Suppose that $\tau\in K$, that $M_{\tau}=\overline{\pi_{\tau}(C^{*}(G))}^{SOT}$, let $\cH$ be an $M_{\tau}$-$M_{\tau}$ bimodule. Define $\phi\colon G\to \cU(H)$ by $\phi(g)\xi=\pi_{\tau}(g)\xi\pi_{\tau}(g)^{-1}$. Then, the $M_{\tau}$-central vectors in $\cH$ are precisely $\Fix(\phi)$, and since $G$ has Property (T), there is some uniform $C>0$ so that 
\[||\xi-P_{\Fix(\phi)}(\xi)\|\leq C\left(\sum_{s\in S}\|\phi(s)\xi-\xi\|_{2}^{2}\right)^{1/2}=C\left(\sum_{s\in S}\|[\pi_{\tau}(s),\xi]\|_{2}^{2}\right)^{1/2}.\]

\end{proof}

\section{$\delta_{MD}$ as an obstruction to finite generation}

\subsection{$\delta_{MD}>1$ obstructs stable single generation for tracial von Neumann algebras}

Since $\delta_{MD}$ is an invariant of tracially complete $C^{*}$-algebras, it is in particular an invariant of tracial von Neumann algebras. It is thus natural to ask what values it can take on tracial von Neumann algebras. In this section, we show that any nontrivial computation of $\delta_{MD}(M,\tau)$ for a tracial von Neumann algebra $(M,\tau)$ would solve the generator problem in the negative. Namely, we show that following.

\begin{thm}\label{thm: nontrivial computations for tracial vNas are hard}
Suppose that there exists a tracial von Neumann $(M,\tau)$ with $\delta_{MD}(M,\tau)>1$, then there is a $\textrm{II}_{1}$-factor $N$ which is not singly generated. If, in addition, we assume that $M_{*}$ is separable, there is a nonzero projection $p\in M$ so that $pMp$ is a $\textrm{II}_{1}$-factor which is not singly generated.  
\end{thm}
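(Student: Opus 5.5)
We argue by contraposition: assume every $\textrm{II}_{1}$-factor is singly generated, and show $\delta_{MD}(M,\tau)\leq 1$ for every tracial von Neumann algebra $(M,\tau)$. The basic input is that for $X=\{\tau\}$ the invariant is bounded by the number of self-adjoint generators. If $M=W^{*}(x_{1},\dots,x_{r})$ with $x_{j}$ self-adjoint, then Corollary \ref{cor: delta MD and MFED} gives $\delta_{MD}(M,\tau)\leq\delta_{0}(x;\tau)\leq r$, using Voiculescu's bound $\delta_{0}\leq r$ for $r$ self-adjoints. So single generation alone only yields $\delta_{MD}\leq 2$, and the real work is removing this extra factor of $2$. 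For that I use \emph{stable} single generation, and I expect this to be the main obstacle.

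\textbf{Step 1 (compressions).} Show that if $p\in M$ is a projection of central support $1$ with $\tau(p)=t$, then $\delta_{MD}(M)-1\leq t^{2}(\delta_{MD}(pMp,\tau_{p})-1)$. I would mimic the classical compression formula for free entropy dimension. Take matrix units $e_{ij}$, partial isometries conjugating pieces of $1$ under $p$, and generators $y$ of $pMp$. The microstates for $(y,e_{ij})$ are then built from $\lfloor tk\rfloor\times\lfloor tk\rfloor$ microstates for $y$ together with the unitary orbit. The unitary orbit modulo the stabilizer has dimension at most $k^{2}(1-t^{2})$, so $\dim_{\varepsilon}$ is subadditive up to these orbit contributions. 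I expect a cleaner route is to prove only the inequality needed, via Proposition \ref{prop: relative inequlaity}. Specifically, $\delta_{MD}(M)\leq\delta_{MD}(M\,|\,N:M)+\delta_{MD}(N:M)$, where $N$ is a hyperfinite (or matrix) subalgebra with $\delta_{MD}(N:M)\leq 1$ by Corollary \ref{cor: basic examples of vanishing}. The relative term is then bounded by roughly $t^{2}$ times the number of generators of $pMp$.

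\textbf{Step 2 (scaling to the factor case).} If $M$ is a $\textrm{II}_{1}$-factor, then under the assumption every compression $pMp$ is singly generated. Hence $pMp$ is generated by two self-adjoints, and Step 1 gives $\delta_{MD}(M)\leq 1+t^{2}\cdot 2$ up to the matrix-unit overhead, which is controlled. Letting $t\to0$ yields $\delta_{MD}(M)\leq 1$.

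\textbf{Step 3 (reducing general $M$ to factors).} For separable $M_{*}$, pass to the central decomposition and use Proposition \ref{prop: limiting formula for MD} to handle nonseparable $M$ by separable subalgebras. Non-factor or type I summands are non-prime or hyperfinite, so their contributions are at most $1$ by Corollary \ref{cor: basic examples of vanishing}, while $\textrm{II}_{1}$ summands are handled via Step 2. Here I would need subadditivity of $\delta_{MD}$ over direct sums. For the second assertion, if $\delta_{MD}(M)>1$ with $M_{*}$ separable, the same Step 1 estimate in contrapositive form shows that some corner $pMp$ must fail stable single generation. Making this local in $p$ for a direct integral of factors is delicate; I would try to isolate a central summand where $\delta_{MD}>1$, which requires an additivity/inequality over central projections.
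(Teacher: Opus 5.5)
There is a genuine gap, and it sits exactly at the step you flag. Step 1 asks for a compression formula $\delta_{MD}(M)-1\leq t^{2}(\delta_{MD}(pMp)-1)$, or, in your relative version, a bound of order $t^{2}$ on $\delta_{MD}(M|N:M)$. Nothing in the paper or in your sketch establishes such a statement for $\delta_{MD}$. Your orbit count is an entropy-type heuristic. Lemma \ref{lem: small neigbhorhoods concentrate on orbits} only says that every microstate of $(b,v_{1},\dots,v_{n})$ lies within $\varepsilon$ in $\|\cdot\|_{2}$ of some $U(X,P_{k},(V^{(k)}_{i})_{i\geq 2})U^{*}$. That suffices for covering numbers, but not for $\dim_{\varepsilon}$, which needs an honest continuous $\varepsilon$-injective map on the microstate space. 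That space is a $\|\cdot\|_{2}$-neighbourhood of the orbit set, so it has full topological dimension. The pair $(U,X)$ is not a continuous function of the microstate; there is not even a global continuous section of $\cU(\bM_{k}(\bC))\to\cU(\bM_{k}(\bC))/\mathrm{Stab}$. So ``base dimension $k^{2}(1-t^{2})$ plus fiber dimension'' does not turn into a bound on $\dim_{\varepsilon}$. The relative route has the same problem: the natural fiber coordinate $P_{k}U^{*}BUP_{k}$ is not continuous in the microstate, and you give no $(\varepsilon,F|\theta,G)$-injective map whose dimension is $o(k^{2})$ as $t\to 0$. Step 3 has a parallel gap. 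You need a weighted inequality $\delta_{MD}(M)-1\leq\sum_{j}\lambda_{j}^{2}(\delta_{MD}(M_{j})-1)$ over central summands, since plain subadditivity only gives bounds like $2$. No such statement is available for $\delta_{MD}$.

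The missing idea is that you never need to manipulate $\delta_{MD}$ beyond two facts: it does not depend on the generators, and $\delta_{MD}(M,\tau)\leq\delta_{0}(a)$ for any generating tuple $a$ (Corollary \ref{cor: delta MD and MFED}). All compression and direct-sum bookkeeping can be done for $\delta_{0}$, a covering-number quantity in which your orbit count is literally a count. Take $\tau(p)=1/n$, $b$ generating $pMp$, and matrix units $v_{i}$. Lemma \ref{lem: small neigbhorhoods concentrate on orbits}, together with Szarek's $\varepsilon$-net of $\cU(\bM_{k}(\bC))$ of size $(C_{1}/\varepsilon)^{k^{2}}$ and an $\varepsilon$-net of $P_{k}\Ball(\bM_{k}(\bC))P_{k}$, gives $\delta_{0}(b,v_{1},\dots,v_{n})\leq 1+O(n^{-2})$ (Proposition \ref{prop: gen with small entropy}).

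For general $M$ the paper splits $M=M_{0}\oplus\bigoplus_{j}M_{j}$, with $M_{0}$ having diffuse center and the $M_{j}$ factors. The center of $M_{0}$ is a diffuse regular abelian subalgebra, so $h(M_{0})\leq 0$ and every generating tuple of $M_{0}$ has $\delta_{0}\leq 1$. Hence there is no direct integral to localize over, which removes your ``delicate'' step. The known weighted direct-sum inequality for $\delta_{0}$ then gives a generating tuple $a$ of $M$ with $\delta_{0}(a)\leq 1+\kappa$, so $\delta_{MD}(M,\tau)\leq 1+\kappa$. Read contrapositively, $\delta_{MD}(M,\tau)>1$ forces some factor summand $M_{j}$ to have a corner $pM_{j}p=pMp$ that is not singly generated, which is the second assertion. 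The nonseparable case follows from Proposition \ref{prop: limiting formula for MD}.
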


Given a tracial von Neumann algebra $(M,\tau)$ an index set $I$ and an $a\in M^{I}$, we say that  a sequence $(A^{(k)})\in \prod_{k}\bM_{k}(\bC)^{I}$ is a \emph{microstates sequence} for $a$ if:
\begin{enumerate}
    \item for all $i\in I$ we have $\sup_{k}\|A^{(k)}_{i}\|<+\infty$,
    \item $\ell_{A^{(k)}}\to \ell_{a}$ in the weak$^{*}$-topology.
\end{enumerate}
We start with the following observation.

\begin{lem}\label{lem: small neigbhorhoods concentrate on orbits}
Let $(M,\tau)$ be a tracial von Neumann algebra, $p\in \Proj(M)$, and $b\in pMp$, and $v_{1},\cdots,v_{n}$ partial isometries in $M$ with $v_{1}=p,$ $v_{i}^{*}v_{i}=p$ and $\sum_{i}v_{i}v_{i}^{*}=1$. Fix $R\geq \max(\|b\|,1)$, and let $(V_{i}^{(k)})_{i=1}^{n}\in \bM_{k}(\bC)^{n}$
be a microstates sequence with $\|V_{i}^{(k)}\|\leq 1$ and so that $P_{k}=V_{1}^{(k)}$ is a projection for all $k$. Set $a=(b,v_{1},\cdots,v_{n})$.  Then for every $\varepsilon>0$, there is a neighborhood $\mathcal{O}$ of $\ell_{a}$ so that for every $k\in \bN$ and every $A\in \Gamma^{(k)}_{R}(\mathcal{O})$, there is a unitary $U\in \cU(\bM_{k}(\bC))$ and an $X\in P_{k}R\Ball(\bM_{k}(\bC))P_{k}$ with
\[\|A-U(X,P_{k},(V_{i}^{(k)})_{i=2}^{n})U^{*}\|_{2}<\varepsilon.\]
\end{lem}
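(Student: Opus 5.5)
We argue by contradiction, by compactness over $k$; a direct construction of the conjugating unitary is also sketched. Suppose the conclusion fails for some $\varepsilon>0$. Taking a decreasing basis of neighborhoods $\mathcal{O}_m$ of $\ell_a$, we get integers $k_m$ and tuples $A^{(m)}=(B^{(m)},W^{(m)}_1,\dots,W^{(m)}_n)\in\Gamma^{(k_m)}_R(\mathcal{O}_m)$ with no $U,X$ as in the statement. Thus $\ell_{A^{(m)}}\to\ell_a$.

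For $k_m$ bounded this is impossible. Along a subsequence $k_m=k$ is constant, and by compactness $A^{(m)}\to A$ with $\ell_A=\ell_a$. But $\ell_{V^{(k')}}\to \ell_{v}$ with $v_1=p$ forces $\tau(p)$ to be a limit of values $\tr(P_{k'})$, so the same argument below at finite level applies. Alternatively, for fixed $k$ the bad set is closed, and compactness of $\Gamma$ over the closed neighborhoods reduces us to the exact statement at level $k$. So the real case is $k_m\to\infty$, and it suffices to produce, for large $k$ and $\mathcal{O}$ small, the unitary directly.

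The direct argument goes as follows. The relations $v_1=p$, $v_i^*v_i=p$, $\sum_i v_iv_i^*=1$ and $b=pbp$ are $*$-polynomial identities with $\|\cdot\|_2$-norm zero under $\ell_a$. Hence for $\mathcal{O}$ small, any $A=(B,W_1,\dots,W_n)\in\Gamma^{(k)}_R(\mathcal{O})$ satisfies them up to $\delta$ in $\|\cdot\|_2$, and also $|\tr(W_1)-\tau(p)|<\delta$, while $|\tr(P_k)-\tau(p)|<\delta$ for $k$ large. We then use standard stability (perturbation) of these relations in $\|\cdot\|_2$, with constants independent of $k$.

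First, $W_1$ is $\delta$-close to a self-adjoint element that is almost idempotent in $\|\cdot\|_2$. Functional calculus gives a projection $Q$ with $\|W_1-Q\|_2=O(\sqrt\delta)$. Adjusting rank on an $O(\delta)$ fraction of the space, we may take $\tr(Q)=\tr(P_k)$.

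Second, the $W_i$ are approximately partial isometries from $Q$ onto approximately orthogonal ranges summing to $1$. Polar decomposition of $W_iQ$ and a Gram–Schmidt-type orthogonalization of the ranges, done inductively in $i$, produce exact partial isometries $S_i$ with $S_1=Q$, $S_i^*S_i=Q$, $\sum_i S_iS_i^*=1$, and $\|W_i-S_i\|_2=O_n(\delta^{c})$. This is the main obstacle: making the orthogonalization quantitative in $\|\cdot\|_2$ with constants depending only on $n$. I would handle it by the usual trick of perturbing almost-orthogonal projections, via $\|\cdot\|_2$ estimates on $(\sum_i T_iT_i^*)^{-1/2}$ after spectral truncation. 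Exact ranks also match automatically, since the $S_iS_i^*$ are equivalent to $Q$; the trace constraint forces $k=n\,\mathrm{rank}(Q)$ only approximately, so an $O(\delta)$ defect in dimension must be absorbed. This is done by letting the last range be slightly smaller and then noting that the $V^{(k)}$ satisfy the same relations approximately; we apply the same construction to the $V^{(k)}$ to get exact systems on both sides.

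Third, two exact systems $(Q,S_i)$ and $(P_k,T_i)$ of matrix units with $\mathrm{rank}(Q)=\mathrm{rank}(P_k)$ are unitarily conjugate. Take any unitary $u_0\colon P_k\to Q$ and set $U=\sum_i S_iu_0T_i^*$; then $US_i'U^*$ matches, where $T_i$ is $O(\delta^c)$-close to $V_i^{(k)}$. Finally we set $X=U^*QBQU$, which lies in $P_kR\Ball(\bM_k(\bC))P_k$ because $\|B\|\le R$. We have $\|B-QBQ\|_2$ small by the relation $b=pbp$. Collecting the errors, which are $O_n(\delta^c)$, and choosing $\delta$ small relative to $\varepsilon$ gives the claim.
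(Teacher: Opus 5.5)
Your treatment of bounded $k_{m}$ does not work, and no argument could make it work: as literally stated (``for every $k\in\bN$''), the lemma is false at small levels. Take $M=\bM_{n}(\bC)$, $p=e_{11}$, $b=0$, $v_{i}=e_{i1}$, $R=1$, and any microstates sequence for $(v_{i})$, altered at the single level $k=n$ to $V^{(n)}=(0,\dots,0)$, so that $P_{n}=0$. Then $A:=a=(0,e_{11},e_{21},\dots,e_{n1})$ lies in $\Gamma^{(n)}_{R}(\mathcal{O})$ for every neighborhood $\mathcal{O}$ of $\ell_{a}$. But $X\in P_{n}\bM_{n}(\bC)P_{n}=\{0\}$, so $\|A-U(X,0,\dots,0)U^{*}\|_{2}=\|a\|_{2}=1$ for every unitary $U$. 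The ``exact statement at level $k$'' you reduce to would need the single tuple $V^{(k)}$ to be an exact system of matrix units with $\tr(P_{k})=\tau(p)$. A microstates sequence says nothing about any fixed level.

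The paper's ultraproduct proof has the same tacit restriction. Treating $v_{j}\mapsto (V^{(k_{n})}_{j})_{n\to\omega}$ as a second embedding of $W^{*}(v_{1},\dots,v_{n})$ requires its law to be $\ell_{v}$, i.e.\ $k_{n}\to\infty$ along $\omega$. So what either argument actually proves is the conclusion for $k\geq k_{0}(\varepsilon)$. That is all Proposition~\ref{prop: gen with small entropy} uses, since it only takes a $\limsup_{k}$. When $W^{*}(a)$ embeds in no $\bM_{k}(\bC)$ (e.g.\ $M$ a $\textrm{II}_{1}$-factor generated by $a$, as in that application), bounded $k_{m}$ is ruled out by compactness of the set of laws realized in $(R\Ball(\bM_{k}(\bC)))^{n+1}$. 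That is the correct version of your first paragraph.

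For large $k$, your direct argument is a genuinely different route from the paper's. The paper never perturbs anything at a finite level. It passes to $\prod_{n\to\omega}\bM_{k_{n}}(\bC)$, where the relations hold exactly, and uses that two trace-preserving unital copies of $\bM_{n}(\bC)$ in this $\textrm{II}_{1}$-factor are unitarily conjugate. It then lifts the conjugating unitary to unitaries $U^{(n)}$, and lifts the corner element $u^{*}\Theta(b)u$ to elements of $P_{k_{n}}R\Ball(\bM_{k_{n}}(\bC))P_{k_{n}}$.

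You trade this for a dimension-free $\|\cdot\|_{2}$-stability lemma for matrix units, plus the explicit unitary $\sum_{i}S_{i}u_{0}T_{i}^{*}$. This buys effective control of how small $\mathcal{O}$ must be in terms of $\varepsilon$, $n$ and $R$. It also makes visible why $k$ must be large, through $\tr(P_{k})\to\tau(p)$ and the approximate relations for $V^{(k)}$. The price is that essentially all the content sits in the step you only sketch. That step is true and standard: take polar parts of $W_{i}Q$, then orthogonalize nearly orthogonal range projections.

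Note, however, that the defect $k-n\,\mathrm{rank}(P_{k})$ can have either sign. When it is positive, you need a leftover projection of the same rank on both sides, carried along by the conjugating unitary, not a shorter last partial isometry. Finally, that stability lemma is most easily proved by exactly the paper's ultraproduct contradiction argument, so the paper's proof is essentially the soft form of yours.
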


\begin{proof}
Set $N=W^{*}(a)$.
Suppose the claim is false, then we can find an $\varepsilon>0$, a decreasing sequence $\mathcal{O}_{n}$ of neighborhoods of $\ell_{a}$ in $\Sigma_{R,n+1}$ with $\bigcap_{n}\mathcal{O}_{n}=\ell_{a}$, a sequence $(k_{n})_{n}$ of integers and $A^{n}\in \Gamma^{(k_{n})}(\mathcal{O}_{n})$ so that 
\[\|A^{(n)}-U(X,P_{k_{n}},(V_{i}^{(k_{n})})_{i=2}^{n})U^{*}\|_{2}\geq \varepsilon, \textnormal{ for all $X\in P_{k_{n}}R\Ball(\bM_{k}(\bC))P_{k_{n}}, U\in \cU(\bM_{k_{n}}(\bC))$.}\]
Fix a free ultrafilter $\omega\in\beta\bN$, and set $\cM=\prod_{n\to\omega}\bM_{k_{n}}(\bC)$. Since $\ell_{A^{(n)}}\to\ell_{a}$, there is an embedding $\Theta\colon N\to \cM$ so that 
\[\Theta(b)=(A_{1}^{(n)})_{n\to\omega},\]
\[\Theta(v_{j})=(V_{j}^{(k_{n})})_{n\to\omega}.\]
Since $W^{*}(v_{1},\cdots,v_{n})\cong \bM_{n}(\bC)$ any two embeddings of $W^{*}(v_{1},\cdots,v_{n})$ are unitarily conjugate. Thus we may find a unitary $u\in \cM$ so that 
\[\Theta(v_{j})=u(V_{j}^{(k_{n})})_{n\to\omega}u^{*}.\]
By \cite[Proposition II.5.1.5]{BlackadarOA}  we can find $X^{(n)}\in P_{k_{n}}R\Ball(\bM_{k_{n}}(\bC))P_{k_{n}}$ so that:
\[\Theta(b)=\Theta(p)\Theta(b)\Theta(p)=u(X^{(n)})_{n\to\omega}u^{*}.\]
By the proof of \cite[Theorem XIV.4.6]{TakesakiIII} we may write $U=(U^{(n)})_{n\to\omega}$ for some $(U^{(n)})_{n}\in \prod_{n}\cU(\bM_{k_{n}}(\bC))$. Then by our choice of $\varepsilon$, $A^{(n)}$,
\[\varepsilon\leq \lim_{n\to\omega}\|A^{(n)}-U^{(n}(X^{(n)},P_{k},(V_{i}^{(k_{n})})_{i=2}^{n})(U^{(n)})^{*}\|_{2}=\|\Theta(a)-u((X^{(n)})_{n\to\omega},((V^{(k_{n})}_{i})_{i=1}^{r})_{n\to\omega}u^{*}\|_{2}=0,\]
a contradiction.

\end{proof}

 Recall that a $\textrm{II}_{1}$-factor is \emph{stably singly generated} (as defined in \cite{SorinSSG}) if for every nonzero projection $p\in M$ we have that $pMp$ is singly generated. 
The following is the main result that allows us to prove Theorem \ref{thm: nontrivial computations for tracial vNas are hard}.
This result can be proved using known results \cite[Corollary 5]{DimaNonMS} (see also \cite[Theorems A-C and Remark 6.9]{AldoNONMS}) in non-microstates free entropy dimension theory, which for a finite group $G$ acting in a trace-preserving manner on a tracial von Neumann algebra $M$ bounds the infimum of the non-microstates free entropy dimension of generating tuples for $M\rtimes G$ in terms of $|G|$ and the minimal number of generators for $M$. We have elected to give the following proof, since it is relatively short, self-contained, and transparently explains why the microstates space asymptotically has small packing number. The reader should also compare this result with \cite[Corollary 8.5 and 8.6]{DSSW}.

\begin{prop}\label{prop: gen with small entropy}
Let $M$ be a $\textrm{II}_{1}$-factor, and suppose that $M$ is stably singly generated. Then for every $\kappa>0$, there is an $r\in \bN$ and an $a\in M^{r}$ with $\delta_{0}(a)\leq 1+\kappa$. 
\end{prop}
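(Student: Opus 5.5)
Fix $\kappa>0$ and choose $n\in\bN$ with $1/n^{2}<\kappa$ (or $2/n<\kappa$, to be adjusted at the end). Since $M$ is a $\textrm{II}_{1}$-factor, we may pick a projection $p$ with $\tau(p)=1/n$ and partial isometries $v_{1}=p,v_{2},\dots,v_{n}$ with $v_{i}^{*}v_{i}=p$ and $\sum_{i}v_{i}v_{i}^{*}=1$. Then $W^{*}(v_{1},\dots,v_{n})\cong\bM_{n}(\bC)$ and $M\cong pMp\otimes\bM_{n}(\bC)$. By stable single generation, $pMp=W^{*}(b)$ for a single $b\in pMp$, so $a:=(b,v_{1},\dots,v_{n})$ generates $M$. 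The plan is to show $\delta_{0}(a)\leq 1+O(1/n^{2})$, the count being: $1$ for the unitary orbit, plus a tiny contribution from the compression $X=P_{k}XP_{k}$, which lives in a space of real dimension about $2(k/n)^{2}$.

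First, fix a microstates sequence $(V_{i}^{(k)})_{i=1}^{n}$ for $(v_{i})$ consisting of exact matrix units, defined when $n\mid k$ and padded otherwise: take $P_{k}=V_{1}^{(k)}$ a projection of rank $\lfloor k/n\rfloor$ and $V_{i}^{(k)}$ partial isometries with initial projection $P_{k}$. Given $\varepsilon>0$, apply Lemma \ref{lem: small neigbhorhoods concentrate on orbits} to obtain a neighborhood $\mathcal{O}$ of $\ell_{a}$ such that every $A\in\Gamma^{(k)}_{R}(\mathcal{O})$ is $\varepsilon$-close in $\|\cdot\|_{2}$ to $U(X,P_{k},(V_{i}^{(k)})_{i\geq 2})U^{*}$ with $X\in P_{k}R\Ball(\bM_{k}(\bC))P_{k}$ and $U$ unitary.

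Next we bound covering numbers. Take an $\varepsilon/(R+1)$-net $\Omega$ of $\cU(\bM_{k}(\bC))$ in operator norm of size at most $(C/\varepsilon)^{k^{2}}$, by Szarek's estimate as in the proof of Proposition \ref{prop: delta MD and MFED}. Take an $\varepsilon$-net $\Xi$ of $P_{k}R\Ball(\bM_{k}(\bC))P_{k}$ in $\|\cdot\|_{2}$. This is a ball in a real space of dimension $2\lfloor k/n\rfloor^{2}$, and since the operator norm ball sits inside the $\|\cdot\|_{2}$ ball of radius $R/\sqrt{n}$ (normalized trace), a volume argument gives $|\Xi|\leq (CR/\varepsilon)^{2k^{2}/n^{2}}$. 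Conjugating the net points gives an $O(\varepsilon)$-net of $\Gamma^{(k)}_{R}(\mathcal{O})$ of size at most $(C/\varepsilon)^{k^{2}}(CR/\varepsilon)^{2k^{2}/n^{2}}$. Taking logs, dividing by $k^{2}|\log\varepsilon|$, letting $k\to\infty$ and then $\varepsilon\to0$ yields $\delta_{0}(a)\leq 1+2/n^{2}$. Choosing $n$ large then gives $\leq 1+\kappa$.

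The main obstacle is the bookkeeping in the lemma's application: the microstates spaces in $\delta_{0}$ are taken over all $k$, not only multiples of $n$, and one needs a genuine microstates sequence with $P_{k}$ a projection. The plan is to handle non-divisible $k$ by letting $P_{k}$ have rank $\lfloor k/n\rfloor$ and absorbing the $O(1/k)$ remainder into the $\|\cdot\|_{2}$ error, since such padded matrix units still converge in law. Beyond this, the lemma already supplies the uniform-in-$k$ statement we need, so the remaining estimates are routine.
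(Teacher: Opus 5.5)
Your proposal is correct and is essentially the paper's proof: the same generating tuple $(b,v_{1},\dots,v_{n})$ built from matrix units over a corner of trace $1/n$, the same use of Lemma \ref{lem: small neigbhorhoods concentrate on orbits} to place all microstates near unitary conjugates of $(X,P_{k},(V_{i}^{(k)})_{i\geq 2})$, and the same count combining Szarek's operator-norm net of $\cU(\bM_{k}(\bC))$ with a volume net of the corner $P_{k}\bM_{k}(\bC)P_{k}$. Your exponent $2\lfloor k/n\rfloor^{2}$ (the real dimension of the corner) gives $1+2/n^{2}$ rather than the paper's $1+n^{-2}$. This is the more accurate count, though the constant does not matter once $n\to\infty$.
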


\begin{proof}
Let $\tau$ be the unique tracial state on $M$. 
Fix an $n\in \bN$ and a projection $p\in M$ with $\tau(p)=\frac{1}{n}$. and let $b\in pMp$ generate $pMp$ with $\|b\|\leq 1$. Since $M$ is a factor, we may then find partial isometries $v_{1},\cdots,v_{n}\in M$ with $v_{1}=p$, $v_{i}^{*}v_{i}=p$ and $\sum_{i}v_{i}v_{i}^{*}=1$.  Set $a=(b,v_{1},\cdots,v_{n})\in M^{n+1}$ and note that $a$ generates $M$. 

Since $W^{*}(v_{1},\cdots,v_{n})\cong \bM_{n}(\bC)$ we may find a microstates sequence $(V_{i}^{(k)})_{i=1}^{n}$ for $(v_{1},\cdots,v_{n})$ with $\|V_{i}^{(k)}\|\leq 1$. Since $v_{1}=p$, we may also assume that $V_{1}^{(k)}$ is a projection, call it $P_{k}$. 
Let $\varepsilon>0$, and for $n\in \bN$, choose an $X\subseteq P_{k}\Ball(\bM_{k}(\bC))P_{k}$ which is $\varepsilon$-dense with respect to $\|\cdot\|_{2}$ and has $|X|\leq \left(\frac{C}{\varepsilon}\right)^{\tr(P_{k})^{2}k^{2}}$ for some universal constant $C>0$.
By Lemma \ref{lem: small neigbhorhoods concentrate on orbits} if $\mathcal{O}$ is a sufficiently small neighborhood of $\ell_{a}$, then $\Gamma^{(k)}_{1}(\mathcal{O})$ is contained in the $\varepsilon$-neighborhood of $\bigcup_{U\in \cU(\bM_{k}(\bC))}UXU^{*}$.
By \cite[Theorem 7]{Szarek}, we may choose an $\Omega\subseteq \cU(\bM_{k}(\bC))$ which is $\varepsilon$-dense in operator norm and has $|\Omega|\leq \left(\frac{C_{1}}{\varepsilon}\right)^{k^{2}}$ for some universal constant $C_{1}$. Then $\bigcup_{U\in \Omega_{k}}UXU^{*}$ contain $\Gamma^{(k)}_{1}(\mathcal{O})$ in its $3\varepsilon$-neighborhood with respect to $\|\cdot\|_{2}$. Hence
\[\frac{1}{k^{2}}\log K_{6\varepsilon}(\Gamma^{(k)}_{1}(\mathcal{O}))\leq \tr(P_{k})^{2}\log(C/\varepsilon)+\log(C_{1}/\varepsilon).\]
Since $\lim_{k\to\infty}\tr(P_{k})=\tau(p)=\frac{1}{n}$, it follows that
\[\delta_{6\varepsilon}(a)\leq \delta_{6\varepsilon}(\mathcal{O})\leq 1+n^{-2}.\]
Taking the limit supremum in $\varepsilon$, we find that 
\[\delta_{0}(a)\leq 1+n^{-2}.\]
\end{proof}

\begin{proof}[Proof of Theorem \ref{thm: nontrivial computations for tracial vNas are hard}]
Splitting the center of $M$ into diffuse and atomic pieces, we may write 
\[M=M_{0}\oplus \bigoplus_{j\in J}M_{j},\]
where:
\begin{itemize}
    \item $M_{0}$ is either $\{0\}$ or has diffuse center,
    \item $J$ is a countable set (potentially empty),
    \item for all $j\in J$ we have that $M_{j}$ is a $\textrm{II}_{1}$-factor.
\end{itemize}

First, assume $M_{*}$ is separable. 
Fix $\kappa>0$.
Let $\lambda_{j}=\tau(1_{M_{j}})$. Fix a countable generating tuple $a_{0}$ for $M_{0}$. If $M_{0}\ne 0$, then by \cite[Example 3]{FreePinsker}, we have that $h(M_{0},\frac{\tau|M_{0}}{\tau(1_{M_{0}})})\leq 0$, thus $\delta_{0}(a_{0},\frac{\tau|_{M_{0}}}{\tau(1_{M_{0}})})\leq 1$.
For $j\in J$, by Proposition \ref{prop: gen with small entropy} we may find a finite tuple $a_{j}\in M_{j}^{r_{j}}$ with $\delta_{0}\left(a_{j},\frac{\tau|_{M_{j}}}{\tau(1_{M_{j}})}\right)\leq 1+\kappa$. 
Set $a=(a_{0},(a_{j})_{j\in J})$. Then, by \cite[Lemma 4.8]{HJKEPropT} (which builds off of \cite[Lemma 3.2 and Corollary 4.3]{JungHyperFiniteIneq}) we have 
\[\delta_{0}(a)-1\leq \tau(1_{M_{0}})^{2}\left(\delta_{0}\left(a_{0},\frac{\tau|_{M_{0}}}{\tau(1_{M_{0}})}\right)-1\right)+\sum_{j\in J}\tau(1_{M_{j}})^{2}\left(\delta_{0}\left(a_{j},\frac{\tau|_{M_{j}}}{\tau(1_{M_{j}})}\right)-1\right)\leq \kappa\sum_{j\in J}\tau(1_{M_{j}})^{2}\leq \kappa.\]
By Corollary \ref{cor: delta MD and MFED}, it follows that
\[\delta_{MD}(M,\tau)\leq 1+\kappa.\]
The proof is completed by sending $\kappa\to 0$. In this case, we see by contraposition that if $\delta_{MD}(M,\tau)>1$, then one of the $M_{j}$ for $j\in J$ has a corner which is not singly generated, giving the desired $N$ as a corner of $M$.

Now consider the general case. For $j\in J\sqcup\{0\}$, let $M_{j,\alpha}$ be increasing nets of von Neumann subalgebras with separable predual $M_{j}=\bigvee_{\alpha}M_{j,\alpha}$ (e.g. because each $M_{j}$ has a faithful, normal, tracial state the set of subalgebras with separable predual is directed). Set 
\[M_{\alpha}=M_{0,\alpha}\bigoplus_{j\in J}M_{j,\alpha}.\]
Suppose that for each $j,\alpha$ we have that $M_{j,\alpha}$ is stably singly generated. Then by the case where $M_{*}$ is separable, we have $\delta_{MD}(M_{\alpha})\leq 1$. Thus,
\[\delta_{MD}(M_{\alpha}:M,\tau)\leq \delta_{MD}(M_{\alpha},\tau)\leq 1.\]
Taking the supremum over $\alpha$ and applying Proposition \ref{prop: limiting formula for MD} we see that $\delta_{MD}(M,\tau)\leq 1$.

\end{proof}

\subsection{$\delta_{MD}$ as an obstruction to generators for tracially complete $C^{*}$-algebras}

Recall that our purpose in this paper for defining the quantity $\delta_{MD}$ is that it provides an obstruction to a bundle being fiberwise finitely generated. The following proposition is how we will use $\delta_{MD}$ to obstruct fiberwise finite generation. 

\begin{prop}\label{prop: md bounds finite generation}
Let $(M,X)$ be a tracially complete $C^{*}$-algebra and $Q\leq N\leq M$. Suppose that $a_{1},\cdots,a_{n}\in M_{s.a}$ and that $C^{*}_{X}(\{a_{1},\cdots,a_{n}\}\cup N)\supseteq Q$. Then $\delta_{MD}(Q|N:M)\leq n$.   
In particular, if $b_{1},\cdots,b_{k}\in M$ satisfy that $C^{*}_{X}(\{b_{1},\cdots,b_{k}\}\cup N)\supseteq Q$, then $\delta_{MD}(Q|N:M)\leq 2k.$ 
\end{prop}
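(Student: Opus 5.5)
Since $\delta_{MD}(Q|N:M)$ is an invariant of the generated algebras, and Corollary \ref{cor:basic properties}(i) makes it monotone, I first reduce to a convenient setup. Let $Q'=C^{*}_{X}(\{a_{1},\dots,a_{n}\}\cup N)\supseteq Q$. By Corollary \ref{cor:basic properties}(i), with $N\leq N\leq Q\leq Q'\leq M\leq M$, we get $\delta_{MD}(Q'|N:M)\geq \delta_{MD}(Q|N:M)$. It therefore suffices to show $\delta_{MD}(Q'|N:M)\leq n$.

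To compute the left side, choose generators as follows: $b$ a tuple generating $N$, $a=(a_{1},\dots,a_{n})$ (so $C^{*}_{X}(a,b)=Q'$), and $c$ a tuple so that $(a,b,c)$ generates $M$. Then $\delta_{MD}(Q'|N:M)=\delta_{MD}(a|b:c)$.

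Fix $\varepsilon>0$ and a finite $F$. By Theorem \ref{thm: invariance of MD}'s conventions, the index set $F$ ranges over $I_{1}\sqcup I_{2}$, so I write $F=F_{1}\sqcup F_{2}$ with $F_{1}\subseteq[n]$ and $F_{2}\subseteq I_{2}$. I take $G=F_{2}$ and $\theta=\varepsilon/2$, with any neighborhood $\mathcal{O}$. On $\Gamma^{(k)}_{R}(\mathcal{O})$ I define
\[f(x)=(x_{1},\dots,x_{n})\in \prod_{j=1}^{n}R_{j}\Ball(\bM_{k}(\bC)_{s.a.}),\]
projecting onto the self-adjoint coordinates in $I_{1}$. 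Self-adjointness is open-condition-free: it follows from $\mathcal{O}$ being small, since $\ell(|T_{j}-T_{j}^{*}|^{2})$ near $0$ only gives approximate self-adjointness. So I instead compose with $x_{j}\mapsto \Re x_{j}=(x_{j}+x_{j}^{*})/2$ and shrink $\mathcal{O}$ so that $\|x_{j}-x_{j}^{*}\|_{2}<\varepsilon/(4n)$ on microstates. The target $(\bM_{k}(\bC)_{s.a.})^{n}$ is a real vector space of dimension $nk^{2}$, and a closed ball in it has covering dimension $nk^{2}$. The map is continuous.

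For injectivity, suppose $f(x)=f(y)$ and $\|x-y\|_{2,G}<\theta$. Then $\Re x_{j}=\Re y_{j}$ forces $\|x_{j}-y_{j}\|_{2}$ to be small by the approximate self-adjointness, and the $G=F_{2}$ coordinates differ by less than $\varepsilon/2$. Hence $\|x-y\|_{2,F}<\varepsilon$, so $f$ is $(\varepsilon,F|\theta,G)$-injective. This gives $\dim_{\varepsilon,F|\theta,G}(\Gamma^{(k)}_{R}(\mathcal{O}))\leq nk^{2}$, and so $\md\leq n$. Taking the infima and suprema yields $\delta_{MD}(a|b:c)\leq n$.

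The definition requires a simplicial complex target, so I may need Proposition \ref{prop: dim TFAE}, or simply note that a cube in $\bR^{nk^{2}}$ is a simplicial complex of dimension $nk^{2}$. The main subtlety is bookkeeping: the relative definition ranges $F$ over $I_{1}\sqcup I_{2}$ but the supremum is over $F\subseteq I_{1}$, and the conditioning via $G$ handles the $b$-part either way.

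For the ``in particular'' statement, write $b_{j}=\Re b_{j}+i\operatorname{Im} b_{j}$. This gives $2k$ self-adjoints that, together with $N$, generate the same algebra as the $b_{j}$ together with $N$, so the first part applies with $n=2k$.
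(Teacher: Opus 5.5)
Your proposal is correct and matches the paper's proof: both reduce via Corollary \ref{cor:basic properties} to $C^{*}_{X}(\{a_{1},\dots,a_{n}\}\cup N)$, shrink the neighborhood to force approximate self-adjointness, and use the real-part map onto $(\bM_{k}(\bC)_{s.a.})^{n}$ as an $(\varepsilon,F|\theta,G)$-injective map with $G$ the $N$-part of $F$ and $\theta$ of order $\varepsilon/2$. The only cosmetic difference is that you keep the extra tuple $c$ generating $M$, whereas the paper first reduces to the case $M=Q$.
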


\begin{proof}
Note that if $\widetilde{Q}=C^{*}_{X}(\{a_{1},\cdots,a_{n}\}\cup N)$, then by Corollary \ref{cor:basic properties} we have that $\delta_{MD}(Q|N:M)\leq \delta_{MD}(\widetilde{Q}|N:M)$. Hence, replacing $Q$ with $\widetilde{Q}$, we may (and will) assume that $C^{*}_{X}(\{a_{1},\cdots,a_{n}\}\cup N)=Q$. Similarly, since $\delta_{MD}(Q|N:M)\leq \delta_{MD}(Q|N:Q)$, we may (and will) assume that $M=Q$. Let $I$ be a set and $c\in Q^{I}$ a generating tuple. Fix $R\in [0,\infty)^{[n]\sqcup I}$ a cutoff function for $a,c$. Suppose we are given an $\varepsilon>0$ and a finite $F\subseteq [n]\sqcup I$.
We may choose a weak$^{*}$-neighborhood $\mathcal{O}$ of $L_{a,c;X}$ so that $\|A_{i}-\frac{A_{i}+A_{i}^{*}}{2}\|_{2}<\frac{\varepsilon}{4\sqrt{n}}$ for all $i\in [n]$ and all $A=(A_{i})_{i\in [n]\sqcup I}\in \Gamma^{(n)}_{R}(\mathcal{O})$. 
Set $G=F\cap I$, and fix any $\theta<\varepsilon/2$. Then the  map $f\colon \prod_{i\in [n]\sqcup I}R_{i}\Ball(\bM_{k}(\bC))\to \prod_{i=1}^{n}R_{i}\Ball(\bM_{k}(\bC))_{s.a}$ given by
\[f(A)=\left(\frac{A_{i}+A_{i}^{*}}{2}\right)_{i=1}^{n}\] is $(\varepsilon,F|\theta,G)$-injective.
Hence
\[\md_{\varepsilon,F}(a|c)\leq \md_{\varepsilon,F|\theta,G}(a|c)\leq n.\]
Taking  the supremum over $\varepsilon,F$ proves that 
\[\delta_{MD}(Q|N)\leq n.\]
The ``in particular" part follows by setting 
\[a_{j}=\frac{b_{j}+b_{j}^{*}}{2},a_{j+k}=\frac{b_{j}-b_{j}^{*}}{2i}\]
for $j\in [k]$. 

\end{proof}

For $W^{*}$-bundles we can connect the concept of finitely generated over $N$, as in the above Proposition, with being fiberwise finitely generated over $N$.

\begin{lem}
Let $(M,\bE,K)$ be a $W^{*}$-bundle with $K$ compact Hausdorff. Let $N\leq M$ with $C(K)\leq N$. Given a family $(a_{i})_{i\in I}\in M^{I}$ for some set $I$, the following are equivalent:
\begin{enumerate}[(i)]
    \item for every $p\in K$ we have that $M_{p}=W^{*}(\{a_{i,p}:i\in I\}\cup N_{p})$, \label{item: fiberwise generation lem}
    \item $C^{*}_{X}(N\cup \{a_{i}:i\in I\})=M$ with $X=\overline{\co}^{wk^{*}}(\{\tau_{p}:p\in K\})$. \label{eqn: relative generation lem}
\end{enumerate}
\end{lem}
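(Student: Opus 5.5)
The plan is to reduce both directions to comparing $\|\cdot\|_{2;X}$-closures with fiberwise SOT-closures. The bridge is the formula $\|x\|_{2;X}=\sup_{p\in K}\|x_{p}\|_{2,\tau_{p}}$, valid because $X$ is the closed convex hull of $\{\tau_{p}\}$ and $\tau\mapsto\tau(x^{*}x)$ is affine and continuous. Set $B=C^{*}_{X}(N\cup\{a_{i}:i\in I\})$. Note that $B\supseteq N\supseteq C(K)$, so $B$ is a $C(K)$-submodule of $M$.

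For (\ref{eqn: relative generation lem})$\Rightarrow$(\ref{item: fiberwise generation lem}), fix $p\in K$ and $x\in M$. By Proposition \ref{prop: better KDT}, applied to a generating tuple made of the $a_{i}$ together with a generating tuple of $N$, there are $*$-polynomials $P_{m}$ with $\|P_{m}(\ldots)\|\leq\|x\|$ and $\|P_{m}(\ldots)-x\|_{2;X}\to 0$. Applying $\pi_{p}$, we get $\|P_{m}(\ldots)_{p}-x_{p}\|_{2,\tau_{p}}\to 0$. These approximants lie in the $*$-algebra generated by $\{a_{i,p}\}\cup N_{p}$ and are norm bounded. A bounded sequence converging in $2$-norm converges in SOT, so $x_{p}\in W^{*}(\{a_{i,p}\}\cup N_{p})$. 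Since $M_{p}=\pi_{p}(M)$ by \cite[Theorem 11]{OzawaBundle}, this gives equality.

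For (\ref{item: fiberwise generation lem})$\Rightarrow$(\ref{eqn: relative generation lem}), fix $x\in M$ and $\varepsilon>0$. For each $p\in K$, Kaplansky density in $M_{p}$, applied to the $*$-algebra generated by $\{a_{i,p}\}\cup N_{p}$ (whose SOT closure is $M_{p}$ by hypothesis), gives a $y^{(p)}$ in the unital $*$-algebra generated by $N$ and the $a_{i}$ with $\|y^{(p)}\|\leq C$ and $\|y^{(p)}_{p}-x_{p}\|_{2,\tau_{p}}<\varepsilon$. Here $C$ is a constant depending on the norms of $x$, the $a_{i}$ and $N$; it is only needed to keep the approximants bounded for a local-to-global gluing. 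The function $q\mapsto\tau_{q}((y^{(p)}-x)^{*}(y^{(p)}-x))=\bE(\cdot)(q)$ is continuous, so the estimate holds on an open neighborhood $U_{p}$ of $p$. By compactness, take a finite subcover $U_{p_{1}},\dots,U_{p_{m}}$ and a subordinate partition of unity $f_{1},\dots,f_{m}\in C(K)$. Then put $y=\sum_{j}f_{j}y^{(p_{j})}$. Since $C(K)\leq N$, the element $y$ lies in $B$. Because the $f_{j}$ are central, $y_{q}-x_{q}=\sum_{j}f_{j}(q)(y^{(p_{j})}_{q}-x_{q})$. By convexity of the norm,
\[\|y_{q}-x_{q}\|_{2,\tau_{q}}\leq\sum_{j}f_{j}(q)\|y^{(p_{j})}_{q}-x_{q}\|_{2,\tau_{q}}<\varepsilon\]
for all $q$. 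Hence $\|y-x\|_{2;X}<\varepsilon$, and also $\|y\|\leq C$. Because $\Ball(M)$ is $\|\cdot\|_{2;X}$-complete and $B$ is the $\|\cdot\|_{2;X}$-closure, we conclude $x\in B$.

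The main obstacle I expect is the first step of the converse: producing, for each $p$, a genuine element of $M$ whose $p$-fiber approximates $x_{p}$. Kaplansky only yields elements of $\pi_{p}$ applied to the $*$-algebra generated by $N$ and the $a_{i}$. Since $\pi_{p}$ is a homomorphism, such an element does lift to a polynomial in $N$ and the $a_{i}$, but its norm may exceed that of its image. To control this I would apply Kaplansky to the C$^{*}$-algebra $C^{*}(N\cup\{a_{i}\})$ rather than to the $*$-algebra. The surjection $C^{*}(N\cup\{a_{i}\})\to\pi_{p}(C^{*}(N\cup\{a_{i}\}))$ maps the ball onto the ball, so contractive lifts exist with $\|y^{(p)}\|\leq\|x\|$, and we can take $C=\|x\|$. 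I would also check that $C^{*}_{X}$ of a set containing $N$ agrees with the closure of the C$^{*}$-algebra it generates.
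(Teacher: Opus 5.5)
Your proof is correct and follows essentially the same route as the paper: (ii)$\Rightarrow$(i) by $\|\cdot\|_{2;X}$-to-$\|\cdot\|_{2,\tau_{p}}$ continuity of $\pi_{p}$, and (i)$\Rightarrow$(ii) by approximating $x_{p}$ fiberwise with polynomials in $N$ and the $a_{i}$, spreading the estimate to a neighborhood via continuity of $\bE$, and gluing with a partition of unity coming from $C(K)\leq N$. The norm control you flag as the main obstacle is not actually needed, because $C^{*}_{X}(\cdot)$ is by definition a $\|\cdot\|_{2;X}$-closure inside $M$, so any polynomial lift works, and SOT-density already gives $\|\cdot\|_{2,\tau_{p}}$-density at the cyclic vector without Kaplansky; your contractive-lift argument is nonetheless also correct.
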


\begin{proof}

That (\ref{eqn: relative generation lem}) implies (\ref{item: fiberwise generation lem}) follows by $\|\cdot\|_{\bE}$-$\|\cdot\|_{2}$ continuity of $\pi_{p}$ for $p\in K$.

To see that (\ref{item: fiberwise generation lem}) implies (\ref{eqn: relative generation lem}), fix a cutoff function $R\in [0,+\infty)^{I}$. Let $x\in M$ and $\varepsilon>0$ be given. Given $p\in K$, apply Proposition \ref{prop: better KDT} and (\ref{item: fiberwise generation lem}) to choose  finite $F_{p}\subseteq N$  and a $P_{p}\in \bC^{*}\ip{(T_{i})_{i\in I\cup F_{p}}}$ so that $\|P_{p}\|_{R}\leq \|x\|$ and $\|P_{p}((a_{i,p})_{i\in I},(b_{p})_{b\in F_{p}})-x_{p}\|_{L^{2}(\tau_{p})}<\varepsilon$. Since $q\mapsto \|P_{p}((a_{i,q})_{i\in I},(b_{q})_{b\in F_{p}})-x_{q}\|_{L^{2}(\tau_{q})}$ is continuous, we can find a neighborhood $\mathcal{O}_{p}$ of $p$ so that $\|P_{p}((a_{i,q})_{i\in I},(b_{q})_{b\in F_{p}})-x_{q}\|_{L^{2}(\tau_{q})}<\varepsilon$ for all $q\in \mathcal{O}_{p}$. By compactness of $K$, we may find $p_{1},\cdots,p_{n}$ so that $K=\bigcup_{i=1}^{n}\mathcal{O}_{p_{i}}$. Let $(\varphi_{i})_{i=1}^{n}$ be a partition of unity subordinate to $(\mathcal{O}_{p_{i}})_{i=1}^{n}$, and set $y=\sum_{j=1}^{n}\varphi_{j}P_{p_{j}}((a_{i})_{i\in I},F_{p_{j}})$. Since $C(K)\leq N$, we have $y\in C^{*}_{X}(N\cup \{a_{i}:i\in I\}\}$, and for every $p\in K$ we have:
\[\|y_{p}-x_{p}\|_{L^{2}(\tau_{p})}\leq \sum_{j:p\in \mathcal{O}_{p_{j}}}\varphi_{p_{j}}(p)\|P_{p_{j}}((a_{i})_{i\in I},F_{p_{j}})-x_{p}\|_{2}<\varepsilon.\]
Thus $\|y-x\|_{\bE}<\varepsilon$, and this proves that $x\in C^{*}_{X}(a_{i}:i\in I)$. 
\end{proof}

We say that the family $(a_{i})_{i\in I}$ \textbf{fiberwise generates $M$ over $N$} if either/both of the above equivalent conditions hold. If $N=C(K)$, we say that $(a_{i})_{i\in I}$ \textbf{fiberwise generates $M$}. 
In Section \ref{sec: finally it is done}, we will exhibit a $W^{*}$-bundle $(M,\bE,K)$ which has a sub-bundle which is the trivial bundle over $K$ with fiber a nonamenable $\textrm{II}_{1}$-factor $Q$, with $Q'\cap M_{p}=\bC1$ for all $p\in K$, and which has $\delta_{MD}(M;X)=+\infty$, where $X=\overline{\co}^{wk^{*}}(\{\tau_{p}:p\in K\})$. The following Corollary will then be used to show that $M$ cannot be fiberwise finitely generated over a sub-bundle $N$ which satisfying that $\sup_{p\in K}h(N_{p},\tau_{p})<+\infty$, and this will prove Theorem \ref{thm: main thm intro}.

\begin{cor} \label{cor: better main thm}
Let $(M,\bE,K)$ be a $W^{*}$-bundle with $\delta_{MD}(M;X)=+\infty$ where $X=\overline{\co}^{wk^{*}}(\{\tau_{p}:p\in K\})$. Then $M$ cannot be fiberwise generated by a finite family of continuous sections. More generally if $N$ is a sub-bundle of $M$ and $\delta_{MD}(N;X)<+\infty$, then $M$ cannot be fiberwise generated by a finite family of continuous sections over $N$.  
In particular, this holds if 
\begin{enumerate}[(i)]
 \item $h(N_{p},\tau_{p})\leq 0$ for every $p\in K$. For example, if for every $p\in K$ we have that $N_{p}$ satisfies one of the following conditions:
\begin{itemize}
    \item If $N_{p}$ is hyperfinite,
    \item If $N_{p}$ has a diffuse, regular, hyperfinite subalgebra,
    \item $N_{p}$ is generated by two commuting diffuse subalgebras,
    \item $N_{p}$ has diffuse central sequence algebra,
    \item $N_{p}$ has a single sequential commutation orbit in the sense of  \cite{SeqCommutation},
\end{itemize}
\item \label{item: uniformly strongly 1-bounded_2}  more generally if $\sup_{p\in K}h(N_{p},\tau_{p})<+\infty$. 
\item \label{item: M property (T)_2} 
If $(N,X)$ satisfies the following uniform Property (T) assumption: there is an $r\in \bN$ and a $C>0$ so that for every $p\in K$, there is a tuple $x\in \Ball(N_{p})^{r}$ which is a Kazhdan tuple with constant $C>0$. 
For example, this holds if $(N,X)$ is the tracial completion $(C^{*}(G),\Upsilon)$ where $G$ is any Property (T) group and $\Upsilon$ is a compact, convex subset of $\cT(C^{*}(G))$.

\end{enumerate}

\end{cor}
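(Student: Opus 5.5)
The plan is a contradiction argument built on the chain inequality of Corollary \ref{cor:basic properties}(ii) and the generator bound of Proposition \ref{prop: md bounds finite generation}. Suppose $N$ is a sub-bundle with $\delta_{MD}(N;X)<+\infty$, and suppose $a_{1},\cdots,a_{k}\in M$ fiberwise generate $M$ over $N$. We may assume $C(K)\leq N$: replace $N$ by $C^{*}_{X}(N\cup C(K))$. This does not change the fibers $N_{p}$, since $C(K)$ maps to scalars in each fiber. To check that $\delta_{MD}$ stays finite, apply the chain inequality to $C(K)\leq \widetilde N$. Because $C(K)$ is central, $\delta_{MD}(C(K)|N)$ should be dominated by $\delta_{MD}(N)$ up to an abelian contribution. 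If this detour is awkward, use instead that in every case (i)--(iii) the hypothesis is fiberwise, so it is unchanged when $N$ is enlarged by $C(K)$, and bound $\delta_{MD}(\widetilde N;X)$ directly as below.

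With $C(K)\leq N$, the lemma preceding Proposition \ref{prop: md bounds finite generation} (equivalence of fiberwise generation and generation over $N$ in $\|\cdot\|_{2;X}$) gives $C^{*}_{X}(N\cup\{a_{1},\cdots,a_{k}\})=M$. Proposition \ref{prop: md bounds finite generation}, applied with $Q=M$, then gives $\delta_{MD}(M|N:M;X)\leq 2k$. Corollary \ref{cor:basic properties}(ii) gives
\[\delta_{MD}(M;X)\leq \delta_{MD}(M|N:M;X)+\delta_{MD}(N:M;X)\leq 2k+\delta_{MD}(N:M;X).\]
By Corollary \ref{cor:basic properties}(i), $\delta_{MD}(N:M;X)\leq\delta_{MD}(N;X)<\infty$. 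This contradicts $\delta_{MD}(M;X)=+\infty$. Taking $N=C(K)$ gives the first sentence of the statement, once we know $\delta_{MD}(C(K);X)\leq 1$. That follows from Corollary \ref{cor: basic examples of vanishing}(\ref{item: 1 bdd entropy 0}), since $C(K)_{\tau}$ is abelian, hence hyperfinite, for every $\tau\in X$.

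For the listed cases we must check that each forces $\delta_{MD}(N;X)\leq 1$, viewing $(N,X|_{N})$ as a tracially complete $C^{*}$-algebra and applying Corollary \ref{cor: basic examples of vanishing}. The main obstacle is that those hypotheses concern $N_{\tau}$ for all $\tau\in X$, whereas ours concern only the extreme points $\tau_{p}$. For a general $\tau=\int\tau_{p}\,d\mu(p)$, $N_{\tau}$ is a direct integral of the $N_{p}$, so we cannot read off its entropy directly. I would handle this through the variational principle \cite[Theorem 6.1]{macmahon20261boundedentropycalgebras}, already used in Proposition \ref{prop: delta MD and MFED}. It should show that $h(N;X)$ is governed by $\sup_{p}h(N_{p},\tau_{p})$, with the supremum over $X$ realized at extreme points. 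Alternatively, one can use that $h$ is finite on direct integrals with uniformly bounded fiber entropy. Either way, Propositions \ref{prop: delta MD and MFED} and Corollary \ref{cor: delta MD and MFED} then give $\delta_{MD}(N;X)\leq\delta_{0}\leq 1$. This covers (i) and (ii).

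For (iii), the proof of Corollary \ref{cor: basic examples of vanishing}(\ref{item: M property (T)}) bounds $h(N_{p},\tau_{p})\leq f(C,r)$ via \cite{HJKEPropT}, which reduces (iii) to (ii). There is one subtlety: that result needs the Kazhdan tuple to generate $N_{p}$. I would either adjoin the tuple to a generating set, using that Property (T) bounds entropy relative to any ambient algebra, or invoke \cite{HJKEPropT} in its relative form. The group example follows exactly as in the proof of Corollary \ref{cor: basic examples of vanishing}.
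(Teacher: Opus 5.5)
Your proposal is correct and matches the paper's argument. The paper likewise applies the chain inequality of Corollary~\ref{cor:basic properties}, using $\delta_{MD}(N:M;X)\leq\delta_{MD}(N;X)$ implicitly. It gets $\delta_{MD}(M|N:M;X)\leq 2k$ from Proposition~\ref{prop: md bounds finite generation}, after translating fiberwise generation into $\|\cdot\|_{2;X}$-generation, and then cites Corollary~\ref{cor: basic examples of vanishing} for the listed cases. The subtleties you flag are passed over silently in the paper: $C(K)\leq N$ (implicit there in $N$ being a sub-bundle), the move from hypotheses at the points $\tau_{p}$ to all of $X$, and whether the Kazhdan tuple must generate. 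So your extra detours are more careful than the paper's proof, although your sketched fixes for them remain heuristic.
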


\begin{proof}
By Theorem \ref{thm: infintie MD} and Corollary \ref{cor:basic properties}, we have that 
\[+\infty=\delta_{MD}(M;X)\leq \delta_{MD}(M|N;X)+\delta_{MD}(N;X).\]
Thus $\delta_{MD}(M|N;X)$ is infinite.
It follows by Proposition \ref{prop: md bounds finite generation} that we cannot find a finite family $(a_{i})_{i=1}^{n}\in M^{n}$ so that $M=C^{*}_{X}(\{a_{i}:i=1,\cdots,n\}\cup N)$. 
The ``in particular" part follows from Corollary \ref{cor: basic examples of vanishing}.
\end{proof}

\section{Proof of the main theorem}

In this section we prove Theorem \ref{thm: main thm intro}. We will use Corollary \ref{cor: better main thm} and so wish to build a $W^{*}$-bundle $(M,\bE,K)$ which has a sub-bundle the trivial bundle with fibers $Q$, where $Q$ is a nonamenable $\textrm{II}_{1}$-factor, and so that:
\begin{itemize}
    \item $Q'\cap M_{p}=\bC 1$ for all $p\in K$,
    \item $\delta_{MD}(M;X)=+\infty$, where $X=\overline{\co}^{wk^{*}}(\{\tau_{p}:p\in K\})$. 
\end{itemize}
First, notice that to build a factorial $W^{*}$-bundle $(M,\bE,K)$, it suffices by Proposition \ref{prop: creating faces}, \cite[Theorem 3]{OzawaBundle} (see also \cite[Theorem 3.37]{TraciallyComplete}), and \cite[Proposition 3.23 (ii) and (iv)]{TraciallyComplete} to find a $C^{*}$-algebra and a compact $K\subseteq \Ext(\cT(A))$. For a trace to be extremal just means it generates a factor, and so we wish to find a compact family of traces with factorial GNS completions. In order to force the first item above, this family of extremal traces should be constant on some sub-algebra $B$, and the GNS completion of $B$ with respect to all/any of the traces in $K$ should be a non-amenable $\textrm{II}_{1}$-factor $Q$. Moreover, this $Q$ should be an irreducible subfactor of any of the GNS completions of $A$ with respect to any of the traces in $K$. Finally, to guarantee that $\delta_{MD}$ of the bundle arising as the tracial completion is infinite, $K$ should be not so restrictive so as to guarantee that there are lots of homomorphisms from $A$ into a tracial ultraproduct of matrices whose traces pullback to elements of $K$. 

The core idea is to start with some nonamenable $\textrm{II}_{1}$-factor $Q$ which has \emph{some embedding} into a tracial ultraproduct of matrices with trivial relative commutant. We give examples in the next section. Let $B$ be a unital and weak$^{*}$-dense subalgebra of $Q$. We then take a universal free product of $B$ with a suitable universal $C^{*}$-algebra (e.g. $C^{*}(\bF_{\infty})$, or the universal $C^{*}$-algebra generated by countably many self-adjoint contractions) and form a $C^{*}$-algebra $\cA$. Since the embedding of $Q$ into a tracial ultraproduct of matrices is irreducible, \emph{any} intermediate von Neumann subalgebra will be a factor. We thus define a compact (compactness will be guaranteed via a diagonal argument) family of extremal traces by considering all traces on $\cA$ which (roughly speaking) come from homomorphisms on $\cA$ into a tracial ultraproduct of matrices which extend the given embedding of $B$ with trivial relative commutant. The desired $(A,K)$ is then given as the separation of $\cA$ by $\cK$.

\subsection{von Neumann algebras which embed into ultraproducts of matrices with trivial relative commutant}\label{sec: trivial rel comm examples}

This subsection is largely folklore, but we include complete proofs for convenience of the reader.

Let $\bF_{2}$ be the free group on two letters $a,b$. Let us first show that $L(\bF_{2})$ has an embedding into a matrix ultraproduct with trivial relative commutant. 
By \cite{Hastings} (see also \cite{PisierQuantumExpander} and \cite{CollinsBordenave}) and \cite[Theorem 3.8]{VoicAsyFree}, almost every (with respect to the infinite product of Haar measure on  $\cU(\bM_{N}(\bC))^{2}$) sequence$(U_{1}^{(N)},U_{2}^{(N)})\in \cU(\bM_{N}(\bC))^{2}$ satisfies the following:
\begin{itemize}
\item for every free ultrafilter $\omega\in\beta\bN\setminus \bN$ there is a trace-preserving embedding $\Theta_{\omega}\colon L(\bF_{2})\to \prod_{n\to\omega}^{tr}\bM_{N}(\bC)$ with $\Theta_{\omega}(\lambda(a))=(U_{1}^{(N)})_{N\to\omega},\Theta_{\omega}(\lambda(b))=(U_{2}^{(N)})_{N\to\omega}.$ 
\item We have
\[\limsup_{n\to\infty}\left\|\sum_{i=1}^{2}\Re(U_{i}^{(N)}\otimes \overline{U_{i}^{(N)}})\right\|_{\bM_{N}(\bC)\ominus \bC 1}<1.\]
\end{itemize}

Suppose $x=(x_{N})_{N\to\omega}\in \cM\ominus\bC 1.$ We may, and will, assume that $\tr(x_{N})=0$ for all $N$. Then
\[\|[\Theta_{\omega}(\lambda(a)),x]\|_{2}^{2}+\|[\Theta_{\omega}(\lambda(b)),x]\|_{2}^{2}=\lim_{N\to\omega}2\|x_{N}\|_{2}^{2}-2\ip{T_{N}(x_{N}),x_{N}},\]
where $T_{N}=\sum_{i=1}^{2}\Re(U_{i}^{(N)}\otimes \overline{U_{i}^{(N)}}).$ Since $\lim_{N\to\infty}\|T_{N}\|_{\bM_{N}(\bC)\ominus\bC 1}<1$, we see that \[\|[\Theta_{\omega}(\lambda(a)),x]\|_{2}^{2}+\|[\Theta_{\omega}(\lambda(b)),x]\|_{2}^{2}>0.\]
Thus $\Theta_{\omega}(L(\bF_{2}))'\cap \cM=\bC 1$. 
Another application of \cite[Theorem 3.8]{VoicAsyFree} tells us that for a.e. such sequence satisfies that for every free ultrafilter $\omega$ there is a extension of $\Theta_{\omega}$ to an embedding $L(\bF_{\infty})\to \cM$. Hence every $L(\bF_{2})\leq Q\leq L(\bF_{\infty})$ has an embedding into an ultraproduct of matrices with trivial relative commutant. This property is also seen to be preserved by taking corners of a given von Neumann algebra or matrices over a given von Neumann algebra. Thus every interpolated free group factor in the sense of \cite{DykmaIFGF, RadulescuIFGF} has an embedding into an ultraproduct of matrices with trivial relative commutant.

Another example is given by the following. 
\begin{thm}\label{thm: Property T trivial rel comm}
Let $G$ be a Property (T) group, and let $\pi_{n}\colon G\to \cU(\bM_{k_{n}}(\bC))$ be a sequence of irreducible representations with $k_{n}\to\infty$. Fix a free ultrafilter $\omega$, and let $\pi_{\omega}\colon G\to \cU\left(\prod_{n\to\omega}\bM_{k_{n}}(\bC)\right)$ be given by $\pi_{\omega}(g)=(\pi_{n}(g))_{n\to\omega}$, and set $\cM=\prod_{n\to\omega}\bM_{k_{n}}(\bC)$. Then $W^{*}(\pi_{\omega}(G))'\cap \cM=\pi_{\omega}(G)'\cap \cM=\bC 1$.   
\end{thm}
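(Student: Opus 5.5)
The equality $W^{*}(\pi_{\omega}(G))'\cap\cM=\pi_{\omega}(G)'\cap\cM$ is immediate: an element commuting with every $\pi_{\omega}(g)$ commutes with the $*$-algebra they span, and hence with its SOT-closure. So the whole task is to show that $x\in\pi_{\omega}(G)'\cap\cM$ is a scalar. I would imitate the quantum expander argument given above for $L(\bF_{2})$, replacing the random-matrix spectral gap with the Kazhdan inequality applied to the conjugation representation. The plan is to establish a spectral gap that is uniform in $n$ at the matrix level, and then pass it to the ultraproduct.

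\textbf{Step 1 (uniform gap).} Fix a Kazhdan pair $(S,C)$ for $G$. For each $n$, consider the unitary representation $\sigma_{n}$ of $G$ on the Hilbert space $\bM_{k_{n}}(\bC)$, equipped with the normalized inner product $\ip{X,Y}=\tr(Y^{*}X)$, given by $\sigma_{n}(g)X=\pi_{n}(g)X\pi_{n}(g)^{*}$. A vector $X$ is fixed by $\sigma_{n}$ exactly when $X\in\pi_{n}(G)'$. Since $\pi_{n}$ is irreducible, Schur's lemma gives $\pi_{n}(G)'=\bC1$, so $\Fix(\sigma_{n})=\bC1$ and $P_{\Fix(\sigma_{n})}(X)=\tr(X)1$. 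The Kazhdan inequality then yields, for every $X\in\bM_{k_{n}}(\bC)$,
\[\|X-\tr(X)1\|_{2}\leq C\Big(\sum_{s\in S}\|\pi_{n}(s)X-X\pi_{n}(s)\|_{2}^{2}\Big)^{1/2},\]
where we used that $\|\pi_{n}(s)X\pi_{n}(s)^{*}-X\|_{2}=\|[\pi_{n}(s),X]\|_{2}$ by unitarity. The point is that the constant $C$ does not depend on $n$.

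\textbf{Step 2 (ultraproduct).} Take $x=(x_{n})_{n\to\omega}\in\pi_{\omega}(G)'\cap\cM$ with $\sup_{n}\|x_{n}\|<\infty$. Both sides of the inequality in Step 1 pass to the $\omega$-limit, since $\tr_{\omega}(x)=\lim_{n\to\omega}\tr(x_{n})$ and the $2$-norm in $\cM$ is the $\omega$-limit of the $2$-norms. This gives
\[\|x-\tr_{\omega}(x)1\|_{2}\leq C\Big(\sum_{s\in S}\|[\pi_{\omega}(s),x]\|_{2}^{2}\Big)^{1/2}=0,\]
so $x$ is a scalar.

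The only point needing care is Step 1, namely identifying the fixed vectors of the conjugation representation with the commutant and applying Schur's lemma. Beyond that there is no real obstacle. The hypothesis $k_{n}\to\infty$ is not needed for the relative-commutant statement itself; it only ensures that $\cM$ is a $\textrm{II}_{1}$-factor, which is relevant for producing the embedding of $L(G)$ mentioned in the introduction.
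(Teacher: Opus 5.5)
Your proposal is correct and follows the paper's proof: Schur's lemma shows the conjugation representation on $S^{2}(k_{n},\tr)$ has fixed vectors $\bC1$, the Kazhdan inequality gives a gap uniform in $n$, and passing to the $\omega$-limit kills $x-\tr_{\omega}(x)1$. The paper also first lifts $x$ to a representative with $\|x_{n}\|\leq 1$ via \cite[Proposition II.5.1.5]{BlackadarOA}; your observation that $k_{n}\to\infty$ is not needed for the relative commutant statement is also correct.
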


\begin{proof}
Fix $x\in \pi_{\omega}(G)'\cap \cM$ with $\|x\|\leq 1$. By \cite[Proposition II.5.1.5]{BlackadarOA} we may write $x=(x_{n})_{n\to\omega}$ with $\|x_{n}\|\leq 1$. Consider the representations $\phi_{n}\colon G\to \cU(S^{2}(k_{n},\tr))$ given by $\phi_{n}(g)A=\pi_{n}(g)A\pi_{n}(g)^{-1}$. Since each $\pi_{n}$ is irreducible, it follows that 
$\Fix(\phi_{n})=\bC1$. Let $(S,C)$ be a Kazhdan pair for $G$. Then:
\[\|x_{n}-\tr(x_{n})\|_{2}=\|x_{n}-P_{\Fix(\phi_{n})}(x_{n})\|_{2}\leq C\left(\sum_{s\in S}\|\phi_{n}(s)x_{n}-x_{n}\|_{2}^{2}\right)^{1/2}=C\left(\sum_{s\in S}\|[x_{n},\pi_{n}(s)]\|_{2}^{2}\right)^{1/2}.\]
Taking $\lim_{n\to\omega}$, we see that $x\in \bC 1$.

\end{proof}

For a representation $\pi\colon G\to \cU(A)$ where is a $C^{*}$-algebra, we continue use $\pi$ for the unique $*$-homomorphism $C^{*}(G)\to A$ induced by $\pi$. 

Given a Property (T) group and a $\chi\in \Ext(\cT(C^{*}(G)))$, it is natural to ask when there is such a sequence $\pi_{n}$ with $W^{*}(\pi_{\omega}(G))=W^{*}(\pi_{\chi}(C^{*}(G)))$, with notation as in the above Theorem.  We proceed to show that this is less restrictive than one might expect, and amounts to the existence of finite-dimensional representations (irreducible or not) whose normalized characters converge to $\chi$. The essential idea will be that if we decompose such finite-dimensional representations into irreducible pieces, then the corresponding character will be a convex combination of characters associated with irreducible representations. Since factoriality of $W^{*}(\pi_{\chi}(C^{*}(G)))$ is equivalent to extremality of $\chi$, if a convex combination is close to $\chi$, then some (in some sense most) of the terms in the convex combination will have to be close to $\chi$. 
For this, it will be helpful to use the following folklore result.

\begin{lem}\label{lem: perturb extremality}
Let $V$ be a locally convex space, $C\subseteq V$ a compact convex set, and $p\in \Ext(C)$. Then given any neighborhood $\mathcal{O}$ of $p$ and a $\eta>0$, there is a neighborhood $\mathcal{V}$ of $p$ so that for every $\mu\in \Prob(C)$ with $\int x\,d\mu(x)\in \mathcal{V}$, we have $\mu(\mathcal{O})\geq 1-\eta$.       
\end{lem}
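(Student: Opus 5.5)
We argue by contradiction, using compactness of $\Prob(C)$ together with the fact that barycenters of measures concentrated on closed sets missing $p$ stay away from $p$. Suppose the statement fails for some neighborhood $\mathcal{O}$ of $p$ and some $\eta>0$. Then for every neighborhood $\mathcal{V}$ of $p$ there is a $\mu_{\mathcal{V}}\in \Prob(C)$ with $\int x\,d\mu_{\mathcal{V}}(x)\in \mathcal{V}$ and $\mu_{\mathcal{V}}(\mathcal{O})<1-\eta$. Directing the neighborhoods by reverse inclusion gives a net $(\mu_{\mathcal{V}})_{\mathcal{V}}$. Since $C$ is compact, $\Prob(C)$ is weak$^{*}$-compact, so we may pass to a subnet converging to some $\mu\in\Prob(C)$.

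Next we identify the barycenter of $\mu$. The barycenter map $\nu\mapsto \int x\,d\nu(x)$ is weak$^{*}$-to-weak continuous: for each $\phi\in V^{*}$, $\phi|_{C}$ is continuous, and $\phi(\int x\,d\nu)=\int\phi\,d\nu$. On the compact set $C$ the weak topology agrees with the original one, since both are compact Hausdorff and one is coarser. Hence $\int x\,d\mu(x)=\lim_{\mathcal{V}}\int x\,d\mu_{\mathcal{V}}(x)=p$. Shrinking $\mathcal{O}$ if necessary (which only strengthens the failure, as $\mu_{\mathcal{V}}(\mathcal{O}')\leq \mu_{\mathcal{V}}(\mathcal{O})$ for $\mathcal{O}'\subseteq\mathcal{O}$), we may assume $\mathcal{O}$ is open. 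Then $C\setminus\mathcal{O}$ is closed, and by Portmanteau (upper semicontinuity of the mass of closed sets under weak$^{*}$ limits)
\[\mu(C\setminus \mathcal{O})\geq \limsup_{\mathcal{V}}\mu_{\mathcal{V}}(C\setminus\mathcal{O})\geq \eta.\]

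The key step is to show that a probability measure with barycenter an extreme point $p$ must be $\delta_{p}$; this is Bauer's characterization of extreme points, and we prove it by a splitting argument. Suppose $\mu\neq\delta_{p}$. Then there is a closed convex neighborhood $D$ of some point $q\neq p$ with $p\notin D$ and $0<\mu(D)$. (In our situation we can use this directly: $\mu(C\setminus\mathcal{O})\geq\eta>0$, and covering the compact set $C\setminus\mathcal{O}$ by finitely many closed convex sets avoiding $p$, one of them, $D$, has $\mu(D)>0$.) If $\mu(D)=1$, then $p=\int x\,d\mu\in D$ because $D$ is closed and convex, a contradiction. Otherwise set $t=\mu(D)\in(0,1)$ and write
\[p=t\,b(\mu|_{D}/t)+(1-t)\,b(\mu|_{C\setminus D}/(1-t)),\]
where $b(\cdot)$ denotes the barycenter. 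Since $p$ is extreme, both barycenters equal $p$. But the first barycenter lies in $D$, since $D$ is closed and convex, so $p\in D$, again a contradiction.

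The main obstacle is producing the closed convex sets $D$ avoiding $p$. Local convexity of $V$ handles this: each $q\in C\setminus\mathcal{O}$ has a closed convex neighborhood in $C$ not containing $p$, obtained by taking $q$ plus a closed convex balanced neighborhood $W$ of $0$ with $p-q\notin W$. Compactness of $C\setminus\mathcal{O}$ then yields finitely many such sets covering it, so one of them has positive $\mu$-measure, and the splitting argument above finishes the proof.
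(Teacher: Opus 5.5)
Your proof is correct and follows the same route as the paper's: you argue by contradiction with a net $(\mu_{\mathcal{V}})$, pass to a weak$^{*}$-convergent subnet in $\Prob(C)$, use continuity of the barycenter to get $\int x\,d\mu(x)=p$, and apply the Portmanteau theorem. The only difference is that you prove the step ``extremality of $p$ forces $\mu=\delta_{p}$'', which the paper simply asserts, by splitting $\mu$ over a closed convex set $D\not\ni p$ of positive $\mu$-mass; you are also slightly more careful than the paper in taking $\mathcal{O}$ open and in using subnets rather than subsequences.
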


\begin{proof}
If the lemma were false, then we could find a neighborhood $\mathcal{O}$ of $p$, an $\eta>0$ and a net $\mu_{\alpha}\in \Prob(C)$ with $\int x\,d\mu_{\alpha}(x)\to p$, but $\mu_{\alpha}(\mathcal{O})<1-\eta$. Passing to a subsequence we may, and will, assume that there is a $\mu\in \Prob(C)$ with $\mu_{\alpha}\to\mu$ weak$^{*}$. Then $p=\int x\,d\mu(x)$, and extremality of $p$ forces $\mu=\delta_{p}$. Thus, by the Portmanteau theorem \cite[Theorem 11.1.1]{DudleyProb}, we have 
\[1=\delta_{p}(\mathcal{O})\leq \liminf_{\alpha}\mu_{\alpha}(\mathcal{O})\leq 1-\eta,\]
a contradiction.

\end{proof}

Using this, we can show that (under the assumption of factoriality) the hypotheses in Theorem \ref{thm: Property T trivial rel comm} are no different than just assuming the existence of finite-dimensional representations which model $W^{*}(\pi_{\omega}(G)).$

\begin{cor}\label{cor:removing irrep assumption}
Let $G$ be a group, and $\chi\in \Ext(\cT(C^{*}(G)))$. Set $M_{\chi}=W^{*}(\pi_{\chi}(C^{*}(G)))$. The following are equivalent:
\begin{enumerate}[(i)]
    \item there is a sequence $\pi_{n}\colon G\to \cU(\bM_{k_{n}}(\bC))$ of finite-dimensional representations with $\tr\circ \pi_{n}\to_{n\to\infty}\chi$ weak$^{*}$, \label{item: irrep approximation}
    \item there is a sequence $\rho_{n}\colon G\to \cU(\bM_{m_{n}}(\bC))$ of irreducible representations with $\tr\circ \rho_{n}\to \chi$ weak$^{*}$. \label{item: general approximation}
\end{enumerate}
\end{cor}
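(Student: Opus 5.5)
Note that the labels in the statement are swapped relative to their content: the label \texttt{item: irrep approximation} sits on the item with general finite-dimensional representations, and \texttt{item: general approximation} on the item with irreducible ones. Below, (i) means the first listed item (general finite-dimensional $\pi_{n}$) and (ii) the second (irreducible $\rho_{n}$).

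The direction (ii)$\Rightarrow$(i) is immediate, since irreducible representations are in particular finite-dimensional representations. For (i)$\Rightarrow$(ii), the plan is to decompose each $\pi_{n}$ into irreducibles and apply Lemma \ref{lem: perturb extremality} to the convex set $C=\cT(C^{*}(G))$ and the extreme point $\chi$.

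Write $\pi_{n}\cong\bigoplus_{j=1}^{s_{n}}\sigma_{n,j}$ with each $\sigma_{n,j}\colon G\to\cU(\bM_{d_{n,j}}(\bC))$ irreducible and $\sum_{j}d_{n,j}=k_{n}$. Then
\[\tr\circ\pi_{n}=\sum_{j=1}^{s_{n}}\frac{d_{n,j}}{k_{n}}\,\tr\circ\sigma_{n,j}=\int x\,d\mu_{n}(x),\qquad \mu_{n}=\sum_{j}\frac{d_{n,j}}{k_{n}}\delta_{\tr\circ\sigma_{n,j}}\in\Prob(C).\]
The space $C$ is compact and convex, and in the weak$^{*}$-topology it lies in a locally convex space. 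Fix a countable decreasing basis of weak$^{*}$-neighborhoods $\mathcal{O}_{m}$ of $\chi$; a countable basis exists because we may test on the countably many elements $g\in G$ after passing to the countable subgroup generated by a countable dense set, or else we simply run the argument with nets. For each $m$, Lemma \ref{lem: perturb extremality} with $\eta=1/2$ gives a neighborhood $\mathcal{V}_{m}$ of $\chi$. Since $\tr\circ\pi_{n}\to\chi$, for all large $n$ we have $\tr\circ\pi_{n}\in\mathcal{V}_{m}$, hence $\mu_{n}(\mathcal{O}_{m})\geq1/2>0$. So some $j$ satisfies $\tr\circ\sigma_{n,j}\in\mathcal{O}_{m}$.

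Choosing $n_{m}$ increasing and setting $\rho_{m}=\sigma_{n_{m},j}$ for such a $j$ gives irreducible representations with $\tr\circ\rho_{m}\to\chi$. This yields (ii).

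The main subtlety is metrizability when $G$ is uncountable. Weak$^{*}$ convergence of a sequence is tested on each fixed $g\in G$ separately, so it suffices to show that for every finite $F\subseteq G$ and $\varepsilon>0$ some irreducible summand of $\pi_{n}$ (for all large $n$) has character within $\varepsilon$ of $\chi$ on $F$. However, choosing one sequence that works for all $g$ at once requires care when there is no countable neighborhood basis. Two ways to handle this:
\begin{itemize}
\item Use the fact that the sequence $\tr\circ\pi_{n}$ only involves the countable set of values on which we need control. Strictly, $G$ is often countable in this setting, so we may assume it.
\item Produce a net $(\rho_{\alpha})$ instead of a sequence.
\end{itemize}
If one also wants the dimensions of the $\rho_{m}$ to tend to infinity, for use with Theorem \ref{thm: Property T trivial rel comm}, this follows when $M_{\chi}$ is infinite-dimensional. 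Irreducible representations of a fixed dimension $d$ have characters with $\tr(\rho(g))$ of a restricted form, and their limits generate algebras of dimension at most $d^{2}$, which contradicts $M_{\chi}$ being infinite-dimensional. A variant is to apply Lemma \ref{lem: perturb extremality} with $\mathcal{O}$ excluding the compact set of characters of dimension at most $d$.
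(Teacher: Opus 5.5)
Your argument is essentially the paper's proof: decompose each finite-dimensional representation into irreducibles, regard $\tr\circ\pi_{n}$ as the barycenter of the corresponding weighted point masses, and apply Lemma \ref{lem: perturb extremality} with $\eta=1/2$ to find an irreducible summand whose character lies in a prescribed neighborhood of $\chi$. Your justification of a countable neighborhood basis by ``passing to a countable subgroup'' does not work for uncountable $G$; there, picking summands of the given $\pi_{n}$ can genuinely fail to produce a convergent sequence. However, the paper's proof carries the same implicit restriction, and your fallback of assuming $G$ countable covers the intended application, since Property (T) groups are finitely generated.
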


\begin{proof}
That (\ref{item: general approximation}) implies (\ref{item: irrep approximation}) is direct. For the reverse implication, let $\mathcal{O}$ be a weak$^{*}$-neighborhood of $\chi$. It suffices to find an irreducible representation $\pi\colon G\to \cU(\cH)$ with $\cH$ finite-dimensional and so that $\tr\circ \pi\in \mathcal{O}$. Let $\mathcal{V}$ be as in Lemma \ref{lem: perturb extremality} for this $\mathcal{O}$ and $\eta=\frac{1}{2}$. By (\ref{item: general approximation}), we may find a finite-dimensional representation $\rho\colon G\to \cU(\bM_{m}(\bC))$ so that $\tr\circ \rho\in \mathcal{V}$. Write $\rho=\bigoplus_{j=1}^{d}\rho_{j}$ where the $\rho_{j}$ are irreducible, and set $m_{j}$ to the dimension of $\rho_{j}$. Then
\[\tr\circ \rho=\sum_{j=1}^{d}\frac{m_{j}}{m}\tr\circ \rho_{j}.\]
Hence, by Lemma \ref{lem: perturb extremality}, we have that 
\[\sum_{j:\tr\circ\rho_{j}\in \mathcal{O}}\frac{m_{j}}{m}\geq \frac{1}{2}.\]
Thus we can find a $j$ with $\tr\circ\rho_{j}\in \mathcal{O}$.
\end{proof}

Combined with our earlier work, we can give more examples of Property (T) von Neumann algebras which embed with trivial relative commutant into an ultraproduct of matrices.

\begin{cor}
Let $G$ be a Property (T) group, and let $\chi\in\Ext(\cT(C^{*}(G)))$. Suppose that there is a sequence $\pi_{n}\colon G\to \cU(\bM_{k_{n}}(\bC))$ of representations with $\tr\circ\pi_{n}\to \chi$. Then there is an embedding of $W^{*}(\pi_{\chi}(G))$ into an ultraproduct of matrices with trivial relative commutant. In particular, this holds for $\chi=\delta_{e}$ (and hence $W^{*}(\pi_{\chi}(G))=L(G)$)  if $G$ is an i.c.c, residually finite, Property (T) group.  
\end{cor}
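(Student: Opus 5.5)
The plan is to combine Corollary \ref{cor:removing irrep assumption} with Theorem \ref{thm: Property T trivial rel comm}. First, since $\chi\in\Ext(\cT(C^{*}(G)))$ and there are finite-dimensional representations $\pi_{n}$ with $\tr\circ\pi_{n}\to\chi$, Corollary \ref{cor:removing irrep assumption} produces irreducible finite-dimensional representations $\rho_{n}\colon G\to \cU(\bM_{m_{n}}(\bC))$ with $\tr\circ\rho_{n}\to\chi$ weak$^{*}$.

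Fix a free ultrafilter $\omega$ and set $\cM=\prod_{n\to\omega}\bM_{m_{n}}(\bC)$ with $\rho_{\omega}(g)=(\rho_{n}(g))_{n\to\omega}$. Because $\tr_{\omega}\circ\rho_{\omega}=\lim_{n\to\omega}\tr\circ\rho_{n}=\chi$ on $G$, and hence on $C^{*}(G)$ by linearity and continuity, uniqueness of the GNS construction gives a trace-preserving isomorphism $W^{*}(\pi_{\chi}(G))\cong W^{*}(\rho_{\omega}(G))\subseteq\cM$ sending $\pi_{\chi}(g)\mapsto\rho_{\omega}(g)$. This is the desired embedding.

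To apply Theorem \ref{thm: Property T trivial rel comm} I still need $m_{n}\to\infty$ (along $\omega$ suffices). This is the one genuine obstacle. If $m_{n}$ stayed bounded on an $\omega$-large set, it would be eventually constant, say $m$, on such a set. Then $\cM\cong\bM_{m}(\bC)$, so $W^{*}(\pi_{\chi}(G))$ would be a finite-dimensional factor. That case is harmless: $\bM_{m}(\bC)$ sits inside itself with trivial relative commutant, since $\rho_{\omega}$ is then irreducible as an ultralimit of irreducibles in a fixed dimension. More simply, Theorem \ref{thm: Property T trivial rel comm}'s proof never uses $k_{n}\to\infty$; it only uses irreducibility and the Kazhdan pair. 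So in either case the Kazhdan-pair estimate from that proof gives $\rho_{\omega}(G)'\cap\cM=\bC1$, and I will invoke it directly.

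For the ``in particular'' clause, take $\chi=\delta_{e}$, which is extremal precisely because $G$ is i.c.c. (its GNS algebra $L(G)$ is a factor). I then need finite-dimensional $\pi_{n}$ with $\tr\circ\pi_{n}\to\delta_{e}$. Residual finiteness supplies a decreasing chain of finite-index normal subgroups $N_{n}$ with trivial intersection. Let $\pi_{n}$ be the left regular representation of $G/N_{n}$, pulled back to $G$ on $\bM_{[G:N_{n}]}(\bC)$. Then $\tr\circ\pi_{n}(g)=1_{g\in N_{n}}$, which tends to $\delta_{e}(g)$ pointwise. Pointwise convergence on group elements, together with uniform boundedness, yields weak$^{*}$ convergence on $C^{*}(G)$. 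Since the GNS algebra of $\delta_{e}$ is $L(G)$, the first part of the argument applies.
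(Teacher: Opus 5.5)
Your proposal is correct and follows the paper's proof: combine Corollary \ref{cor:removing irrep assumption} with Theorem \ref{thm: Property T trivial rel comm}, and for the ``in particular'' clause use the regular representations of the finite quotients $G/N_{n}$. You also treat the possibility that $m_{n}\not\to\infty$, which the paper leaves implicit, and you handle it correctly by noting that the Kazhdan-pair estimate uses only irreducibility; however, your side remark that an ultralimit of irreducibles in a fixed dimension is automatically irreducible is false in general without Property (T), so keep the Kazhdan argument as the actual justification.
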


\begin{proof}
That there is such an embedding of $W^{*}(\pi_{\chi}(G))$ follows from  Corollary \ref{cor:removing irrep assumption} and Theorem \ref{thm: Property T trivial rel comm}. For the ``in particular part" first note that if $G$ is i.c.c, then $L(G)$ is a factor, so $\delta_{e}$ is an extremal character. If $G$ is residually finite, then we may find a decreasing sequence $G_{n}$ of finite index, normal subgroups of $G$ with $\bigcap_{n}G_{n}=\{e\}$. The corresponding representations $\lambda_{G/G_{n}}\colon G\to \cU(B(\ell^{2}(G/G_{n})))$ then satisfy that $\tr\circ \lambda_{G/G_{n}}\to \delta_{e}$. 
\end{proof}

\subsection{Construction and proof of the main theorem} \label{sec: finally it is done}
It will be helpful to give an equivalent asymptotic criterion to the condition that $Q$ has an embedding into an ultraproduct of matrices with trivial relative commutant. 

\begin{prop}\label{prop: asymptotic rephrase}
Let $(Q,\tau)$ be a tracial von Neumann algebra with separable predual, and let $x=(x_{j})_{j\in J}$ be a countable generating tuple for $Q$. Then the following are equivalent:
\begin{enumerate}[(i)]
    \item there is a free ultrafilter $\omega\in \bN\setminus \bN$ and a trace-preserving embedding $\Theta\colon Q\to\prod_{k\to\omega}\bM_{k}(\bC):=\cM_{\omega}$, so that $\Theta(Q)'\cap \cM_{\omega}=\bC1$, \label{item: trivial relative commutant ultrapower}
    \item there is a sequence $n_{1}<n_{2}<\cdots$ of positive integers, and $X^{(k)}=(X^{(k)}_{j})_{j\in J}\in \prod_{j\in J}\|x_{j}\|\Ball(\bM_{n_{k}}(\bC))$ so that $\ell_{X^{(k)}}\to \ell_{x}$, which satisfy the following property. For every $\varepsilon>0$, there is a $\delta>0$, finite $F\subseteq I$ and a $K\in \bN$, so that whenever $k\geq K$, and $A\in \Ball(\bM_{n_{k}}(\bC))$ satisfies $\|A\|\leq 1$ and $\|[X^{(k)},A]\|_{2,F}<\delta$, then $\|A-\tr(A)\|_{2}<\varepsilon$. \label{item: asymptotic criterion}
\end{enumerate}

\end{prop}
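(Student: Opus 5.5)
We prove the two implications separately. Both directions come down to unwinding what an element of the relative commutant in a tracial ultraproduct of matrices looks like at the level of representing sequences. Throughout we fix an enumeration of $J$ by $\bN$. We freely use that bounded elements of the ultraproduct lift to uniformly bounded representing sequences (\cite[Proposition II.5.1.5]{BlackadarOA}), and that $\tr_{\omega}$ is the limit of the normalized traces.

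\emph{(\ref{item: asymptotic criterion}) $\Rightarrow$ (\ref{item: trivial relative commutant ultrapower}).} Choose a free ultrafilter $\omega$ on $\bN$ concentrating on $\{n_{k}:k\in\bN\}$. For $m\notin\{n_{k}\}$ put arbitrary matrices, say $X^{(m)}_{j}=0$ or scalars; they are $\omega$-negligible. Then $\Theta(x_{j})=(X^{(k)}_{j})_{k\to\omega}$, reindexed along $n_{k}$, has law $\ell_{x}$, because $\ell_{X^{(k)}}\to\ell_{x}$. Hence it extends to a trace-preserving embedding $\Theta\colon Q\to\cM_{\omega}$, since $x$ generates $Q$ and laws determine the GNS algebra.

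Now take $y\in\Theta(Q)'\cap\cM_{\omega}$ with $\|y\|\leq 1$, and lift it to $(A_{k})$ with $\|A_{k}\|\leq 1$. Since $y$ commutes with each $\Theta(x_{j})$, for each finite $F$ we have $\lim_{k\to\omega}\|[X^{(k)},A_{k}]\|_{2,F}=0$. Given $\varepsilon$, take $\delta,F,K$ from (\ref{item: asymptotic criterion}). Then for $\omega$-almost every $k$ we get $\|A_{k}-\tr(A_{k})\|_{2}<\varepsilon$, so $\|y-\tr_{\omega}(y)\|_{2}\leq\varepsilon$. Letting $\varepsilon\to 0$ gives $y\in\bC1$. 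This direction is routine.

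\emph{(\ref{item: trivial relative commutant ultrapower}) $\Rightarrow$ (\ref{item: asymptotic criterion}).} Lift $\Theta(x_{j})$ to sequences $(Y^{(m)}_{j})_{m}\in\bM_{m}(\bC)$ with $\|Y^{(m)}_{j}\|\leq\|x_{j}\|$. For each $k$ we want a single index $n_{k}$ satisfying three things: the law is close, $\ell_{Y^{(n_{k})}}$ lies in the $k$-th basic neighborhood of $\ell_{x}$; the indices increase, $n_{k}>n_{k-1}$; and a quantitative spectral-gap property holds. The spectral-gap property is: with $F_{k}$ the first $k$ indices of $J$, every $A\in\Ball(\bM_{n_{k}}(\bC))$ with $\|[Y^{(n_{k})},A]\|_{2,F_{k}}<1/k$ satisfies $\|A-\tr(A)\|_{2}<\varepsilon_{k}$, where $\varepsilon_{k}\to0$ is to be arranged.

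The key claim is the following. For every $\varepsilon>0$ there exist finite $F$ and $\delta>0$ such that the set $S_{\varepsilon}$ of $m$ for which the spectral-gap property holds with $(\varepsilon,F,\delta)$ belongs to $\omega$. Suppose not. Then for each pair $(F_{i},1/i)$ the complementary set lies in $\omega$, so for $\omega$-almost every $m$ there is a witness $A_{m,i}\in\Ball(\bM_{m}(\bC))$ with small commutators but $\|A_{m,i}-\tr(A_{m,i})\|_{2}\geq\varepsilon$. A diagonal choice then produces an element of $\Theta(Q)'\cap\cM_{\omega}$ at $2$-distance at least $\varepsilon$ from the scalars, which is a contradiction. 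Concretely, the diagonal choice is to take $i(m)$ to be the largest $i\leq m$ such that $m$ lies in the intersection of the first $i$ complementary sets. Since each of these sets is in $\omega$, $i(m)\to\infty$ along $\omega$, and then $(A_{m,i(m)})_{m\to\omega}$ commutes with each $\Theta(x_{j})$.

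Once the claim is established, we apply it with $\varepsilon=1/k$ and enlarge $F$ so that it contains $F_{k}$. The sets $S_{1/k}$, the law-neighborhood sets, and the cofinite sets all lie in $\omega$, so their intersection is nonempty, and we pick $n_{k}$ in it, greater than $n_{k-1}$. Setting $X^{(k)}=Y^{(n_{k})}$, condition (\ref{item: asymptotic criterion}) for a given $\varepsilon$ follows by taking $K$ with $1/K<\varepsilon$ and using the monotonicity of the gap condition in $F$ and $\delta$. This uses that the sets $F_{k}$ and the parameters $\delta_{k}$ are chosen to be monotone, which is arranged by replacing each $\delta_{k}$ by $\min(\delta_{1},\dots,\delta_{k})$.

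The main obstacle is this diagonal/contrapositive step in the second direction: the quantifiers $\varepsilon,\delta,F$ must be uniform over $\omega$-large sets of indices and not merely pointwise. This is the same kind of diagonal argument used in Lemma \ref{lem: small neigbhorhoods concentrate on orbits}.
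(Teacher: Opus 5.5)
Your route is the paper's in both directions: you reindex $\omega$ along $(n_k)$ for (ii)$\Rightarrow$(i), and for (i)$\Rightarrow$(ii) you derive a contradiction from a diagonal sequence of almost-commuting, non-scalar matrices. Your key claim, with its $i(m)$ diagonal, is correct. It is in fact the right strengthening of the paper's Claim, which only gives one index $n$ at a time, each with its own $(\delta,F)$, and leaves uniformity to ``a diagonal argument.''

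The gap is in your final assembly. Condition (ii) requires, for a fixed $\varepsilon$, a \emph{single} pair $(\delta,F)$ that works at every $n_k$ with $k\geq K$. The gap property only gets \emph{weaker} as $F$ grows and $\delta$ shrinks. Suppose you know it at $n_k$ with parameters $(1/k,F_k,\delta_k)$, where $F_k\uparrow$ and $\delta_k\downarrow$. This gives it with $(\varepsilon,F,\delta)$ only if $F\supseteq F_k$ and $\delta\leq\delta_k$ for all $k\geq K$, which forces $F$ infinite and $\delta=0$. Your stated per-stage target already has this defect, since it uses $\delta=1/k$. Making $F_k,\delta_k$ monotone pushes in exactly the wrong direction, and choosing $n_k$ merely in $S_{1/k}$ does not yield (ii).

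The repair uses your claim unchanged:
\begin{itemize}
\item For each $i$, fix the pair $(F^{(i)},\delta^{(i)})$ that the claim provides for $\varepsilon=1/i$, so that $S_{1/i}\in\omega$.
\item Choose $n_k>n_{k-1}$ in the finite intersection $S_{1}\cap S_{1/2}\cap\cdots\cap S_{1/k}\cap E_k$, where $E_k\in\omega$ is the $k$-th law-neighborhood set.
\item Given $\varepsilon$, take $K$ with $1/K<\varepsilon$. Every $n_k$ with $k\geq K$ lies in $S_{1/K}$, so $(\delta,F)=(\delta^{(K)},F^{(K)})$ works uniformly.
\end{itemize}
No monotonicity of the parameters is needed.

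One further caveat, shared with the paper: the contradiction in your claim uses that an element commuting with every $\Theta(x_j)$ lies in $\Theta(Q)'$. This needs the tuple to be closed under adjoints, or to consist of normal elements (by Fuglede). Without that assumption (ii) genuinely fails. For a one-element generating tuple $x=(x_1)$ of a $\textrm{II}_{1}$-factor, $A=X^{(k)}_1/\|x_1\|$ commutes with $X^{(k)}$, yet $\|A-\tr(A)\|_2\to\|x_1-\tau(x_1)\|_2/\|x_1\|>0$.
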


\begin{proof}
(\ref{item: asymptotic criterion}) implies (\ref{item: trivial relative commutant ultrapower}): Fix a free ultrafilter $\widetilde{\omega}\in \bN\setminus \bN$, and define a free ultrafilter $\omega\in \bN\setminus\bN$ by
\[\lim_{n\to\omega}f(n)=\lim_{k\to\widetilde{\omega}}f(n_{k})\]
for all $f\in \ell^{\infty}(\bN)$. Then $\cM_{\omega}\cong \prod_{k\to\widetilde{\omega}}^{tr}\bM_{n_{k}}(\bC)$. Since $\ell_{X^{(k)}}\to_{k\to\infty}\ell_{x}$, we have a trace-preserving embedding $\Theta_{\omega}\colon Q\to \prod_{k\to\widetilde{\omega}}\bM_{n_{k}}(\bC)$ satisfying $\Theta_{\omega}(x_{j})=(X^{(k)}_{j})_{k\to\widetilde{\omega}}$ for all $j\in J$. Suppose $a\in \Theta_{\omega}(Q)'\cap \prod_{k\to\widetilde{\omega}}\bM_{n_{k}}(\bC)$  with $\|a\|\leq 1$. Then by \cite[Proposition II.5.1.5]{BlackadarOA}, we may write $a=(A^{(k)})_{k\to\widetilde{\omega}}$ with $\|A^{(k)}\|\leq 1$ for all $k\in \bN$. Let $\varepsilon>0$, and choose a $\delta>0$, a finite $F\subseteq I$, and a $K\in \bN$ as in (\ref{item: asymptotic criterion}). Then, by assumption,
\[\|A^{(k)}-\tr(A^{(k)})\|_{2}\leq \varepsilon+\frac{2}{\delta}\|[A^{(k)},X^{(k)}]\|_{2,F}, \textnormal{ for all $k\geq K$}.\]
Taking $\lim_{k\to\widetilde{\omega}}$ and using that $[a,x_{j}]=0$ for all $j\in F$, we have 
\[\|a-\tr_{\omega}(a)\|_{2}\leq \varepsilon.\]
Since $\varepsilon>0$ is arbitrary, this implies that $a=\tr_{\omega}(a)\in \bC1$.

(\ref{item: trivial relative commutant ultrapower}) implies (\ref{item: asymptotic criterion}):  Define $R\in [0,\infty)^{J}$ by $R_{j}=\|x_{j}\|$. By \cite[Proposition II.5.1.5]{BlackadarOA}, we may write $\Theta(x_{j})=(X^{(n)}_{j})_{n\to\omega}$ with $\|X^{(n)}_{j}\|\leq \|x_{j}\|$ for all $n\in \bN$. 

By a diagonal argument, it suffices to prove the following:

\emph{Claim: given a weak$^{*}$-neighborhood $\mathcal{O}$ of $\ell_{x}$ in $\Sigma_{R}$, an $\varepsilon>0$ and an $N\in \bN$, there is a $\delta>0$, a finite $F\subseteq I$ and an $n\in \bN$ with $n\geq N$ with $\ell_{X^{(n)}}\in \mathcal{O}$ and so that  $\|A-\tr(A)\|_{2}<\varepsilon$ for all $A\in \Ball(\bM_{n}(\bC))$ with $\|[A,X^{(n)}]\|_{2,F}<\delta$.}

Choose a finite $G\subseteq \bC^{*}\ip{(T_{j})_{j\in J}}$ and an $\eta>0$ so that 
\[\mathcal{O}\supseteq \bigcap_{P\in G}\{\ell\in \Sigma_{R}:|\ell(P)-\tau(P(x))|<\eta\}.\]
Let $E=\{k\in \bN:k\geq N, \text{ and }\ell_{X^{(k)}}\in \mathcal{O}\}$, then
\[\lim_{n\to\omega}1_{E^{c}}(n)\leq \frac{1}{\eta}\lim_{n\to\omega}\sum_{P\in G}|\tr(P(X^{(n)}))-\tau(P(x))|=0,\]
so
\[\lim_{n\to\omega}1_{E}(n)=1.\]
Write $J=\bigcup_{n}F_{n}$ where $F_{n}$ is an increasing sequence of finite subsets of $J$. Assuming the claim is false, for each $n\in E$  we may find an $A^{(n)}\in \Ball(\bM_{n}(\bC))$ so that $\|[A^{(n)},X]\|_{2,F_{n}}<n^{-1}$ and $\|A^{(n)}-\tr(A^{(n)})\|_{2}\geq \varepsilon$. Let $A^{(n)}=0$ for $n\notin E$.
Set $a=(A^{(n)})_{n\to\omega}$. Since $\lim_{n\to\omega}1_{E}(n)=1$, we have that $a=(A^{(n)}1_{E}(n))_{n\to\omega}$, so $\|a-\tr_{\omega}(a)\|_{2}\geq \varepsilon$, and since $F_{n}$ is increasing and has union $J$ we have that $a\in \Theta_{\omega}(Q)'\cap \cM_{\omega}$ but $a\notin \bC1$, a contradiction.

\end{proof}

\begin{cons}\label{cons: preliminary construction}
Suppose $(Q,\tau_{Q})$ is a tracial von Neumann algebra so that there is a free ultrafilter $\omega_{0}\in \beta\bN\setminus\bN$ and a trace-preserving embedding $\Theta_{\omega_{0}}\colon Q\to \cM_{\omega_{0}}$ so that $\Theta_{\omega_{0}}(Q)'\cap \cM_{\omega_{0}}=\bC1$. Fix generators $(x_{j})_{j\in J}$ for $Q$ with $J$ countable, and set $B=C^{*}(1,(x_{j})_{j\in J})$. Let $n_{1}<n_{2}<\cdots$ and $X^{(k)}\in \prod_{j\in J}\|x_{j}\|\Ball(\bM_{n_{k}}(\bC))$ be as in (\ref{item: asymptotic criterion}). 
Let $\cA=B*_{u}C([-1,1])^{*_{u}\bN}$, and let $\cK$ be set the traces $\tau$ on $\cA$ satisfying the following: there is a free ultrafilter $\omega\in \beta\bN\setminus\bN$ and a unital $*$-homomorphism 
\[\Psi\colon \cA\to \prod_{k\to\omega}\bM_{n_{k}}(\bC)\] 
with $\Psi(x_{j})=(X^{(k)}_{j})_{k\to\omega}$ and with $\tau=\tr_{\omega}\circ \Psi$.  
\end{cons}


For $i\in\bN$, we use $y_{i}$ for the identity function on $[-1,1]$ in the $i^{th}$ copy of $C([-1,1])$ inside $\cA$. 
It will be helpful to rephrase being an element of $\cK$ in the following way.

\begin{prop}\label{prop: usual diagaonl stuff}
Let $\cK$ be as defined above. Let $R\in [0,+\infty)^{J\sqcup \bN}$ be given by $R_{j}=\|x_{j}\|$ for $j\in J$, and $R_{i}=1$ for $i\in \bN$. 
Given $\ell\in \Sigma_{R}$, the following are equivalent:
\begin{enumerate}[(i)]
\item there is a $\tau\in K$ so that $\ell=\tau\circ \ev_{(x_{j})_{j\in J},(y_{i})_{i=1}^{\infty}}$, \label{item: ultraproduct reformulation trace}
\item there is a strictly increasing sequence $k_{1}<k_{2}<\cdots$ of natural numbers, and $A^{(s)}=(A_{i}^{(s)})_{i=1}^{\infty}\in \Ball(\bM_{n_{k_{s}}}(\bC)_{s.a.})^{\bN}$ for $p\in \bN$ so that $\ell_{(X^{(k_{s})}_{j})_{j\in J},A^{(s)}}\to_{s\to\infty} \ell$.
\label{item: sequential refomrulation trace}
\end{enumerate}

\end{prop}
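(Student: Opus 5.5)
We prove the two implications separately; both are standard ultraproduct/diagonal arguments. Note that $\cA$ is generated by $(x_{j})_{j\in J}$ and $(y_{i})_{i\in\bN}$, with $\|x_{j}\|\leq R_{j}$ and $y_{i}$ a self-adjoint contraction. Hence a trace on $\cA$ is determined by its law $\tau\circ\ev_{(x_{j}),(y_{i})}\in\Sigma_{R}$. Also, $\Sigma_{R}$ is metrizable, since $J\sqcup\bN$ is countable and so there are countably many $*$-monomials.

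(\ref{item: ultraproduct reformulation trace}) implies (\ref{item: sequential refomrulation trace}): let $\tau=\tr_{\omega}\circ\Psi$ as in Construction \ref{cons: preliminary construction}. Since $\Psi(y_{i})$ is a self-adjoint contraction, \cite[Proposition II.5.1.5]{BlackadarOA} lets us write $\Psi(y_{i})=(A^{(k)}_{i})_{k\to\omega}$ with $A^{(k)}_{i}\in\Ball(\bM_{n_{k}}(\bC)_{s.a.})$. For each $*$-polynomial $P$ we then have $\lim_{k\to\omega}\tr(P(X^{(k)},A^{(k)}))=\ell(P)$. Fix a decreasing countable neighborhood base $\mathcal{O}_{s}$ of $\ell$ in $\Sigma_{R}$. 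For each $s$ the set $\{k:\ell_{X^{(k)},A^{(k)}}\in\mathcal{O}_{s}\}$ lies in $\omega$, so it is infinite. We therefore choose $k_{1}<k_{2}<\cdots$ with $k_{s}$ in the $s$-th set, and set $A^{(s)}:=A^{(k_{s})}$. This gives convergence to $\ell$ along $s\to\infty$.

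(\ref{item: sequential refomrulation trace}) implies (\ref{item: ultraproduct reformulation trace}): given $k_{s}$ and $A^{(s)}$, we define $\omega$ as the pushforward of a free ultrafilter $\widetilde\omega$ on the index $s$ under the injective map $s\mapsto k_{s}$. Concretely, $\lim_{k\to\omega}f(k)=\lim_{s\to\widetilde\omega}f(k_{s})$. This $\omega$ is free because the map is injective, and it concentrates on $\{k_{s}\}$.

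Next we set $A^{(k)}=0$ for $k\notin\{k_{s}\}$. We define $\Psi$ on $B$ via the given trace-preserving embedding determined by $x_{j}\mapsto(X^{(k)}_{j})_{k\to\omega}$; this exists because $\ell_{X^{(k)}}\to\ell_{x}$ along the full sequence, which is the step where Proposition \ref{prop: asymptotic rephrase}(\ref{item: asymptotic criterion}) is used. Since $B$ is the $C^{*}$-algebra generated by the $x_{j}$ inside $Q$, the map $x_{j}\mapsto(X_{j}^{(k)})_{k\to\omega}$ extends to a $*$-homomorphism of $B$. Indeed, it is the restriction of the embedding $Q\to\prod_{k\to\omega}\bM_{n_{k}}(\bC)$, which preserves the law and hence operator norms of polynomials. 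We define $\Psi$ on the $i$-th copy of $C([-1,1])$ by functional calculus, sending $y_{i}\mapsto(A^{(k)}_{i})_{k\to\omega}$, which is a self-adjoint contraction. The universal property of $*_{u}$ then gives $\Psi\colon\cA\to\prod_{k\to\omega}\bM_{n_{k}}(\bC)$. Finally, $\tr_{\omega}\circ\Psi$ agrees with $\ell$ on $*$-polynomials because $\lim_{s\to\widetilde\omega}$ of a convergent sequence is its limit. By continuity and density of polynomials, $\tau:=\tr_{\omega}\circ\Psi$ is a trace in $\cK$ whose law is $\ell$.

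The only mildly delicate point is checking that $\Psi|_{B}$ is well defined as a $*$-homomorphism on the $C^{*}$-algebra $B$, and not merely on polynomials. This is handled by factoring through $Q$ as above, or equivalently by noting that the ultraproduct law equals $\ell_{x}$, so the norms of polynomials are dominated by those computed in $Q$.
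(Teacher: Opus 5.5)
Your proposal is correct and follows essentially the same route as the paper: you extract a diagonal subsequence along a countable neighborhood base of $\ell$ for (i)$\Rightarrow$(ii), and you push a free ultrafilter forward along $s\mapsto k_{s}$ for (ii)$\Rightarrow$(i). The differences are minor. You lift $\Psi(y_{i})$ directly to self-adjoint contractions using Blackadar's norm-preserving lifting, where the paper truncates by functional calculus. You also justify explicitly, by factoring through the trace-preserving embedding of $Q$, why $x_{j}\mapsto (X^{(k)}_{j})_{k\to\omega}$ extends to a $*$-homomorphism on $B$, a step the paper leaves implicit.
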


\begin{proof}

(\ref{item: sequential refomrulation trace}) implies (\ref{item: ultraproduct reformulation trace}): Let $\omega$ be any free ultrafilter on $\bN$, and define another free ultrafilter $\widetilde{\omega}$ on $\bN$ by 
\[\lim_{s\to\widetilde{\omega}}f(s)=\lim_{n\to\omega}f(k_{s}), \text{ for all $f\in \ell^{\infty}(\bN).$}\]
Then $\prod_{s\to\omega}\bM_{n_{k_{s}}}(\bC)\cong \prod_{k\to\widetilde{\omega}}\bM_{n_{k}}(\bC)$.
Our hypothesis implies that we have a unital $*$-homomorphism
\[\Psi\colon \cA\to \prod_{s\to\omega} \bM_{n_{k_{s}}}(\bC)\]
so that:
\begin{itemize}
    \item $\Psi_(x_{j})=(X_{i}^{(k_{s})})_{s\to\omega}$, for $j\in J$,
    \item $\Psi(y_{i})=(A_{i}^{(s)})_{s\to\omega}$, for $i\in \bN$,
    \item $\ell=\tr_{\omega}\circ \Psi\circ \ev_{(x_{j})_{j\in J},(y_{i})_{i=1}^{\infty}}$. 
\end{itemize}
By design $\Psi(x_{j})=(X^{(k)}_{j})_{j\to\widetilde{\omega}}$ under the identification $\prod_{k\to\widetilde{\omega}}\bM_{n_{k}}(\bC)=\prod_{s\to\omega}\bM_{n_{k_{s}}}(\bC).$ Thus, $\tau=\tr_{\widetilde{\omega}}\circ \Psi\in \cK$ and $\ell=\tau\circ \ev_{(x_{j})_{j\in J},(y_{i})_{i=1}^{\infty}}$.

(\ref{item: ultraproduct reformulation trace}) implies (\ref{item: sequential refomrulation trace}):
Let $\bC^{*}\ip{(T_{j})_{j\in J\sqcup \bN}}'$ denote the space of linear functions on $\bC^{*}\ip{(T_{j})_{j\in J\sqcup \bN}}$ endowed with the weak$^{*}$-topology. Since $\bC^{*}\ip{(T_{j})_{j\in J\sqcup \bN}}'$ is metrizable,  we may find a decreasing sequence $\cV_{m}$ of weak$^{*}$-neighborhoods in $\bC^{*}\ip{(T_{j})_{j\in J\sqcup \bN}}'$ of $\{0\}$ so that $\{0\}=\bigcap_{m=1}^{\infty}\cV_{s}$. Choice a free ultrafilter $\omega$ and a $*$-homomorphism $\Psi\colon \cA\to \prod_{k\to\omega}\bM_{n_{k}}(\bC)$ so that $\tau=\tr_{\omega}\circ \Psi$, and $\Psi(x_{j})=(X^{(k)}_{j})_{k\to\omega}$ for $j\in J$. Write $\Psi(y_{i})=(A_{i}^{(k)})_{k\to\omega}$, with $A_{i}^{(k)}$ self-adjoint.  Since continuous functional calculus commutes with $*$-homomorphisms, replacing $A_{i}^{(n)}$ with $f(A_{i}^{(k)})$ where $f\colon \bR\to [-1,1]$ is any continuous function with $f|_{[-1,1]}=\id$, we may, and will, assume that $\|A_{i}^{(k)}\|\leq 1$ for all $i,k\in \bN$. 

We then have that 
\[\lim_{k\to\omega}\ell_{(X^{(k)}_{j})_{j\in J},(A^{(k)}_{i})_{i=1}^{\infty}}=\tau\circ \ev_{(x_{j})_{j\in J},(y_{i})_{i=1}^{\infty}}.\]
In particular, for every $m\in \bN$, 
\[\{k:\ell_{(X^{(k)}_{j})_{j\in J},(A_{i}^{(k)})_{i=1}^{\infty}}\in \mathcal{V}_{m}+\tau\circ \ev_{(x_{j})_{j\in J},(y_{i})_{i=1}^{\infty}}\}\]
is infinite. As such, we can construct a strictly increasing sequence $k_{1}<k_{2}<\cdots$ so that 
\[\ell_{(X^{(k_{s})})_{j\in J},(A_{i}^{(k_{s})})_{i=1}^{\infty}}\in \mathcal{V}_{s}+\tau\circ \ev_{(x_{j})_{j\in J}),(y_{i})_{i=1}^{\infty}}, \text{ for all $p\in \bN$.}\]
Thus 
\[\ell_{(X^{(k_{s})}_{j})_{j\in J},(A^{(s)}_{i})_{i=1}^{\infty}}\to_{s\to\infty}\ell.\]

\end{proof}

From this, it is direct to prove that $\cK$ is a compact family of extremal traces. 

\begin{prop}\label{prop: get that Bauer simplex}
Let $\cA$ and $\cK$ be defined as above. Then $\cK$ is a weak$^{*}$-compact family of traces and $\cK\subseteq \Ext(\cT(A))$.    
\end{prop}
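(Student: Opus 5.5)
Both claims reduce to Proposition \ref{prop: usual diagaonl stuff}, which lets us treat $\cK$ as a set of laws $\ell\in\Sigma_{R}$ with $R_{j}=\|x_{j}\|$ for $j\in J$ and $R_{i}=1$ for $i\in\bN$. Since $\cA$ is generated as a $C^{*}$-algebra by $(x_{j})_{j\in J}$ and $(y_{i})_{i\in\bN}$ with $\|y_{i}\|\leq 1$, the restriction map $\tau\mapsto\tau\circ\ev_{(x_{j}),(y_{i})}$ is a homeomorphism of $\cT(\cA)$ onto its image in $\Sigma_{R}$. This image is closed because the defining relations of $\cA$ are closed conditions on laws. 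Hence it suffices to show that the corresponding set $L_{\cK}\subseteq\Sigma_{R}$ of laws is closed.

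\emph{Compactness.} Let $\ell_{m}\in L_{\cK}$ converge to $\ell$. I would run a diagonal argument using condition (\ref{item: sequential refomrulation trace}) of Proposition \ref{prop: usual diagaonl stuff}.
\begin{itemize}
\item Fix a decreasing base $\mathcal{V}_{m}$ of neighborhoods of $0$; the weak$^{*}$-topology here is metrizable since $J\sqcup\bN$ is countable.
\item For each $m$, $\ell_{m}$ is a limit of matricial laws $\ell_{(X^{(k_{s})}),A^{(s)}}$ along a strictly increasing sequence $k_{s}$. So I can pick an index $k^{(m)}>k^{(m-1)}$ and self-adjoint contractions $A^{(m)}$ with $\ell_{(X^{(k^{(m)})}),A^{(m)}}\in\ell_{m}+\mathcal{V}_{m}$.
\item Then $\ell_{(X^{(k^{(m)})}),A^{(m)}}\to\ell$, so $\ell$ satisfies (\ref{item: sequential refomrulation trace}) and lies in $L_{\cK}$.
\end{itemize}
One must check that the law of a matricial tuple satisfies the relations of $\cA$. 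This holds because $B$ is the $C^{*}$-algebra generated by the $x_{j}$, and the matrices $X^{(k)}$ have laws converging to $\ell_{x}$. So it is cleaner to rely on Proposition \ref{prop: usual diagaonl stuff}, which already encodes this.

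\emph{Extremality.} Let $\tau\in\cK$ come from $\Psi\colon\cA\to\cM:=\prod_{k\to\omega}\bM_{n_{k}}(\bC)$. Then $\pi_{\tau}(\cA)''\cong\Psi(\cA)''$, with $\tau$ corresponding to $\tr_{\omega}$ restricted to this algebra. The laws of the $X^{(k)}$ converge to $\ell_{x}$, so $\Psi$ induces a trace-preserving embedding of $Q=W^{*}(x)$ onto $W^{*}(\Psi(B))\subseteq\Psi(\cA)''$. By criterion (\ref{item: asymptotic criterion}) of Proposition \ref{prop: asymptotic rephrase}, run exactly as in the proof that (\ref{item: asymptotic criterion}) implies (\ref{item: trivial relative commutant ultrapower}), we get $\Psi(B)'\cap\cM=\bC1$ for \emph{every} free ultrafilter $\omega$. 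That argument is valid for any ultrafilter along $n_{k}$, since the criterion holds for all large $k$. Consequently
\[Z(\Psi(\cA)'')\subseteq\Psi(B)'\cap\cM=\bC1,\]
so $\pi_{\tau}(\cA)''$ is a factor and $\tau\in\Ext(\cT(\cA))$ by the standard correspondence between extremal traces and factorial GNS completions.

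The main obstacle is the trivial relative commutant for an arbitrary ultrafilter. The key point is the uniform-in-$k$ formulation of Proposition \ref{prop: asymptotic rephrase} (\ref{item: asymptotic criterion}): it gives the estimate $\|A-\tr(A)\|_{2}\leq\varepsilon+\frac{2}{\delta}\|[A,X^{(k)}]\|_{2,F}$ for every $k\geq K$. Passing to the limit along any $\omega$ then kills the relative commutant.
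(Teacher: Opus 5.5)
Your proof is correct and follows essentially the same route as the paper. For extremality you use $Z(\Psi(\cA)'')\subseteq\Psi(B)'\cap\cM=\bC1$, obtained from the ultrafilter-independent argument in the proof of Proposition \ref{prop: asymptotic rephrase}. For compactness you use the diagonal argument through Proposition \ref{prop: usual diagaonl stuff}, carried out in $\Sigma_{R}$ rather than in the metrizable space $\cT(\cA)$; this is equivalent, since restriction is a homeomorphism onto its image. Your appeal to the ``defining relations'' to show that this image is closed is unnecessary: closedness follows immediately from compactness of $\cT(\cA)$.
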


\begin{proof}
Given $\tau\in \cK$, choose a free ultrafilter    $\omega\in \beta\bN\setminus \bN$ be and a unital $*$-homomorphism
\[\Psi\colon \cA\to \prod_{k\to\omega}\bM_{n_{k}}(\bC)\]
with $\Psi(x_{j})=(X^{(k)}_{j})$ for all $j\in J$ and so that $\tau=\tr_{\omega}\circ \Psi$. Set $\cM=\prod_{k\to\omega}^{tr}\bM_{n_{k}}(\bC).$ Then 
\[Z(\overline{\Psi(\cA)}^{SOT})\subseteq \Psi(B)'\cap \cM.\]
Since $\tau|_{B}=\tau_{Q}|_{B}$, we may extend $\Psi|_{B}$ to a trace-preserving $*$-homomorphism $\Theta\colon Q\to \cM$. By density of $B$ in $Q$, we have that 
\[\Psi(B)'\cap \cM=\Theta(Q)'\cap \cM.\]
But, by the proof of Proposition \ref{prop: asymptotic rephrase}, we see that $\Theta(Q)'\cap M=\bC1$.

Thus we have shown that $Z(\overline{\Psi(\cA)}^{SOT})=\bC1$, and uniqueness of GNS representations show that
\[\overline{\pi_{\tau}(\cA)}^{SOT}\cong \overline{\Psi(\cA)}^{SOT}.\]
So $\overline{\pi_{\tau}(\cA)}^{SOT}$ is a factor, i.e. $\tau\in \Ext(\cT(\cA))$. 

To show that $\cK$ is compact, since $\cT(\cA)$ is compact, it suffices to show that $\cK$ is closed in $\cT(\cA)$. Since $\cA$ is separable, we know that $\cT(\cA)$ is metrizable. Thus given a sequence $(\tau_{n})_{n=1}^{\infty}\in \cK$ with $\tau_{n}\to_{n\to\infty}\tau$ it suffices to show that $\tau\in \cK$. Let $(\cV_{m})_{m=1}^{\infty}$ be as in the proof of Proposition \ref{prop: usual diagaonl stuff}. Choose weak$^{*}$-open neighborhoods $\cW_{m}$ of $\{0\}$ so that $\cW_{m}+\cW_{m}\subseteq \cV_{m}$.  Passing to a subsequence we may, and will, assume that $(\tau_{s}-\tau)\circ \ev_{(x_{j})_{j\in J},(y_{i})_{i=1}^{\infty}}\in \cW_{s}$ for all $s\in \bN$. Iteratively applying Proposition \ref{prop: usual diagaonl stuff}, we may find a strictly increasing sequence $k_{1}<k_{2}<\cdots$ and $A^{(n)}=(A_{i}^{(s)})_{i=1}^{\infty}\in \Ball(\bM_{n_{k_{s}}}(\bC)_{s.a})^{\bN}$ so that \[\ell_{(X^{(k_{s})}_{j})_{j\in J},(A^{(s)}_{i})_{i=1}^{\infty}}-\tau_{s}\circ \ev_{(x_{j})_{j\in J},(y_{i})_{i=1}^{\infty}}\in  \cW_{s}, \text{ for all $n\in \bN$.}\]
Thus
\[\ell_{(X^{(k_{s})}_{j})_{j\in J},(A_{i}^{(s)})_{i=1}^{\infty}}-\tau\circ \ev_{(x_{j})_{j\in J},(y_{i})_{i=1}^{\infty}}\in \mathcal{V}_{s}\]
for all $n$, and thus $\ell_{(X^{(k_{s})}_{j})_{j\in J},(A^{(s)}_{i})_{i=1}^{\infty}}\to\tau\circ \ev_{(x_{j})_{j\in J},(y_{i})_{i=1}^{\infty}}.$ 
Hence by Proposition \ref{prop: usual diagaonl stuff} it follows that $\cK$ is closed.

\end{proof}

\begin{cons}\label{cons: defining the bundle}
Let $\cA,Q,\cK$ be as Construction \ref{cons: preliminary construction}.
We let $A=\cA/\{a\in \cA:\tau(a^{*}a)=0 \text{ for all $a\in \cK$}\}$, and we let $K$ be the compact family of extremal traces on $A$ induced by $\cK$. Then $K$ is a faithful family of extremal traces on $A$. Set $X=\overline{\co}(K)$, then by Proposition \ref{prop: creating faces}, we know that $X$ is a face in $\cT(A)$, and $\Ext(X)=K$.  Thus by \cite[Theorem 3]{OzawaBundle} (see also \cite[Theorem 3.37]{TraciallyComplete}) and \cite[Proposition 3.23 (ii) and (iv)]{TraciallyComplete} if $(M,X)$ is the universal tracial completion of $(A,X)$ we have that $M$ is a $W^{*}$-bundle over $K$. 
\end{cons}

\begin{prop}\label{prop: relative commutant condition bundle}
Let $(M,\bE,K)$ be the bundle given in Construction \ref{cons: defining the bundle}. Then the $M$ admits a sub-bundle $N$ which is the trivial bundle over $K$ with fibers $Q$. Moreover, $Q'\cap M_{p}=\bC$ for every $p\in K$.     
\end{prop}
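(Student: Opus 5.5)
The plan is to exhibit $N$ concretely and then read off the relative commutant from Proposition \ref{prop: get that Bauer simplex}.

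\textbf{Step 1: the fibers over $B$.} Every $\tau\in K$ agrees with $\tau_{Q}$ on $B$: if $\tau=\tr_{\omega}\circ\Psi$ with $\Psi(x_{j})=(X^{(k)}_{j})_{k\to\omega}$, then $\ell_{X^{(k)}}\to\ell_{x}$ gives $\tau(P(x))=\tau_{Q}(P(x))$ for every $*$-polynomial $P$. Hence for each $p\in K$, the map $\pi_{p}|_{B}$ extends to a trace-preserving normal isomorphism
\[\theta_{p}\colon Q\to W^{*}(\pi_{p}(B))\subseteq M_{p}.\]

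\textbf{Step 2: building the sub-bundle.} I would define a $*$-homomorphism $\iota\colon C_{\sigma}(K,Q)\to M$ and take $N$ to be its image.

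For constant functions, I would use Kaplansky density (via Proposition \ref{prop: better KDT} applied in $(Q,\tau_{Q})$). Given $q\in Q$, choose $b_{n}\in B$ with $\|b_{n}\|\leq\|q\|$ and $b_{n}\to q$ in $\|\cdot\|_{2,\tau_{Q}}$. Since every $\tau\in X$ restricts to $\tau_{Q}$ on $B$, we get $\|b_{n}-b_{m}\|_{2;X}=\|b_{n}-b_{m}\|_{2,\tau_{Q}}$. So $(b_{n})$ is $\|\cdot\|_{2;X}$-Cauchy and bounded, and it defines $\iota(q)\in M$ with $\iota(q)_{p}=\theta_{p}(q)$. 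This $\iota$ is a trace-preserving unital $*$-homomorphism $Q\to M$, and it is continuous from the $2$-norm on bounded sets into $\|\cdot\|_{\bE}$.

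For a general $f\in C_{\sigma}(K,Q)$, I would use the $C(K)$-module structure. Pick a finite partition of unity $(\varphi_{i})$ subordinate to small open sets on which $f$ is within $\varepsilon$ in $2$-norm of $f(p_{i})$. Set $f_{\varepsilon}=\sum_{i}\varphi_{i}f(p_{i})$. Then $\sum_{i}\varphi_{i}\iota(f(p_{i}))\in M$ has fibers within $\varepsilon$ of $\theta_{p}(f(p))$ for every $p$, and these elements are norm bounded by $\sup\|f\|$. Letting $\varepsilon\to0$ gives a $\|\cdot\|_{\bE}$-Cauchy bounded net, and completeness of $\Ball(M)$ produces $\iota(f)$ with $\iota(f)_{p}=\theta_{p}(f(p))$.

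It remains to check that $\iota$ is a $*$-homomorphism intertwining the expectations. Multiplicativity follows fiberwise, since an element of $M$ is determined by its fibers because $\bE$ is faithful. Isometry likewise follows fiberwise, from $\|x\|=\sup_{p}\|x_{p}\|$ for $W^{*}$-bundles. So $N=\iota(C_{\sigma}(K,Q))$ is a sub-bundle isomorphic to the trivial bundle, with $N_{p}=\theta_{p}(Q)$.

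\textbf{Step 3: the relative commutant.} Fix $p\in K$ and realize $\tau_{p}$ as $\tr_{\omega}\circ\Psi$, with $M_{p}\cong\overline{\Psi(\cA)}^{SOT}\subseteq\cM=\prod_{k\to\omega}\bM_{n_{k}}(\bC)$ by uniqueness of the GNS representation. Under this identification $N_{p}$ corresponds to $\Theta(Q)$, where $\Theta$ extends $\Psi|_{B}$. As shown in the proof of Proposition \ref{prop: get that Bauer simplex}, via Proposition \ref{prop: asymptotic rephrase}(\ref{item: asymptotic criterion}) applied to the subsequence defining $\omega$, we have $\Theta(Q)'\cap\cM=\bC1$. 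Therefore $Q'\cap M_{p}\subseteq\Theta(Q)'\cap\cM=\bC1$.

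\textbf{Main obstacle.} I expect Step 2 to be the delicate part: defining $\iota$ on all SOT-continuous sections, not just constant ones, and verifying that it is an isometric $*$-homomorphism. The partition-of-unity approximation together with completeness of $\Ball(M)$ in $\|\cdot\|_{\bE}$ should handle this, mirroring the argument in the lemma on fiberwise generation.
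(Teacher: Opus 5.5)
Your proposal is correct and follows the paper's route: you identify the $\|\cdot\|_{\bE}$-closure of $C^{*}(C(K),B)$ with $C_{\sigma}(K,Q)$, using that every trace in $K$ restricts to $\tau_{Q}$ on $B$. You then realize $M_{p}$ as $\overline{\Psi(\cA)}^{SOT}$ and invoke the trivial relative commutant $\Theta(Q)'\cap\cM=\bC1$ from the proof of Proposition \ref{prop: get that Bauer simplex}.

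The one difference is in the identification step. The paper gets it in a single line from the disintegration $\tau(fb)=\tau_{Q}(b)\int f\,d\mu_{\tau}$ and uniqueness of GNS representations. You instead build the embedding $\iota$ fiberwise, via Kaplansky density, partitions of unity and completeness of $\Ball(M)$, which spells out details the paper leaves implicit.
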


\begin{proof}
Let $X\subseteq \cT(M)$ be as above. By \cite[Theorem 3]{OzawaBundle} (see also \cite[Theorem 3.37]{TraciallyComplete}) for every $\tau\in X$ there is a unique $\mu_{\tau}\in \Prob(K)$ so that 
\[\tau(fa)=\int f(p)p(a)\,d\mu_{\tau}(a)\textnormal{ for all $f\in C(K),a\in M$}.\]
In particular, applying this to $\tau_{p}=\mu_{\delta_{p}}$ we see that 
\[\tau_{p}(fa)=f(p)p(a) \textnormal{ for all $p\in K$}\]
Let $\cA$ be as above and $q\colon \cA\to A$. By design, for each $p\in K$, there is a $\chi_{p}\in \cK$ so that $\chi_{p}=p\circ q$. Moreover, by definition of $\cK$, we may choose a free ultrafilter $\omega\in \beta\bN\setminus\bN$ and a $*$-homomorphism $\Psi\colon\cA\to \prod_{k\to\omega}\bM_{n_{k}}(\bC)$ with $\chi_{p}=\tr_{\omega}\circ \Psi$, and $\Psi(x_{j})=(X^{(k)}_{j})_{k\to\omega}$. By uniqueness of GNS representations, we thus see that 
\[M_{p}=\pi_{\tau_{p}}(M)\cong \overline{\pi_{p}(A)}^{SOT}\cong\overline{\pi_{\chi_{p}}(\cA)}\cong \overline{\Psi(M)}^{SOT}.\]
Moreover, since $\chi_{p}|_{B}=\tau_{Q}|_{B}$ is independent of $p$, we have that 
\[\tau(fb)=\tau_{Q}(b)\int f\,d\mu_{\tau} \text{ for all $b\in B,f\in C(K)$}\]
which implies by uniqueness of GNS representations that the $\|\cdot\|_{\bE}$-closure of $C^{*}(C(K),B)$ inside $M$ is $C_{\sigma}(K,Q)$. It follows that the image of $\overline{\Psi(B)}^{SOT}$ under each of these isomorphism gives us a fixed copy of $Q$. So we have 
\[Q'\cap M_{p}\cong Q'\cap \overline{\Psi(M)}^{SOT},\]
and in the proof of Proposition \ref{prop: get that Bauer simplex}, we see that this relative commutant is trivial. 
\end{proof}

The following lemma ultimately shows that our construction is universal enough to allow $(M,X)$ to admit a very large set of microstates. 
\begin{lem}\label{lem: getting many microstates}
Let $(M,X)$ be the bundle constructed in Construction \ref{cons: defining the bundle}. Let $R\in [0,+\infty)^{J\sqcup \bN}$ be given by $R_{j}=\|x_{j}\|$ for $j\in J$ and $R_{i}=1$ for $i\in \bN$.  Let $(y_{i})_{i=1}^{\infty}$ be the natural generators of $C([-1,1])^{*\bN}$ arising from the $i^{th}$ copy of the identity function on $[-1,1]$. Then for any neighborhood $\mathcal{O}$ of $L_{(x_{j})_{j\in J},(y_{i})_{i=1}^{\infty});X}$ there is a $K\in \bN$ so that for all $k\geq K$ we have that $((X^{(k)}_{j})_{j\in J},(A_{i})_{i=1}^{\infty})\in \Gamma^{(n_{k})}_{R}(\mathcal{O})$ for all $(A_{i})_{i=1}^{\infty}\in (\Ball(\bM_{n_{k}}(\bC)_{s.a.}))^{\bN}$.

\end{lem}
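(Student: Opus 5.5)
I would argue by contradiction, using Proposition \ref{prop: usual diagaonl stuff}, which describes $L_{(x_{j})_{j\in J},(y_{i})_{i=1}^{\infty};X}$ through microstate sequences along subsequences of $(n_{k})$. First reduce to a basic neighborhood: $\mathcal{O}$ may be assumed of the form
\[\bigcap_{P\in G}\{\ell\in \Sigma_{R}: |\ell(P)-\lambda(P)|<\eta \text{ for some } \lambda\in L\},\]
with $L=L_{(x_{j}),(y_{i});X}$. Even better, I would just use the weak$^{*}$-openness of $\mathcal{O}$ together with compactness of $L$ and metrizability of $\Sigma_{R}$ (only countably many variables). The point is then to show that every limit point of the laws $\ell_{(X^{(k)}),A^{(k)}}$, with $A^{(k)}$ arbitrary self-adjoint contractions, lies in $L$.

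Suppose the lemma fails. Then there are infinitely many $k$, say $k_{1}<k_{2}<\cdots$, and $A^{(s)}\in \Ball(\bM_{n_{k_{s}}}(\bC)_{s.a.})^{\bN}$ with
\[\ell_{(X^{(k_{s})}_{j})_{j\in J},A^{(s)}}\notin \mathcal{O}.\]
Since $\Sigma_{R}$ is weak$^{*}$-compact and metrizable, pass to a further subsequence along which these laws converge to some $\ell\in\Sigma_{R}$. Because $\Sigma_{R}\setminus\mathcal{O}$ is closed, $\ell\notin\mathcal{O}$.

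On the other hand, the sequence is exactly of the form in item (\ref{item: sequential refomrulation trace}) of Proposition \ref{prop: usual diagaonl stuff}. So there is a $\tau\in\cK$ with $\ell=\tau\circ\ev_{(x_{j}),(y_{i})}$. Passing to the quotient $A$ of $\cA$, $\tau$ is (the pullback of) a point of $K\subseteq X$, and $(M,X)$ is the tracial completion of $(A,X)$ with $x_{j},y_{i}$ embedded via the quotient. Hence $\ell\in L_{(x_{j}),(y_{i});X}\subseteq\mathcal{O}$, which is a contradiction.

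The only step needing care is checking that the laws computed in $M$ with respect to $X$ coincide with those computed in $\cA$ with respect to $\cK$. This holds because the quotient map and the completion are trace-preserving on generators, and $\Ext(X)=K$ means every $\tau\in X$ restricts appropriately. Convexity is not even needed, since $K\subseteq X$ suffices. The main obstacle is a bookkeeping one: the operator-norm cutoff must be respected, which is why the proposition states the $A_{i}^{(s)}$ as contractions.
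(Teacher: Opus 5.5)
Your proposal is correct and is essentially the paper's proof. You argue by contradiction, extract a subsequence whose laws converge in the compact metrizable $\Sigma_{R}$, and apply Proposition \ref{prop: usual diagaonl stuff} to identify the limit as $\tau\circ\ev_{(x_{j}),(y_{i})}$ for some $\tau\in\cK$, so that it lies in $L_{(x_{j})_{j\in J},(y_{i})_{i=1}^{\infty};X}$. The only cosmetic difference is in the final step: you use closedness of $\Sigma_{R}\setminus\mathcal{O}$, which implicitly takes $\mathcal{O}$ open (harmless, since you can pass to its interior), while the paper instead notes that the laws must eventually enter $\mathcal{O}$ because $\mathcal{O}$ is a neighborhood of the limit.
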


\begin{proof}
If the lemma were false, then we could find a neighborhood $\mathcal{O}$ of $L_{(x_{j})_{j\in J},(y_{i})_{i=1}^{\infty};X}$, a strictly increasing sequence $k_{s}\in \bN$ and $(A_{i}^{(s)})_{i=1}^{\infty}\in \bM_{n_{k_{s}}}(\bC)_{s.a.}^{\bN}$ so that $\ell_{(X^{(k_{s})}_{j})_{j\in J},(A_{i}^{(s)})_{i}}\notin \mathcal{O}$. Passing to a further subsequence we may, and will, assume that $\ell_{(X^{(k_{s})}_{j})_{j\in J},(A_{i}^{(s)})_{i}}\to_{s\to\infty}\ell\in \Sigma_{R}$. By Proposition \ref{prop: usual diagaonl stuff}, it follows that $\ell=\tau\circ \ev_{(x_{j})_{j\in J},(y_{i})_{i=1}^{\infty}}$ for some $\tau\in K$. Thus $\ell\in L_{(x_{j})_{j\in J},(y_{i})_{i=1}^{\infty};X}.$ Since $\mathcal{O}$ is a neighborhood of $L_{(x_{j})_{j\in J},(y_{i})_{i=1}^{\infty};X}$ it follows that $\ell_{(X^{(k_{s})}_{j})_{j\in J},(A_{i}^{(s)})_{i}}\in \mathcal{O}$ for all large $s$. This is a contradiction.

\end{proof}

\begin{thm}\label{thm: infintie MD}
Let $(M,X)$ be the bundle defined in Construction \ref{cons: defining the bundle}. Then $\delta_{MD}(M,X)=+\infty$.

\end{thm}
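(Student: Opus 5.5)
We will show that $\delta_{MD}(M;X)\geq r$ for every $r\in\bN$. Let $x=(x_{j})_{j\in J}$ and $y=(y_{i})_{i=1}^{\infty}$ be as in Lemma \ref{lem: getting many microstates}. Since $\cA$ is generated by $B$ and the $y_{i}$, the tuple $(x,y)$ generates $M$ as a tracially complete $C^{*}$-algebra. By Theorem \ref{thm: invariance of MD}, $\delta_{MD}(M;X)$ may therefore be computed with this tuple and the cutoff $R$ of Lemma \ref{lem: getting many microstates}. The monotonicity in Corollary \ref{cor:basic properties} is not needed; it suffices to bound $\md_{\varepsilon,F}(x,y)$ from below for the finite set $F=\{y_{1},\dots,y_{r}\}\subseteq J\sqcup\bN$ and small $\varepsilon$.

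Fix $r$, $\varepsilon\in(0,1)$, and a weak$^{*}$-neighborhood $\mathcal{O}$ of $L_{x,y;X}$. By Lemma \ref{lem: getting many microstates}, there is a $K_{0}$ such that for $k\geq K_{0}$ the set
\[\Omega_{k}=\{(X^{(k)},A):A\in \Ball(\bM_{n_{k}}(\bC)_{s.a.})^{\bN}\}\]
is contained in $\Gamma^{(n_{k})}_{R}(\mathcal{O})$. Consider the continuous embedding
\[\iota\colon \Ball(\bM_{n_{k}}(\bC)_{s.a.})^{r}\to\Omega_{k},\qquad \iota(A_{1},\dots,A_{r})=(X^{(k)},A_{1},\dots,A_{r},0,0,\dots).\]
Let $f\colon\Gamma^{(n_{k})}_{R}(\mathcal{O})\to\Delta$ be any continuous map that is $(\varepsilon,F)$-injective. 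Then $f\circ\iota$ is continuous and $\varepsilon$-injective with respect to $\|\cdot\|_{2}$ on $\Ball(\bM_{n_{k}}(\bC)_{s.a.})^{r}$, because $\|\iota(A)-\iota(A')\|_{2,F}=\|A-A'\|_{2}$. Continuity is fine here since the product topology on $\bM_{n_{k}}(\bC)^{J\sqcup\bN}$ restricts correctly. Hence
\[\dim_{\varepsilon}(\Gamma^{(n_{k})}_{R}(\mathcal{O}),\|\cdot\|_{2,F})\geq \dim_{\varepsilon}(\Ball(\bM_{n_{k}}(\bC)_{s.a.})^{r},\|\cdot\|_{2}).\]

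Dividing by $n_{k}^{2}$ and passing along the subsequence $(n_{k})$ gives the lower bound for the $\limsup$ in the definition of $\md_{\varepsilon,F}(\mathcal{O})$:
\[\md_{\varepsilon,F}(\mathcal{O})\geq\liminf_{n\to\infty}\frac{1}{n^{2}}\dim_{\varepsilon}(\Ball(\bM_{n}(\bC)_{s.a.})^{r},\|\cdot\|_{2}).\]
This bound does not depend on $\mathcal{O}$, so taking the infimum over $\mathcal{O}$ and then the supremum over $\varepsilon$ yields
\[\delta_{MD}(M;X)\geq\sup_{\varepsilon>0}\liminf_{n}\frac{1}{n^{2}}\dim_{\varepsilon}(\Ball(\bM_{n}(\bC)_{s.a.})^{r},\|\cdot\|_{2}).\]
By Theorem \ref{thm:computation of dim for op norm vs 2-norm}, as established in its proof, the right-hand side is at least $r(1-\kappa)$ for every $\kappa>0$, hence at least $r$. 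Letting $r\to\infty$ gives $\delta_{MD}(M;X)=+\infty$.

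The real difficulty is the Banach-geometric lower bound of Theorem \ref{thm:computation of dim for op norm vs 2-norm}, which we take as given. Within this proof, the only point needing care is bookkeeping. First, the definition of $\delta_{MD}$ takes a $\limsup$ over all $k$, while we only have microstates in dimensions $n_{k}$; a $\limsup$ is bounded below by its value along any subsequence, so this causes no problem. Second, we must confirm that the $F$-seminorm only sees the coordinates $y_{1},\dots,y_{r}$, which holds by the choice of $F$.
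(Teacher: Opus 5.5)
Your proposal is correct and follows essentially the paper's argument: Lemma~\ref{lem: getting many microstates} lets you place arbitrary self-adjoint contraction tuples, alongside the fixed $X^{(k)}$, inside $\Gamma^{(n_{k})}_{R}(\mathcal{O})$. You then pull back $(\varepsilon,F)$-injective maps and apply the Szarek-based lower bound of Theorem~\ref{thm:computation of dim for op norm vs 2-norm}. The only difference is that you zero-pad to $r$ coordinates, which spells out the reduction from the full sequence space $\Ball(\bM_{n_{k}}(\bC)_{s.a.})^{\bN}$ to $r$-tuples; the paper leaves that step implicit.
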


\begin{proof}
Define $R\in [0,+\infty)^{J\sqcup \bN}$ by $R_{j}=\|x_{j}\|$ for $j\in J$ and $R_{i}=1$ for $i\in \bN$. 
Fix a neighborhood $\mathcal{O}$ of $L_{(y_{j})_{j\in J},(x_{i})_{i=1}^{\infty};X}$, an $\varepsilon>0$ and a finite $\widetilde{F}\subseteq J\sqcup \bN$, and set $F=\widetilde{F}\cap \bN$. By Lemma \ref{lem: getting many microstates}, for all sufficiently large $k$, there is a well-defined map
\[f\colon \Ball(\bM_{n_{k}}(\bC)_{s.a})^{\bN}\to \Gamma_{R}^{(n_{k})}(\mathcal{O})\]
given by $f((A_{i})_{i=1}^{\infty})=((X^{(k)}_{j})_{j\in J},(A_{i})_{i=1}^{\infty})$.
If $g\colon \Gamma_{1}^{(n)}(\mathcal{O})\to \Delta$ is an $(\widetilde{F},\varepsilon)$-injective function, then $g\circ f$ is an $(F,\varepsilon)$-injective function. Hence
\[\dim_{\varepsilon,F}(\Ball(\bM_{n_{k}}(\bC)_{s.a})^{\bN},\|\cdot\|_{2})\leq \dim_{\varepsilon,\widetilde{F}}(\Gamma_{R}^{(n)}(\mathcal{O}),\|\cdot\|_{2})\]
for all sufficiently large $k$. It follows that 
\[\limsup_{k\to\infty}\frac{1}{n_{k}^{2}}\dim_{\varepsilon,F}(\Ball(\bM_{n_{k}}(\bC)_{s.a})^{\bN},\|\cdot\|_{2}))\leq \inf_{\mathcal{O}}\limsup_{n\to\infty}\frac{1}{n^{2}}\dim_{\varepsilon,\widetilde{F}}(\Gamma_{R}^{(n)}(\mathcal{O}),\|\cdot\|_{2}),\]
where the infimum is over all weak$^{*}$-neighborhoods $\mathcal{O}$ of $L_{(X^{(k)_{j}}))_{j\in J},(y_{i})_{i=1}^{\infty};X}$. 
Taking the supremum over $\varepsilon$ and applying Theorem \ref{thm:computation of dim for op norm vs 2-norm} we thus see that $\sup_{\varepsilon>0}\md_{\varepsilon,\widetilde{F}}((x_{j})_{j\in J},(y_{i})_{i=1}^{\infty};X)\geq |F|$ for all finite $\widetilde{F}\subseteq J\sqcup \bN$. The proof is now completed by taking the supremum over finite subsets of $J\sqcup \bN$.

\end{proof}

We now prove the main theorem of the paper.

\begin{proof}[Proof of Theorem \ref{thm: main thm intro}]
Let $Q$ be any von Neumann algebra which has an embedding into an ultraproduct of matrices with trivial relative commutant, e.g. any of the examples given in Section \ref{sec: trivial rel comm examples}.
Let $(M,\bE,K)$ be the $W^{*}$-bundle defined in Construction \ref{cons: defining the bundle}. By Proposition \ref{prop: relative commutant condition bundle}, we have that the trivial bundle over $K$ with fiber $Q$ embeds as a sub-bundle of $M$ and that $Q'\cap M_{p}=\bC1$ for all $p\in K$. 
Since any two embeddings of the hyperfinite $\textrm{II}_{1}$-factor are unitarily conjugate, and at least of them does not have trivial relative commutant (we leave this verification as an exercise to the reader), none of them have trivial relative commutant. Thus $Q$ is a non-amenable $\textrm{II}_{1}$-factor. Since $Q$ is a subfactor of $M_{p}$, it follows that each $M_{p}$ is a nonamenable $\textrm{II}_{1}$-factor. 
By Theorem \ref{thm: infintie MD}, we have that $\delta_{MD}(M;X)=+\infty$, and so items (\ref{item: intro classes}), (\ref{item: unfiromly S1B INTRO}) follow from Corollary \ref{cor: better main thm}.

\end{proof}


Constructions \ref{cons: preliminary construction}, \ref{cons: defining the bundle} use the universal $C^{*}$-algebra associated with countably many self-adjoint contractions for concreteness. However, it is frequently useful (e.g. in terms of connections to group theory), to work with unitaries instead of self-adjoints. For this reason, we state a version of this construction for unitaries. Fix again a tracial von Neumann algebra $(Q,\tau_{Q})$ so that there is some free ultrafilter $\omega\in \beta\bN\setminus\bN$ and a trace-preserving embedding $\Theta\colon Q\to\cM_{\omega_{0}}$ with $\Theta(Q)'\cap \cM_{\omega_{0}}=\bC$. Let $(x_{j})_{j\in J}$,$B$, $(n_{k})_{k=1}^{\infty},X^{(k)}$ be as in Construction \ref{cons: preliminary construction}. Now set $\cA_{0}=B*_{u}C^{*}(\bF_{\infty})$, and let $\cK_{0}$ be the set of all traces $\tau$ on $\cA_{0}$ satisfying the following: there is a free ultrafilter $\omega\in \beta\bN\setminus\bN$ and a $*$-homomorphism
\[\Psi\colon \cA_{0}\to \prod_{k\to\omega}\bM_{n_{k}}(\bC)\]
with $\Psi(x_{j})=(X_{j}^{(k)})_{k\to\omega}$ and $\tau=\tr_{\omega}\circ \Psi$. We again set $A_{0}=\cA_{0}/\{x\in \cA_{0}:\tau(x^{*}x)=0, \text{ for all $\tau\in \cK_{0}$}\}$, and let $K_{0}$ be the traces on $A_{0}$ induced from $\cK_{0}$. Exactly as in Proposition \ref{prop: get that Bauer simplex}, we have that $K_{0}$ is a compact subset of extremal traces on $A_{0}$ and hence if $X_{0}=\overline{\co}^{wk^{*}}(K_{0})$, then $X_{0}$ is a face in $\cT(A_{0})$ and a Bauer simplex. Hence, the tracial completion $(M_{0},X_{0})$ of $(A_{0},X_{0})$ is a factorial $W^{*}$-bundle over $K_{0}$. The following is proved exactly as in the self-adjoint case, the only difference being that we use Corollary \ref{cor: epsilon dim of unitaries} to show that $\delta_{MD}(M_{0};X_{0})=+\infty$.

\begin{thm}
Let $(M_{0},\bE_{0},K_{0})$ be the $W^{*}$-bundle described above. Then:
\begin{enumerate}[(i)]
    \item $\delta_{MD}(M_{0};X_{0})=+\infty$,
    \item if $N\leq M_{0}$ is a sub-bundle so that $\sup_{p\in K_{0}}h(N_{p},\tau_{p})<+\infty$, then $M_{0}$ cannot be fiberwise generated over $N$ by a finite family of continuous sections. 
\end{enumerate}
\end{thm}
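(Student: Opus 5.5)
The plan is to run the argument of Theorem \ref{thm: infintie MD} and then the proof of Corollary \ref{cor: better main thm} again, with self-adjoint contractions replaced by unitaries. Write $(u_{i})_{i=1}^{\infty}$ for the canonical generating unitaries of $C^{*}(\bF_{\infty})\subseteq \cA_{0}$. Take the cutoff $R\in[0,\infty)^{J\sqcup\bN}$ with $R_{j}=\|x_{j}\|$ and $R_{i}=1$.

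\textbf{Step 1: the analogue of Proposition \ref{prop: usual diagaonl stuff}.} A law $\ell\in\Sigma_{R}$ has the form $\tau\circ\ev_{(x_{j}),(u_{i})}$ with $\tau\in\cK_{0}$ if and only if it is a limit of laws $\ell_{(X^{(k_{s})}_{j}),U^{(s)}}$, where $k_{1}<k_{2}<\cdots$ and $U^{(s)}\in\cU(\bM_{n_{k_{s}}}(\bC))^{\bN}$. The forward direction is a diagonal argument. The only new point is that a unitary in $\prod_{k\to\omega}\bM_{n_{k}}(\bC)$ lifts to a sequence of unitaries; this holds by the proof of \cite[Theorem XIV.4.6]{TakesakiIII}, and it replaces the functional-calculus truncation used in the self-adjoint case. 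The reverse direction is immediate from the universal property of $B*_{u}C^{*}(\bF_{\infty})$.

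\textbf{Step 2: the analogue of Lemma \ref{lem: getting many microstates}.} For every neighborhood $\mathcal{O}$ of $L_{(x_{j}),(u_{i});X_{0}}$ and every sufficiently large $k$, every tuple $((X^{(k)}_{j})_{j},(U_{i})_{i})$ with the $U_{i}$ unitary lies in $\Gamma^{(n_{k})}_{R}(\mathcal{O})$. This follows by contradiction and compactness of $\Sigma_{R}$, exactly as in the lemma.

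\textbf{Step 3: part (i).} Fix a finite $\widetilde F\subseteq J\sqcup\bN$ and let $F=\widetilde F\cap\bN$. The map $f\colon \cU(\bM_{n_{k}}(\bC))^{\bN}\to\Gamma^{(n_{k})}_{R}(\mathcal{O})$ that inserts $X^{(k)}$ in the $J$-coordinates is continuous. Composing it with an $(\varepsilon,\widetilde F)$-injective map gives an $(\varepsilon,F)$-injective map. Precomposing further with a continuous section of the coordinate projection onto $\cU(\bM_{n_{k}}(\bC))^{F}$ (fill the remaining coordinates with $1$) shows that
\[\dim_{\varepsilon}(\cU(\bM_{n_{k}}(\bC))^{F},\|\cdot\|_{2})\leq\dim_{\varepsilon}(\Gamma^{(n_{k})}_{R}(\mathcal{O}),\|\cdot\|_{2,\widetilde F}).\]
Then I divide by $n_{k}^{2}$, pass to the limit along $n_{k}$, take the infimum over $\mathcal{O}$ and the supremum over $\varepsilon$. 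Corollary \ref{cor: epsilon dim of unitaries} gives $\delta_{MD}\geq|F|$. Since $|F|$ is arbitrary and Theorem \ref{thm: invariance of MD} shows $\delta_{MD}(M_{0};X_{0})$ does not depend on the choice of generators, we get $\delta_{MD}(M_{0};X_{0})=+\infty$.

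\textbf{Step 4: part (ii).} Apply Corollary \ref{cor:basic properties}(ii) to get $+\infty=\delta_{MD}(M_{0})\leq\delta_{MD}(M_{0}|N)+\delta_{MD}(N)$. Corollary \ref{cor: basic examples of vanishing}(ii) gives $\delta_{MD}(N)\leq1$, so $\delta_{MD}(M_{0}|N)=+\infty$. Proposition \ref{prop: md bounds finite generation}, together with the lemma equating fiberwise generation with $C^{*}_{X_{0}}$-generation, then excludes finitely many generating sections. For that lemma we may assume $C(K_{0})\leq N$, after replacing $N$ by $C^{*}_{X_{0}}(N\cup C(K_{0}))$, since this does not change the fibers $N_{p}$.

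I expect the main obstacle to be Step 3, the lower bound on $\dim_{\varepsilon}$. It rests entirely on Corollary \ref{cor: epsilon dim of unitaries}, whose lower bound has to be transported from the self-adjoint case. My first attempt there is to restrict to $e^{iA}$ with $\|A\|\leq1/4$. Lemma \ref{lem: coercive unitary} makes $A\mapsto e^{iA}$ a bi-Lipschitz homeomorphism onto its image, so an $\varepsilon$-injective map on unitaries pulls back to a $2\varepsilon$-injective map on $\frac14\Ball(\bM_{n}(\bC)_{s.a.})^{r}$. I would then rescale and apply Theorem \ref{thm:computation of dim for op norm vs 2-norm}, which holds at every fixed small scale after adjusting $\varepsilon$.
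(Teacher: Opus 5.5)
Your proposal is correct and follows the paper's route: the paper proves this by rerunning Proposition \ref{prop: usual diagaonl stuff}, Lemma \ref{lem: getting many microstates} and Theorem \ref{thm: infintie MD} with unitaries (lifting unitaries from the ultraproduct instead of truncating self-adjoints), using Corollary \ref{cor: epsilon dim of unitaries}, which is itself deduced from Lemma \ref{lem: coercive unitary} and Theorem \ref{thm:computation of dim for op norm vs 2-norm} as you sketch, and then obtaining (ii) exactly as in Corollary \ref{cor: better main thm}. Your explicit continuous section onto $\cU(\bM_{n_{k}}(\bC))^{F}$ makes precise a step the paper leaves implicit, namely passing from the infinite product with the seminorm $\|\cdot\|_{2,F}$ to finitely many coordinates.
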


Finally, we can also use projections instead of  $C^{*}(\bF_{\infty})$ or the universal $C^{*}$-algebra associated with countably many self-adjoint contractions. 
Let $Q$ be a $\textrm{II}_{1}$-factor with separable predual which has an embedding into an ultraproduct of matrices with trivial relative commutant. Pick a countable family $(x_{j})_{j\in J}$ of projections in $Q$ which generate $Q$. Set $B=C^{*}((x_{j})_{j\in J})$. 
Fix $(X_{j}^{(k)})_{k\in \bN,j\in J},(n_{k})_{k}$ as in the conclusion to Proposition \ref{prop: asymptotic rephrase} for $(x_{j})_{j\in J}$. 

Set $\cA_{1}=B*_{u}C^{*}((\bZ/2\bZ)^{*\infty})$. For $i\in \bN$, let $g_{i}$ be the $i^{th}$ generator of $(\bZ/2\bZ)^{*\infty}$, and let $y_{i}=\frac{1+g_{i}}{2}$. Set $\cK_{1}$ to be all traces $\tau$ on $\cA_{1}$ so that there is a free ultrafilter $\omega\in \beta\bN\setminus\bN$ and a $*$-homomorphism $\Psi\colon \cA_{1}\to \prod_{k\to\omega}\bM_{n_{k}}(\bC)$ with
\begin{itemize}
    \item $\tau=\tr_{\omega}\circ \Psi$, 
    \item $\Psi(x_{j})=(X_{j}^{(k)})_{k\to\omega}$ for $j\in J$,
    \item $\tau(y_{i})=\frac{1}{2}$.
\end{itemize}
Let $A_{1}$ be the separation of $\cA_{1}$ by $\cK_{1}$ and $K_{1}$ the induced family of traces. Set $X_{1}=\overline{\co}^{wk^{*}}(K_{1})$, and $(M_{1},X_{1})$ be the tracial completion. Then again we have that $M_{1}$ is a factorial $W^{*}$-bundle over $K_{1}$, and that $Q$ is a sub-bundle with fiberwise trivial relative commutant.
Moreover, using Corollary \ref{cor: projection lower bound} with $k_{j,n_{k}}=\lfloor{n_{k}/2\rfloor}$ instead of Corollary \ref{cor: epsilon dim of unitaries} we deduce that $\delta_{MD}(M_{1};X_{1})=+\infty$. Hence we have the following, which immediately implies Theorem \ref{thm: main theorem intro but with projections}.
\begin{thm}\label{thm: case of projections}
Let $(M_{1},\bE_{1},K_{1})$ be the $W^{*}$-bundle described above. Then:
\begin{enumerate}[(i)]
    \item $M_{1}$ is generated by a countably family of projections,
    \item $\delta_{MD}(M_{1};X_{1})=+\infty$,
    \item if $N\leq M_{1}$ is any sub-bundle so that $\sup_{p\in K_{1}}h(N_{p},\tau_{p})<+\infty,$ then $M_{1}$ cannot be fiberwise generated over $N$ by a finite family of continuous sections. 
\end{enumerate}

\end{thm}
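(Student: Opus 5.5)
The plan is to run the argument for Construction \ref{cons: defining the bundle} again, replacing self-adjoint contractions by projections of trace $\frac12$. Item (iii) then follows from item (ii) exactly as in the proof of Corollary \ref{cor: better main thm}. Combining Corollary \ref{cor:basic properties}(ii) with Corollary \ref{cor: basic examples of vanishing}(\ref{item: uniformly strongly 1-bounded}) applied to $(N,X_{1})$ gives $+\infty=\delta_{MD}(M_{1};X_{1})\leq \delta_{MD}(M_{1}|N;X_{1})+\delta_{MD}(N;X_{1})$, so $\delta_{MD}(M_{1}|N;X_{1})=+\infty$. Proposition \ref{prop: md bounds finite generation}, together with the equivalence between fiberwise generation and $C^{*}_{X_{1}}$-generation, then rules out a finite family of continuous sections. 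For item (i), I would note that $\cA_{1}$ is generated as a $C^{*}$-algebra by the projections $x_{j}$ and $y_{i}=\frac{1+g_{i}}{2}$. Their images are dense in $A_{1}$, and $A_{1}$ is $\|\cdot\|_{2,X_{1}}$-dense in $M_{1}$, so $M_{1}=C^{*}_{X_{1}}((x_{j})_{j},(y_{i})_{i})$.

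Before computing $\delta_{MD}$ I must verify the bundle structure. First, I reprove Proposition \ref{prop: usual diagaonl stuff} with the sequences $A^{(s)}$ taken in $\Gr(\lfloor n_{k_{s}}/2\rfloor,n_{k_{s}})^{\bN}$. If an ultraproduct projection has trace $\frac12$, it lifts to projections of trace exactly $\lfloor n_{k}/2\rfloor/n_{k}$ along the ultrafilter, because projections in a matrix ultraproduct lift to projections and traces can be corrected by changing rank by $o(n_{k})$, which is a small $\|\cdot\|_{2}$ perturbation. The condition $\tau(y_{i})=\frac12$ is closed, so the diagonal argument in Proposition \ref{prop: get that Bauer simplex} still gives compactness. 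Extremality comes verbatim from the trivial relative commutant of $\Theta(Q)$. Proposition \ref{prop: creating faces} and Ozawa's theorem then give a factorial $W^{*}$-bundle, and Proposition \ref{prop: relative commutant condition bundle} carries over unchanged.

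The analogue of Lemma \ref{lem: getting many microstates} is where I expect the real content. I must show that for every neighborhood $\mathcal{O}$ of $L_{x,y;X_{1}}$ and all large $k$, every tuple $((X^{(k)}_{j})_{j},(P_{i})_{i})$ with $P_{i}\in\Gr(\lfloor n_{k}/2\rfloor,n_{k})$ lies in $\Gamma^{(n_{k})}(\mathcal{O})$. The contradiction argument goes through: a subsequential limit law is realized by a homomorphism into the ultraproduct sending $x_{j}\mapsto (X^{(k)}_{j})$ and $y_{i}$ to projections of trace $\frac12$. Hence its trace lies in $\cK_{1}$, using universality of $C^{*}((\bZ/2\bZ)^{*\infty})$ for families of symmetries $2P-1$.

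Finally, as in Theorem \ref{thm: infintie MD}, the map $(P_{i})_{i}\mapsto ((X^{(k)}_{j})_{j},(P_{i})_{i})$ composed with any $(\varepsilon,\widetilde F)$-injective map is $(\varepsilon,F)$-injective on $\prod_{i\in F}\Gr(\lfloor n_{k}/2\rfloor,n_{k})$, where $F=\widetilde F\cap\bN$; the remaining coordinates can be frozen. Corollary \ref{cor: projection lower bound} with $\lambda_{i}=\frac12$ then gives $\sup_{\varepsilon}\md_{\varepsilon,\widetilde F}\geq \frac{|F|}{2}$, and letting $|F|\to\infty$ yields $\delta_{MD}(M_{1};X_{1})=+\infty$. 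The main obstacle is the lower bound in Corollary \ref{cor: projection lower bound}. Lemma \ref{lem: case of projections} gives a bi-Lipschitz parametrization of a neighborhood of $P$ by off-diagonal self-adjoints of small operator norm. To transfer Theorem \ref{thm:computation of dim for op norm vs 2-norm} to this space I would first try the same Szarek quotient argument applied directly to the real space of off-diagonal self-adjoints, which is isometric to $P\bM_{n}(\bC)(1-P)$ with operator norm versus $2$-norm and has real dimension $2k(n-k)$.
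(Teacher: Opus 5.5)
Your proposal is correct and follows the paper's own route. You rerun Constructions \ref{cons: preliminary construction}--\ref{cons: defining the bundle} with the trace-$\frac{1}{2}$ projections $y_{i}=\frac{1+g_{i}}{2}$, use Corollary \ref{cor: projection lower bound} with $\lambda_{i}=\frac{1}{2}$ in place of Corollary \ref{cor: epsilon dim of unitaries} to get $\delta_{MD}(M_{1};X_{1})=+\infty$, and then deduce (iii) from Corollary \ref{cor: better main thm}, exactly as the paper does. The step you flag as the main obstacle is already stated in the paper as Corollary \ref{cor: projection lower bound}, with its lower bound attributed to Lemma \ref{lem: case of projections}, so you may simply cite it; your sketch of running the Szarek quotient argument on the off-diagonal block is the argument left implicit there.
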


\section{Relations to the generator problem for von Neumann algebras}

\subsection{Universally measurable and Borel functions on bundles}\label{Sec:universmally measurable bundle}

We recall the construction of Evington-Pennig \cite{EPBundles}, where we also the refer the reader to for complete proofs of everything claimed here.

Given a $W^{*}$-bundle $(M,\bE,K)$, set $B=\bigsqcup_{p\in K}M_{p}$. We define $\pi\colon  B\to K$ by $\pi(b)=p$ if $b\in M_{p}$. For $U\subseteq K$ open, $\varepsilon>0$, and $a\in M$, we let $V(a,\varepsilon,U)=\{b\in B:\pi(b)\in U, \|a_{\pi(b)}-b\|_{2}<\varepsilon\}$. This forms a basis for a topology on $B$. For $X\subseteq B$, we set 
$X\times_{\pi}X=\{(x,y)\in X^{2}:\pi(x)=\pi(y)\}$, and for $r>0$, we let $r\Ball(B)=\{x\in B:\pi(x)=p, \|x\|_{M_{p}}\leq r\}$.
The triple $(B,\pi,K)$ has the following properties:
\begin{enumerate}[(i)]
    \item the fiberwise addition map $B\times_{p}B\to B$ is continuous,
    \item the fiberwise scaling map $\bC\times B\to B$ is continuous,
    \item the fiberwise $*$-operation $B\to B$ is continuous,
    \item the sections $0\colon K\to B,1\colon K\to B$ which send each $p\in K$ to the $0,1$ elements of $M_{p}$ are continuous,
    \item the fiberwise trace and $\|\cdot\|_{2}$-maps $B\to \bC,B\to [0,+\infty)$ are continuous,
    \item given a net $(b_{i})_{i}$ in $B$ and $p\in K$, we have that $b_{i}$ converges to the zero element of $M_{p}$ if and only if $\pi(b_{i})\to p$ and $\|b_{i}\|_{L^{2}(\tau_{\pi(b_{i})})}\to_{i}0$.
    \item the fiberwise product map $(r\Ball(B))\times_{\pi}(r\Ball(B))\to B$ is continuous for all $r>0$,
    \item $\pi|_{r\Ball(B)}$ is an open map for all $r>0$. 
\end{enumerate}

For a Polish space $X$, we let $\cB_{X}$ be the Borel $\sigma$-algebra of $X$. For $\mu\in \Prob(X)$, we let $\cG_{\mu}$ be the completion of $\cB_{X}$ with respect to $\mu$. We define the $\sigma$-algebra of \emph{universally measurable sets} to be 
\[\cG_{u,X}=\bigcap_{\mu\in \Prob(X)}\cG_{\mu}.\]
While not obvious, there are many elements of $\cG_{u,X}$ that are not in $\cB_{X}$, see e.g \cite[Theorems 14.2 and 21.10]{KechrisClassic}.
Recall that if $Y$ is a topological space, we say that $f\colon X\to Y$ is \emph{Borel} if the inverse image of any open subset of $Y$ is Borel. Similarly, we say that $f$ is \emph{universally measurable} if the inverse image of any open subset of $Y$ is universally measurable.
\begin{prop}\label{prop:Borel TFAE}
 Let $(M,\bE,K)$ be a $W^{*}$-bundle which is $\|\cdot\|_{\bE}$-separable. Let $\pi\colon B\to K$ be the corresponding topological bundle of \cite{EPBundles} as describe above. Fix $\cF\in \{\cB_{K},\cG_{u,K}\}$. Given a section $a\colon K\to B$ of $\pi$ (which we denote as $p\mapsto a_{p}$, the following are equivalent:
 \begin{enumerate}[(i)]
 \item for every $b\in M$ we have that $p\mapsto \tau_{p}(a_{p}b_{p})$ is $\cF$-measurable, \label{item: pointwise traces}
 \item for every $b\in M$, we have that $p\mapsto \|a_{p}-b_{p}\|_{L^{2}(\tau_{p})}$ is $\cF$-measurable. \label{item: distance is measurable}

 \end{enumerate}
\end{prop}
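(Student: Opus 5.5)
We prove the two directions separately, using the polarization and expansion identities in each fiber $M_{p}$ together with the fact that $p\mapsto \tau_{p}(c_{p})$ is continuous (hence Borel) for every $c\in M$. Indeed, $\tau_{p}(c_{p})=\bE(c)(p)$ and $\bE(c)\in C(K)$.

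(\ref{item: pointwise traces}) implies (\ref{item: distance is measurable}). Fix $b\in M$ and expand
\[\|a_{p}-b_{p}\|_{2}^{2}=\tau_{p}(a_{p}^{*}a_{p})-2\Re\tau_{p}(a_{p}b_{p}^{*})+\tau_{p}(b_{p}^{*}b_{p}).\]
The last term is continuous in $p$. The middle term is $\cF$-measurable by (\ref{item: pointwise traces}) applied to $b^{*}\in M$. The first term is the main obstacle, since $a^{*}$ is not a continuous section, so (\ref{item: pointwise traces}) does not directly apply to it. To handle it, use $\|\cdot\|_{\bE}$-separability: fix a countable $\|\cdot\|_{\bE}$-dense set $\{c_{m}\}\subseteq M$. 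Then $\{c_{m,p}\}_{m}$ is $\|\cdot\|_{2}$-dense in $M_{p}$ for every $p$, since $\pi_{p}(M)=M_{p}$ by \cite[Theorem 11]{OzawaBundle} and $\pi_{p}$ is $\|\cdot\|_{\bE}$-$\|\cdot\|_{2}$ contractive. By Riesz duality in $L^{2}(M_{p},\tau_{p})$,
\[\|a_{p}\|_{2}=\sup_{m:\,c_{m}\neq 0}\frac{|\tau_{p}(a_{p}c_{m,p})|}{\|c_{m,p}\|_{2}},\]
where terms with $\|c_{m,p}\|_{2}=0$ are interpreted as $0$. Each quotient is $\cF$-measurable, because the numerator is measurable by (\ref{item: pointwise traces}) and the denominator is continuous. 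A countable supremum of $\cF$-measurable functions is $\cF$-measurable, so $p\mapsto\|a_{p}\|_{2}^{2}$ is $\cF$-measurable and (\ref{item: distance is measurable}) follows.

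(\ref{item: distance is measurable}) implies (\ref{item: pointwise traces}). Taking $b=0$ in (\ref{item: distance is measurable}) shows that $p\mapsto \|a_{p}\|_{2}$ is measurable. For $b\in M$ and $\lambda\in\{1,-1,i,-i\}$, apply (\ref{item: distance is measurable}) to $-\lambda b^{*}\in M$:
\[\|a_{p}+\lambda b^{*}_{p}\|_{2}^{2}=\|a_{p}\|_{2}^{2}+\|b_{p}\|_{2}^{2}+2\Re\big(\lambda\,\tau_{p}(a_{p}b_{p})\big),\]
using $\langle a_{p},\lambda b_{p}^{*}\rangle=\overline{\lambda}\tau_{p}(b_{p}a_{p})$ and the trace property. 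Every term other than $\Re(\lambda\tau_{p}(a_{p}b_{p}))$ is measurable. Letting $\lambda=1$ and $\lambda=-i$ recovers the real and imaginary parts of $\tau_{p}(a_{p}b_{p})$ as $\cF$-measurable functions, which gives (\ref{item: pointwise traces}).

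Both directions work identically for $\cF=\cB_{K}$ and $\cF=\cG_{u,K}$. The only properties used are that $\cF$ contains the Borel sets and is closed under countable suprema and arithmetic operations on measurable functions.
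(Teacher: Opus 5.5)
Your proof is correct and is essentially the paper's argument. For (i)$\Rightarrow$(ii) you use a countable supremum over a $\|\cdot\|_{\bE}$-dense family, via duality in each $L^{2}(M_{p},\tau_{p})$; the paper applies this directly to $a_{p}-b_{p}$, whereas you apply it to $a_{p}$ and then expand the square. For (ii)$\Rightarrow$(i) you use polarization, as the paper does.

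One cosmetic slip: by your own identity $\langle a_{p},\lambda b_{p}^{*}\rangle=\overline{\lambda}\tau_{p}(b_{p}a_{p})$, the cross term should read $2\Re(\overline{\lambda}\,\tau_{p}(a_{p}b_{p}))$. This only flips the sign of the recovered imaginary part, so measurability is unaffected.
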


\begin{proof}
(\ref{item: pointwise traces}) implies (\ref{item: distance is measurable}):
First note that by $\|\cdot\|_{\bE}$-separability, we may find a countable family $(c_{j})_{j\in J}$ in $M$ so that $\overline{\{c_{j,p}:j\in J\}}^{\|\cdot\|_{2}}=M_{p}$ for all $p\in K$. 
Then:
 \[\|a_{p}-b_{p}\|_{2}=\sup_{j\in J}1_{(0,\infty)}(\|c_{j,p}\|_{2})\frac{\Re(\tau((a_{p}-b_{p})c_{j,p}^{*}))}{\|c_{j,p}\|_{2}}.\]
 This is a countable supremum of $\cF$-measurable functions, hence $\cF$-measurable.
 
 (\ref{item: distance is measurable}) implies (\ref{item: pointwise traces}): This follows from the fact that
 \[\tau(a_{p}b_{p})=\frac{1}{2}\left(\sum_{j=0}^{4}i^{j}\|a_{p}+i^{j}b_{p}^{*}\|_{2}^{2}\right).\]

\end{proof}

From this, we can give an alternate description of being Borel/universally measurable in terms of inner products against continuous sections.

\begin{thm}
Let $(M,\bE,K)$ be a $W^{*}$-bundle which is $\|\cdot\|_{\bE}$-separable, and let $\pi\colon B\to K$ be the corresponding topological bundle of \cite{EPBundles}, as describe above. Let $a\colon K\to B$ be a section of $B$. Then
$a$ is Borel (resp. universally measurable) if and only if for every $b\in M$ we have that $p\mapsto \tau_{p}(a_{p}b_{p})$ is Borel (resp. universally measurable).
\end{thm}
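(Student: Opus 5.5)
Both directions reduce, via Proposition \ref{prop:Borel TFAE}, to comparing Borel/universal measurability of $a\colon K\to B$ with measurability of the real-valued functions $p\mapsto \|a_{p}-b_{p}\|_{L^{2}(\tau_{p})}$ for $b\in M$. Throughout, fix $\cF\in\{\cB_{K},\cG_{u,K}\}$. Using $\|\cdot\|_{\bE}$-separability, fix a countable $\|\cdot\|_{\bE}$-dense family $(c_{j})_{j\in J}$ in $M$; then $\{c_{j,p}:j\in J\}$ is $\|\cdot\|_{2}$-dense in $M_{p}$ for every $p$, as in the proof of Proposition \ref{prop:Borel TFAE}.

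\emph{Forward direction.} Assume $a$ is $\cF$-measurable. For $b\in M$, the map $p\mapsto (a_{p},b_{p})$ lands in $B\times_{\pi}B$, and the fiberwise subtraction map $B\times_{\pi}B\to B$ followed by the fiberwise $\|\cdot\|_{2}$ map $B\to[0,\infty)$ is continuous. So $p\mapsto \|a_{p}-b_{p}\|_{2}$ is the composite of $p\mapsto(a_{p},b_{p})$ with a continuous map. Hence it suffices to show that $p\mapsto (a_{p},b_{p})$ is $\cF$-measurable into $B\times_{\pi}B$. An open set of $B\times_{\pi}B$ is a union of products $V(c,\varepsilon,U)\times_{\pi}V(c',\varepsilon',U')$ of basic sets, and its preimage is
\[a^{-1}(V(c,\varepsilon,U))\cap b^{-1}(V(c',\varepsilon',U')).\]
Since $b$ is continuous, this preimage is measurable. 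To keep the unions countable, I would first check that the basic sets with $c\in\{c_{j}\}$, rational $\varepsilon$, and $U$ in a countable base of $K$ (which exists since $K$ is compact metrizable) already form a base. This follows from continuity of $p\mapsto \|c_{p}-c_{j,p}\|_{2}$ together with the triangle inequality. Then every open subset of $B$, and of $B\times_\pi B$, is a countable union of basic sets.

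\emph{Reverse direction.} Assume the trace condition. By Proposition \ref{prop:Borel TFAE}, every $p\mapsto \|a_{p}-b_{p}\|_{2}$ is $\cF$-measurable. With the countable base above, any open $O\subseteq B$ is a countable union of sets $V(c_{j},\varepsilon,U)$, and
\[a^{-1}(V(c_{j},\varepsilon,U))=U\cap\{p:\|a_{p}-c_{j,p}\|_{2}<\varepsilon\},\]
which lies in $\cF$. Hence $a^{-1}(O)\in\cF$ and $a$ is $\cF$-measurable.

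The main obstacle is the countable-base claim, since Borel measurability through a non-second-countable space could fail otherwise. Concretely, I must show: if $x\in V(c,\varepsilon,U)$ with $\pi(x)=p_{0}$, then there exist $j$, a rational $\eta$, and a basic open $U_{0}\ni p_{0}$ with
\[x\in V(c_{j},\eta,U_{0})\subseteq V(c,\varepsilon,U).\]
I would pick $\delta=\varepsilon-\|c_{p_{0}}-x\|_{2}>0$ and choose $j$ with $\|c_{j,p_{0}}-x\|_{2}<\delta/4$. Then I would use continuity of $q\mapsto\|c_{q}-c_{j,q}\|_{2}$ (which holds because $\bE$ maps into $C(K)$) to get a neighbourhood $U_{0}\subseteq U$ of $p_{0}$ on which $\|c_{q}-c_{j,q}\|_{2}<\|c_{p_{0}}-c_{j,p_{0}}\|_{2}+\delta/4$. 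Finally I would take a rational $\eta\in(\|c_{j,p_{0}}-x\|_{2},\delta/4)$, and verify the inclusion by the triangle inequality.
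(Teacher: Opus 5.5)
Your proof is correct and has the same skeleton as the paper's. Both reduce, via Proposition \ref{prop:Borel TFAE}, to $\cF$-measurability of $p\mapsto\|a_{p}-b_{p}\|_{2}$, and both compute $a^{-1}$ of the basic sets $V(b,\varepsilon,U)$.

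\emph{Forward direction.} Here the paper is shorter. The complement of $\{x\in B:\|x-b_{\pi(x)}\|_{2}\ge t\}$ is the basic open set $V(b,t,K)$. Hence $\{p:\|a_{p}-b_{p}\|_{2}<t\}=a^{-1}(V(b,t,K))\in\cF$ immediately. Your detour through $B\times_{\pi}B$ and continuity of fiberwise subtraction is valid. It does, however, rely on your countable-base lemma to reduce to basic rectangles, which the paper's one-line argument does not need.

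\emph{Converse direction.} Here your version is more complete than the paper's. The paper only checks that preimages of basic open sets are Borel, and its displayed identity drops the factor $U\cap$. That implies measurability of $a$ only once every open subset of $B$ is a countable union of basic sets, which the paper never argues. Your countable-base lemma supplies exactly this missing step. It uses the dense $c_{j}$, rational radii, and a countable base of $K$. The triangle-inequality check goes through: for $y\in V(c_{j},\eta,U_{0})$ with $\pi(y)=q$,
\[
\|c_{q}-y\|_{2}<\bigl(\varepsilon-\delta+\tfrac{\delta}{4}\bigr)+\tfrac{\delta}{4}+\tfrac{\delta}{4}<\varepsilon .
\]

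\emph{One small addition.} You assert that $K$ is compact metrizable without justification, and it does follow from the hypotheses. For $f\in C(K)\subseteq M$ you have $\bE(f^{*}f)=f^{*}f$, so $\|f\|_{\bE}=\|f\|_{\infty}$. Therefore $\|\cdot\|_{\bE}$-separability of $M$ forces $C(K)$ to be norm-separable, and $K$ is metrizable.
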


\begin{proof}
We do the proof for Borel, the proof for universally measurable is the same. 

First, suppose that $a\colon K\to B$ is Borel. 
Let $t>0$, then by Proposition \ref{prop:Borel TFAE} we have to show that for $b\in M$ we have that
\[\{p\in K:\|a_{p}-b_{p}\|\geq t\}\]
is Borel. But this is 
\[a^{-1}(\{x\in B:\|x-b_{\pi(x)}\|_{2}\geq t\}),\]
and by definition, the set $\{x\in B:\|x-b_{\pi(x)}\|_{2}\geq t\}$ is closed in $B$.

Conversely, suppose that for all $b\in M$ we have that $p\mapsto \tau_{p}(a_{p}b_{p})$ is Borel. Let $b\in M$, $\varepsilon>0$ and an open $U\subseteq K$ be given. Define $f\colon K\to [0,\infty)$ by $f(p)=\|a_{p}-b_{p}\|_{2}$. Note that $f$ is Borel, by Proposition \ref{prop:Borel TFAE}.
Then:
\[a^{-1}(V(b,\varepsilon,U))=\{p\in K:\|a_{p}-b_{p}\|_{2}<\varepsilon\}=f^{-1}([0,\varepsilon))\]
is Borel.

\end{proof}

\subsection{A weak topology on Evington-Pennig's topological bundle}

In order to have access to descriptive set-theoretic machinery, we need to prove that $\Ball(B)$ is a standard Borel space when $M$ is $\|\cdot\|_{\bE}$-separable. We give a proof of this by using an alternate topology (analogous to the weak$^{*}$-topology) which has the same Borel sets and which makes $\Ball(B)$ compact metrizable.

Let $(M,\bE,K)$ be a $W^{*}$-bundle, and let $B$ be the topological bundle of Evington-Pennig described in Section \ref{Sec:universmally measurable bundle}. Given $b\in B$, $a\in M$, and $\varepsilon>0$ and $U\subseteq K$ open, we let 
\[\widetilde{V}(b,a,U,\varepsilon)=\{b'\in B:|\tau(ba_{\pi(b)})-\tau(b'a_{\pi(b')})|<\varepsilon\}.\]
We define a new topology on $B$ by saying that for $b\in B$ the sets
\[\widetilde{\mathcal{V}}_{F,U,\varepsilon}(b)=\bigcap_{a\in F}\widetilde{\mathcal{V}}(b,a,U,\varepsilon)\]
ranging over finite $F\subseteq M$, $\varepsilon>0$, and open neighborhoods $U\subseteq K$ of $\pi(b)$ form a neighborhood basis at $b$. Let $\widetilde{\cT}$ be the generated topology.

\begin{prop}\label{prop: new topology who dis}
 Let $(M,\bE,K)$ be a $W^{*}$-bundle, and let $B$ be the topological bundle of Evington-Pennig described in Section \ref{Sec:universmally measurable bundle}. Then:
 \begin{enumerate}[(i)]
     \item $\Ball(B)$ is $\widetilde{\cT}$-compact, \label{item: new wk* compact}
     \item if $M$ is $\|\cdot\|_{\bE}$-separable, then $\Ball(B)$ is metrizable and generates the same Borel $\sigma$-algebra as the original topology on $\Ball(B)$. \label{item: same borels}
 \end{enumerate}
\end{prop}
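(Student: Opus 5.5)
Note first that the definition of $\widetilde{V}$ as written only imposes the trace condition, so I will read it (as clearly intended) as also requiring $\pi(b')\in U$. That is, $\widetilde{\cT}$ is the coarsest topology making $\pi$ continuous and making every map $b'\mapsto \tau_{\pi(b')}(b'a_{\pi(b')})$ continuous, for $a\in M$.

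The plan is to embed $\Ball(B)$ into a product space and identify its image as a closed set, in the spirit of Banach–Alaoglu. Define
\[\Phi\colon \Ball(B)\to K\times \prod_{a\in M}\|a\|\,\overline{\mathbb{D}},\qquad \Phi(b)=\bigl(\pi(b),(\tau_{\pi(b)}(ba_{\pi(b)}))_{a\in M}\bigr),\]
where $\overline{\mathbb{D}}$ is the closed unit disc. By construction of $\widetilde{\cT}$, the map $\Phi$ is a homeomorphism onto its image. It is injective because $\pi_{p}(M)=M_{p}$ is $\|\cdot\|_{2}$-dense, so $b\in M_{p}$ is determined by the values $\tau_{p}(ba_{p})$. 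The codomain is compact by Tychonoff, so for (\ref{item: new wk* compact}) it suffices to show that $\Phi(\Ball(B))$ is closed.

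For closedness, take a net $b_{i}$ with $\Phi(b_{i})\to (p,(\lambda_{a})_{a})$. Define a functional $\phi(a_{p}):=\lambda_{a}$ on $M_{p}$.

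\begin{enumerate}[(1)]
\item \emph{$\phi$ is well defined and bounded in $\|\cdot\|_{1}$.} We need $|\lambda_{a}|\le\|a_{p}\|_{L^{1}(\tau_{p})}$. The estimate $|\tau_{q}(b_{i}a_{q})|\le \|a_{q}\|_{1,\tau_{q}}$ holds at $q=\pi(b_{i})$, so this reduces to upper semicontinuity of $q\mapsto\|a_{q}\|_{1}$. I would obtain this by writing $\|a_{q}\|_{1}=\sup_{c\in\Ball(M)}\Re\,\tau_{q}(ac)$, which requires Kaplansky density for $\pi_{q}(\Ball(M))$ in $\Ball(M_{q})$; Ozawa gives $\pi_q(M)=M_q$, and the unit ball maps onto the unit ball. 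This exhibits $\|a_{q}\|_{1}$ only as a supremum of continuous functions, i.e. lower semicontinuous, which is the wrong direction. Instead I will use $\|\cdot\|_{2}$: the bound $|\tau_{q}(b_{i}a_{q})|\le\|a_{q}\|_{2}$ passes to the limit because $q\mapsto\|a_{q}\|_{2}=\bE(a^{*}a)(q)^{1/2}$ is continuous. Hence $|\lambda_{a}|\le\|a_{p}\|_{2}$, so $\phi$ is well defined and $\|\cdot\|_{2}$-bounded.
\item \emph{$\phi$ is represented by a contraction.} By Riesz, $\phi(x)=\tau_{p}(bx)$ for some $b\in L^{2}(M_{p})$. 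To see that $\|b\|\le 1$, note that $|\tau_{q}(b_{i}a_{q})|\le\|a_{q}\|_{1}$ along the net, and then use a continuity argument for $\|\cdot\|_{1}$. One option is to approximate $\|a_{q}\|_{1}$ by the continuous functions $\bE(f_{\delta}(a^{*}a))$ with $f_{\delta}(t)\uparrow\sqrt t$. These give $\|a_{q}\|_{1}=\lim_\delta\bE(f_\delta(a^*a))(q)$, with the convergence uniform in $q$ since $\sqrt t-f_\delta(t)\le\delta$ uniformly, so $q\mapsto \|a_q\|_1$ is continuous. Therefore $|\lambda_{a}|\le\|a_{p}\|_{1}$, which gives $\|b\|_{\infty}\le1$ by $L^{1}$–$L^{\infty}$ duality.
\end{enumerate}

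It follows that $b\in\Ball(M_{p})$ and $\Phi(b)$ is the limit point, which proves (\ref{item: new wk* compact}).

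For (\ref{item: same borels}), fix a countable $\|\cdot\|_{\bE}$-dense set $\{c_{j}\}\subseteq M$. By (2) the coordinates $b\mapsto\tau(bc_{j})$ are uniformly continuous in $c$ with respect to $\|\cdot\|_{2}$ (the bound $|\tau(bc)|\le\|c\|_2$), so countably many coordinates already generate $\widetilde{\cT}$ on $\Ball(B)$. Together with metrizability of $K$ (as $C(K)\subseteq M$ is separable), this embeds $\Ball(B)$ into a metrizable compact space, so it is compact metrizable.

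For the Borel structures, the identity from the original topology to $\widetilde{\cT}$ is continuous, since the trace pairing is continuous on the original $B$, by the Evington–Pennig properties (v), (vii) and polarization. So $\widetilde{\cT}$-Borel sets are originally Borel. Conversely, the original basic sets are $V(a,\varepsilon,U)\cap\Ball(B)$, and $b\mapsto\|b-a_{\pi(b)}\|_{2}$ is the countable supremum $\sup_{j}\Re\,\tau((b-a)c_{j}^{*})/\|c_{j}\|_{2}$, exactly as in Proposition \ref{prop:Borel TFAE}, hence $\widetilde{\cT}$-Borel. The remaining point is that the original topology on $\Ball(B)$ is second countable, so open sets are countable unions of basic sets. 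I would get this from countably many $a$ drawn from $\{c_{j}\}$, rational $\varepsilon$, and a countable base of $K$. Then every open set is $\widetilde{\cT}$-Borel.

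I expect the main obstacle to be step (2), the norm bound on the limit. It requires the continuity of $q\mapsto\|a_{q}\|_{1}$, and I would verify it via the uniform approximation $\bE(f_\delta(a^*a))$ described above.
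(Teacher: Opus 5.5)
Your proposal is correct. Its overall architecture matches the paper's: embed $\Ball(B)$ into $K\times\prod_{a}\|a\|\overline{\mathbb{D}}$, show the image is closed and apply Tychonoff, then use a countable $\|\cdot\|_{\bE}$-dense set and Cauchy--Schwarz for metrizability. Your reading of $\widetilde{V}$ as also requiring $\pi(b')\in U$ is the intended one; the paper's own proof uses $U$ exactly this way.

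The genuine difference is how you identify the limit point in the closedness step.

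\begin{itemize}
\item \emph{The paper's route.} It forms the sesquilinear form $(a,c)\mapsto\lim_{\alpha}\tau(c^{*}x_{\alpha}a)$ and bounds it by $\|a_{p}\|_{2}\|c_{p}\|_{2}$, which needs only continuity of $q\mapsto\|a_{q}\|_{2}$. This gives a contraction $T$ on $L^{2}(M_{p})$ commuting with the right action. The commutation theorem then makes $T$ left multiplication by an element of $\Ball(M_{p})$.
\item \emph{Your route.} You use the linear functional $a_{p}\mapsto\lambda_{a}$ with the bound $|\lambda_{a}|\leq\|a_{p}\|_{1}$, followed by $L^{1}$--$L^{\infty}$ duality. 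This is a one-variable argument, at the cost of needing continuity of $q\mapsto\|a_{q}\|_{1}$.
\end{itemize}

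That continuity is immediate, with no $f_{\delta}$ approximation needed. Since $|a|\in M$ and $\pi_{q}(|a|)=|a_{q}|$, you have $\|a_{q}\|_{1}=\bE(|a|)(q)$. Once you have the $\|\cdot\|_{1}$ bound, the preliminary $L^{2}$/Riesz step is also redundant: duality alone gives well-definedness and a representing element of $\Ball(M_{p})$.

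For the Borel structures your argument is actually more complete than the paper's one-line appeal to Proposition \ref{prop:Borel TFAE}. You observe that the identity from the original topology to $\widetilde{\cT}$ is continuous. You also note that the original topology on $\Ball(B)$ is second countable, with base the sets $V(c_{j},\varepsilon',U_{k})$. That second point is exactly what is needed to pass from the $\widetilde{\cT}$-Borel measurability of basic sets to all open sets.
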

 \begin{proof}
 (\ref{item: new wk* compact}): Define 
 \[\iota\colon \Ball(B)\to \left(\prod_{a\in M}\{z\in \bC:|z|\leq \|a\|\}\right)\times K\]
 by
 \[\iota(b)=((\tau(ba))_{a\in M},\pi(b)).\]
 It is direct to see that for $b\in M$ the map $a\mapsto \tau(ab)$ is $\widetilde{\cT}$-continuous, hence $\iota$ is a continuous function. We claim that its image is closed and that $\iota$ is a homeomorphism onto its image. First note that $\iota$ is injective, simply because its second coordinate is the projection map $\pi$, and from the fact that if $p\in K$ and $x,y\in M_{p}$ have $\tau_{p}(xa)=\tau_{p}(ya)$ for all $a\in M_{p}$, then $x=y$.

Since $\iota$ is  continuous injective, to show that $\iota$ is a homeomorphism onto its image, it suffices to show that $\iota(\widetilde{\mathcal{V}}(b,a,U,\varepsilon))$ is an open subset of $\iota(\Ball(B))$. But,
\[\iota(\widetilde{\mathcal{V}}(b,a,U,\varepsilon))=\iota(\Ball(B))\cap \left[\{z\in \bC^{M}:|z_{a}-\tau(ba)|<\varepsilon\} \times U\right]
,\]
and this is an open subset of $\iota(\Ball(B))$.

Finally, let us show that the image is closed. Suppose that $x_{\alpha}\in\Ball(B)$ is a net and that $\iota(x_{\alpha})\to (z,p)$. Necessarily this forces that $\pi(x_{\alpha})\to p$, and that $\lim_{\alpha}\tau_{\pi(x_{\alpha})}(x_{\alpha}b_{\pi(x_{\alpha})})$ converges for all $b\in M$. Define $\widetilde{B}\colon M\times M\to \bC$ by
\[B(a,c)=\lim_{\alpha}\tau_{\pi(x_{\alpha})}(c_{\pi(x_{\alpha})}^{*}x_{\alpha}a_{\alpha}).\]
Then
\[|B(a,c)|\leq \liminf_{\alpha}\|c_{\pi(x_{\alpha})}\|_{2}\|a_{\pi(x_{\alpha})}\|_{2}=\|c_{p}\|_{2}\|a_{p}\|_{2}.\]
This implies that $\widetilde{B}$ descends to a unique bounded sesquilinear form $B\colon M_{p}\times M_{p}\to \bC$ which satisfies that $B(a_{p},c_{p})=\widetilde{B}(a,c)$ for all $a,c\in M$. Thus there is a unique operator $T\in B(L^{2}(M_{p}))$ with $B(a,c)=\ip{Ta,c}$. Moreover, since $B(ay,c)=B(a,cy^{*})$ it is direct to show that $T$ commutes with right multiplication by elements of $M_{p}$, and hence $T$ is given as left multiplication by some element $x$ in $M_{p}$. By definition of $\widetilde{\cT}$ we have that $\iota(x_{\alpha})\to \iota(x)$  and hence the image of $\iota$ is closed.

(\ref{item: same borels}): The fact that $\Ball(B)$ has the same Borel sets as the original topology follows from Proposition \ref{prop:Borel TFAE}.
To show that $\Ball(B)$ is metrizable, we use a slightly different embedding then in (\ref{item: new wk* compact}). Fix a $\|\cdot\|_{\bE}$-dense sequence $(a_{n})_{n=1}^{\infty}$ in $M$. Define 
\[j\colon \Ball(B)\to \left(\prod_{n\in \bN}\{z\in \bC:|z|\leq \|a_{n}\|\}\right)\times K\]
by $j(b)=((\tau(ba_{n}))_{n=1}^{\infty},\pi(b)))$.
Since the countable product of metrizable spaces is metrizable, it suffices to show that $j$ is a homeomorphism onto its image. The fact that $j$ is continuous follows from the proof of (\ref{item: new wk* compact}). 
Suppose that $x_{\alpha}$ is a net in $\Ball(B)$ and that $j(x_{\alpha})$ converges to $j(x)$ for some $x\in\Ball(B)$. To show that $x_{\alpha}$ converges to $x$ with respect to $\widetilde{\cT}$ it suffices, from the proof of (\ref{item: new wk* compact}), to show that $\tau_{\pi(x_{\alpha})}(x_{\alpha}a_{\pi(x_{\alpha})})\to \tau_{\pi(x)}(xa_{\pi(x)})$ for all $a\in M$. Since $j(x_{\alpha})$ converges to $j(x)$ we know that 
$\tau_{\pi(x_{\alpha})}(x_{\alpha}a_{n,\pi(x_{\alpha})})\to \tau_{\pi(x)}(xa_{n,\pi(x)})$ for every $n\in \bN$. From the uniform estimate $\|x_{\alpha}\|\leq 1$, it is direct to show that 
\[\{a\in M:\tau_{\pi(x_{\alpha})}(x_{\alpha}a_{\pi(x_{\alpha})})\to \tau_{\pi(x)}(xa_{\pi(x)})\} \]
is a $\|\cdot\|_{\bE}$-closed subset of $M$, and since it contains $\{a_{n}:n\in \bN\}$, it must therefore be all of $M$.

 \end{proof}

The following corollary of the above material shows that if one could upgrade continuous in Theorem \ref{thm: main thm intro} to universally measurable, then this would amount to a disproof of the generator problem for von Neumann algebras. 
\begin{cor}
Let $(M,\bE,K)$ be the $W^{*}$-bundle which is $\|\cdot\|_{\bE}$-separable. Let $N$ be a sub-bundle of $M$, and let $\pi\colon B\to K$ be the topological bundle of \cite{EPBundles}, as described above. Fix $n\in \bN$. Then the following are equivalent:
\begin{enumerate}[(i)]
    \item for each $p\in K$, there are $a_{1},\cdots,a_{n}\in M_{p}$ so that $M=W^{*}(\{a_{1},\cdots,a_{n}\}\cup N_{p})$, \label{cor item: fiberwise finite generation over N iff}
    \item there are universally measurable sections $b_{1},\cdots,b_{n}\colon K\to B$ of $\pi$  so that $M_{p}=W^{*}(\{b_{j,p}\}_{j=1}^{n}\cup N_{p})$ for every $p\in K$. \label{cor item: finite generation univ measu}
\end{enumerate}
\end{cor}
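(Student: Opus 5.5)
The implication (\ref{cor item: finite generation univ measu}) $\Rightarrow$ (\ref{cor item: fiberwise finite generation over N iff}) is immediate: evaluate the sections at each $p$, i.e. take $a_{j}=b_{j,p}$. For the converse the plan is a measurable selection argument via the Jankov--von Neumann uniformization theorem. By scaling we may assume the fiberwise generators lie in $\Ball$ of each fiber, since $W^{*}(\{ta_{j}\}\cup N_{p})=W^{*}(\{a_{j}\}\cup N_{p})$ for $t>0$. Proposition \ref{prop: new topology who dis} shows that $\Ball(B)$ with $\widetilde{\cT}$ is compact metrizable and has the same Borel sets as the original topology. Hence the fiber product $Z=\Ball(B)^{n}\times_{\pi}$, the set of $n$-tuples in a common fiber, is a closed subset of the compact metrizable space $\Ball(B)^{n}$, since $\pi$ is $\widetilde{\cT}$-continuous. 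So $Z$ is a standard Borel space with a Borel map $\pi\colon Z\to K$.

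Next we show that
\[
\mathcal{G}=\bigl\{(b_{1},\dots,b_{n})\in Z: W^{*}(\{b_{j}\}\cup N_{\pi(b)})=M_{\pi(b)}\bigr\}
\]
is Borel, or at least analytic, in $Z$. Fix a countable family $(c_{m})_{m}$ in $M$ whose values are $\|\cdot\|_{2}$-dense in every fiber, and a countable family $(d_{l})_{l}$ in $N$ with the same property for $N_{p}$; both exist by $\|\cdot\|_{\bE}$-separability. By Kaplansky density in $M_{p}$, a tuple $b$ lies in $\mathcal{G}$ if and only if
\[
\forall m\ \forall k\ \exists P\in \bC^{*}\langle T\rangle_{\mathbb{Q}}:\ \|P\|_{R}\le \|c_{m}\|+1,\qquad \|P(b,(d_{l,p}))-c_{m,p}\|_{2}<1/k .
\]
Here $P$ ranges over the countably many rational noncommutative polynomials in the $T_{j}$ and in finitely many $d_{l}$. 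The norm bound keeps evaluations in bounded sets, where the fiberwise product is continuous in the original topology (property (vii)). So each map $b\mapsto \|P(b,(d_{l,\pi(b)}))-c_{m,\pi(b)}\|_{2}$ is continuous in the original topology, hence Borel. Thus $\mathcal{G}$ is a countable Boolean combination of Borel sets and is Borel. The hypothesis (\ref{cor item: fiberwise finite generation over N iff}) says exactly that $\pi(\mathcal{G})=K$.

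Now apply the Jankov--von Neumann theorem \cite[Theorem 18.1]{KechrisClassic} to the Borel set $\mathcal{G}\subseteq Z$ and the Borel map $\pi|_{\mathcal{G}}$; this is most cleanly done through the graph, a Borel subset of $K\times Z$. It gives a $\sigma(\Sigma^{1}_{1})$-measurable, hence universally measurable, uniformizing map $s\colon K\to\mathcal{G}$ with $\pi(s(p))=p$. Its coordinates $b_{j}=\mathrm{pr}_{j}\circ s$ are universally measurable sections of $\pi$ with respect to the Borel structure of $\Ball(B)$. Since the two topologies on $\Ball(B)$ have the same Borel sets, the $b_{j}$ are also universally measurable with respect to the original Evington--Pennig topology, and they generate $M_{p}$ over $N_{p}$ fiberwise.

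The main obstacle is the Borel-ness of $\mathcal{G}$. The delicate points are the continuity of polynomial evaluation on bounded pieces of $B$, and making sure that the polynomial approximation is correctly encoded with countably many quantifiers. For the latter we may appeal to Proposition \ref{prop: better KDT} applied fiberwise, or to Kaplansky density directly. A secondary check is that $K$ is Polish, which holds because $K$ is compact metrizable.
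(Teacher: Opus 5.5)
Your proposal is correct and is essentially the paper's argument: you write the set of fiberwise-generating tuples as a countable intersection of countable unions of sets defined by $\|\cdot\|_{2}$-approximation of a dense sequence by polynomials, use Proposition \ref{prop: new topology who dis} to get a standard Borel space, and apply Jankov--von Neumann, and your rescaling into $\Ball(B)$ even tidies up a point the paper glosses over. One quibble: the bound $\|P\|_{R}\le\|c_{m}\|+1$ is unnecessary, since continuity of evaluation only needs bounded inputs and $\|\cdot\|_{2}$-approximation of a dense sequence already forces generation; if you keep it, the forward direction needs Proposition \ref{prop: better KDT}, because Kaplansky density only bounds $\|P(b,d_{p})\|$ and not the universal norm $\|P\|_{R}$.
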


\begin{proof}
The fact that (\ref{cor item: finite generation univ measu}) implies (\ref{cor item: fiberwise finite generation over N iff}) is direct. So we focus on the reverse implication.

Assume (\ref{cor item: fiberwise finite generation over N iff}) holds. Let $X=\{(p,x)\in K\times B^{n}:\pi(x_{j})=p \text{ for all $j\in [n]$ and } W^{*}(\{x_{j}\}_{j=1}^{n}\cup N_{p})=M_{p}\}$. We claim that $X$ is a Borel subset of $K\times \Ball(B)^{n}$. To see this, apply $\|\cdot\|_{\bE}$-separability of $M$ to find  countable families $(y_{j})_{j\in J}\in M^{J}$, $(c_{i})_{i\in I}\in N^{I}$ which are $\|\cdot\|_{\bE}$-dense in $M,N$. 
For $j\in J,k\in \bN,P\in \bC^{*}\ip{(T_{i})_{i\in I\sqcup [n]}}$, set 
\[X_{j,k,P}=\{(p,x)\in K\times \Ball(B)^{n}:\pi(x_{j})=p, \text{ for all $j\in [n]$ and } \|P(c_{p},x)-y_{j,p}\|_{2}<k^{-1}\}.\]
Then
\[X=\bigcap_{j\in J,k\in \bN}\bigcup_{P\in \bC^{*}\ip{(T_{i})_{i\in I\sqcup [n]}}}X_{j,k,P},\]
so $X$ is a $G_{\delta}$-subset of $\{(p,x)\in K\times B^{n}:\pi(x_{j})=p \text{ for all $j\in [n]$}\}$ (and in particular, is Borel). 
Since $\Ball(B)$ is standard Borel by Proposition \ref{prop: new topology who dis} (\ref{item: same borels}), we have that $X$ is a standard Borel space. By (\ref{cor item: fiberwise finite generation over N iff}) the first factor projection $\rho\colon X\to K$ is surjective. Since $X,K$ are standard Borel, the Jankov-von Neumann theorem \cite[Theorems 18.1, 21.10]{KechrisClassic} implies that there is a universally measurable section $s\colon K\to X$ of $\rho$. We write $s(p)=(p,b_{1,p},\cdots,b_{n,p})$. Then $p\mapsto b_{j,p}$ are universally measurable sections, and by design we have that 
$W^*{*}(\{b_{j,p}\}_{j=1}^{n}\cup N_{p})=M_{p}$ for all $p\in K$.

\end{proof}

\subsection{There is no continuous algorithm to solve the generator problem}

We recall the construction of Jekel's $L^{2}$-uniformly continuous functional calculus here as given in \cite{FreePinsker, JekelThesis}. Fix an index set $J,$ and $R\in [0,\infty)^{J}.$
Define
\[\mathcal{A}_{R}=C(\Sigma_{R})\otimes_{\textnormal{alg}}\bC^{*}\ip{(T_{j})_{j\in J}}.\]
Given a tracial von Neumann algebra $(M,\tau)$ and $x\in \prod_{j\in J}R_{j}\Ball(M)$, we let $\ev_{x}\colon \mathcal{A}_{R}\to M$ be the linear map satisfying $\ev_{x}(\phi\otimes P)=\phi(\ell_{x})P(x)$ for $\phi \in C(\Sigma_{R,J})$,$P\in \bC^{*}\ip{(T_{j})_{j\in J}}$. For $f\in \mathcal{A}_{R}$ we will use $f(x)$ for $\ev_{x}(f).$
Define a seminorm $\|\cdot\|_{R,2}$ on $\mathcal{A}_{R}$ by
\[\|f\|_{R,2}=\sup_{x,(M,\tau)}\|f(x)\|_{2},\]
where the supremum is over all tracial von Neumann algebras $(M,\tau)$ and all $x\in \prod_{j\in J}R_{j}\Ball(M).$ We then let $\mathcal{F}_{R,2}$ be the completion of
\[\mathcal{A}_{R}/\{f\in \mathcal{A}_{R}:\|f\|_{R,2}=0\}\]
under the norm induced by $\|\cdot\|_{R,2}.$  For $f\in \mathcal{F}_{R,2}$ we let
\[\|f\|_{R}=\sup_{x, (M,\tau)}\|f(x)\|_{\infty}\in [0,+\infty],\]
where again the supremum is over all tracial von Neumann algebras and all $x\in \prod_{j\in J}R_{j}\Ball(M)$. We set
\[\mathcal{F}_{R}=\{f\in \mathcal{F}_{R,2}:\|f\|_{R}<\infty\}.\]
Note that for a tracial von Neumann algebra $(M,\tau)$ and $x\in \prod_{j\in J}R_{j}\Ball(M),$ we then have that $f(x)\in M.$

By \cite[Section 3.1]{FreePinsker} (see also \cite[Section 2.4]{HayesPT}) the  product and   $*$-operation have a unique extension to product and $*$-operations on $\mathcal{F}_{R}$ which satisfy the axioms of a $*$-algebra as well as the following estimates
 \[\|f\|_{R}=\|f^{*}\|_{R},\,\,\ \|f\|_{R,2}=\|f^{*}\|_{R,2}\]
 \[\|fg\|_{R}\leq \|f\|_{R}\|g\|_{R},\,\,\,\, \|fg\|_{R,2}\leq \|f\|_{R}\|g\|_{R,2}.\]
Further, under these extended operations and the norm $\|\cdot\|_{R}$ we have that $\cF_{R}$ is a $C^{*}$-algebra.

We remark that $\cF_{R}$ may be viewed as a $W^{*}$-bundle over $\Sigma_{R}$. Indeed, for $\ell\in \Sigma_{R}$, choose an von Neumann algebra $(M,\tau)$ and an $a\in \prod_{j\in J}R_{j}\Ball(M)$ with $\ell_{a}=\ell$. We then define $\bE(f)(\ell)=\tau(f(a))$. It is direct to verify that this is independent of the choice of $(M,\tau,a)$ (indeed such a triple is unique up to isomorphism), that $\bE(f)\in C(\Sigma_{R})$, that $\|f\|_{R,2}=\bE(f^{*}f)^{1/2}$, and that $\Ball(\cF_{R})$ is complete with respect to $\|\cdot\|_{R,2}$. 

We now recall the basic properties of this highly general noncommutative functional calculus.
\begin{enumerate}[(i)]
\item Let $(M,\tau)$ be a tracial von Neumann  algebra and $x\in \prod_{j\in J}R_{j}\Ball(M)$. Then the map $\mathcal{F}_{R}\to W^{*}(x)$ given by $f\mapsto f(x)$ is surjective. In fact, for all $a\in W^{*}(x),$ there is an $f\in \mathcal{F}_{R}$ with $\|f\|_{R}\leq \|a\|$ and so that $f(x)=a.$ \label{item:surjective nc func calc}
\item Every $f\in \mathcal{F}_{R,2}$ is $\|\cdot\|_{2}$-uniformly continuous in the following sense. For every $\varepsilon>0,$ there is a $\delta>0$ and a finite $F\subseteq J$ so that if $(M,\tau)$ is any tracial von Neumann algebra and $x,y\in \prod_{j\in J}R_{j}\Ball(M)$ with $\|x_{j}-y_{j}\|_{2}<\delta$ for all $j\in J,$ we have $\|f(x)-f(y)\|_{2}<\varepsilon.$ \label{item:unif cont nc func calc}
\item \label{item:naturality of nc func calc}
Suppose $(M_{k},\tau_{k}),k=1,2$ are tracial von Neumann algebras and $x\in \prod_{j\in J}\{a\in M_{1}:\|a\|_{\infty}\leq R_{j}\},$ and that $\Theta\colon M_{1}\to M_{2}$ is a trace-preserving, unital, normal $*$-homomorphism. Then $f(\Theta(x))=\Theta(f(x))$ for all $f\in \mathcal{F}_{R,2}.$
\end{enumerate}

Using Jekel's $L^{2}$-uniformly continuous functional calculus, we can give a precise statement that it is not possible to continuously choose a single generator for von Neumann algebras. We reminder the reader of the notation $\theta_{\ell},\pi_{\ell},W^{*}(\ell)$ given in Section \ref{sec: all praise the OG}.

\begin{thm}\label{thm: cont func of law}
Fix a countable set $I$ and $R\in [0,\infty)^{I}$. Given $\ell\in \Sigma_{R}$, we let $\vartheta_{\ell}=(\pi_{\ell}(T_{i}))_{i\in i}$. 
Then given any $\|\cdot\|_{R}$-bounded, weak$^{*}$-$\|\cdot\|_{R,2}$ continuous function $\Sigma_{R}\to \cF_{R}$ (which we denote as $\ell\mapsto f_{\ell}$), there is an $\ell\in \Sigma_{R}$ so that  $W^{*}(f_{\ell}(\vartheta_{\ell}))\neq W^{*}(\ell)$.  
\end{thm}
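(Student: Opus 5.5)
Argue by contradiction. Suppose $\ell\mapsto f_{\ell}$ is a $\|\cdot\|_{R}$-bounded, weak$^{*}$-$\|\cdot\|_{R,2}$ continuous map with $W^{*}(f_{\ell}(\vartheta_{\ell}))=W^{*}(\ell)$ for every $\ell\in\Sigma_{R}$. The plan is to turn this into a single continuous section of a suitable $W^{*}$-bundle that fiberwise generates it. That contradicts Corollary \ref{cor: better main thm} once we know the bundle has $\delta_{MD}=+\infty$.

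\emph{Choice of bundle.} The natural target is the bundle $(M,\bE,K)$ of Construction \ref{cons: defining the bundle}. Take $I=J\sqcup\bN$ (countable) and generators $z=((x_{j})_{j\in J},(y_{i})_{i\in\bN})$ of $M$ as a tracially complete $C^{*}$-algebra. We may rescale so that $z$ has the cutoff $R$ given in the statement, or else reduce to that cutoff by composing with a fixed continuous rescaling of laws; I expect only bookkeeping here. For each $p\in K$, the tuple $z_{p}\in M_{p}$ has law $\ell_{z_{p}}=\tau_{p}\circ\ev_{z}$. The map $K\ni p\mapsto \ell_{z_{p}}\in\Sigma_{R}$ is continuous, and $W^{*}(\ell_{z_{p}})\cong M_{p}$ via a trace-preserving isomorphism sending $\vartheta_{\ell_{z_{p}}}\mapsto z_{p}$.

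\emph{The section.} Define $a\in\prod_{p}M_{p}$ by $a_{p}=f_{\ell_{z_{p}}}(z_{p})$. Naturality of the functional calculus (property (\ref{item:naturality of nc func calc})) gives that $a_{p}$ is the image of $f_{\ell_{z_{p}}}(\vartheta_{\ell_{z_{p}}})$. Hence, by hypothesis, $W^{*}(a_{p})=M_{p}$ for every $p$. It remains to see that $a$ is an honest element of $M$, i.e. a continuous section. First, the $\cF_{R}$-elements $g\in\cF_{R}$ define elements $g(z)\in M$. This holds because elements of $\mathcal{A}_{R}$ evaluate to $\phi(\ell_{z_{\cdot}})P(z)\in C(K)\cdot C^{*}(z)\subseteq M$, and $\|\cdot\|_{R,2}$-limits with a uniform $\|\cdot\|_{R}$ bound converge in $\|\cdot\|_{\bE}$ inside $\Ball(M)$, which is complete. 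Next, use compactness of $K$ and continuity of $\ell\mapsto f_{\ell}$ to build a partition of unity $(\varphi_{m})$ subordinate to small open sets $U_{m}\ni p_{m}$. Then set $a^{\varepsilon}=\sum_{m}\varphi_{m}\, f_{\ell_{z_{p_{m}}}}(z)\in M$. For each $p$ we have $\|a^{\varepsilon}_{p}-a_{p}\|_{2}\leq\sup_{m:p\in U_{m}}\|f_{\ell_{z_{p_{m}}}}-f_{\ell_{z_{p}}}\|_{R,2}<\varepsilon$. Thus $(a^{\varepsilon})$ is $\|\cdot\|_{\bE}$-Cauchy and bounded by the uniform $\|\cdot\|_{R}$ bound. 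Its limit in $M$ has fibers $a_{p}$.

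\emph{Conclusion.} Now $a$ fiberwise generates $M$ over $C(K)$. Here I would check that $C(K)$, being abelian, has $\delta_{MD}(C(K);X)<\infty$, e.g.\ via Corollary \ref{cor: basic examples of vanishing} since its fibers are $\bC$. Corollary \ref{cor: better main thm}, together with Theorem \ref{thm: infintie MD}, then gives the contradiction.

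\emph{Main obstacle.} I expect the delicate step to be verifying that the pointwise-defined $a$ lies in $M$: the partition-of-unity approximation, and the identification of $g(z)$ as an element of $M$ compatible with fiber evaluation, including the cutoff normalization. I would handle this exactly as in the lemma characterising fiberwise generation, whose proof uses the same partition-of-unity argument.
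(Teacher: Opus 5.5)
Your strategy is the paper's. The paper also forms the section $p\mapsto f_{\ell_{a_p}}(a_p)$ for a generating tuple $a$ of the bundle of Construction~\ref{cons: defining the bundle} and shows it lies in $M$ (Proposition~\ref{prop: preserving continuous functions}). It then transfers generation through the trace-preserving isomorphism $W^{*}(\ell_{a_p})\cong M_p$ by naturality, and contradicts Corollary~\ref{cor: better main thm}, packaged as Theorem~\ref{thm: no continuous algorithm}.

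The one step you under-justify is the claim that $g(z)\in M$ for every $g\in\cF_R$. Elements $\Phi\in\mathcal{A}_R$ do give $\Phi(z)\in M$. But nothing in the definition of $\cF_R$ lets you choose $\|\cdot\|_{R,2}$-approximants from $\mathcal{A}_R$ with a uniform $\|\cdot\|_R$ bound, and $M$ is only complete on bounded sets. The paper avoids this by approximating $f_p$ locally by some $\Phi\in\mathcal{A}_R$ and invoking Ozawa's local criterion, which accepts such unbounded local approximants. If you want to keep your partition-of-unity argument, insert a cutoff. Set $C=\|g\|_R$ and $h_C(t)=\max(-C,\min(C,t))$. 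For self-adjoints in a tracial von Neumann algebra, $\|h_C(s)-h_C(t)\|_2\le\|s-t\|_2$, and continuous functional calculus in $M$ commutes with each $\pi_p$. Hence $h_C(\Re\Phi_n(z))+ih_C(\operatorname{Im}\Phi_n(z))\in 2C\Ball(M)$ is $\|\cdot\|_{\bE}$-Cauchy, with fibers converging to $g(z_p)$.

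Also, the cutoff ``bookkeeping'' is not a formality. You need $I$ infinite with $R_i>0$ for infinitely many $i$. Then you can biject $\{i:R_i>0\}$ with $J\sqcup\bN$, rescale the generators, and set the remaining coordinates to $0$. Without this the statement fails: for $|I|=1$, the constant map $\ell\mapsto T_1$ gives $W^{*}(f_\ell(\vartheta_\ell))=W^{*}(\ell)$ for every $\ell$. The paper's proof makes the same tacit assumption when it takes $a,R,I$ as in Theorem~\ref{thm: no continuous algorithm}. So this is a defect of the statement rather than of your argument, but you should state the hypothesis explicitly.
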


The above theorem can be quickly deduced from the following.

\begin{thm}\label{thm: no continuous algorithm}
Let $(M,\bE,K)$ be a $W^{*}$-bundle satisfying the conclusion of Corollary \ref{cor: better main thm}, and let $N$ be a sub-bundle of $M$ so that $\sup_{p\in K}h(N_{p},\tau_{p})<+\infty$. Fix $R\in [0,+\infty)^{I}$ and $a\in \prod_{i}R_{i}\Ball(M)$. Then given any $\|\cdot\|_{R}$-bounded, and $\|\cdot\|_{R,2}$-continuous function $p\mapsto f_{p}$ from $K\to \cF_{R}$ (giving $\cF_{R}$ the $\|\cdot\|_{R,2}$-topology), there is a $p\in K$ so that $f_{p}(a)\cup N_{p}$ does not generate $M_{p}$.   
\end{thm}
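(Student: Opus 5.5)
The plan is to turn the family $p\mapsto f_{p}(a)$ into a single continuous section of $M$, and then apply Corollary \ref{cor: better main thm}. Define $c\colon K\to B$ by $c_{p}=f_{p}(a_{p})$. Here $a_{p}\in\prod_{i}R_{i}\Ball(M_{p})$, because $\pi_{p}$ is contractive, so $f_{p}(a_{p})$ makes sense in $M_{p}$. The main step is to show that $c$ is (the fiber family of) an honest element $\tilde c\in M$. Once this is done, suppose $\{c_{p}\}\cup N_{p}$ generated $M_{p}$ for every $p$. Then by the lemma equating fiberwise generation with $C^{*}_{X}$-generation, $M=C^{*}_{X}(\{\tilde c\}\cup N)$. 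This would mean $M$ is fiberwise generated over $N$ by a single continuous section, contradicting Corollary \ref{cor: better main thm}(\ref{item: uniformly strongly 1-bounded_2}).

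To build $\tilde c$, I will approximate uniformly in $p$. Fix $\varepsilon>0$. Since $p\mapsto f_{p}$ is $\|\cdot\|_{R,2}$-continuous and $K$ is compact, I choose a finite open cover $\{U_{m}\}$ of $K$ and points $p_{m}\in U_{m}$ with $\|f_{p}-f_{p_{m}}\|_{R,2}<\varepsilon$ for $p\in U_{m}$. Let $(\varphi_{m})$ be a subordinate partition of unity.

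Each fixed $f_{p_{m}}\in\cF_{R}$ is a $\|\cdot\|_{R,2}$-limit of elements $g_{m}\in\mathcal{A}_{R}$, and we may take $\|g_{m}\|_{R}$ bounded by cutting off with functional calculus as in \cite{FreePinsker}. Each such $g_{m}=\sum\phi\otimes P$ can be evaluated at $a$ directly inside $M$. Indeed, $\ell\mapsto\phi(\ell)$ composed with $p\mapsto\ell_{a_{p}}=\tau_{p}\circ\ev_{a}$ is continuous $K\to\Sigma_{R}$, since $\bE(P(a))\in C(K)$. So $g_{m}(a):=\sum(\phi\circ\ell_{a_{\cdot}})P(a)$ is an element of $M$ with fibers $g_{m}(a_{p})$, using $C(K)\subseteq Z(M)$.

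Set $d_{\varepsilon}=\sum_{m}\varphi_{m}g_{m}(a)\in M$. Its norm is bounded by $\sup_{p}\|f_{p}\|_{R}$ plus a constant, and
\[
\sup_{p}\|(d_{\varepsilon})_{p}-c_{p}\|_{2}\leq\sup_{p}\sum_{m}\varphi_{m}(p)\bigl(\|g_{m}-f_{p_{m}}\|_{R,2}+\|f_{p_{m}}-f_{p}\|_{R,2}\bigr)<2\varepsilon .
\]
Taking $\varepsilon=2^{-n}$ gives a norm-bounded sequence $(d_{n})$ that is $\|\cdot\|_{\bE}$-Cauchy, because $\|x\|_{\bE}=\sup_{p}\|x_{p}\|_{2}$. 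By completeness of $\Ball(M)$ in $\|\cdot\|_{\bE}$, after scaling, the sequence converges to some $\tilde c\in M$. Continuity of each $\pi_{p}$ then gives $\tilde c_{p}=c_{p}=f_{p}(a)_{p}$ for all $p$.

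The step I expect to be the main obstacle is the norm-boundedness of the approximants $g_{m}$. If $\|g_{m}\|_{R}$ could not be controlled by $\|f_{p_{m}}\|_{R}$, the $d_{n}$ might fail to lie in a fixed ball, and the completeness axiom would not apply. I would handle this by applying a fixed cutoff function to the real and imaginary parts of $g_{m}$, which is $L^{2}$-Lipschitz on bounded sets and preserves the $\|\cdot\|_{R,2}$-estimate up to a constant. Alternatively, I would directly use the description of $\cF_{R}$ as a $W^{*}$-bundle over $\Sigma_{R}$: each $f\in\cF_{R}$ is a continuous section over $\Sigma_{R}$, and pulling back along the continuous map $p\mapsto\ell_{a_{p}}$ is a $*$-homomorphism into $M$, defined on the dense subalgebra and extended by the completeness axiom. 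Everything else reduces to Corollary \ref{cor: better main thm}.
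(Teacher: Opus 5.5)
Your proposal is correct and follows the paper's route. The paper likewise realizes $p\mapsto f_p(a_p)$ as an element of $M$ by locally approximating $f_p$ in $\|\cdot\|_{R,2}$ by elements of $\mathcal{A}_R$ evaluated at $a$, and then invokes Corollary \ref{cor: better main thm}.

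The only difference is in the gluing step. The paper cites Ozawa's local criterion \cite[Theorem 11]{OzawaBundle} to pass from local approximations to an element of $M$. You reprove that criterion by hand, using a partition of unity, completeness of $\Ball(M)$ in $\|\cdot\|_{\bE}$, and a Lipschitz truncation of the real and imaginary parts to keep the approximants norm-bounded.
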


In both cases, if the generator problem is true then we can find a universally measurable solution. E.g. in the context of Theorem \ref{thm: no continuous algorithm}, it is not hard to show that 
\[\{(\ell,f)\in\Sigma_{R}\times \Ball(\cF_{R}):W^{*}(f(\vartheta_{\ell}))=W^{*}(\ell)\}\]
is a $G_{\delta}$ (hence Borel) subset of the Polish space $\Sigma_{R}\times \Ball(\cF_{R})$. 

The following Proposition does the bulk of the work of proving Theorem \ref{thm: no continuous algorithm}.

\begin{prop}\label{prop: preserving continuous functions}
Let $(M,\bE,K)$ be a $W^{*}$-bundle which is $\|\cdot\|_{\bE}$-separable. Fix a countable set $I$, an $R\in [0,\infty)^{I}$ and a tuple $a\in \prod_{i\in I}R_{i}\Ball(M)$.
Suppose that $p\mapsto f_{p}$ is a $\|\cdot\|_{R}$-bounded,  and $\|\cdot\|_{R,2}$-continuous function $K\to \cF_{R}$. Then, the section $b_{p}=f_{p}(a_{p})$ is in $M$. 
\end{prop}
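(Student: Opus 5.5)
The plan is to approximate $p\mapsto f_p(a_p)$ uniformly in the fiberwise $2$-norm by genuine elements of $M$, and then use completeness of bounded balls of $M$ in $\|\cdot\|_{\bE}$.

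\emph{Step 1 (a constant function of $p$).} Fix a single $g\in\cF_{R}$. I claim the section $p\mapsto g(a_{p})$ lies in $M$, with $\|g(a)\|\le \|g\|_{R}$. If $g=\phi\otimes P$ with $\phi\in C(\Sigma_R)$ and $P$ a $*$-polynomial, then $g(a_p)=\phi(\ell_{a_p})P(a_p)$. The map $p\mapsto \ell_{a_p}=\tau_p\circ\ev_{a}$ is continuous $K\to\Sigma_R$, because $\bE(P(a))\in C(K)$ for each $P$. Hence $\phi(\ell_{a_\cdot})\in C(K)\subseteq M$, and the section is $\phi(\ell_{a_\cdot})P(a)\in M$. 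By linearity this extends to all of $\mathcal{A}_R$.

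For general $g\in\cF_R$, choose $g_n\in\mathcal{A}_R$ with $\|g_n-g\|_{R,2}\to0$. By item (\ref{item:surjective nc func calc}) and the uniform bound on $\|g\|_R$, we may take $\|g_n\|_R\le C$. I expect to use the standard cutoff: replace $g_n$ by $h(g_n)$ for a bounded continuous $h$, which Jekel's calculus allows and which keeps us in the algebra generated by $\mathcal{A}_R$ and $C(K)$. Then
\[\|g_n(a)-g_m(a)\|_{\bE}=\sup_p\|g_n(a_p)-g_m(a_p)\|_2\le\|g_n-g_m\|_{R,2},\]
so $g_n(a)$ is $\|\cdot\|_{\bE}$-Cauchy in $C\Ball(M)$. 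By completeness it converges to some $y\in M$. Since $\pi_p$ is $\|\cdot\|_{\bE}$-$\|\cdot\|_2$ continuous, $y_p=g(a_p)$ for every $p$.

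\emph{Step 2 (partition of unity).} Let $\varepsilon>0$. By continuity of $p\mapsto f_p$ and compactness of $K$, choose $p_1,\dots,p_m$ and a partition of unity $(\varphi_i)$ subordinate to open sets $U_i$ on which $\|f_q-f_{p_i}\|_{R,2}<\varepsilon$. Set
\[y_\varepsilon=\sum_i\varphi_i\,f_{p_i}(a)\in M,\]
using Step 1. For each $q\in K$,
\[\|y_{\varepsilon,q}-f_q(a_q)\|_2\le\sum_i\varphi_i(q)\|f_{p_i}(a_q)-f_q(a_q)\|_2\le\varepsilon.\]
Moreover $\|y_\varepsilon\|\le\sup_p\|f_p\|_R=:C'$, since $y_{\varepsilon,q}$ is a convex combination of elements of norm at most $C'$ in each fiber, and the $C^*$-norm on a $W^*$-bundle is the supremum of the fiber norms. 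Thus $(y_{1/n})$ is $\|\cdot\|_{\bE}$-Cauchy in $C'\Ball(M)$, and by completeness it converges to some $b\in M$ with $b_p=f_p(a_p)$ for all $p$.

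\emph{Main obstacle.} The delicate point is the norm control in Step 1: producing approximants $g_n\in\mathcal{A}_R$ that converge in $\|\cdot\|_{R,2}$ while remaining uniformly $\|\cdot\|_R$-bounded, since completeness only holds on bounded balls. I would first try a truncation by functional calculus applied to the real and imaginary parts, which is contractive in $\|\cdot\|_{R,2}$ up to a constant. Failing that, I would invoke the density statement in the construction of $\cF_R$ from \cite{FreePinsker}, namely that the unit ball of $\mathcal{A}_R$ modulo null elements is $\|\cdot\|_{R,2}$-dense in $\Ball(\cF_R)$, which is the analogue of Proposition \ref{prop: better KDT} for this bundle.
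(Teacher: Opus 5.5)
Your argument is correct, but it is organized differently from the paper's.

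\textbf{The paper's route.} The paper invokes Ozawa's local criterion \cite[Theorem 11]{OzawaBundle}. The section $b$ is norm-bounded, since $\sup_p\|f_p(a_p)\|\le\sup_p\|f_p\|_R$. So it suffices to find, near each $p\in K$, an element of $M$ that is uniformly $\|\cdot\|_2$-close to $b$ on a neighborhood. The paper takes $\Phi(a)$ with $\Phi\in\mathcal{A}_R$ and $\|\Phi-f_p\|_{R,2}<\varepsilon$. This lies in $M$ for exactly the trivial reason in the first half of your Step~1, and the estimate $\|b_x-\Phi(a_x)\|_2\le\|f_x-f_p\|_{R,2}+\|f_p-\Phi\|_{R,2}$ finishes.

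\textbf{Your route.} You carry out the gluing by hand, via a partition of unity and completeness of $\Ball(M)$. Because completeness only holds on operator-norm-bounded sets, you have to control norms explicitly. That is why you need Step~1 for general $g\in\cF_R$, which is itself the special case of the proposition for a constant map $p\mapsto g$. Your approach is self-contained and yields the explicit bound $\|b\|\le\sup_p\|f_p\|_R$. The paper's is shorter because Ozawa's criterion asks for no norm bound on the local approximants, so the truncation issue never surfaces in the proof.

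\textbf{The justification in Step~1.} Your first justification for bounded approximants does not work. Appealing to the surjectivity property of Jekel's functional calculus gives norm control on a preimage under evaluation, not on approximants of $g$ taken from $\mathcal{A}_R$. Your truncation, however, does work. Take self-adjoint $g_n\in\mathcal{A}_R$ with $\|g_n-\Re g\|_{R,2}\to0$, let $z_n\in M$ have sections $g_n(a_p)$, and put $h(t)=\max(-C,\min(C,t))$ with $C=\|g\|_R$. Then $h(z_n)\in C\Ball(M)$, and $\pi_p(h(z_n))=h(g_n(a_p))$. Since $\|h(A)-h(B)\|_2\le\|A-B\|_2$ for self-adjoint $A,B$ in a tracial von Neumann algebra (by the joint spectral measure of left and right multiplication), and $h$ fixes $(\Re g)(a_p)$, the sequence $h(z_n)$ is $\|\cdot\|_{\bE}$-Cauchy with the right fibers. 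So the Kaplansky-type density fallback you mention is unnecessary.
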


\begin{proof}
We use Ozawa's local criterion \cite[Theorem 11]{OzawaBundle}.
Fix $p\in K$ and $\varepsilon>0$.  Choose a neighborhood $\mathcal{O}$ of $p$ so that $\|f_{p}-f_{x}\|_{R,2}<\varepsilon$ for all $x\in \mathcal{O}$. Choose a $\Phi\in C(\Sigma_{R})\otimes_{\textnormal{alg}}\bC^{*}\ip{(T_{i})_{i\in i}}$ so that $\|\Phi-f_{p}\|_{R,2}<\varepsilon$. Then we may define $c\in M$ by $c_{x}=\Phi(a_{x})$. Moreover, we have for all $x\in \mathcal{O}$ that:
\[\|b_{x}-c_{x}\|_{2}\leq \|f_{x}-f_{p}\|_{R,2}+\|f_{p}-\Phi\|_{R,2}<2\varepsilon.\]

\end{proof}

\begin{proof}[Proof of Theorem \ref{thm: no continuous algorithm}]
Suppose $p\mapsto f_{p}$ is a bounded, continuous function $K\to \cF_{R}$ (giving $\cF_{R}$ the $\|\cdot\|_{R,2}$-topology). Set $b_{p}=f_{p}(a_{p})$, then Proposition \ref{prop: preserving continuous functions} implies $b\in M$. Since $M$ satisfies the conclusion of Corollary \ref{cor: better main thm},  there is some $p\in K$ with $W^{*}(b_{p}\cup N_{p})\neq M_{p}$.

\end{proof}

\begin{proof}[Proof of Theorem \ref{thm: cont func of law}]
Suppose there is such a function $\ell\mapsto f_{\ell}$. Let $a,R,I$ be as in the setup of Theorem \ref{thm: no continuous algorithm}. Consider the function $K\to \Sigma_{R}$ given by $p\mapsto f_{\ell_{a_{p}}}$. This is a continuous function since the maps $p\mapsto \ell_{a_{p}}$ and $\ell\mapsto f_{\ell}$ are continuous. 

Since 
\[\ell_{\vartheta_{\ell_{a_{p}}}}=\ell_{a_{p}},\]
there is a unique trace-preserving isomorphism $W^{*}(\ell)\cong M_{p}$ which sends $\pi_{\ell}(T_{i})$ to $a_{i,p}$ for $i\in I$. Thus the fact that $f_{\ell_{a_{p}}}(\vartheta_{\ell_{a_{p}}})$ generates $W^{*}(\ell)$ implies that $f_{\ell}(a_{p})$ generates $M_{p}$. This contradicts Theorem \ref{thm: no continuous algorithm}.

\end{proof}

%
%

\end{document}